\theoremstyle{plain}
\newtheorem{theorem}[subsection]{Theorem}
\newtheorem{lemma}[subsection]{Lemma}
\newtheorem{sublemma}[subsection]{Sublemma}
\newtheorem{proposition}[subsection]{Proposition}
\newtheorem{corollary}[subsection]{Corollary}
\theoremstyle{definition}
\newtheorem{definition}[subsection]{Definition}
\newtheorem{example}[subsection]{Example}
\newtheorem{remark}[subsection]{Remark}
\newcommand{\AC}{\mathcal{A}}
\newcommand{\GW}{\mathrm{GW}}
\newcommand{\K}{K}
\newcommand{\SO}{\mathcal{O}}
\newcommand{\Z}{\mathbb{Z}}
\newcommand{\id}{\mathrm{id}}
\newcommand{\op}{\textnormal{op}}
\newcommand{\Hom}{\textnormal{Hom}}
\newcommand{\quis}{\textnormal{quis}}
\newcommand{\can}{\textnormal{can}}
\newcommand{\Spec}{\mathrm{Spec}}
\newcommand{\ext}{\mathrm{Ext}}
\newcommand{\im}{\mathrm{im}}
\newcommand{\cone}{\textnormal{cone}}
\newcommand{\Sch}{\textnormal{Sch}}
\newcommand{\Sets}{\textnormal{Sets}}
\newcommand{\Fl}{\mathrm{Fl}}
\newcommand{\Gr}{\mathrm{Gr}}
\newcommand{\sHom}{\mathcal{H}\mathrm{om}}
\newcommand{\M}{\mathbf{M}}
\newcommand{\V}{\mathbf{V}}
\newcommand{\Ch}{\textnormal{Ch}}
\newcommand{\Res}{\mathrm{Res}}
\newcommand{\Dual}{\mathrm{Dual}}
\newcommand{\Tr}{\mathrm{Tr}}
\newcommand{\uG}{\underline{\Gamma}}
\newcommand{\uH}{\underline{\mathrm{H}}}
\newcommand{\Pic}{\mathrm{Pic}}
\newcommand{\SP}{\mathcal{P}}
\newcommand{\SV}{\mathcal{V}}
\newcounter{mydiag}
\renewcommand{\themydiag}{\theequation}
\newcommand{\diagram}{\stepcounter{equation} \refstepcounter{mydiag} {\rm \fbox{$\scriptstyle \themydiag$}}}
\newcommand{\diag}[1]{{\rm \fbox{$\scriptstyle #1$}}} 
\DeclareFontFamily{U}{mathx}{}
\DeclareFontShape{U}{mathx}{m}{n}{<-> mathx10}{}
\DeclareSymbolFont{mathx}{U}{mathx}{m}{n}
\DeclareMathAccent{\widehat}{0}{mathx}{"70}
\DeclareMathAccent{\widecheck}{0}{mathx}{"71}
\author{Tao Huang}
\address{Tao Huang, School of Mathematics, Sun Yat-sen University, No. 135 Xingang Xi Road, Guangzhou, 510275, China}
\email{huangt233@mail2.sysu.edu.cn}
\author{Heng Xie}
\address{Heng Xie, School of Mathematics, Sun Yat-sen University, No. 135 Xingang Xi Road, Guangzhou, 510275, China}
\email{xieh59@mail.sysu.edu.cn}
\thanks{MSC classes: 19G38, 11E81, and 14M15.}
\title[The connecting homomorphism for Hermitian $K$-theory]{The connecting homomorphism for Hermitian $K$-theory: Projective bundles and Grassmannians}
\begin{document}
\begin{abstract}
	We provide a geometric interpretation for the connecting homomorphism in the localization sequence of Hermitian $K$-theory. As an application, we compute the Hermitian $K$-theory of projective bundles and Grassmannians in the regular case. We provide an explicit basis for Hermitian $K$-theory of Grassmannians, which is indexed by even Young diagrams together with another special class of Young diagrams, that we call \textit{buffalo-check} Young diagrams. To achieve this, we develop pushforwards and pullbacks in Hermitian $K$-theory using Grothendieck's residue complexes, and we establish fundamental theorems for those pushforwards and pullbacks, including base change, projection, and excess intersection formulas.
\end{abstract}
\maketitle
\setcounter{tocdepth}{1}
\tableofcontents

\section{Introduction}

Pushforwards and pullbacks are foundational in many cohomology theories. For instance, in $K$-theory, Chow theory, and Witt theory, they serve as powerful computational tools.\ Interestingly, pushforwards in Witt theory always keep track of orientations, complicating computations compared to those in oriented cohomology theories, cf. \cite{calmes2011push}. Based on pushforwards and pullbacks, Balmer and Calm\`es \cite{balmer2009geometric} introduced the `Blow-up setup', a method that geometrically interprets the connecting homomorphism in the 12-term localization sequence for Witt groups. As an application, Balmer-Calm\`es computed Witt groups of Grassmannians, which was a long standing open problem in the Witt theory, cf. \cite{balmer2012witt}.

Hermitian $K$-theory, a generalization of Witt theory, has garnered attention  for its applications in recent works. Roughly speaking, Hermitian $K$-theory can be thought of a `combination' of $K$-theory and Witt theory in view of the algebraic Bott sequence. We mention some recent applications of Hermitian $K$-theory.\ Asok and Fasel utilized Hermitian $K$-theory as a tool to give a cohomological classification of vector bundles of rank $2$ on a smooth affine threefold \cite{asok2014a}, and they also made progress in Murthy's conjecture \cite{asok2015splitting}. Fasel and Srinivas \cite{fasel2009chow} showed that a vector bundle $\SV$ of rank $3$ over a smooth affine threefold splits off a trivial direct summand if and only if its Euler class $e(\SV)$ in Hermitian $K$-theory is zero.  By studying the slice filtration for Hermitian $K$-theory, R\"{o}ndigs, Spitzweck, and \O stv\ae r computed the first stable homotopy groups of motivic spheres \cite{rondigs2019the}. The second author showed that a numerical condition of Atiyah on Hurwitz's 1898 problem can be generalized from $\mathbb{R}$ to any field by computing Hermitian $K$-theory of deleted quadrics \cite{xie2014an}.

This work is devoted to the study of pushforwards and pullbacks in Hermitian $K$-theory and the geometric description of the connecting homomorphism in Hermitian $K$-theory.  Building on the work of Schlichting \cite{schlichting2010mayer} and \cite{schlichting2017hermitian}  in Hermitian $K$-theory, we show that the connecting homomorphism in the localization sequence of Hermitian $K$-theory can be interpreted by the pushforward, pullback and cupping with the Bott element. More precisely, we have the following result.

\begin{theorem}\label{thm:connecting-homo-1}
	Assume that we have the following diagram:
	\[\xymatrix{
		Z \ar@{^{(}->}[r]^-{\iota} & X  & \ar@{_{(}->}[l]_-{v}  \ar@{_{(}->}[ld]_-{\tilde{v}} U \ar[d]^-{\alpha}  \\
		E \ar[u]^-{\widetilde{\pi}} \ar@{^{(}->}[r]_-{\tilde{\iota}} & B  \ar[u]^-{\pi} \ar@{-->}[r]_-{\tilde{\alpha}} & Y
		}\]
	where $\iota: Z \to X$ is a regular immersion of codimension $c \geq 2$ between regular schemes, $B$ is the blow up of $X$ along $Z$, and $E$ is the exceptional fibre. Here, we set $U := X-Z \cong B- E$, and let $v,\tilde{v}$ be the corresponding open immersions. Suppose that there is a flat morphism $\tilde{\alpha} : B \to Y$ such that $\alpha := \tilde{\alpha} \tilde{v}$ is an affine bundle.
	Let $L$ be a line bundle on $X$. Then, there exists an integer $\lambda(L) \in \Z$ such that

	\begin{enumerate}[leftmargin=20pt,label={\rm (\Alph*)}]
		\item If $\lambda(L) \equiv c-1 \mod 2$, then we have an equivalence
		      \[ \GW^{[n]}(X,L)  \simeq \GW^{[n]}(Y,L_{Y}) \oplus \GW^{[n-c]}(Z, \omega_\iota \otimes L_{Z})  \]
		      of spectra in the stable homotopy category of spectra $\mathcal{SH}$.
		\item If $\lambda(L) \equiv c \mod 2$, then the following diagram
		      $$
			      \xymatrix{
			      \GW^{[n]}(U,L) \ar[d]^-{\simeq}_-{(\alpha^*)^{-1}} \ar[r]^-{\partial} & \GW^{[n-c]}(Z, \omega_\iota \otimes L)[1]  & \ar[l]_-{\eta \cup} \GW^{[n-c +1]}(Z, \omega_\iota \otimes L )  \\
			      \GW^{[n]}(Y,L_{Y})  \ar[r]^-{ \tilde{\iota}^* \tilde{\alpha}^*}  & \GW^{[n]}(E,L) \ar[r]^-{\simeq} & \GW^{[n]}(E,  \omega_{\widetilde{\pi}} \otimes \widetilde{\pi}^*(\omega_\iota \otimes L) ) \ar[u]^-{\widetilde{\pi}_*}
			      }
		      $$
		      of spectra is commutative in $\mathcal{SH}$. In particular, on the level of homotopy groups, the connecting homomorphism $$\partial: 	\GW^{[n]}_i(U,L) \to  \GW^{[n-c]}_i(Z, \omega_\iota \otimes L)[1]$$ is equal to the composition $  (\eta \cup) \widetilde{\pi}_* \tilde{\iota}^* \tilde{\alpha} (\alpha^*)^{-1}$.
	\end{enumerate}
\end{theorem}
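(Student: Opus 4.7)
The plan is to adapt the Balmer-Calm\`es strategy for Witt groups \cite{balmer2009geometric} to the Hermitian $K$-theoretic setting, using the pushforward-pullback formalism and the fundamental formulas developed earlier in this paper. The starting point is the localization fiber sequence associated to $Z \hookrightarrow X \hookleftarrow U$,
\[
\GW^{[n]}_Z(X,L) \to \GW^{[n]}(X,L) \to \GW^{[n]}(U,L) \xrightarrow{\partial} \GW^{[n]}_Z(X,L)[1],
\]
combined with the devissage equivalence $\iota_* : \GW^{[n-c]}(Z, \omega_\iota \otimes L) \xrightarrow{\simeq} \GW^{[n]}_Z(X,L)$ built from Grothendieck's residue complex. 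This identifies the target of $\partial$ with $\GW^{[n-c]}(Z,\omega_\iota \otimes L)[1]$ but does not yet compute $\partial$ geometrically.

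I then compare with the localization sequence for the blow-up $E \hookrightarrow B \hookleftarrow U$. The proper birational map $\pi : B \to X$ induces a map of localization sequences that is the identity on $U$, so the two boundary maps are identified after the equivalence $\GW^{[n]}_E(B,\pi^*L) \simeq \GW^{[n]}_Z(X,L)$. Affine bundle invariance of Hermitian $K$-theory forces $\alpha^*$ to be an equivalence, so every class on $U$ comes from one on $Y$ via $\tilde{\alpha}^*$, and such classes extend across $E$ because $\tilde{\alpha}$ is defined on all of $B$. The base change and excess intersection formulas established earlier in this paper then reduce the computation of $\partial$ on pulled-back classes to $\tilde{\iota}^*$ followed by the pushforward $\tilde{\pi}_*$, up to a twist correction.

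That twist correction is the content of the parity condition. The exceptional divisor $E = \mathbb{P}(N_{Z/X})$ is a projective bundle of relative dimension $c-1$ over $Z$, and on it the two line bundles $L\restr{E}$ and $\omega_{\tilde{\pi}} \otimes \tilde{\pi}^*(\omega_\iota \otimes L\restr{Z})$ differ by a line bundle whose class modulo squares in $\Pic(E)$ is detected by the integer $\lambda(L) \bmod 2$. When $\lambda(L) \equiv c-1 \pmod 2$ (case (A)), the two twists agree up to a square, the corresponding Hermitian $K$-theory spectra are canonically equivalent, and the geometric operation $\tilde{\pi}_* \circ \tilde{\iota}^* \circ \tilde{\alpha}^*$ (suitably twist-adjusted) provides a section of the localization sequence, which therefore splits into the asserted direct sum. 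When $\lambda(L) \equiv c \pmod 2$ (case (B)), the twists differ by an odd power of $\SO_E(1)$ times a square, introducing a form-index shift of one that is precisely filled in by cup product with the Bott element $\eta$ coming from the algebraic Bott sequence, yielding the commutative square as stated.

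The main technical obstacle will be keeping careful track of line bundle twists and duality data through the chain of residue identifications, pullbacks, and pushforwards. In particular, the vertical isomorphism $\GW^{[n]}(E,L) \cong \GW^{[n]}(E, \omega_{\tilde{\pi}} \otimes \tilde{\pi}^*(\omega_\iota \otimes L))$ appearing in the diagram must come from an explicit identification of the underlying line bundles modulo squares, and its compatibility with $\tilde{\pi}_*$ must be verified against the projection formula. Once these twist compatibilities are pinned down, the commutativity in (B) follows formally by combining devissage, the blow-up comparison, affine bundle invariance, and the Bott sequence; in (A), the same geometric ingredients, read without the Bott correction, yield the splitting.
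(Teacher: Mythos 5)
Your high-level outline (localization, d\'evissage, comparison with the blow-up, affine-bundle invariance, twist bookkeeping through $\Pic(B)$) is the right skeleton, and the idea of transporting the boundary map from $X$ to $B$ via a map of localization sequences does match the paper's Lemma \ref{lma:conn_map_of_seq}. But there are two places where the proposal as written does not yet amount to a proof, and one place where it cites the wrong tool.

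First and most seriously, case (B) is asserted rather than argued. You write that the odd twist difference introduces ``a form-index shift of one that is precisely filled in by cup product with the Bott element $\eta$,'' but this is not a bookkeeping fact about line bundle classes in $\Pic(E)/2$; it is the substantive new ingredient that distinguishes the Hermitian statement from Balmer--Calm\`es for Witt groups. In the paper the Bott element arises from Lemma \ref{lem:codim-one}, which compares the boundary map $\partial$ of a codimension-one localization sequence on $B$ with the pushforward $\tilde{\iota}_\ast$ along $E$, and this comparison in turn rests on interpreting $\partial$ via the cone functor (Appendix Lemma \ref{lem:balmer-schlichting-coin}, using Balmer's multiplicativity of the cone, and the Koszul/projection-formula computation of Lemma \ref{lma:koszul_regian}). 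Nothing in your proposal supplies this mechanism, and there is no formal reason a degree-one mismatch in twists should be absorbed by $\eta\cup$: one needs the specific identification of the connecting morphism with the cone construction to see $-\eta$ appear.

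Second, the splitting you propose in case (A) is not the one the paper uses, and as stated it does not obviously split the localization sequence. You propose $\tilde{\pi}_\ast \circ \tilde{\iota}^\ast \circ \tilde{\alpha}^\ast$, which lands in $\GW(Z)$, as a ``section''; but to split the localization triangle one wants a right inverse of $v^\ast : \GW^{[n]}(X,L) \to \GW^{[n]}(U,L)$, which must take values in $\GW^{[n]}(X,L)$. The paper achieves this by pushing forward along the blow-up projection $\pi : B \to X$ itself, namely by a composite of the form $\pi_\ast \circ \epsilon \circ \tilde{\alpha}^\ast \circ (\alpha^\ast)^{-1}$, where $\epsilon$ is a twist identification on $B$ using the two-periodicity of the duality and the isomorphism $\tilde{\alpha}^\ast L\restr{Y} \cong \SO(E)^{\otimes\lambda(L)}\otimes\pi^\ast L$. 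Your map $\tilde{\pi}_\ast\tilde{\iota}^\ast\tilde{\alpha}^\ast$ is instead the geometric formula for $\partial$ in case (B); proposing it in case (A) conflates the two cases and does not yield the splitting.

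Finally, a smaller point: the excess intersection formula, which you invoke as a reduction step for $\partial$, is not used in the proof of this theorem (it appears later, in the projective-bundle and Grassmannian computations). The tools that actually enter here are the base change formula (Theorem \ref{thm:residually_stable_pullback}/Proposition \ref{prop:regular_smooth_base_change}), the projection formula (Theorem \ref{theo:projection-formula}, Proposition \ref{prop:projection_projective_space}), composition of pushforwards, and the cone/Bott mechanism of Lemma \ref{lem:codim-one}. To turn your outline into a proof, you would need to supply the codimension-one Lemma \ref{lem:codim-one} and the Appendix comparison between $\partial$ and the cone functor; without those, the appearance of $\eta$ remains unjustified.
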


Pushforwards and pullbacks in Hermitian $K$-theory which were conjectured to exist should be of independent interests, cf. \cite[Section 3.3]{bachmann2023euler}.
To demonstrate the applications of Theorem \ref{thm:connecting-homo-1}, we compute Hermitian $K$-theory of projective bundles in Theorem \ref{thm:projective_bundle_thm}. This computation has previously  appeared in the work of Walter in the case of $\GW_0$, but using a different method (see also \cite{rohrbach2021on},  \cite{karoubi2021grothendieck} and \cite{calmes2024a}). Furthermore, we proceed to compute Hermitian $K$-theory of Grassmannians in Section \ref{sect:grassmanian}.

Let $S$ be a regular scheme, and let $L$ be a line bundle over $S$. Let $\Gr_d(m)$ be the Grassmannian of $d$-dimensional subbundles of the trivial bundle $\SO^{d+m}$, and let $\Delta_d$ be the determinantal line bundle of the taotaulogical $d$-bundle on $\Gr_d(m)$. For $i \in \Z$, we define
$$ \GW^{[n]}_i(\Gr_d(m),L)^{\mathrm{tot}} : =  \GW^{[n]}_i(\Gr_d(m),L) \oplus \GW^{[n]}_i(\Gr_d(m),L \otimes \Delta_d).$$
\begin{theorem}[Theorem \ref{theo:GW-Grassmannian}]
	The map  
 $$ (\Theta^0+\Theta^1, \Omega^0 +\Omega^1): \bigoplus_{(\Lambda, c_\Lambda) \in \mathfrak{B}_{d,m}} \K_i(S)  \oplus \bigoplus_{\Pi \in \mathfrak{A}_{d,m}} \GW^{[n-|\Pi|]}_i(S,L) \xrightarrow{\simeq} \GW^{[n]}_i(\Gr_d(m),L)^{\mathrm{tot}}  $$
 is an equivalence of spectra, where $\mathfrak{A}_{d,m}$ is the set of even Young diagrams in $(d \times m)$-frame, and $\mathfrak{B}_{d,m}$ is the set of {buffalo-check} Young diagrams in $(d \times m)$-frame.\ Here, $|\Pi|$ denotes the number of boxes in an even Young diagram $\Pi$.
\end{theorem}

Even Young diagrams were introduced by Balmer-Calm\`es in order to index the additive basis for Witt groups of Grassmannian (cf. \cite[Definition 2.7]{balmer2012witt}). To understand the additive basis for the $K$-theory part of Hermitian $K$-theory of Grassmannians, we shall introduce a combinatorial object called \textit{buffalo-check} Young diagrams.

Consider an ordinary Young diagram sitting in the upper left corner of a $(d\times m)$-frame $\Xi$, which is a rectangle consisting of $d$ rows and $m$ columns. The \textit{boundary} $b(\Lambda)$ of a Young diagram $\Lambda$ in $\Xi$ is a lattice path that goes from  lower-left to upper-right corner in $\Xi$ such that the Young diagram $\Lambda$ lives precisely in the upper-left area above the path. The boundary $b(\Lambda)$ consists of \textit{segments} $s_1, \ldots, s_l$, where we order them from left to right.\

\begin{figure}[!ht]
	\begin{center}
		\begin{tikzpicture}[y=-1cm]

			\path[draw=white, fill=black!20] (5.6,3.4) -- (6.4,3.4) -- (6.4,2.6) -- (7.6,2.6) -- (7.6,1.4) -- (8.4,1.4) -- (8.4,1) -- (5.6,1);
			\draw[black] (5.6,1) rectangle (8.9,3.8);

			\draw[black, ultra thick] (5.6,3.8) --(5.6,3.4) -- (6.4,3.4) -- (6.4,2.6)-- (7.6,2.6) -- (7.6,1.4) -- (8.4,1.4) -- (8.4,1) -- (8.9,1)  ;
			\path (6.4,1.9) node[text=black,anchor=base west] {$\Lambda$};
			\path (5.15,3.7)
			node[text=black,anchor=base west] {\scriptsize $s_1$};
			\path (5.8,3.57)
			node[text=black,anchor=base west] {\scriptsize $s_2$};
			\path (5.95,3)
			node[text=black,anchor=base west] {\scriptsize $s_3$};
			\path (6.7,2.5)
			node[text=black,anchor=base west] {\scriptsize $s_4$};
			\path (7.15,2)
			node[text=black,anchor=base west] {\scriptsize $s_5$};
			\path (7.8,1.55)
			node[text=black,anchor=base west] {\scriptsize $\ldots$};
		\end{tikzpicture}
	\end{center}
	\caption{Young diagram $\Lambda$ in the $(d \times m)$-frame $\Xi$. The boundary $b(\Lambda)$ is thickened. }
	\label{Fig:Framed_fig_Young_diagram}%
\end{figure}
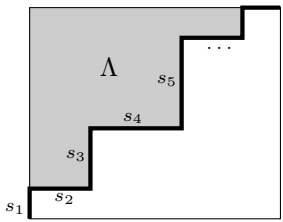

A Young diagram $\Lambda$ in a $(d \times m)$-frame $\Xi$ is called \textit{$K$-even} if there exists an integer $1 \leq w(\Lambda) < l$ such that segments $s_2, \ldots, s_{w(\Lambda)-1}$ have even length and the segment $s_{w(\Lambda)}$, which has odd length if $w(\Lambda) \geq 2$, is vertical. In other words, a Young diagram $\Lambda$ in $(d \times m)$-frame $\Xi$ is $K$-even if and only if (i) the first segment $s_1$ is vertical, or if (ii) there exists an integer $r(\Lambda) \geq 2$ such that the segments $s_2, \ldots, s_{r(\Lambda)-1}$ have even length and the segement $s_{r(\Lambda)}$ is vertical of odd length. A box in $\Xi$ is called a \textit{center} of a $K$-even Young diagram $\Lambda$, if it sits in the angle between the segments $s_{w(\Lambda)}$ and $s_{w(\Lambda)+1}$; Or equivalently, if it sits in the angle between the segments $s_1$ and $s_{2}$ when condition (i) holds, or between the segments $s_{r(\Lambda)}$ and $s_{r(\Lambda)+1}$ when condition (ii) holds. Note that a $K$-even Young diagram $\Lambda$ in $\Xi$ could have at most two centers.

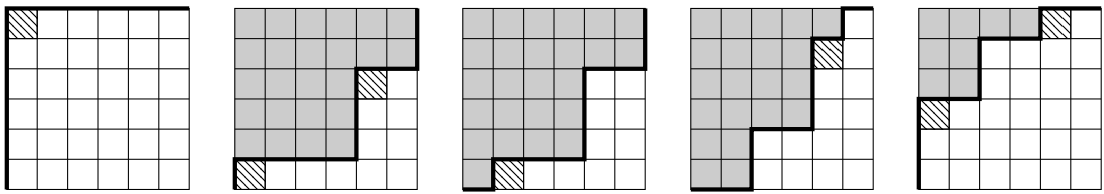
\begin{figure}[!ht]
	\begin{center}
		\begin{tikzpicture}[scale=0.4]
			\foreach \t in {0,...,4}{\draw (7.5*\t,0) rectangle (6+7.5*\t,6);}

			\foreach \y in {1,...,5}{
					\foreach \t in {0,...,4}
						{\draw (7.5*\t,\y) -- (6+7.5*\t,\y);}}

			\foreach \x in {1,...,5}{
					\foreach \t in {0,...,4}
						{\draw (\x+7.5*\t,0) -- (\x+7.5*\t,6);}}

			\draw[ultra thick] (0,0) -- (0,6) -- (6,6);
			\draw[pattern=north west lines] (0,6) rectangle (1,5);

			\draw[ultra thick] (7.5,0) -- (7.5,1) -- (7.5+4, 1) -- (7.5+4,4) -- (7.5+6,4) -- (7.5+6,6);
			\draw[pattern=north west lines] (7.5+4,4) rectangle (7.5+5,3);
			\draw[pattern=north west lines] (7.5,0) rectangle (7.5+1,1);
			\draw[fill, opacity = 0.2]
			(7.5,0) -- (7.5,1) -- (7.5+4, 1) -- (7.5+4,4) -- (7.5+6,4) -- (7.5+6,6) -- (7.5,6) -- (7.5,0) ;

			\draw[ultra thick] (15,0) -- (15+1,0) -- (15+1, 1) -- (15+4,1) -- (15+4,4) -- (15+6,4) -- (15+6,6);
			\draw[pattern=north west lines] (15+1,0) rectangle (15+2,1);
			\draw[fill, opacity = 0.2]
			(15,0) -- (15+1,0) -- (15+1, 1) -- (15+4,1) -- (15+4,4) -- (15+6,4) -- (15+6,6) -- (15,6) -- (15,0) ;

			\draw[ultra thick] (22.5,0) -- (22.5+2,0) -- (22.5+2, 2) -- (22.5+4,2) -- (22.5+4,5) -- (22.5+5,5) -- (22.5+5,6) -- (22.5+6,6);
			\draw[pattern=north west lines] (22.5+5,4) rectangle (22.5+4,5);
			\draw[fill, opacity = 0.2]
			(22.5,0) -- (22.5+2,0) -- (22.5+2, 2) -- (22.5+4,2) -- (22.5+4,5) -- (22.5+5,5) -- (22.5+5,6) -- (22.5,6) -- (22.5,0) ;

			\draw[ultra thick] (30,0) -- (30,3) -- (30+2, 3) -- (30+2,5) -- (30+4,5) -- (30+4,6) -- (30+6,6);
			\draw[pattern=north west lines] (30+5,6) rectangle (30+4,5);
			\draw[pattern=north west lines] (30,3) rectangle (30+1,2);
			\draw[fill, opacity = 0.2]  (30,0) -- (30,3) -- (30+2, 3) -- (30+2,5) -- (30+4,5) -- (30+4,6) -- (30,6) -- (30,0);
		\end{tikzpicture}
	\end{center}
	\caption{Five examples of $K$-even Young diagrams with centers hatched and boundaries thickened.
	}
	\label{Fig:K-even}%
\end{figure}
We order the rows (resp. columns) in $\Xi$ from left to right (resp. bottom to top). Observe that the centers of $K$-even Young diagrams can not locate on thoses boxes in the intersection of even rows and even columns, and therefore the boxes that $K$-even Young diagrams can center form a buffulo-check pattern, see Figure \ref{Fig:Buffalo-check}. A \textit{{buffalo-check} Young diagram} in $\Xi$ is a pair $(\Lambda, c_\Lambda)$ consisting of a $K$-even Young diagram $\Lambda$ in $\Xi$ and a center $c_\Lambda$ of $\Lambda$.\

\begin{figure}[!ht]
	\begin{center}
		\begin{tikzpicture}[scale=0.5]
			\draw (0,0) rectangle (6,6);
			\foreach \y in {1,...,6}{
					\draw (0,\y) -- (6,\y);}
			\foreach \x in {1,...,6}{
					\draw (\x,0) -- (\x,6);}

			\foreach \x in {0,...,2}{
			\foreach \y in {0,...,2}{
			\draw[fill=black] ( (2*\x,2*\y) -- (2*\x + 1,2*\y) -- (2*\x + 1,2*\y + 1) -- (2*\x,2*\y+1) -- (2*\x,2*\y);
			}
			}

			\foreach \x in {0,...,2}{
					\foreach \y in {0,...,2}{
							\draw[fill=black, opacity=0.3] (2*\x+1,2*\y) -- (2*\x + 2,2*\y) -- (2*\x + 2,2*\y + 1) -- (2*\x+1,2*\y+1) -- (2*\x+1,2*\y);
						}
				}

			\foreach \x in {0,...,2}{
					\foreach \y in {0,...,2}{
							\draw[fill=black,opacity=0.3] (2*\x,2*\y+1) -- (2*\x + 1,2*\y+1) -- (2*\x + 1,2*\y + 2) -- (2*\x,2*\y+2) -- (2*\x,2*\y+1);
						}
				}
		\end{tikzpicture}
	\end{center}
	\caption{An example of the buffalo-check pattern of size $(6\times 6)$. The centers of $K$-even Young diagrams can only sit on those non-white boxes. A black box (resp. grey box) is the location of the center of a buffalo-check Young diagram,  if the corresponding basis element lands in the even (resp. odd) twist.
	}
	\label{Fig:Buffalo-check}%
\end{figure}
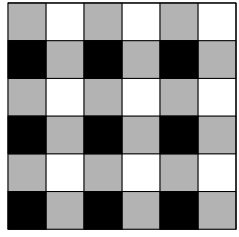

In Section \ref{sect:grassmanian}, we will describe the underlying map of the isomorphism in Theorem \ref{theo:GW-Grassmannian} in light of the combinatorial of even and buffalo-check Young diagrams.
If $S = \Spec(\mathbb{C})$, our result  agrees with \cite{zibrowius2011witt}. To compare, we note that
$$|\mathfrak{B}_{d,m}| = \binom{d+m}{d}-\binom{\lfloor \frac{d}{2} \rfloor +\lfloor \frac{m}{2} \rfloor  }{\lfloor \frac{d}{2} \rfloor}.   $$
Taking the negative homotopy groups, we recover Balmer-Calm\`es's computation on Witt groups of Grassmannians \cite{balmer2012witt}. Hermitian $K$-theory of Grassmannians could also be computed using the method of semi-orthogonal decomposition, as done by Walter in the case of projective bundles. This approach has been pursued in recent work \cite{rohrbach2021on}, where the author addresses half of the cases and assumes that the base is of characteristic zero. It is important to note that these restrictions are not necessary within our method.

The recent advancements in Hermitian $K$-theory, especially when the invertibility of the element two in the base is not assumed, have opened new avenues for exploration, cf. \cite{schlichting2021higher} and  \cite{calmes2023hermitian}. If we assume that two is invertible in the base, then one has the freedom to choose either the framework of Hermitian $K$-theory of dg categories \cite{schlichting2017hermitian} or that of the infinity category \cite{calmes2023hermitian} to work on. It is interesting to investigate our results in this paper when two is not invertible in the base.

\vspace{0.5em}
\noindent \textbf{Convention}.  Throughout this paper, we assume that every scheme is separated Noetherian of finite Krull dimension, over a commutative ring $k$ and has $\frac{1}{2}$ in its global sections.

\section{Hermitian \texorpdfstring{$K$}{K}-theory of schemes}

Let $X$ be a scheme.
Let $\M(X)$ be the category of $\SO_X$-modules, and let $\Ch^b(\M(X))$ be the dg category of bounded complexes of $\SO_X$-modules.\
The pair $$\big(\Ch^b(\M(X)), \mathrm{quis}\big)$$ is a pretriangulated dg category with weak equivalences, where $\mathrm{quis} \subset Z^{0}\Ch^b(\M(X))$ is the set of quasi-isomorphisms of complexes, cf.\ \cite[Section 9.5]{schlichting2017hermitian}.\
It's associated triangulated category
$\mathcal{T}\big(\Ch^b(\M(X)), \mathrm{quis} \big) $
is the derived category $\mathrm{D}^b(\M(X))$.\  Let $I^\bullet$ be an object of $\Ch^b(\M(X))$.\ Define the duality functor on $ \Ch^b(\M(X))$ as
$$\sharp_{I^\bullet}: \Ch^b(\M(X))^{\mathrm{op}} \to \Ch^b(\M(X)): \quad A^\bullet \mapsto  \left[A^\bullet,I^\bullet\right]_{X}  $$
where $[A^\bullet,I^\bullet]_{X}$ is the mapping complex of $\SO_X$-modules defined in \cite[Section 7.2]{schlichting2010mayer}. Define the natural transformation
$\can : 1 \rightarrow \sharp_{I^\bullet}\sharp_{I^\bullet} $
by the formula
\[\can_{A} : A^\bullet \rightarrow (A^\bullet)^{\sharp_{I^\bullet}\sharp_{I^\bullet}} : m \mapsto \big( \can_{A}(m) : f \mapsto (-1)^{|f||m|} f(m) \big). \]
We usually omit the bullet in the notation of complexes, and write $A = A^\bullet$ if no confusion occurs.
\subsection{Coherent Hermitian $K$-theory}

Let $\Ch^b_{c}(\M(X))$ be the full dg subcategory of $\Ch^b(\M(X))$ consisting of bounded complexes of $\SO_X$-modules with coherent cohomology.

\begin{definition}[{\cite[p.304]{hartshorne1966residues}}]
	A \textit{residue complex} on $X$ is a bounded complex $I^\bullet$ of quasi-coherent injective $\SO_X$-modules,  with coherent cohomology, and such that there is an isomorphism
	\[ I^j \cong \bigoplus_{\mu_I(x) = j} i_{x} \left(E(\kappa(x))\right) \]
	for all $j\in \Z$, where $\mu_I: X \to \Z$ is the codimension function (cf. \cite[Section V.7, p.282]{hartshorne1966residues}), $E(\kappa(x))$ is the injective hull of residue field $\kappa(x)$ over the local ring $\SO_{X,x}$ and $i_{x} (E(\kappa(x)))$ is the skyscraper sheaf at $x$ with value $E(\kappa(x))$. Denote the full additive subcategory of $Z^{0} \Ch^b_c(\M(X))$ of residue complexes by $\Res(X)$.
\end{definition}

\begin{definition}[{\cite[p.258]{hartshorne1966residues}}]
	\ A \textit{dualizing complex} $K^\bullet$ on $X$ is a complex of $\SO_X$-modules in $ D^b_{c}(\M(X))$ which is isomorphic to a complex of  injective $\SO_X$-modules
	\[I^\bullet :=  (\cdots  \rightarrow 0 \rightarrow I^m \stackrel{d^m}\longrightarrow I^{m+1}\rightarrow \cdots \rightarrow I^{n-1} \stackrel{d^{n-1}}\longrightarrow I^{n} \rightarrow 0 \rightarrow \cdots )  \in D^b_{c}(\M(X))\]
	such that the morphism of complexes $\can_{A} : A^\bullet \rightarrow (A^\bullet)^{\sharp_{I^\bullet}\sharp_{I^\bullet}}$ is an isomorphism for any $A^\bullet \in  D^b_{c}(\M(X))$.\
\end{definition}

\begin{remark}
	\begin{enumerate}[leftmargin=20pt]
		\item Since we assume that every scheme is noetherian of finite Krull dimension, a residue complex is dualizing cf.\ \cite[Proposition VI.1.1 (c), p.304]{hartshorne1966residues} (Note that a pointwise dualizing complex is dualizing \cite[Proposition V.8.2, p.288]{hartshorne1966residues} in this case).\
		\item The terminology ``dualizing complex'' used by \cite{schlichting2017hermitian}, \cite{gille2007a} and \cite{xie20a} is precisely a degreewise injective dualizing complex.\ 	A dualizing complex $I^\bullet$ is called \textit{minimal} if it is degreewise injective and $I^r$ is an essential extension of $\ker(d^r)$ for all $r \in \Z$.\ Note that a degreewise injective dualizing complex is a residue complex if and only if it is minimal, cf. \cite[Remark 1.16]{gille2007a}.
	\end{enumerate}
\end{remark}

\begin{proposition}
	Suppose that $I$ is a residue complex on $X$.\ Then, the quadruple
	\[
		\left(\Ch^b_{c}(\M(X)), \mathrm{quis}, \sharp_{I}, \can \right)
	\]
	is a dg category with weak equivalences and duality.
\end{proposition}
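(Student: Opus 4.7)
The plan is to verify the axioms of a \emph{dg category with weak equivalences and duality} as formulated in \cite[Section 9.5]{schlichting2017hermitian}. There are essentially four things to check: that $(\Ch^b_c(\M(X)),\quis)$ is a pretriangulated dg category with weak equivalences; that $\sharp_I$ restricts to a dg endofunctor of $\Ch^b_c(\M(X))^{\op}$; that $\sharp_I$ preserves quasi-isomorphisms; and that $\can_A$ is a quasi-isomorphism for every $A \in \Ch^b_c(\M(X))$.

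First, I would observe that $\Ch^b_c(\M(X))$ is stable under translations and mapping cones inside $\Ch^b(\M(X))$, since the subcategory $D^b_c(\M(X)) \subseteq D^b(\M(X))$ is triangulated; moreover $\quis \cap Z^0 \Ch^b_c(\M(X))$ clearly satisfies the ``weak equivalence'' axioms (two-out-of-three, closure under extension, etc.) inherited from $\Ch^b(\M(X))$. This step is formal.

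Second, I would check that $\sharp_I$ sends $\Ch^b_c(\M(X))$ to itself. Because $I$ is a bounded complex of quasi-coherent injective $\SO_X$-modules, the mapping complex $[A,I]_X$ computes $R\sHom_{\SO_X}(A,I)$ in the derived category, and is itself bounded. The fact that $R\sHom(-,I)$ preserves coherent cohomology on $D^b_c(\M(X))$ when $I$ is dualizing is standard, cf.\ \cite[Proposition V.2.1]{hartshorne1966residues}. Preservation of quasi-isomorphisms follows immediately from the degreewise injectivity of $I$: for a quasi-isomorphism $A \to A'$ in $\Ch^b_c(\M(X))$, the induced map $[A',I]_X \to [A,I]_X$ is again a quasi-isomorphism. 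The fact that $\sharp_I$ is a dg functor, and that $\can$ is a dg natural transformation, is a direct sign check from the given formula.

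The main content is biduality, namely that $\can_A \colon A \to A^{\sharp_I \sharp_I}$ is a quasi-isomorphism for every $A \in \Ch^b_c(\M(X))$. Under our running hypothesis that every scheme is Noetherian of finite Krull dimension, the preceding remark — relying on \cite[Proposition VI.1.1(c), p.304]{hartshorne1966residues} together with \cite[Proposition V.8.2, p.288]{hartshorne1966residues} — asserts that a residue complex is dualizing. By the defining property of a dualizing complex recalled above, $\can_A$ is an isomorphism in $D^b_c(\M(X))$, hence a quasi-isomorphism at the level of $\Ch^b_c(\M(X))$. This is the one non-formal input, but it has already been imported from Hartshorne and requires no further work. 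Putting the four items together yields the desired structure on $(\Ch^b_c(\M(X)),\quis,\sharp_I,\can)$. The step most likely to need care in a fully written proof is keeping track of the coherence of cohomology of $[A,I]_X$ for unbounded-below $A$, but since we restrict to bounded complexes this reduces to Hartshorne's statement without complication.
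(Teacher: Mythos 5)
Your proof is correct and amounts to the same verification that the paper outsources entirely to \cite[Section~9.5]{schlichting2017hermitian}: check that $\sharp_I$ is a dg endofunctor of $\Ch^b_c(\M(X))^{\op}$ (using that $I$ is bounded, degreewise injective, and dualizing, so $[-,I]_X$ computes $R\sHom(-,I)$ and preserves coherent cohomology and quasi-isomorphisms), and that $\can$ is a natural weak equivalence (from the definition of dualizing complex together with the remark that residue complexes are dualizing). The decomposition into four items, the citation of Hartshorne~V.2.1 for stability of $D^b_c$ under $\sharp_I$, and the observation that boundedness of $A$ sidesteps any subtleties are all accurate; you have simply written out the content of Schlichting's Section~9.5 rather than citing it.
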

\begin{proof}
	See \cite[Section 9.5]{schlichting2017hermitian}.
\end{proof}
This result leads to the following definition.
\begin{definition}[{\cite[Definition 9.15]{schlichting2017hermitian}}]
	Let $I$ be a residue complex on $X$. The \textit{$n$-th shifted coherent Grothendieck-Witt spectrum of $X$ with coefficients in $I$} is the $n$-th shifted Grothendieck-Witt spectrum
	\[
		\GW^{[n]}\left(X,I\right) := \GW^{[n]}\big(\Ch^b_{c}(\M(X)), \mathrm{quis}, \sharp_{I}, \can \big).
	\]
	Its homotopy groups are denoted by
	$$\GW^{[n]}_i\left(X,I\right) : =   \pi_i \GW^{[n]}\left(X,I\right)$$
	for $i \in \Z$.
\end{definition}

\begin{definition}
	Let $\iota:Z \hookrightarrow X$ be a closed immersion and $I$ a residue complex on $X$.
	Define $\Ch_{c,Z}^b(\M(X))$ to be the full dg subcategory of $\Ch_c^b(\M(X))$ consisting of objects with cohomology supported in $Z$. The \textit{$n$-th shifted coherent Grothendieck-Witt spectrum of $X$ supported on $Z$ with coefficients in $I$} is the $n$-th shifted Grothendieck-Witt spectrum
	\[
		\GW_Z^{[n]}\left(X,I\right) := \GW^{[n]}\big(\Ch_{c,Z}^b(\M(X)), \mathrm{quis}, \sharp_{I}, \can \big).
	\]
\end{definition}

\begin{theorem}[Localization]
	Suppose that $X$ is a scheme admitting a dualizing complex $I$. Let $\iota:Z \hookrightarrow X$ be a closed immersion, and let $v: U= X - Z \hookrightarrow X$ be its open complement. Then for all $n \in \Z$, the sequence
	\[
		\GW^{[n]}_Z(X, I) \xrightarrow{} \GW^{[n]}(X, I) \xrightarrow{v^\ast} \GW^{[n]}(U, v^\ast I) \xrightarrow{\partial} \GW^{[n]}_Z(X, I)[1]
	\]
	is a distinguished triangle in $\mathcal{SH}$.
\end{theorem}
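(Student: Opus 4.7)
The plan is to deduce this statement from Schlichting's abstract localization theorem for Grothendieck--Witt spectra of pretriangulated dg categories with weak equivalences and duality (cf.\ \cite{schlichting2010mayer, schlichting2017hermitian}). The essential task is therefore to upgrade the open/closed decomposition of $X$ into a short exact sequence of such dg categories with duality, whose associated GW-spectra are precisely the three terms appearing in the triangle.

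First I would check that $v^*I$ is again a residue complex, and in particular dualizing, on $U$. Since $v$ is an open immersion, the codimension function restricts correctly, the pullback of $i_x(E(\kappa(x)))$ for $x\in U$ is again a skyscraper of the required form, and injectivity of the components in the category of quasi-coherent $\SO_U$-modules is preserved. Next I would verify that the duality $\sharp_I$ restricts to a duality on $\Ch^b_{c,Z}(\M(X))$: for $A$ with cohomology supported on $Z$, the cohomology of $A^{\sharp_I}$ is again supported on $Z$, because $I$ is dualizing and support is preserved by the biduality isomorphism. One also needs a natural isomorphism of form functors $v^*\sharp_I \simeq \sharp_{v^*I} v^*$, which comes from the commutation of the mapping complex with the flat pullback along the open immersion.

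The central step is to identify
\[
\Ch^b_{c,Z}(\M(X)) \hookrightarrow \Ch^b_c(\M(X)) \xrightarrow{v^*} \Ch^b_c(\M(U))
\]
as a short exact sequence of pretriangulated dg categories with weak equivalences, in the sense required by Schlichting's machinery. On the triangulated level this amounts to the equivalence $\mathrm{D}^b_c(\M(X))/\mathrm{D}^b_{c,Z}(\M(X)) \simeq \mathrm{D}^b_c(\M(U))$. Full faithfulness follows from a calculus of fractions / Gabriel-style localization argument, while essential surjectivity uses the classical Noetherian fact that any bounded complex of coherent $\SO_U$-modules extends to a bounded complex of coherent $\SO_X$-modules. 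With the dg enhancement in place and the duality verified to descend, Schlichting's localization theorem \cite{schlichting2017hermitian} then produces the desired distinguished triangle in $\mathcal{SH}$.

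The main obstacle, as I see it, is not any deep geometric input but rather the careful bookkeeping required to check that the dg-level sequence, together with its weak equivalences and duality, fits precisely into the hypotheses of Schlichting's abstract localization theorem — in particular verifying that $\sharp_I$ preserves the supported subcategory, restricts compatibly along $v$, and that the Verdier-quotient identification lifts through the dg enhancement. Once these compatibilities are established, the triangle follows formally from the general machinery.
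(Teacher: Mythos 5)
Your proposal reconstructs the content of Schlichting's Theorem 9.19 in \cite{schlichting2017hermitian}, which is the entirety of the paper's proof (a citation): one verifies that $v^*I$ is a residue complex, that $\sharp_I$ restricts to the supported subcategory, and that the inclusion-restriction sequence of dg categories with duality is exact, then applies the abstract localization theorem for Grothendieck--Witt spectra. This is essentially the same approach as the reference the paper invokes, and your checks (residual stability of open immersions, support-preservation by biduality, the Verdier quotient identification via the coherent extension lemma) are exactly the ingredients needed.
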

\begin{proof}
	See	\cite[Theorem 9.19]{schlichting2017hermitian}.
\end{proof}
\subsection{Hermitian $K$-theory of vector bundles}
Let $\Ch^b(\V(X))$ be the full dg subcategory of $\Ch^b(\M(X))$ consisting of complexes of vector bundles over $X$ of finite rank.

\begin{proposition}
	Let $L$ be an line bundle.\ Then, the quadruple
	\[
		\left(\Ch^b\left(\V(X)\right), \mathrm{quis}, \sharp_{L[m]}, \can \right)
	\]
	is a dg category with weak equivalences and duality, where $L[m]$ is the complex consisting of $L$ concentrated in degree $-m$.
\end{proposition}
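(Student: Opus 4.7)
The plan is to show that each ingredient of the quadruple is inherited from the larger quadruple $\bigl(\Ch^b(\M(X)),\mathrm{quis},\sharp_{L[m]},\can\bigr)$ discussed at the start of this section, and then to verify the one nontrivial point: that $\sharp_{L[m]}$ restricts to an endofunctor of $\Ch^b(\V(X))$ on which $\can$ is still a natural isomorphism.

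First, I would note that $\Ch^b(\V(X))$ is a full dg subcategory of $\Ch^b(\M(X))$, closed under shifts and mapping cones (since these operations preserve boundedness and the property of being termwise a vector bundle of finite rank), and hence is itself pretriangulated. The set $\mathrm{quis}\subset Z^0\Ch^b(\V(X))$ of quasi-isomorphisms inherits the axioms of weak equivalences from the ambient dg category with weak equivalences. The key content is therefore to check that $\sharp_{L[m]}$ preserves $\Ch^b(\V(X))$: for any vector bundle $V$ of finite rank on $X$, one has $\sHom_{\SO_X}(V,L)\cong V^\vee\otimes L$, which is again a vector bundle of finite rank since $L$ is a line bundle. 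Applied degreewise, this shows that the mapping complex $[A,L[m]]_X$ of any $A\in\Ch^b(\V(X))$ lies in $\Ch^b(\V(X))$, so $\sharp_{L[m]}$ restricts to a dg functor $\Ch^b(\V(X))^{\op}\to\Ch^b(\V(X))$.

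Next, I would verify that the restriction of $\can:1\to\sharp_{L[m]}\sharp_{L[m]}$ to $\Ch^b(\V(X))$ is a natural isomorphism. Since the formula for $\can$ given at the start of the section is already an isomorphism in $\Ch^b(\M(X))$ whenever $A$ is termwise reflexive with respect to $L$, the only thing to check is pointwise reflexivity of vector bundles: for any vector bundle $V$ of finite rank, the classical evaluation pairing together with the canonical trivialization $L^\vee\otimes L\cong\SO_X$ exhibits $V\xrightarrow{\sim} V^{\vee\vee}\otimes L^\vee\otimes L$. Extending degreewise, with the sign conventions already built into $\can$, yields the biduality isomorphism on complexes. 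The relation $\sharp_{L[m]}\can=\can^{-1}_{\sharp_{L[m]}}$ and the preservation of quasi-isomorphisms by $\sharp_{L[m]}$ both descend from the corresponding statements in the ambient quadruple, the latter being immediate since dualizing a complex of vector bundles with a line bundle is an exact operation.

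The only possible obstacle is bookkeeping of signs and shifts coming from the definition of $[-, L[m]]_X$ and the sign appearing in $\can$. However, this bookkeeping is already handled in the ambient quadruple on $\Ch^b(\M(X))$ recalled above, so restriction to the full subcategory of bounded complexes of vector bundles requires no further computation; only the stability of this subcategory under $\sharp_{L[m]}$ and the pointwise reflexivity of vector bundles are new, and both are elementary.
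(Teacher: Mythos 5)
Your argument is correct and is essentially the standard verification that the paper delegates to \cite[Section 9.1]{schlichting2017hermitian}: $\Ch^b(\V(X))$ is a pretriangulated full dg subcategory, $\sharp_{L[m]}$ preserves it because $\sHom_{\SO_X}(V,L)\cong V^\vee\otimes L$ is again a finite-rank vector bundle, $\sharp_{L[m]}$ is exact on complexes of vector bundles so preserves $\mathrm{quis}$, the biduality map is an isomorphism of complexes by reflexivity of vector bundles, and the duality coherence $\sharp(\can)\circ\can_{\sharp}=\id$ is a formal identity. One small imprecision in phrasing: the quadruple $\bigl(\Ch^b(\M(X)),\mathrm{quis},\sharp_{L[m]},\can\bigr)$ is not itself asserted by the paper to be a dg category with weak equivalences and duality (and indeed $\can$ fails to be a quasi-isomorphism for general $\SO_X$-modules, and $\sharp_{L[m]}$ is only left-exact there), so the properties you need do not literally ``descend'' from that ambient quadruple; but you correctly supply the in-line reasons — reflexivity of vector bundles and exactness of $\sHom(-,L[m])$ on locally free terms — so the conclusion stands.
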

\begin{proof}
	See \cite[Section 9.1]{schlichting2017hermitian}.
\end{proof}
\begin{definition}[{\cite[Definition 9.1]{schlichting2017hermitian}}]
	Let $X$ be a scheme with an ample family of line bundles and let $L$ be a line bundle on $X$. The \textit{$n$-th shifted Grothendieck-Witt spectrum of $X$ with coefficients in $L[m]$} is the $n$-th shifted Grothendieck-Witt spectrum
	\[
		\GW^{[n]}\left(X,L[m]\right) := \GW^{[n]}\big(\Ch^b\left(\V(X)\right), \mathrm{quis}, \sharp_{L[m]}, \can \big).
	\]
	Its homotopy groups are denoted by
	\[   \GW^{[n]}_i\left(X,L[m]\right) : =   \pi_i \GW^{[n]}\left(X,L[m]\right)  \]
	for $i \in \Z$. When $n=0$, or $L= \SO_X$, we may omit the notation corresponding to it. For instance, $\GW(X)$ (resp. $\GW^{[n]}(X)$) represents $\GW^{[0]}(X, \SO_X)$ (resp.  $\GW^{[n]}(X,\SO_X)$).
\end{definition}
\begin{remark}
	Note that there is a natural stable equivalence $\GW^{[n]}(X,L) \simeq \GW^{[0]}(X,L[n]) $.
\end{remark}

Suppose now that $X$ is a regular scheme, and $L$ is an line bundle.\ Then there is a residue complex $I$ on $X$ resolving $L$, and we denote $\rho: L \to I$ the canonical quasi-isomorphism of complexes.

\begin{lemma}\label{lma:regular_coherent_vector_gw}
	Let $X$ be a regular scheme. Then, the inclusion form functor
	\[
		(\id,\rho): 	\left(\Ch^b\left(\V(X)\right), \mathrm{quis}, \sharp_{L}, \can \right) \to \left(\Ch^b_{c}(\M(X)), \mathrm{quis}, \sharp_{I}, \can \right)
	\]
	with duality compatibility morphism $\rho: \sharp_{L} \to \sharp_{I}$ induces a stable equivalence
	\[
		\rho: \GW^{[n]}\left(X,L\right) \xrightarrow{\simeq} \GW^{[n]}\left(X,I\right).
	\]
	of Grothendieck-Witt spectra.
\end{lemma}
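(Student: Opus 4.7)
The plan is to reduce the claim to a general invariance principle for Grothendieck--Witt spectra of dg categories with weak equivalences and duality: if a form functor (i) induces an equivalence of the associated triangulated categories and (ii) its duality compatibility natural transformation is a pointwise quasi-isomorphism (in particular the target duality is nondegenerate), then it induces a stable equivalence on $\GW^{[n]}$-spectra. Such an invariance statement is the content of the relevant agreement/invariance theorem in Schlichting \cite{schlichting2017hermitian}, and I would just cite it and then verify its hypotheses in our situation.

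First I would check that $(\id,\rho)$ is a well-defined form functor from $(\Ch^b(\V(X)),\quis,\sharp_L,\can)$ to $(\Ch^b_c(\M(X)),\quis,\sharp_I,\can)$. The inclusion $\id : \Ch^b(\V(X)) \hookrightarrow \Ch^b_c(\M(X))$ preserves quasi-isomorphisms tautologically. For the duality compatibility, since $\rho: L \to I$ is a quasi-isomorphism of complexes with injective target, and since every vector bundle $A$ is flat, tensor-hom manipulations give a quasi-isomorphism $\rho_A : [A,L]_X \to [A,I]_X$ that is natural in $A$, strictly duality-compatible with $\can$ on both sides. Hence the hypothesis (ii) of the invariance theorem holds.

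Next I would verify (i), the derived equivalence. Here I use that $X$ is regular and has an ample family of line bundles (which is available in our standing setup via the coherent theory, or by the assumption carried along from the GW of vector bundles). Regularity together with an ample family implies that every bounded complex of coherent $\SO_X$-modules admits a bounded resolution by a complex of finite-rank vector bundles; that is, the inclusion induces an equivalence
\[
\mathcal{T}(\Ch^b(\V(X)),\quis) \;=\; \mathrm{D}^b(\V(X)) \;\xrightarrow{\ \simeq\ }\; \mathrm{D}^b_c(\M(X)) \;=\; \mathcal{T}(\Ch^b_c(\M(X)),\quis).
\]
Essential surjectivity follows from the bounded locally free resolution, and full faithfulness from the fact that $\Hom$ in the derived category between vector bundle complexes can be computed either in $\Ch^b(\V(X))$ or in $\Ch^b_c(\M(X))$.

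Combining these two steps, Schlichting's invariance theorem yields the desired stable equivalence $\rho: \GW^{[n]}(X,L)\xrightarrow{\simeq} \GW^{[n]}(X,I)$. The main technical point to be careful with is the existence of \emph{bounded} vector bundle resolutions of coherent sheaves on $X$: one needs both regularity (to control homological dimension) \emph{and} the ample family of line bundles (to produce the resolutions globally), so this is where our standing hypotheses on $X$ are genuinely used. Once that is in place, both the derived equivalence and the quasi-isomorphism $\rho$ on dualities are entirely formal, and the conclusion follows directly from the invariance principle.
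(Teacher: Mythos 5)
Your proposal is correct and is essentially the argument behind the result the paper cites — the paper simply refers to \cite[Theorem 9.18]{schlichting2017hermitian}, and your write-up unpacks how one would prove that theorem: reduce to Schlichting's invariance theorem for nonsingular form functors inducing derived equivalences, check nonsingularity via $\rho: L \to I$ being a quasi-isomorphism (using flatness of vector bundles to see $\rho_A : [A,L] \to [A,I]$ is a quasi-isomorphism), and check the derived equivalence $\mathrm{D}^b(\V(X)) \simeq \mathrm{D}^b_c(\M(X))$ via regularity plus the ample family of line bundles. Your closing remark correctly identifies that both regularity and the ample family are genuinely needed for the bounded global locally free resolutions.
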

\begin{proof}
	See {\cite[Theorem 9.18]{schlichting2017hermitian}}.
\end{proof}

Let $X$ be a scheme with an ample family of line bundles and let $L$ be a line bundle on $X$. Let $Z \hookrightarrow X$ be a closed immersion. Let $\Ch_Z^b(\V(X))$ be the dg full subcategory of $\Ch^b(\V(X))$ consisting of objects which has cohomology supported on $Z$.
\begin{definition}
	The \textit{$n$-th shifted Grothendieck-Witt spectrum of $X$ supported on $Z$ with coefficients in $L[m]$} is the $n$-th shifted Grothendieck-Witt spectrum
	\[
		\GW_Z^{[n]}\left(X,L[m]\right) := \GW^{[n]}\big(\Ch_Z^b\left(\V(X)\right), \mathrm{quis}, \sharp_{L[m]}, \big).
	\]
\end{definition}

\begin{theorem}[D\'evissage]\label{thm:devissage}
	Let $X$ be a regular  scheme and $\iota: Z \hookrightarrow X$ be a regular immersion of codimension $c$.\ Let $L$ be a line bundle on $X$. Then the pushforward is an equivalence of spectra
	\[
		\iota_\ast: \GW^{[n-c]}\left(Z,\omega_{\iota} \otimes \iota^\ast L\right) \to \GW_Z^{[n]}\left(X,L\right),
	\]
	where $\omega_{\iota}$ is the relative canonical line bundle.
\end{theorem}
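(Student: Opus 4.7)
The plan is to reduce the statement to the already-established dévissage result for \emph{coherent} Grothendieck-Witt spectra in Schlichting's framework, and then transport that equivalence back to the vector bundle setting via the comparison of Lemma \ref{lma:regular_coherent_vector_gw}. Both $Z$ and $X$ are regular, so on each side we may fix residue complexes resolving the relevant line bundles: choose a residue complex $I_X$ on $X$ together with a quasi-isomorphism $L \to I_X$, and a residue complex $I_Z$ on $Z$ together with a quasi-isomorphism $\omega_\iota \otimes \iota^* L \to I_Z$. Applying Lemma \ref{lma:regular_coherent_vector_gw} to the source, and its evident variant with supports to the target, yields stable equivalences
\[
\GW^{[n-c]}\bigl(Z, \omega_\iota \otimes \iota^* L\bigr) \simeq \GW^{[n-c]}(Z, I_Z), \qquad \GW^{[n]}_Z(X, L) \simeq \GW^{[n]}_Z(X, I_X).
\]

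The second step is to recognize the pushforward $\iota_*$ as a duality-preserving form functor between the coherent dg categories with duality
\[
\bigl(\Ch^b_c(\M(Z)), \quis, \sharp_{I_Z}, \can\bigr) \longrightarrow \bigl(\Ch^b_{c,Z}(\M(X)), \quis, \sharp_{I_X}, \can\bigr).
\]
The essential input is Grothendieck's fundamental local isomorphism, which produces a canonical identification
\[
\iota_* I_Z \;\cong\; \sHom_{\SO_X}\bigl(\iota_* \SO_Z,\, I_X\bigr)[c],
\]
where the shift by the codimension $c$ records the fact that $\omega_\iota$ appears on $Z$. This isomorphism supplies the duality compatibility morphism that upgrades $\iota_*$ to a form functor once the degree shift between $\GW^{[n-c]}$ on $Z$ and $\GW^{[n]}$ on $X$ is accounted for.

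With the form functor in place, the third and final step is to invoke the coherent dévissage theorem of Schlichting (Section 9 of \cite{schlichting2017hermitian}): for a closed immersion $\iota: Z \hookrightarrow X$, pushforward induces a stable equivalence $\GW^{[n-c]}(Z, I_Z) \xrightarrow{\simeq} \GW_Z^{[n]}(X, I_X)$. This ultimately rests on the exactness properties of $\iota_*$ together with the fact that every coherent complex on $X$ with support in $Z$ is, up to quasi-isomorphism, built out of coherent sheaves scheme-theoretically supported on infinitesimal thickenings of $Z$, to which a filtration argument applies.

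The main obstacle is the bookkeeping in the second step: verifying that the fundamental local isomorphism genuinely intertwines the canonical evaluation maps $\can$ on both sides, so that symmetric forms in $(\Ch^b_c(\M(Z)), \sharp_{I_Z})$ correspond to symmetric forms in $(\Ch^b_{c,Z}(\M(X)), \sharp_{I_X})$ shifted by $c$, with the correct signs. Once this duality compatibility is laid down, the dévissage equivalence on spectra is a direct application of Schlichting's coherent framework, and concatenation of the three equivalences above delivers the asserted isomorphism.
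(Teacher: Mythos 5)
The paper's own proof is a single citation: see \cite[Theorem 6.1]{xie20a}, where the d\'evissage statement (with its line-bundle twist and shift by the codimension) is actually established. Your plan of reducing the vector-bundle statement to a coherent one via Lemma \ref{lma:regular_coherent_vector_gw} (and a supported variant), then feeding in the fundamental local isomorphism as the duality-compatibility datum, is the right high-level strategy and is essentially how one would set up the proof. The degree bookkeeping you give, including the formula $\iota_* I_Z \cong \sHom_{\SO_X}(\iota_* \SO_Z, I_X)[c]$, also checks out.

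The gap is in your third step. You assert that you can ``invoke the coherent d\'evissage theorem of Schlichting (Section~9 of \cite{schlichting2017hermitian})'' to conclude that $\iota_*$ is a stable equivalence onto supported coherent GW-theory. Schlichting's Section~9 does supply the resolution comparison (cited in this paper as Theorem~9.18, which is exactly Lemma \ref{lma:regular_coherent_vector_gw}) and the coherent localization sequence (Theorem~9.19), but it does not contain the d\'evissage equivalence you need. The paper consistently cites Schlichting for localization and resolution and cites \cite{xie20a} for d\'evissage, precisely because the latter requires a separate argument. In particular, the filtration step you sketch in your final sentence --- that every bounded coherent complex supported in $Z$ is built from sheaves scheme-theoretically supported on thickenings of $Z$, and that the resulting filtration is compatible with the duality $\sharp_{I_X}$ and induces an equivalence of GW-spectra --- is the entire content of the theorem. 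It is not a formal consequence of the form-functor setup, and it is not automatic that the standard $G$-theory d\'evissage lifts to the Hermitian setting: one has to verify that the filtration can be chosen self-dually (or argue via the hyperbolic/forgetful maps and Karoubi induction) to control the passage to Grothendieck-Witt spectra. As written, your proposal reduces the theorem to a claim that is exactly as hard as the theorem itself, attributed to a reference that does not contain it.
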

\begin{proof}
	See \cite[Theorem 6.1]{xie20a}.
\end{proof}

\begin{theorem}[Localization]\label{coro:loc_seq}
	Let $\iota: Z \hookrightarrow X$ be a regular immersion of regular schemes of codimension $c$, and let $v: U = X \backslash Z \to X$ be its open complement. For any line bundle $L$ on $X$, the sequence
	\[
		\GW^{[n-c]}(Z, \omega_{\iota} \otimes \iota^\ast L) \xrightarrow{\iota_\ast} \GW^{[n]}(X, L) \xrightarrow{v^\ast} \GW^{[n]}(U, L_{U}) \xrightarrow{\partial} \GW^{[n-c]}(Z, \omega_{\iota} \otimes \iota^\ast L)[1]
	\]
	is a distinguished triangle in $\mathcal{SH}$.
\end{theorem}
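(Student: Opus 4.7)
The plan is to reduce the vector-bundle localization sequence to the already-stated coherent localization theorem via two kinds of bridges: the form functor $\rho$ of Lemma \ref{lma:regular_coherent_vector_gw} replacing a line bundle by a resolving residue complex, and the D\'evissage Theorem \ref{thm:devissage} converting the support term into a shifted Grothendieck--Witt spectrum of $Z$.

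First I would fix a residue complex $I$ on the regular scheme $X$ together with the canonical quasi-isomorphism $\rho: L \to I$. Applying the coherent Localization theorem to the closed/open decomposition $Z \hookrightarrow X \hookleftarrow U$ yields a distinguished triangle
\[
	\GW^{[n]}_Z(X,I) \to \GW^{[n]}(X,I) \xrightarrow{v^*} \GW^{[n]}(U, v^*I) \xrightarrow{\partial} \GW^{[n]}_Z(X,I)[1]
\]
in $\mathcal{SH}$. Since $U$ is regular and $v^* I$ is a residue complex on $U$ resolving $L\restr{U}$, Lemma \ref{lma:regular_coherent_vector_gw} applied to $X$ and to $U$ delivers the stable equivalences $\GW^{[n]}(X,L) \xrightarrow{\simeq} \GW^{[n]}(X,I)$ and $\GW^{[n]}(U,L\restr{U}) \xrightarrow{\simeq} \GW^{[n]}(U,v^*I)$, compatibly with $v^*$. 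For the support term, I would prove the analogous equivalence
\[
	\GW^{[n]}_Z(X,L) \xrightarrow{\simeq} \GW^{[n]}_Z(X,I)
\]
by the same method as Lemma \ref{lma:regular_coherent_vector_gw}: the form functor $(\id, \rho)$ restricts to the $Z$-supported subcategories, and one checks that it induces an equivalence of the underlying triangulated categories with duality, using that on a regular scheme every coherent complex has a bounded locally free resolution (and supports are preserved at the level of the derived category). Finally, D\'evissage (Theorem \ref{thm:devissage}) identifies $\GW^{[n]}_Z(X,L)$ with $\GW^{[n-c]}(Z,\omega_\iota \otimes \iota^*L)$ via $\iota_*$. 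Stringing these equivalences together with the coherent triangle gives a distinguished triangle in $\mathcal{SH}$ of the asserted shape, and naturality of $\rho$, $v^*$ and $\iota_*$ ensures that the connecting morphism is identified with the one claimed.

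The main obstacle is the supported version of Lemma \ref{lma:regular_coherent_vector_gw}, namely that $(\id,\rho)$ induces $\GW_Z^{[n]}(X,L) \xrightarrow{\simeq} \GW_Z^{[n]}(X,I)$. The non-supported case is Schlichting's result; the supported case adds the wrinkle that a locally free resolution of a coherent sheaf with cohomology supported on $Z$ is not itself supported on $Z$ as a chain complex, only as an object of the derived category. One must therefore verify that the inclusion $\mathcal{T}(\Ch^b_Z(\V(X))) \hookrightarrow \mathcal{T}(\Ch_{c,Z}^b(\M(X)))$ is an equivalence of triangulated categories with duality, which can be done by a Thomason--Trobaugh style resolution argument relative to $Z$ and an invocation of Schlichting's general invariance theorem \cite[Theorem 8.9]{schlichting2017hermitian} for Grothendieck--Witt spectra under such equivalences of dg categories with weak equivalences and duality. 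Once this is in hand, the assembly of the triangle is formal.
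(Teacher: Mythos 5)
Your proposal is correct, but it takes a genuinely different route from the paper. The paper's proof is a one-liner: it invokes Schlichting's vector-bundle localization theorem \cite[Theorem~6.6]{schlichting2017hermitian} directly, which gives the distinguished triangle $\GW^{[n]}_Z(X,L)\to\GW^{[n]}(X,L)\xrightarrow{v^*}\GW^{[n]}(U,L|_U)\to\GW^{[n]}_Z(X,L)[1]$ in the vector-bundle setting, and then identifies the first term with $\GW^{[n-c]}(Z,\omega_\iota\otimes\iota^*L)$ via D\'evissage (Theorem~\ref{thm:devissage}). You instead start from the coherent localization theorem and transport each term along the $\rho$-comparison maps of Lemma~\ref{lma:regular_coherent_vector_gw}. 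Both are sound, but yours requires an extra bridge that the direct route sidesteps: the supported version $\GW^{[n]}_Z(X,L)\xrightarrow{\simeq}\GW^{[n]}_Z(X,I)$. You correctly identify this as the only nontrivial point and the fix you sketch works: on a regular scheme every object of $D^b_c(\M(X))$ is perfect, so the inclusion $\mathcal{T}(\Ch^b_Z(\V(X)))\hookrightarrow\mathcal{T}(\Ch^b_{c,Z}(\M(X)))$ of triangulated categories with duality is an equivalence (both being the full subcategory of $D^b_c(\M(X))$ of objects with cohomology supported on $Z$), and Schlichting's invariance theorem then upgrades this to a stable equivalence of Grothendieck--Witt spectra. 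In short, your proof is a slightly longer detour through the coherent theory where the paper stays on the vector-bundle side throughout; the conclusion and the compatibility of $v^*$, $\iota_*$, $\rho$ with the connecting map come out the same.
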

\begin{proof}
	In view of Theorem \ref{thm:devissage}, the results follows from \cite[Theorem 6.6]{schlichting2017hermitian}.
\end{proof}

\begin{theorem}[Algebraic Bott sequence]\label{theo:bott_seq_L} Let $L$ be a line bundle on $X$. The sequence of spectra
	\[\GW^{[n]}(X,L) \xrightarrow{F} K(X) \xrightarrow{H} \GW^{[n+1]}(X,L) \xrightarrow{\eta \cup } \GW^{[n]}(X,L)[1]\]
	is a distinguished triangle in $\mathcal{SH}$.
\end{theorem}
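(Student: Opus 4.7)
The plan is to deduce the statement from the general algebraic Bott fibration for dg categories with weak equivalences and duality, which is established in Schlichting's foundational paper \cite{schlichting2017hermitian}. The geometric statement is essentially a specialization of that general result.

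First I would apply the general Bott fibration to the quadruple $\bigl(\Ch^b(\V(X)), \mathrm{quis}, \sharp_{L[n]}, \can\bigr)$, which yields a homotopy fiber sequence of spectra
\[ \GW^{[n]}(X,L) \xrightarrow{F} \K\bigl(\Ch^b(\V(X)), \mathrm{quis}\bigr) \xrightarrow{H} \GW^{[n+1]}(X,L), \]
where $F$ is the forgetful map and $H$ is the hyperbolic map. The middle term is canonically equivalent to the $K$-theory spectrum $\K(X)$ of vector bundles on $X$, since $K$-theory depends only on the underlying pretriangulated dg category with weak equivalences, and not on the choice of duality functor.

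Next, to identify the third map in the triangle, I would argue by naturality and $\GW(\Spec k)$-module structure. The whole fibration is natural in the input dg category with duality and is a triangle of module spectra over the Grothendieck--Witt spectrum of the base ring; hence the connecting map is determined, up to cup product, by its value on the unit over $\Spec k$. By the very definition of the Bott element $\eta \in \GW^{[-1]}_{-1}(\Spec k)$ as the class realized by this connecting map at the base, this value is $\eta$, and so the connecting map is $\eta \cup (-)$.

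The main obstacle is the construction of the first fiber sequence at the level of spectra itself: promoting the expected long exact sequence relating $\GW^{[n]}$, $K$ and $\GW^{[n+1]}$ in homotopy groups to an actual cofiber sequence of spectra requires Schlichting's hermitian $\mathcal{S}_\bullet$-construction together with the compatibility between the hyperbolic and forgetful maps and their deloopings. Rather than reconstructing this machinery, I would invoke the corresponding structural results of \cite{schlichting2017hermitian} as a black box, and devote the argument to verifying that the specialization of these abstract maps to $\Ch^b(\V(X))$ indeed reproduces the classical forgetful and hyperbolic functors of vector bundles.
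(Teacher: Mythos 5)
Your proposal is correct and is essentially the same as the paper's proof, which simply cites \cite[Theorem 6.1]{schlichting2017hermitian}. You spell out the specialization of Schlichting's general dg-categorical Bott sequence to $(\Ch^b(\V(X)), \mathrm{quis}, \sharp_{L[n]}, \can)$ and the identification of the connecting map, but the substance is the same invocation of Schlichting's theorem that the paper makes directly.
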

\begin{proof}
	See \cite[Theorem 6.1]{schlichting2017hermitian}.
\end{proof}
\begin{lemma}\label{lem:FH_cup}
	Let $L,L'$ be line bundles on $X$. The diagram
	\[
		\xymatrix{
		K_{i}(X) \ar[r]^-{H} \ar[d]_-{\cup F(\alpha) }& \GW^{[n]}_{i}(X,L) \ar[d]_-{\cup \alpha}\\
		K_{i+j}(X) \ar[r]^-{H} & \GW^{[n+m]}_{i+j}(X,L \otimes L')
		}
	\]
	commutes for any $\alpha  \in \GW^{[m]}_j(X,L')$. In particular, the composition
	\[
		\GW^{[n]}(X,L') \xrightarrow{F} K(X) \xrightarrow{H} \GW^{[n+1]}(X,L \otimes L')
	\]
	is homotopic to $\beta \cup -$, where $\beta = H(1) \in \GW^{[1]}_0(X,L)$ is the image of $1 \in K_0(X)$ under the the map $H:K(X) \to \GW^{[1]}(X,L)$.
\end{lemma}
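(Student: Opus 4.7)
The diagram expresses a projection formula (Frobenius reciprocity) between the forgetful functor $F$ and the hyperbolic functor $H$. At the categorical level, $F$ sends a symmetric complex $(A, \phi)$ to its underlying complex $A$, and $H$ sends a complex $B$ to the hyperbolic symmetric complex $\big(B \oplus B^{\sharp_L}, h_B\big)$. Since the cup products on $\K$ and $\GW$ are induced by tensor products of dg categories (with duality), cf.\ \cite[Section 9]{schlichting2017hermitian}, the commutativity of the diagram reduces to a categorical statement relating $H$, $F$, and $\otimes$.

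The key categorical step is to exhibit, for any $B \in \Ch^b(\V(X))$ and any symmetric complex $(A, \phi)$ in $\big(\Ch^b(\V(X)), \mathrm{quis}, \sharp_{L'[m]}, \can\big)$, a natural duality-preserving quasi-isomorphism
\[
\psi_{B, (A,\phi)}: H(B) \otimes (A, \phi) \xrightarrow{\simeq} H(B \otimes A)
\]
in $\big(\Ch^b(\V(X)), \mathrm{quis}, \sharp_{(L \otimes L')[n+m]}, \can\big)$. Explicitly, the underlying chain map is $\id_{B \otimes A} \oplus \big(\id_{B^{\sharp_L}} \otimes \phi\big)$, post-composed with the canonical isomorphism $B^{\sharp_L} \otimes A^{\sharp_{L'[m]}} \xrightarrow{\cong} (B \otimes A)^{\sharp_{(L \otimes L')[n+m]}}$. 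Compatibility of $\psi$ with the hyperbolic forms on both sides is a direct verification using the symmetry of $\phi$ and standard properties of the tensor-Hom adjunction.

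Naturality of $\psi$ combined with functoriality of the Grothendieck-Witt construction then yield a homotopy of spectrum maps
\[
H(-) \cup \alpha \simeq H\big(- \cup F(\alpha)\big): \K(X) \to \GW^{[n+m]}(X, L \otimes L'),
\]
whose passage to $\pi_i$ gives the commutativity of the diagram. For the ``in particular'' assertion, specialize $\alpha \in \GW^{[n]}(X, L')$ (so $m = n$) and evaluate at the unit $1 \in \K_0(X)$: the top route returns $H(1) \cup \alpha = \beta \cup \alpha$, while the bottom route returns $H(1 \cup F(\alpha)) = H(F(\alpha))$. The main obstacle I foresee is bookkeeping---verifying that $\psi$ is duality-preserving with consistent signs in Schlichting's conventions, and confirming that this chain-level construction lifts to a genuine spectrum-level homotopy in $\mathcal{SH}$ rather than merely an identity on $\pi_*$.
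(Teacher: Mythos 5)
Your proposal is essentially a direct re-derivation of the ingredient that the paper leaves to a citation. The paper's proof is a single sentence: it invokes the multiplicativity of the algebraic Bott sequence and refers to \cite[Proof of Theorem 6.1]{schlichting2017hermitian}, where Schlichting constructs exactly the kind of categorical compatibility you describe, namely a natural isomorphism of form functors $H(B)\otimes(A,\phi)\cong H(B\otimes A)$ encoding the Frobenius reciprocity between $H$ and $F$. So you are not taking a genuinely different route; you are unpacking what the citation contains.

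That said, your unpacking is substantively sound and the key construction is correct: the map $\id_{B\otimes A}\oplus(\id_{B^{\sharp}}\otimes\phi)$ followed by the canonical isomorphism $B^{\sharp_{L[n]}}\otimes A^{\sharp_{L'[m]}}\cong (B\otimes A)^{\sharp_{(L\otimes L')[n+m]}}$ is the right natural isomorphism, and it is a quasi-isomorphism precisely because $\phi$ is one. The caveats you flag are the right ones. Verifying that $\psi$ intertwines $h_B\otimes\phi$ with $h_{B\otimes A}$ requires keeping track of Schlichting's sign conventions for the tensor product of dualities, and the passage from a natural isomorphism of form functors to a spectrum-level homotopy requires that the cup products and the maps $H$, $F$ are all realized at the level of exact form functors — which they are in Schlichting's dg framework, but this needs to be said rather than asserted. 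One small notational slip: in the ``in particular'' step you write ``(so $m=n$)'' without making explicit that the shift $n$ appearing in the commutative square must simultaneously be specialized to $1$ (so the bottom twist becomes $1+m=n+1$); the variable $n$ in the displayed square and the variable $n$ in the ``in particular'' clause are playing different roles, and conflating them makes the bookkeeping look wrong even though your conclusion $H(F(\alpha))=\beta\cup\alpha$ is correct.
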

\begin{proof}
	The result follows from the multiplicativity of the algebraic Bott sequence, cf. \cite[Proof of Theorem 6.1]{schlichting2017hermitian}.
\end{proof}

\section{Pushforwards for Hermitian \texorpdfstring{$K$}{K}-theory} \label{sec:pushforward}

\subsection{Residue complexes} In this subsection, we review the theory of residue complexes, cf. \cite{hartshorne1966residues} and \cite{conrad2000grothendieck}.

\begin{theorem}[Residue theorem {\cite{hartshorne1966residues}}]\label{thm:residue}
	Let $f: X \to Y$ be a proper morphism.\ Then, there is a functor of additive categories
	\[ f^\Delta: \Res(Y) \to \Res(X)\]
	together with a natural transformation $\Tr: f_\ast f^\Delta \to 1$ (so called trace map) from $\Res(Y)$ to $Z^0 \Ch^b _c(\M(Y))$ such that the following conditions hold
	\begin{enumerate}[label={\rm (T\arabic*)}]
		\item For every residue complex $J \in \mathrm{Res}(Y)$, denote by $\zeta : f_\ast \sharp_{f^\Delta J} \to \sharp_J f_\ast $ the composition $$f_\ast\left[-, f^\Delta J\right]_X \xrightarrow{\phi} \left[f_\ast (-), f_\ast f^\Delta J\right]_Y \xrightarrow{\Tr} \left[f_\ast (-), J\right]_Y$$
		      where $\phi$ is the canonical morphism, cf.\ \cite[(3.1.4), p.86 ]{lipman2009notes}. Then,
		      $$\zeta_A: f_\ast \sharp_{f^\Delta J}(A) \to \sharp_J f_\ast(A)$$ is a quasi-isomorphism for each $f_*$-acyclic complex $A$ in $\Ch^b_c(\M(X))$.
		\item If $f$ and $g$ are two consecutive proper morphisms, then there is a natural isomorphism
		      $$c: f^\Delta g^\Delta \to (gf)^\Delta$$
		      satisfying associativity, together with a commutative diagram
		      \[\xymatrix{g_\ast f_\ast f^\Delta g^\Delta \ar[r]^-{c} \ar[d]_-{\Tr} & (gf)_\ast (gf)^\Delta \ar[d]_-{\Tr}\\
			      g_\ast g^\Delta \ar[r]^-{\Tr} & \id . }\]
	\end{enumerate}
\end{theorem}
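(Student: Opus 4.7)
The plan is to reproduce Grothendieck's construction from \cite[Chapter VI]{hartshorne1966residues}. The key observation is that every proper morphism admits, locally on the target, a factorization $f = p \circ i$ with $i$ a closed immersion and $p$ a smooth proper morphism (for instance via Nagata-style embedding into a projective bundle). The strategy is therefore to define $f^\Delta$ and $\Tr$ separately for these two basic classes and then show that the outcome depends canonically on $f$ alone, not on the chosen factorization.

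For a smooth morphism $p: X \to Y$ of pure relative dimension $d$, set $p^\Delta J := p^* J \otimes \omega_{X/Y}[d]$, with trace coming from fibre integration in the proper case. For a finite morphism $i: X \to Y$ (in particular a closed immersion), set $i^\Delta J$ to be the $\SO_X$-module associated to $\sHom_{\SO_Y}(i_*\SO_X, J)$, with trace given by evaluation at $1$. In each case one checks that the output again lies in $\Res(X)$: for finite $i$ this reduces to the compatibility between injective hulls and $\sHom$; for smooth $p$ it is a calculation over the fibres using the explicit form of the injective hulls $E(\kappa(y))$. The main obstacle is then to show that $f^\Delta := i^\Delta p^\Delta$ is canonically independent of the factorization. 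The argument proceeds by showing that any two factorizations are dominated by a common third, reducing the question to a base-change isomorphism for a Cartesian square with $p$ smooth and $i$ a closed immersion; this is where working with residue complexes (rather than arbitrary dualizing complexes) pays off, since the isomorphism can be verified stalkwise via local duality.

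Once $f^\Delta$ is well defined, (T2) is essentially automatic: the associativity of $c$ follows from the associativity of the two basic operations, and the commutative square for $\Tr$ is built into the definition of the composite trace, once one has checked compatibility between $\Tr$ and the base-change isomorphism above. For (T1), one verifies that $\zeta_A$ is a quasi-isomorphism separately for finite $f$ (where it is Hom-tensor adjunction, requiring no acyclicity) and for smooth proper $f$ (where it is Serre duality, transferred to residue complexes via the canonical quasi-isomorphism $\rho$ appearing in Lemma \ref{lma:regular_coherent_vector_gw}); the $f_*$-acyclicity hypothesis on $A$ is exactly what is needed to promote these pointwise/derived-category statements to a quasi-isomorphism of actual complexes, by replacing the derived pushforward with $f_*$ itself on either side.
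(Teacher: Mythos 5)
The paper does not attempt to prove Theorem \ref{thm:residue} — it only supplies pinpoint references to \cite{hartshorne1966residues} (Theorem VI.3.1 for $f^\Delta$, Theorems VI.4.2 and VII.2.1 for $\Tr$, Theorem VII.3.3 for (T1), TRA1 for (T2)) and to \cite{conrad2000grothendieck}. Your proposal, by contrast, sketches the internal structure of Hartshorne's construction. That is a legitimate thing to attempt, but the sketch as written has several gaps that would derail it.

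The most serious problem is the factorization. You factor $f$ locally as $p \circ i$ with $i$ a closed immersion and $p$ \emph{smooth proper}, invoking ``Nagata-style embedding into a projective bundle.'' For the construction of $f^\Delta$ this is more than what is available and more than what is needed: Hartshorne factors $f$ locally as a \emph{finite} morphism followed by a \emph{smooth separated} (typically non-proper) morphism, e.g.\ a closed immersion into some $\mathbb{A}^n_Y$ followed by the projection, and this exists for any finite-type $f$. The trouble is that you then lean on the same factorization to build $\Tr$, and a general proper morphism does \emph{not} locally factor as a closed immersion into a smooth \emph{proper} $Y$-scheme. In Hartshorne's development the trace is where properness actually intervenes, and it requires Chow's lemma to reduce proper to projective, then a reduction to $\mathbb{P}^n_Y$ and ultimately to $\mathbb{P}^1_Y$ via explicit local residues; none of this is captured by ``fibre integration,'' which hides essentially all of Chapter VII. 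A second problem is in the definition of the smooth pullback: $p^*J \otimes \omega_{X/Y}[d]$ is Hartshorne's $p^\sharp J$, which is a dualizing complex but not a complex of injectives, hence not in $\Res(X)$. The correct formula is $p^\Delta J = E\,p^\sharp J$, applying the Cousin functor; your proposal silently omits $E$, which is precisely the functor the paper introduces in Section \ref{sec:pullback} and which makes the whole residue-complex machinery honest at the chain level. Finally, proving (T1) ``separately for finite and smooth proper $f$'' and pasting is circular once the factorization is unavailable, and the appeal to the quasi-isomorphism $\rho$ of Lemma \ref{lma:regular_coherent_vector_gw} is misplaced — that $\rho$ lives in a different context (regular schemes, line bundles) and is downstream of, not an input to, the residue theorem.
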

\begin{proof}
	For the existence of $f^\Delta$, see \cite[Theorem VI.3.1, p.318]{hartshorne1966residues}. For the existence of $\Tr$, see \cite[Theorem VI.4.2, p.339 and VII.2.1, p.369]{hartshorne1966residues} or \cite[Theorem 3.4.1, p.147]{conrad2000grothendieck}. For the proof of (T1), see \cite[Theorem VII.3.3, p.379]{hartshorne1966residues}. For the proof of (T2), See \cite[Theorem VI.4.2(TRA1), p.340]{hartshorne1966residues}. Note the subtle point is that $(gf)_\ast = g_\ast f_\ast$.
\end{proof}

\subsection{Pushforward} Let $f:X \to Y$ be a proper morphism throughout this section unless otherwise specified. The aim of this subsection is to prove the following result.

\begin{theorem} \label{thm:pushforward_gw}
	Let $J$ be a residue complex on $Y$.\ Then, the dg functor
	\[
		f_\ast: \Ch^b_c(\M(X)) \to \Ch^b_c(\M(Y))
	\]
	induces a map of Grothendieck-Witt spectra
	\[
		f_\ast: \mathrm{\GW}^{[n]}\left(X,f^\Delta J\right) \to \mathrm{\GW}^{[n]}\left(Y,J\right)
	\]
	which respects composition, i.e. if $f$ and $g$ are two consecutive morphisms, then $g_\ast f_\ast = (gf)_\ast$.
\end{theorem}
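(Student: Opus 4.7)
The plan is to construct a dg form functor
\[ (f_*, \zeta): \bigl(\Ch^b_c(\M(X)), \quis, \sharp_{f^\Delta J}, \can\bigr) \longrightarrow \bigl(\Ch^b_c(\M(Y)), \quis, \sharp_J, \can\bigr) \]
whose underlying dg functor is $f_*$ and whose duality comparison natural transformation is the map $\zeta$ from (T1) of Theorem \ref{thm:residue}, and then to deduce the composition law from (T2). The immediate obstruction is that $f_*$ of a general bounded complex of coherent sheaves on $X$ need not have coherent cohomology on $Y$, and (T1) only asserts that $\zeta_A$ is a quasi-isomorphism when $A$ is $f_*$-acyclic.

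I would address both issues by restricting to a full dg subcategory $\AC \subset \Ch^b_c(\M(X))$ of bounded complexes of $f_*$-acyclic quasi-coherent $\SO_X$-modules with coherent cohomology (for instance, bounded complexes of injective quasi-coherent sheaves with coherent cohomology; note that $f^\Delta J$ is itself of this form). First I would verify that $\AC$ is stable under $\sharp_{f^\Delta J}$ and that the inclusion $\AC \hookrightarrow \Ch^b_c(\M(X))$ is a quasi-equivalence of dg categories with weak equivalences and duality, using standard existence of degreewise-injective resolutions with bounded coherent cohomology \cite{hartshorne1966residues}. Invariance of $\GW$ under such quasi-equivalences would then reduce the construction to $\AC$. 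On $\AC$, the naive $f_*$ lands in $\Ch^b_c(\M(Y))$ (coherence from Grothendieck's finiteness theorem for proper morphisms, boundedness from $f_*$-acyclicity), and (T1) says $\zeta_A$ is a quasi-isomorphism for every $A \in \AC$. The principal verification is then that $(f_*, \zeta)$ is a dg form functor, i.e.\ that the composite
\[ f_* \xrightarrow{f_*(\can)} f_* \sharp_{f^\Delta J} \sharp_{f^\Delta J} \xrightarrow{\zeta_{\sharp_{f^\Delta J}(-)}} \sharp_J f_* \sharp_{f^\Delta J} \xrightarrow{\sharp_J(\zeta)} \sharp_J \sharp_J f_* \]
agrees with $\can$ for $\sharp_J$ whiskered by $f_*$. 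This ultimately reduces to the statement that $\Tr$ is adjoint to $\can$ under the Hom-evaluation pairing, a compatibility built into Hartshorne's construction of $\Tr$.

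For the composition law, the strict identity $g_* \circ f_* = (gf)_*$ of underlying dg functors, combined with the coherent isomorphism $c: f^\Delta g^\Delta \xrightarrow{\cong} (gf)^\Delta$ and the trace compatibility of (T2), yields after unwinding definitions the identity $\zeta^{gf} = \zeta^g \circ g_*(\zeta^f) \circ c^{-1}$. Hence $(g_*, \zeta^g) \circ (f_*, \zeta^f)$ and $((gf)_*, \zeta^{gf})$ are isomorphic as dg form functors and therefore induce the same map on $\GW$-spectra. The main obstacle I anticipate is verifying the form-functor hexagon above: it requires tracking signs through the construction of $\zeta$ out of $\Tr$ and the canonical adjunction $\phi: f_*[-,-]_X \to [f_*(-), f_*(-)]_Y$, and reducing the resulting identity to a standard compatibility of $\Tr$ with biduality as recorded in \cite{hartshorne1966residues, conrad2000grothendieck}.
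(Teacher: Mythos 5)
Your overall plan is close to the paper's, but the specific choice of the intermediate subcategory $\AC$ has a real gap that the paper itself flags and works hard to circumvent. You propose $\AC$ to be the bounded complexes of degreewise $f_*$-acyclic (for instance degreewise injective) quasi-coherent modules with coherent cohomology, and you say you "would verify that $\AC$ is stable under $\sharp_{f^\Delta J}$." That verification does not go through, and it is exactly the obstruction the paper isolates in its remark preceding Definition \ref{def:acyclic_dg}: if $A$ has quasi-coherent injective terms and $I=f^\Delta J$ is a residue complex, the terms of $\sharp_I A=[A,I]$ are products of sheaves $\sHom_X(A^p,I^q)$, and since the $A^p$ are typically not coherent, these sheaf Homs are generally neither quasi-coherent nor injective nor even $f_*$-acyclic in a termwise sense. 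So the proposed $\AC$ is not preserved by the duality, and the argument "verify stability, then apply $(f_*,\zeta)$ on $\AC$" cannot be run as stated. The same issue persists for the flasque and degreewise $f_*$-acyclic variants, as the paper notes explicitly citing Thomason--Trobaugh.

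The paper's fix is different in kind: instead of a termwise acyclicity condition, it defines $\AC_f$ by the \emph{global} condition that the canonical map $\theta_A: f_*A \to \mathrm{R}f_*A$ be an isomorphism (Definition \ref{def:acyclic_dg}). Closure of $\AC_f$ under $\sharp_{f^\Delta J}$ is then Lemma \ref{lma:ac_preserve}, and its proof is not a termwise check at all; it invokes Spaltenstein's theory: since $f^\Delta J$ is bounded injective, hence K-injective, the mapping complex $[A,f^\Delta J]$ is \emph{weakly K-injective} for every $A$, and weakly K-injective complexes automatically satisfy $f_* \simeq \mathrm{R}f_*$. This is the decisive input you are missing. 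Once you adopt $\AC_f$ in place of your $\AC$, the rest of your outline (form-functor verification via the compatibility of $\Tr$ with biduality, composition via (T2) and the isomorphism $c$) matches the paper's Proposition \ref{lma:ac_zeta} and Proposition \ref{prop:composition-push-forward}; the paper also needs the auxiliary subcategory $\mathcal{W}_X$ of weakly K-injective complexes to make the composition argument work, since $f_*$ must land inside $\AC_g$ before $g_*$ can be applied. In short: your high-level strategy is right, but the category you restrict to must be chosen so that duality-stability is provable, and that forces the Spaltenstein-style $\AC_f$ rather than a degreewise-acyclic subcategory.
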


\begin{remark}
	Let $f:X\to Y$ be a proper map. Then $\mathrm{R}f_\ast$ maps complexes of $\SO_X$-modules with coherent cohomology to complexes of $\SO_Y$-modules with coherent cohomology, as shown in \cite[Chapter III Theorem 3.2.1]{ega31}.
\end{remark}

\begin{lemma}\label{lem:push-can}
	Let $f: X \to Y$ be a morphism of schemes, and let $\mathcal{F}$ and $\mathcal{G}$ be $\SO_X$-modules.\ Then, the following diagram
	\[
		\xymatrix{f_\ast \mathcal{F} \ar[r]^-{f_\ast \can} \ar[d]_-{\can} & f_\ast \sHom_X(\sHom_X(\mathcal{F},\mathcal{G}),\mathcal{G}) \ar[d]_{\phi}\\
		\sHom_Y(\sHom_Y(f_\ast \mathcal{F}, f_\ast \mathcal{G}), f_\ast \mathcal{G}) \ar[r]^-{\phi^\sharp} & \sHom_Y(f_\ast \sHom_X(\mathcal{F},\mathcal{G}), f_\ast \mathcal{G})}
	\]
	commutes in $\M(X)$.
\end{lemma}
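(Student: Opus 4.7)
The plan is to verify commutativity by unwinding both composites on sections. Since the diagram compares two morphisms of sheaves on $Y$ with source $f_\ast\mathcal{F}$, it suffices to fix an open $V \subset Y$, set $U = f^{-1}(V)$, and test on an arbitrary section $m \in \mathcal{F}(U) = (f_\ast\mathcal{F})(V)$. Each composite then becomes a section of $\sHom_Y(f_\ast\sHom_X(\mathcal{F},\mathcal{G}),f_\ast\mathcal{G})$ over $V$, which I would in turn evaluate on a local section $\psi \in \sHom_X(\mathcal{F},\mathcal{G})(f^{-1}(V'))$ over some $V' \subset V$.

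For the right-then-down route, $f_\ast(\can)(m)$ is the section of $f_\ast\sHom_X(\sHom_X(\mathcal{F},\mathcal{G}),\mathcal{G})$ that sends $\psi$ to $\psi(m) \in \mathcal{G}(f^{-1}(V'))$, and $\phi$ merely reinterprets this $\SO_X$-linear datum as the corresponding $\SO_Y$-linear map, so the output at $\psi$ is still $\psi(m)$. For the down-then-right route, $\can(m)$ is the section of $\sHom_Y(\sHom_Y(f_\ast\mathcal{F},f_\ast\mathcal{G}),f_\ast\mathcal{G})$ given by $\xi \mapsto \xi(m)$; composing with $\phi^\sharp$ produces $\psi \mapsto \phi(\psi)(m)$, and the defining formula of $\phi$ yields $\phi(\psi)(m) = \psi(m)$. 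Both routes therefore produce the section $\psi \mapsto \psi(m)$, establishing the identity.

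The argument is purely formal, and the only thing that could cause friction is bookkeeping with the adjunction $(f^\ast, f_\ast)$ and the explicit formula for $\phi$ on sections. I do not expect any real obstruction; in particular, since $\mathcal{F}$ and $\mathcal{G}$ are plain $\SO_X$-modules rather than complexes, the Koszul sign $(-1)^{|f||m|}$ appearing in the complex-level definition of $\can$ collapses to $1$ and plays no role here.
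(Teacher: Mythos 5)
Your proposal is correct and takes essentially the same approach as the paper's own proof: both verify commutativity by unwinding the two composites on local sections and observing that each produces the evaluation map $\psi \mapsto \psi(m)$, with the defining formula for $\phi$ (cf. \cite[p.86 (3.1.4)]{lipman2009notes}) supplying the key identity $\phi(\psi)(m) = \psi(m)$. Your remark that the Koszul sign is vacuous at the sheaf level is a correct observation, consistent with the paper treating the sign only when passing to the complex-level Proposition \ref{lma:ac_zeta}.
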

\begin{proof}
	We check the commutativity locally by sections. Let $V$ be an open subset of $Y$. We need to check the diagram
	\[
		\xymatrix{\mathcal{F}(f^{-1}V) \ar[r]^-{\can} \ar[d]_-{\can} & \Hom_{f^{-1}V}(\sHom_X(\mathcal{F},\mathcal{G})|_{f^{-1}V},\mathcal{G}|_{f^{-1}V}) \ar[d]_{\phi}\\
		\Hom_V(\sHom_Y(f_\ast \mathcal{F}, f_\ast \mathcal{G})|_V, f_\ast \mathcal{G}|_V) \ar[r]^-{\phi^\sharp} & \Hom_V(f_\ast \sHom_X(\mathcal{F},\mathcal{G})|_V, f_\ast \mathcal{G}|_V)}
	\]
	commutes.\

	Let $s$ be a section in $\mathcal{F}(f^{-1}V)$. We check that $\phi(\can(s)) = \phi^\sharp(\can(s))$.\ On the left-hand side, the morphism of $\SO_V$-modules $\phi(\can(s)): f_\ast \sHom_X(\mathcal{F},\mathcal{G})|_V \to f_\ast \mathcal{G}|_V $ may be described as follows. Take an open subset $U \subseteq V$, $\phi(\can)(s)(U)$ sends an element $t \in \Hom_{f^{-1}(U)}(\mathcal{F}|_{f^{-1}U},\mathcal{G}|_{f^{-1}U})$ to $t(f^{-1}U)(s|_{f^{-1}U})$ in $\mathcal{G}(f^{-1}U)$. For the right-hand side, the morphism $\SO_V$-modules $\phi^\sharp(\can(s)): f_\ast \sHom_X(\mathcal{F},\mathcal{G})|_V \to f_\ast \mathcal{G}|_V $ sends $t$ to $\phi(t)(f^{-1}U)(s|_{f^{-1}U})$.\ The morphism $\phi(t) (f^{-1}U)$ is equal to $t(f^{-1}U)$ in $\Hom_{\SO_Y(f^{-1}U)}(\mathcal{F}(f^{-1}(U)), \mathcal{G}(f^{-1}(U)))$, cf. \cite[p. 86 (3.1.4)]{lipman2009notes}.
\end{proof}

\begin{lemma}\label{lem:mor-can}
	Let $\mathcal{F}, \mathcal{G}$ and $\mathcal{G}'$ be $\SO_X$-modules, and let $\psi :\mathcal{G}\to \mathcal{G}' $ be a morphism of $\SO_X$-modules. Then, the following diagram
	\[
		\xymatrix{\mathcal{F} \ar[r]^-{\can} \ar[d]_-{\can} & \sHom_X(\sHom_X(\mathcal{F},\mathcal{G}),\mathcal{G}) \ar[d]_-{\psi}\\
		\sHom_X(\sHom_X(\mathcal{F},\mathcal{G}'),\mathcal{G}') \ar[r]^{\psi^\sharp} & \sHom_X(\sHom_X(\mathcal{F},\mathcal{G}),\mathcal{G}')}
	\]
	commutes.
\end{lemma}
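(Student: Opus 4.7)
The plan is to verify the commutativity of the square directly on sections, in the spirit of Lemma~\ref{lem:push-can}. Every arrow is a morphism of sheaves of $\SO_X$-modules, so it suffices to fix an open $U \subseteq X$ and a section $s \in \mathcal{F}(U)$, then compare the images of $s$ under the two composites as natural transformations on opens $V \subseteq U$.

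Tracing the top-right path, $\can(s)$ is the element of $\sHom_X(\sHom_X(\mathcal{F},\mathcal{G}),\mathcal{G})(U)$ that sends $f \in \sHom_X(\mathcal{F},\mathcal{G})(V)$ to $f(s|_V)$; the right vertical arrow is post-composition with $\psi$, so the resulting section of $\sHom_X(\sHom_X(\mathcal{F},\mathcal{G}),\mathcal{G}')(U)$ sends $f$ to $\psi_V(f(s|_V))$. Tracing the bottom-left path, $\can(s)$ is the element of $\sHom_X(\sHom_X(\mathcal{F},\mathcal{G}'),\mathcal{G}')(U)$ sending $g \mapsto g(s|_V)$, while $\psi^\sharp$ is the map induced by the functor $\sHom_X(-,\mathcal{G}')$ applied to the morphism $\psi_\ast: \sHom_X(\mathcal{F},\mathcal{G}) \to \sHom_X(\mathcal{F},\mathcal{G}')$ given by post-composition with $\psi$. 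Thus $\psi^\sharp$ is pre-composition with $\psi_\ast$, and its value on $\can(s)$ evaluated at $f$ is $\can(s)(\psi \circ f) = (\psi \circ f)(s|_V) = \psi_V(f(s|_V))$, which matches the expression obtained along the other route.

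The main, and essentially only, obstacle is notational bookkeeping: one must pin down that the vertical $\psi$ is covariant functoriality of the outer $\mathcal{G}$-slot, whereas $\psi^\sharp$ encodes covariant functoriality in the inner $\sHom_X(\mathcal{F},-)$ slot transported through the contravariant outer $\sHom_X(-,\mathcal{G}')$. Once these conventions are fixed, the statement of the lemma is simply naturality of the double-dual evaluation map $\can$ in the dualizing argument, and the sectionwise computation above closes the argument.
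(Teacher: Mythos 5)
Your proof is correct, and it is the argument one would expect: fix a section $s$, trace both composites, and observe that each sends $f \in \sHom_X(\mathcal{F},\mathcal{G})(V)$ to $\psi_V(f(s|_V))$ once one unwinds that the right-hand vertical arrow is post-composition with $\psi$ while $\psi^\sharp$ is pre-composition with $\psi_* = \sHom_X(\mathcal{F},\psi)$. The paper itself simply labels this lemma ``Exercise,'' so there is no official proof to compare against; your write-out is exactly the kind of routine sectionwise check the authors intended the reader to supply, and it correctly identifies the statement as naturality of the evaluation (double-dual) map in the coefficient object.

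One minor point worth keeping in mind: the paper invokes Lemma~\ref{lem:mor-can} inside the proof of Proposition~\ref{lma:ac_zeta}, where the objects are bounded complexes and the map $\can$ carries the Koszul sign $(-1)^{|f||m|}$. Your verification is at the level of $\SO_X$-modules (i.e.\ in a single degree), which is exactly the scope of the stated lemma; the extension to complexes is termwise and the sign appears identically on both composites, so nothing is lost, but it is good to be aware that the application is to the graded setting.
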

\begin{proof}
	Exercise.
\end{proof}

\begin{proposition}\label{lma:ac_zeta}
	The pushforward functor
	\[
		f_\ast: \Ch^b_c(\M(X)) \to \Ch^b_c(\M(Y))
	\]
	induces a dg form functor
	\[ (f_\ast, \zeta) : (\Ch^b_c(\M(X)), \sharp_{f^\Delta J},\can) \to ( \Ch^b_c(\M(Y)) , \sharp_J, \can).   \]
\end{proposition}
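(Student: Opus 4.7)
The plan is to unpack the definition of a dg form functor and check each ingredient: first, that $f_\ast$ is a dg functor between the two dg categories (which is routine since it is the pushforward on sheaves applied degreewise, and it preserves coherence of cohomology by the proper pushforward theorem for coherent sheaves cited just before the proposition); second, that $\zeta$ as defined in condition (T1) of Theorem \ref{thm:residue} is a natural transformation $f_\ast \sharp_{f^\Delta J} \to \sharp_J f_\ast$ of dg functors $\Ch^b_c(\M(X))^{\op} \to \Ch^b_c(\M(Y))$; and third, that the following compatibility diagram commutes for every $A \in \Ch^b_c(\M(X))$:
\[
\xymatrix{
f_\ast A \ar[r]^-{f_\ast \can} \ar[d]_-{\can} & f_\ast \sharp_{f^\Delta J}\sharp_{f^\Delta J} A \ar[d]^-{\zeta_{\sharp_{f^\Delta J} A}} \\
\sharp_J \sharp_J f_\ast A \ar[r]^-{\sharp_J(\zeta_A)} & \sharp_J f_\ast \sharp_{f^\Delta J} A .
}
\]

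The key idea for the third step is to factor $\zeta$ through the intermediate object $[f_\ast(-), f_\ast f^\Delta J]_Y$ as $\zeta = \Tr_\ast \circ \phi$, where $\phi : f_\ast [-, f^\Delta J]_X \to [f_\ast(-), f_\ast f^\Delta J]_Y$ is the canonical adjunction morphism and $\Tr_\ast$ is post-composition with the trace $\Tr : f_\ast f^\Delta J \to J$ of Theorem \ref{thm:residue}. Correspondingly I would split the large square into two subsquares by inserting the column $[[f_\ast A, f_\ast f^\Delta J]_Y, f_\ast f^\Delta J]_Y$ in the middle, so that the left square involves only $\phi$ and the right square involves only $\Tr_\ast$.

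The left subsquare becomes precisely the commutative diagram of Lemma \ref{lem:push-can} (with $\mathcal{F} = A$ and $\mathcal{G} = f^\Delta J$), applied termwise to the complexes with the standard sign conventions of the mapping complex $[-,-]_X$. The right subsquare is exactly an instance of Lemma \ref{lem:mor-can} applied to the trace morphism $\psi = \Tr : f_\ast f^\Delta J \to J$ in the category $\M(Y)$ with $\mathcal{F} = f_\ast A$, $\mathcal{G} = f_\ast f^\Delta J$, $\mathcal{G}' = J$; after taking into account the definition of $\sharp_J(\zeta_A)$ as pre-composition with $\zeta_A = \Tr_\ast \circ \phi$, pasting the two commutative squares produces the required diagram.

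The expected main obstacle is not conceptual but bookkeeping: the two lemmas are stated for single $\SO$-modules, whereas we need the corresponding equalities of morphisms of mapping \emph{complexes}. I would promote both lemmas to the dg level by checking that all the maps in sight ($\phi$, $\can$, $\psi^\sharp$, $\Tr_\ast$) are defined degreewise from the sheaf-level constructions and are compatible with the Koszul signs entering the differential and the $\can$ map on $\sharp_{I^\bullet}\sharp_{I^\bullet}$; the signs on the two sides agree because they arise from the same shift of $[-,-]$ applied on each side of the lemmas. Once this is checked, the naturality of $\zeta$ in the complex variable (needed for step two) is also immediate from the naturality of $\phi$ and of post-composition with $\Tr$.
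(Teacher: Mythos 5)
Your proposal takes essentially the same route as the paper: the same compatibility square, the same factorization $\zeta = \Tr \circ \phi$, and the same two key lemmas (``push-can'' and ``mor-can'') used in exactly the same way.

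There is one small gap in the pasting step. The square from Lemma~\ref{lem:push-can} (with $\mathcal{F}=A$, $\mathcal{G}=f^\Delta J$) and the square from Lemma~\ref{lem:mor-can} (with $\psi=\Tr$) share only the morphism $\can\colon f_\ast A \to [[f_\ast A, f_\ast f^\Delta J], f_\ast f^\Delta J]$; to paste them into the required compatibility square one also needs the square recording that post-composition by $\Tr$ commutes with pre-composition by $\phi$, namely
\[
\xymatrix{
\left[\left[f_\ast A, f_\ast f^\Delta J\right], f_\ast f^\Delta J\right] \ar[r]^-{\phi^\sharp} \ar[d]_-{\Tr} & \left[f_\ast\left[A, f^\Delta J\right], f_\ast f^\Delta J\right] \ar[d]^-{\Tr} \\
\left[\left[f_\ast A, f_\ast f^\Delta J\right], J\right] \ar[r]^-{\phi^\sharp} & \left[f_\ast\left[A, f^\Delta J\right], J\right].
}
\]
This commutes by naturality of the mapping complex in both variables, so it costs nothing, but the paper records it as a separate sub-square and you should too: ``pasting the two commutative squares'' alone does not literally produce the required diagram.
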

\begin{proof}
	It is enough to check that the following diagram
	\[
		\xymatrix{f_\ast A \ar[r]^-{f_\ast \left(\mathrm{can}\right)} \ar[d]_-{\mathrm{can}}& f_\ast \left[\left[A,f^\Delta J\right],f^\Delta J\right] \ar[d]_-{\zeta_{A^\sharp}}\\
		\left[\left[f_\ast A,J\right],J\right] \ar[r]^-{\zeta_A^\sharp} & \left[f_\ast \left[A,f^\Delta J\right],J\right]}
	\]
	commutes for every $A \in \Ch^b_c(\M(X))$. For the commutativity of this diagram, we depict the all the maps by definition in the following diagram.

	$$
		\xymatrix{f_\ast A \ar@{=}[d] \ar[rr]^-{f_\ast \left(\mathrm{can}\right)}  \ar@{}[drr]!<50pt,0pt>|-{\diagram \label{diag:push_dual_1}} && f_\ast \left[\left[A,f^\Delta J\right],f^\Delta J\right] \ar[d]_-{\phi}\\
		f_\ast A \ar[d]_-{\mathrm{can}} \ar[r]^-{\mathrm{can}} \ar@{}[dr]|-{\diagram \label{diag:push_dual_2}}& \left[\left[f_\ast  A,f_\ast f^\Delta J\right],f_\ast f^\Delta J\right] \ar[d]_-{\Tr} \ar[r]^-{\phi^\sharp} \ar@{}[dr]|-{\diagram \label{diag:push_dual_3}}& \left[f_\ast \left[A,f^\Delta J\right],f_\ast f^\Delta J\right]\ar[d]_-{\Tr} \\
		\left[\left[f_\ast A,J\right],J\right] \ar[r]^-{{\Tr}^\sharp} & \left[\left[f_\ast  A,f_\ast f^\Delta J\right], J\right] \ar[r]^-{\phi^\sharp} & \left[f_\ast \left[A,f^\Delta J\right],J\right]. }
	$$
	The diagram $\diag{\ref{diag:push_dual_1}}$ (resp. $\diag{\ref{diag:push_dual_2}}$) is commutative by Lemma \ref{lem:push-can} (resp. Lemma \ref{lem:mor-can}). The diagram $\diag{\ref{diag:push_dual_3}}$ commutes by naturality.
\end{proof}
\begin{remark}
	If $f$ is finite, the pushforward functor is exact and $\zeta$ is a quasi-isomorphism. Consequently, the dg form functor $(f_\ast, \zeta)$ is non-singular and exact in the sense of \cite{schlichting2010mayer}. Thus, the dg form functor $(f_\ast, \zeta)$ induces a map of Grothendieck-Witt spectra $\GW^{n}(X, f^\Delta J) \to \GW^{[n]}(Y, J)$. However, if $f$ is not finite, the above statement may not hold true. This limitation is evident in the special case of Witt groups. To construct the pushforward for Witt groups, one needs to use the right-derived functor $\mathrm{R} f_\ast$, as noted in \cite[Theorem 4.4]{calmes2011push}. In the context of higher algebraic $G$-theory, one might employ a technique from Thomason-Trobaugh \cite[3.16]{thomason1990higher} to construct a pushforward of $K$-theory as the composition:
	\[
		G(X) \xleftarrow{\cong} K(\mathcal{F}_X) \xrightarrow{f_\ast} G(Y),
	\]
	where $\mathcal{F}_X \subset \Ch^b_c( \M(X))$ denotes the category of complexes of flasque $\SO_X$-modules. Regrettably, the category $\mathcal{F}_X$ may not be closed under the duality $\sharp_{f^\Delta J}$. The issue persists even after replacing $\mathcal{F}_X$ with the subcategory of $\Ch^b_c( \M(X))$ consisting of degree-wise injective (or $f_\ast$-acyclic) complexes, as considered in \cite[1.13]{thomason1986lefschetz}. To overcome this problem, we introduce the subsequent construction.
\end{remark}
\begin{definition}\label{def:acyclic_dg}
	The full dg subcategory $\AC_f$ of $\Ch^b_{c}(\M (X))$ is defined to be
	\[
		\AC_f := \left\{ A \in \Ch^b_{c}(\M (X)) \,\middle\vert\, \theta_A: f_\ast (A) \to \mathrm{R}f_\ast (A) \text{ is an isomorphism. } \right\}
	\]
	where $\theta: f_\ast \to \mathrm{R}f_\ast$ is the canonical natural transformation from $\Ch^b_{c}(\M (X))$ to $\mathrm{D}^b_c(\M (Y))$.
\end{definition}
\begin{lemma}\label{lma:ac_pretriangulated}
	The full dg subcategory $\AC_f$ is pretriangulated.
\end{lemma}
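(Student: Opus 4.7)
The plan is to verify that $\AC_f$ is closed under the two basic operations that generate the pretriangulated structure on the ambient pretriangulated dg category $\Ch^b_c(\M(X))$: shifts of objects and mapping cones of closed degree zero morphisms. Since the defining condition for $\AC_f$ is that a specific map become an isomorphism in $\mathrm{D}^b_c(\M(Y))$, and this condition is invariant under quasi-isomorphism, it suffices to show that for each generating operation the resulting object again satisfies the isomorphism condition.

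Closure under shifts is essentially formal: both $f_\ast$ and $\mathrm{R}f_\ast$ commute with the shift functor, and the canonical natural transformation $\theta$ is compatible with shifts, so $\theta_{A[1]}$ agrees (up to sign) with $\theta_A[1]$ and is therefore an isomorphism whenever $\theta_A$ is. The main step is closure under cones. Given a closed degree zero morphism $g : A \to B$ with $A, B \in \AC_f$, form $\cone(g) \in \Ch^b_c(\M(X))$. Because $f_\ast$ is additive and preserves the mapping cone construction on the nose, it sends the termwise split short exact sequence $0 \to B \to \cone(g) \to A[1] \to 0$ to a corresponding short exact sequence over $Y$, which yields a distinguished triangle $f_\ast A \to f_\ast B \to f_\ast \cone(g) \to f_\ast A[1]$ in $\mathrm{D}^b_c(\M(Y))$. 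Since $\mathrm{R}f_\ast$ is a triangulated functor on the derived category, an analogous distinguished triangle exists on the derived side. I would then apply the five lemma in the triangulated category: with $\theta_A$ and $\theta_B$ isomorphisms, one concludes that $\theta_{\cone(g)}$ is also an isomorphism, hence $\cone(g) \in \AC_f$.

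The main point requiring care is that $\theta$ assembles into an honest morphism of distinguished triangles, not merely a pair of commuting squares connected by an ad hoc third vertical arrow. I would address this by choosing a \emph{functorial} K-injective (or flasque) resolution $A \rightsquigarrow J_A$ on $\Ch^b_c(\M(X))$: such functorial resolutions turn the short exact sequence of mapping cones into a compatible short exact sequence of resolutions, and applying $f_\ast$ then produces the requisite morphism of triangles intertwining $\theta_A$, $\theta_B$, $\theta_{\cone(g)}$ together with the boundary maps. Once this morphism of triangles is in hand, the triangulated five lemma closes the argument.
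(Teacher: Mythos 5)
Your overall strategy coincides with the paper's: verify that $\AC_f$ is closed under shifts and cones by applying the natural transformation $\theta: f_\ast \to \mathrm{R}f_\ast$ to the relevant distinguished triangles and invoking the triangulated five lemma. The cone argument itself is sound. There are, however, two points to flag.

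First, the claim that the defining condition of $\AC_f$ ``is invariant under quasi-isomorphism'' is false, and you use it to justify that it suffices to check the generating operations. The functor $f_\ast$ does not send quasi-isomorphisms to quasi-isomorphisms (that is precisely the failure that $\mathrm{R}f_\ast$ is designed to correct), so if $A \to A'$ is merely a quasi-isomorphism the left vertical arrow $f_\ast A \to f_\ast A'$ in the naturality square for $\theta$ need not be an isomorphism in $\mathrm{D}^b_c(\M(Y))$, and $\theta_A$ being an isomorphism gives no control over $\theta_{A'}$. For instance, any complex admits a quasi-isomorphism to a K-injective resolution, which always lies in $\AC_f$, so quasi-isomorphism invariance would force $\AC_f = \Ch^b_c(\M(X))$. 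What is actually true, and what the paper checks, is invariance under chain homotopy equivalence (isomorphism in $K^b_c(\M(X))$): an additive functor preserves chain homotopy equivalences, so the left vertical arrow is then an isomorphism and the two-out-of-three argument applies. This is the correct statement needed to reduce ``closure under all distinguished triangles'' to ``closure under cone triangles.'' You should also note, briefly, that $0 \in \AC_f$ and that $\AC_f$ is closed under finite direct sums, which the paper records and which is needed for $H^0\AC_f$ to be an additive subcategory.

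Second, the worry you raise about $\theta$ assembling into a morphism of triangles, and your remedy via functorial K-injective resolutions, is legitimate but more machinery than needed here. When $A_3 = \cone(g)$ for a closed degree-zero morphism $g: A_1 \to A_2$, the connecting morphism of the rotated triangle is the chain-level map $-g[1]: A_1[1] \to A_2[1]$, and $f_\ast$ preserves the termwise split exact sequence on the nose while $\mathrm{R}f_\ast$ is triangulated; all three squares then commute directly by naturality of $\theta$ applied to the chain maps $g$, the inclusion $A_2 \hookrightarrow \cone(g)$, the projection $\cone(g) \twoheadrightarrow A_1[1]$, and $g[1]$. Functorial resolutions are a perfectly good way to produce the natural transformation $\theta$ in the first place, but once $\theta$ is fixed as a natural transformation of functors into $\mathrm{D}^b_c(\M(Y))$, the morphism of triangles comes for free.
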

\begin{proof}
	We need to check that $ H^0 \AC_f$ is triangulated with respect to the split exact triangulated structure.\	It's clear that $0 \in H^0 \AC_f$ and if $A_1, A_2 \in \AC_f$, so is $A_1 \oplus A_2$. Therefore $H^0\AC_f$ is additive. It's clear that if $A \in \AC_f$, so is $A[1]$.
	Next, for any $A_1 \simeq A_2 \in \mathrm{K}^b_{c} (\M(X))$ such that $A_1 \in \AC_f$, the diagram
	\[
		\xymatrix{f_\ast (A_1) \ar[r]_-{\simeq} \ar[d]_{\theta}^-{\simeq} & f_\ast (A_2) \ar[d]_{\theta} \\ \mathrm{R}f_\ast(A_1) \ar[r]_-{\simeq} & \mathrm{R}f_\ast(A_2)}
	\]
	commutes and thus $A_2 \in \AC_f$.\ For any distinguished triangle $(A_1,A_2,A_3,f,g,h)$ in $\mathrm{K}^b_{c} (\M(X))$ such that $A_1, A_2 \in H^0\AC_f$, the canonical natural transformation $\theta: f_\ast \to \mathrm{R}f_\ast$  induces the following map of distinguished triangles
	\[
		\xymatrix{f_\ast(A_1) \ar[r]^-{f_\ast(f)} \ar[d]_{\theta} & f_\ast(A_2) \ar[r]^-{f_\ast(g)} \ar[d]_{\theta} & f_\ast(A_3) \ar[r]^-{f_\ast(h)} \ar[d]_{\theta} & f_\ast(A_1)[1] \ar[d]_{\theta} \\
		\mathrm{R}f_\ast(A_1) \ar[r]^-{\mathrm{R}f_\ast(f)} & \mathrm{R}f_\ast(A_2) \ar[r]^-{\mathrm{R}f_\ast(g)} & \mathrm{R}f_\ast(A_3) \ar[r]^-{\mathrm{R}f_\ast(h)} & \mathrm{R}f_\ast(A_1)[1]. }
	\]
	Consequently, since all the other vertical arrows in the diagram are isomorphisms, the map $ \theta: f_\ast(A_3) \to \mathrm{R}f_\ast(A_3) $ is an isomorphism.
\end{proof}

\begin{lemma}\label{lma:ac_preserve}
	Let $I$ be a residue complex on $X$, then $\sharp_I(\AC_f) \subset \AC_f$, i.e.\ the duality functor $\sharp_{I}$ preserves the dg category $\AC_f$.
\end{lemma}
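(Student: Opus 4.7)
The plan is to establish the slightly stronger statement that $[A, I]_X \in \AC_f$ for every $A \in \Ch^b_c(\M(X))$, not only for $A \in \AC_f$; this immediately implies the lemma.

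First I would reduce the problem to componentwise $f_*$-acyclicity. Since $A$ and $I$ are both bounded, the mapping complex $[A, I]_X$ is bounded with components of the form $\sHom_X(A^p, I^j)$. By a standard hypercohomology / Cartan--Eilenberg spectral sequence argument, if each of these component sheaves is $f_*$-acyclic in the sense that $\mathrm{R}^{>0}f_*$ vanishes on it, then the canonical map $\theta : f_*[A,I]_X \to \mathrm{R}f_*[A,I]_X$ is already a quasi-isomorphism, i.e.\ $[A,I]_X \in \AC_f$. So it suffices to check that each $\sHom_X(A^p, I^j)$ is an $f_*$-acyclic $\SO_X$-module.

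Second, I would exploit the explicit form of a residue complex, $I^j \cong \bigoplus_{\mu_I(x)=j} i_x E(\kappa(x))$. A direct stalk computation shows $\sHom_X(A^p, i_x E(\kappa(x))) \cong i_x \Hom_{\SO_{X,x}}(A^p_x, E(\kappa(x)))$: away from $\overline{\{x\}}$ the stalk vanishes, while on $\overline{\{x\}}$ every $\SO_X$-linear map to the skyscraper factors through the stalk at $x$. Thus each summand is itself a skyscraper sheaf, hence flasque. Combining the summands (which are supported at distinct points of the same codimension) yields a flasque sheaf as well, and flasque sheaves are $f_*$-acyclic for any morphism $f$. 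Step 1 then gives $[A,I]_X \in \AC_f$.

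The main obstacle is the interaction of $\sHom_X(A^p,-)$ with the possibly infinite direct sum defining $I^j$, particularly when $A^p$ is not coherent (recall that $\Ch^b_c(\M(X))$ only constrains the cohomology of $A$, not its terms). Two ways out are available: (a) argue the flasqueness of $\sHom_X(A^p, I^j)$ purely at the level of stalks and extensions of sections, using that the decomposition of $I^j$ into skyscrapers has essentially disjoint supports so that sections on any open extend skyscraper-by-skyscraper; or (b) use that $X$ is noetherian and invoke the pretriangulated property of $\AC_f$ established in Lemma \ref{lma:ac_pretriangulated} to replace $A$ by a quasi-isomorphic bounded complex of coherent sheaves, for which $\sHom_X$ commutes with the direct sum and the stalk computation applies without further care. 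Either route reduces the lemma to the componentwise skyscraper/flasqueness observation.
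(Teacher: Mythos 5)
Your argument is correct in outline but takes a genuinely different and more hands-on route than the paper, which simply cites Spaltenstein: since $I$ is a bounded complex of injective $\SO_X$-modules it is $K$-injective, so $[A,I]_X$ is weakly $K$-injective by \cite[Proposition 5.14]{spaltenstein1988resolutions}, and weakly $K$-injective complexes satisfy $f_\ast \simeq \mathrm{R}f_\ast$ by \cite[Proposition 6.7]{spaltenstein1988resolutions}. Your approach reduces instead to componentwise $f_\ast$-acyclicity of the bounded complex $[A,I]_X$, which is a valid starting point and makes the source of the acyclicity more transparent; the paper's citation is terser but more opaque.

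That said, the detour through the explicit decomposition $I^j \cong \bigoplus_x i_x E(\kappa(x))$ creates exactly the difficulty you then have to dodge, and your route (a) does not in fact resolve it: for non-coherent $A^p$ the canonical map $\bigoplus_x \sHom_X(A^p, i_x E(\kappa(x))) \to \sHom_X\bigl(A^p, \bigoplus_x i_x E(\kappa(x))\bigr)$ need not be an isomorphism, and ``essentially disjoint supports'' is not enough, since the closures $\overline{\{x\}}$ can overlap arbitrarily. Route (b) does close the gap --- on a Noetherian scheme one has $\mathrm{D}^b_c(\M(X)) \simeq \mathrm{D}^b(\mathrm{Coh}(X))$, the functor $\sharp_I$ preserves quasi-isomorphisms because $I$ is $K$-injective, and $\AC_f$ is closed under quasi-isomorphism by Lemma~\ref{lma:ac_pretriangulated}, so one may replace $A$ by a bounded complex of coherents, for which $\sHom$ commutes with the (filtered colimit of finite) direct sum at the level of stalks. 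But the cleanest version of your idea avoids the residue-complex structure entirely: on a Noetherian scheme a quasi-coherent injective is injective in the full category $\M(X)$, and for any injective $\SO_X$-module $\mathcal{I}$ and arbitrary $\SO_X$-module $\mathcal{F}$ the sheaf $\sHom_X(\mathcal{F},\mathcal{I})$ is flasque (a section over $U$ corresponds to a map $j_!(\mathcal{F}|_U)\to\mathcal{I}$, which extends to $\mathcal{F}$ by injectivity). That gives $f_\ast$-acyclicity of every component $\sHom_X(A^p, I^j)$ with no skyscraper decomposition, no coherence hypothesis on $A^p$, and no interchange of $\sHom$ with infinite sums.
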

\begin{proof}
	Since $I$ is injective and bounded, $I$ is $K$-injective. For every object $A$ of $\Ch^b(\M(X))$, the mapping complex $[A,I]_X$ is weakly $K$-injective \cite[Proposition 5.14]{spaltenstein1988resolutions}, thus the map $\theta:f_\ast(\sharp_I(A)) \xrightarrow{\cong} \mathrm{R} f_\ast(\sharp_I(A))$ is an isomorphism, cf.\ \cite[Proposition 6.7]{spaltenstein1988resolutions}, i.e. $\sharp_{I}(A) \in \AC_f$.
\end{proof}
\begin{remark}\label{rmk:weakly-K-injective-dual}
	Note that the above proof implies that the full dg subcategory $\mathcal{W}_X$ of weakly $K$-injective complexes in $\Ch^b_c(\M(X))$ is also closed under the duality $\sharp_I$.
\end{remark}

\begin{proof}[Proof of Theorem \ref{thm:pushforward_gw}]
	By Lemma \ref{lma:ac_pretriangulated} and \ref{lma:ac_preserve}, the quadruple $(\AC_f, \mathrm{quis}, \sharp_{f^\Delta J}, \mathrm{can})$ forms a pretriangulated dg category with weak equivalences and duality.\ Furthermore, the composition of dg form functors
	$$(\AC_f, \quis, \sharp_{f^\Delta J}) \hookrightarrow (\Ch^b_c(\M(X)), \quis, \sharp_{f^\Delta J}) \xrightarrow{(f_\ast, \zeta)} (\Ch^b_{c}(\M (Y)),\quis, \sharp_J)$$
	is non-singular and exact, and therefore induces a map
	\[
		f_\ast: \GW^{[n]}\left(\AC_f, \quis, \sharp_{f^\Delta J}\right) \to \GW^{[n]}\left(Y,J\right)
	\]
	of Grothendieck-Witt spectra \cite[Section 2.8]{schlichting2010mayer}.

	We claim that the inclusion of dg categories with weak equivalences $\AC_f \hookrightarrow \Ch^b_{c}(\M (X))$ induces an equivalence $\mathcal{T}(\AC_f, \mathrm{quis}) \xrightarrow{\simeq} \mathrm{D}^b_c (\M(X))$ of associated triangulated categories.\ To see the essential surjectivity, for any object $A$ in $\mathrm{D}^b_c (\M(X))$, we have a canonical quasi-isomorphism $\can: A \to \sharp_I \sharp_I(A)$ for any residue complex $I$ on $X$, and we could take $I = f^\Delta J$.
	It follows that the inclusion $\AC_f \hookrightarrow \Ch^b_{c}(\M (X))$ induces a stable equivalence
	\[
		\GW^{[n]}\left(\AC_f, \sharp_{f^\Delta J}\right) \xrightarrow{\simeq} \GW^{[n]}\left(X,f^\Delta J\right)
	\]
	of Grothendieck-Witt spectra by \cite[Theorem 6.5]{schlichting2017hermitian}.

	The required pushforward morphism
	\[
		f_\ast: \GW^{[n]}\left(X,f^\Delta J\right) \to \GW^{[n]}\left(Y,J\right)
	\]
	is defined to be the composition
	\[
		\GW^{[n]}\left(X,f^\Delta J\right) \xleftarrow{\simeq} \GW^{[n]}\left(\AC_f, \sharp_{f^\Delta J}\right) \xrightarrow{f_\ast} \GW^{[n]}\left(Y,J\right).
	\]
	The fact that the pushforward morphism respects composition follows from Proposition \ref{prop:composition-push-forward} below.
\end{proof}

\begin{proposition}[Composition of pushforward]\label{prop:composition-push-forward}
	Let $f:X\to Y$ and $g:Y \to Z$ be proper morphisms. Let $K$ be a residue complex on $Z$.	Then, the following diagram
	\[
		\xymatrix{\GW^{[n]}\left(X,f^\Delta g^\Delta K\right) \ar[r]^-{c}_{\simeq} \ar[d]_-{f_\ast} & \GW^{[n]}\left(X,(gf)^\Delta K\right)  \ar[d]_-{(gf)_\ast}\\
		\GW^{[n]}\left(Y,g^\Delta K\right)	\ar[r]^-{g_\ast} & \GW^{[n]}\left(Z,K\right)}
	\]
	commutes.
\end{proposition}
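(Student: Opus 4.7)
The plan is to reduce the diagram to an identity of two dg form functors on a common pretriangulated dg subcategory of $\Ch^b_c(\M(X))$ on which both compositions $(gf)_\ast \circ c$ and $g_\ast \circ f_\ast$ are simultaneously defined, and then to verify this identity using (T1) and (T2) from Theorem \ref{thm:residue}. To this end, I would introduce the full dg subcategory
$$\AC_{f,g} := \left\{ A \in \Ch^b_c(\M(X)) : A \in \AC_f \text{ and } f_\ast A \in \AC_g \right\}.$$
For $A \in \AC_{f,g}$, the chain $(gf)_\ast A = g_\ast f_\ast A = \mathrm{R}g_\ast f_\ast A = \mathrm{R}g_\ast \mathrm{R}f_\ast A = \mathrm{R}(gf)_\ast A$ gives $\AC_{f,g} \subseteq \AC_{gf}$, so each of $f_\ast$, $g_\ast \circ f_\ast$ and $(gf)_\ast$ is available on $\AC_{f,g}$ via the construction in the proof of Theorem \ref{thm:pushforward_gw}.

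Set $I := f^\Delta g^\Delta K$. Arguing as in Lemmas \ref{lma:ac_pretriangulated} and \ref{lma:ac_preserve}, I would check that $(\AC_{f,g}, \quis, \sharp_I, \can)$ is a pretriangulated dg category with weak equivalences and duality. The only new point is that $\sharp_I$ preserves the second condition defining $\AC_{f,g}$: if $A \in \AC_{f,g}$, then (T1) applied to $A$ (which is $f_\ast$-acyclic since $A \in \AC_f$) yields a quasi-isomorphism $\zeta_A \colon f_\ast \sharp_I A \xrightarrow{\simeq} \sharp_{g^\Delta K} f_\ast A$; the target is weakly $K$-injective on $Y$ by Remark \ref{rmk:weakly-K-injective-dual}, hence $g_\ast$-acyclic, so $f_\ast \sharp_I A \in \AC_g$ because $\AC_g$ is closed under quasi-isomorphism. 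The same argument applied to the weakly $K$-injective object $\sharp_I B$ shows that $\sharp_I \sharp_I B \in \AC_{f,g}$ for every $B \in \Ch^b_c(\M(X))$, so the canonical quasi-isomorphism $\can \colon B \to \sharp_I \sharp_I B$ witnesses that the inclusion induces an equivalence $\mathcal{T}(\AC_{f,g}, \quis) \xrightarrow{\simeq} \mathrm{D}^b_c(\M(X))$. By \cite[Theorem 6.5]{schlichting2017hermitian} this produces a stable equivalence $\GW^{[n]}(\AC_{f,g}, \sharp_I) \xrightarrow{\simeq} \GW^{[n]}(X, I)$.

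On $\AC_{f,g}$, the two routes around the square in the proposition are represented by the dg form functors
$$\bigl((gf)_\ast,\, \zeta_{gf} \circ (gf)_\ast(\sharp_c)\bigr) \quad \text{and} \quad \bigl(g_\ast \circ f_\ast,\, \zeta_g \circ g_\ast(\zeta_f)\bigr),$$
both from $(\AC_{f,g}, \sharp_I)$ to $(\Ch^b_c(\M(Z)), \sharp_K)$. Their underlying dg functors agree since $(gf)_\ast = g_\ast \circ f_\ast$. To compare the two duality structure maps as natural transformations $(gf)_\ast \sharp_I \to \sharp_K (gf)_\ast$, I would unfold each $\zeta$ as $\Tr \circ \phi$, use naturality of $\phi_f$ and $\phi_g$ to slide the morphism $\sharp_c = c \circ -$ through the $\phi$'s (together with the standard compatibility $\phi_{gf} = \phi_g \circ g_\ast(\phi_f)$ coming from the identification of adjunctions $f^\ast g^\ast \dashv g_\ast f_\ast$ with $(gf)^\ast \dashv (gf)_\ast$), and then apply (T2) in the form $\Tr_{gf} \circ c = \Tr_g \circ g_\ast(\Tr_f)$ at the residue complex level. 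After these manipulations, the two structure maps coincide as natural transformations, not merely up to homotopy.

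The main obstacle is precisely this diagram chase: carefully tracking how $c$, the adjunction maps $\phi_f, \phi_g, \phi_{gf}$ and the trace maps $\Tr_f, \Tr_g, \Tr_{gf}$ interact inside the iterated mapping complexes $(gf)_\ast[A,I]_X$ and $[(gf)_\ast A, K]_Z$, so that (T2) can be invoked at the right place. Once the two dg form functors are seen to be literally equal, they induce the same map $\GW^{[n]}(\AC_{f,g}, \sharp_I) \to \GW^{[n]}(Z, K)$, and composing with the stable equivalence of the previous paragraph gives the commutativity of the square.
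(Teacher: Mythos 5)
Your proof plan is correct, and the core step---the diagram chase identifying $\zeta_{gf}\circ(gf)_\ast(\sharp_c)$ with $\zeta_g\circ g_\ast(\zeta_f)$ via $\phi_{gf}=\phi_g\circ g_\ast(\phi_f)$ and {\rm (T2)}---is exactly the chase the paper carries out (its diagram with cells labelled by naturality and {\rm (T2)}). The genuine divergence is in the choice of auxiliary subcategory on which both routes are simultaneously represented by strict dg form functors. You build $\AC_{f,g}=\{A\in\AC_f : f_\ast A\in\AC_g\}$ and establish the required closure under $\sharp_I$ and essential surjectivity of $\AC_{f,g}\hookrightarrow\Ch^b_c(\M(X))$ by hand from {\rm (T1)} together with the fact that $\sharp_{g^\Delta K}$ of anything is weakly $K$-injective. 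The paper instead works with the smaller subcategory $\mathcal{W}_X$ of weakly $K$-injective complexes, gets pretriangulation and duality closure from Remark \ref{rmk:weakly-K-injective-dual} and \cite[Proposition 5.15(a)]{spaltenstein1988resolutions}, and then uses \cite[Proposition 5.15(b)]{spaltenstein1988resolutions} to conclude $f_\ast(\mathcal{W}_X)\subseteq\mathcal{W}_Y\subseteq\AC_g$. Your route trades the invocation of Spaltenstein's Proposition 5.15(b) for a slightly longer verification via {\rm (T1)} and closure of $\AC_g$ under quasi-isomorphism (using that $f_\ast$ preserves quasi-isomorphisms between objects of $\AC_f$); note in particular that $\mathcal{W}_X\subseteq\AC_{f,g}$, so the two subcategories are compatible and both make the comparison possible. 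Both are sound; the paper's $\mathcal{W}_X$ is perhaps cleaner since it is defined independently of $f$ and $g$ and is recycled verbatim from Lemma \ref{lma:ac_preserve}, whereas your $\AC_{f,g}$ is tailored to the pair $(f,g)$ but avoids relying on the extra Spaltenstein preservation result.
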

\begin{proof}
	Let $\mathcal{W}_X$ be the full dg subcategory of $\Ch^b_{c}(\M(X))$ consisting of objects that are weakly $K$-injective.\ Note that $\mathcal{W}_X$ is pretriangulated, cf.\ \cite[Proposition 5.15 (a)]{spaltenstein1988resolutions}.\ By Remark \ref{rmk:weakly-K-injective-dual}, the category $\mathcal{W}_X$ is closed under the duality $\sharp_I$ for any residue complex $I$ on $X$.\ Moreover, the inclusion $\mathcal{W}_X \subset \Ch^b_{c}(\M(X))$ induces an equivalence $\mathcal{T}(\mathcal{W}_X, \mathrm{quis}) \xrightarrow{\simeq} \mathrm{D}^b_c (\M(X))$ of the associated triangulated category.
	By \cite[Proposition 5.15 (b)]{spaltenstein1988resolutions}, the functor $f_\ast$ maps $\mathcal{W}_X$ into $\mathcal{W}_Y\subseteq \AC_{g}$.

	To show the commutativity claimed in the proposition, we consider the following diagram

	$$
		\xymatrix{(\Ch^b_c (\M(X)), \sharp_{f^\Delta g^\Delta K}) \ar[rr]^-{(\id, c)} && (\Ch^b_c (\M(X)), \sharp_{(gf)^\Delta K})\\
		(\AC_{f}, \sharp_{f^\Delta g^\Delta K})  \ar[d]_-{(f_\ast, \zeta)} \ar@{^{(}->}[u]& \ar@{}[dr]|-{\diagram{} \label{diag:push_comp}} (\mathcal{W}_X, \sharp_{f^\Delta g^\Delta K}) \ar[d]_-{(f_\ast, \zeta)} \ar[r]^-{(\id, c)} \ar@{_{(}->}[l] & (\mathcal{W}_X, \sharp_{(gf)^\Delta K}) \ar[d]_-{((gf)_\ast, \zeta)} \ar@{^{(}->}[u]\\
		(\Ch^b_c (\M(Y)), \sharp_{g^\Delta K}) & (\AC_{g}, \sharp_{g^\Delta K}) \ar[r]^-{(g_\ast, \zeta)} \ar@{_{(}->}[l] & (\Ch^b_{c}(\M (Z)), \sharp_K). }
	$$
	It is enough to prove that the square $\diag{\ref{diag:push_comp}}$ commutes, that is to show the following diagram
	\[
		\xymatrix{(gf)_\ast \left[A, f^\Delta g^\Delta K\right] \ar[r]^-{c} \ar[d]_-{\zeta} & (gf)_\ast \left[A, (gf)^\Delta K\right] \ar[d]_-{\zeta} \\
		g_\ast \left[f_\ast A, g^\Delta K\right] \ar[r]^-{\zeta} & \left[g_\ast f_\ast A, K\right]}
	\]
	commutes for any $A \in \mathcal{W}_X$, where $\zeta = \Tr \phi$, cf.\ Theorem \ref{thm:residue}(T1). This follows directly from the following the diagram
	$$
		\xymatrix{g_\ast f_\ast \left[A, f^\Delta g^\Delta K\right] \ar[d]_-{\phi} \ar@{=}[r] & (gf)_\ast \left[A, f^\Delta g^\Delta K\right] \ar[d]_-{\phi} \ar[r]^-{c} & (gf)_\ast \left[A, (gf)^\Delta K\right] \ar[d]_-{\phi}\\
		g_\ast \left[f_\ast A, f_\ast f^\Delta g^\Delta K\right] \ar[d]_-{\Tr} \ar[r]^-{\phi} & \left[g_\ast f_\ast A, g_\ast f_\ast f^\Delta g^\Delta K\right] \ar[d]_-{\Tr} \ar[r]^-{c} \ar@{}[dr]|-{\diagram \label{diag:push_comp_dual}} & \left[(gf)_\ast A, (gf)_\ast (gf)^\Delta K\right] \ar[d]_-{\Tr}\\
		g_\ast \left[f_\ast A, g^\Delta K\right] \ar[r]^-{\phi} & \left[g_\ast f_\ast A, g_\ast g^\Delta K\right] \ar[r]^-{\Tr} & \left[g_\ast f_\ast A, K\right]}
	$$
	where $\diag{\ref{diag:push_comp_dual}}$ commutes by Theorem \ref{thm:residue}(T2).
\end{proof}

\begin{lemma}\label{lma:push_FH}
	Let $f:X \to Y$ be a proper morphism and let $J$ be a residue complex on $Y$, then the following diagram of spectra
	\[
		\xymatrix{
		G(X) \ar[r]^-{H} \ar[d]_-{f_\ast} & \GW^{[n]}(X,f^\Delta J) \ar[d]_-{f_\ast}\\
		G(Y) \ar[r]^-{H} & \GW^{[n]}(Y,J)
		}
	\]
	commutes, where $G(X): = K(\Ch^b_c(\M(X)))$ and $H$ is the hyperbolic functor (cf. \cite[Theorem 6.1]{schlichting2017hermitian}).
\end{lemma}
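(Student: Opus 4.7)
The plan is to factor both pushforwards through the pretriangulated dg subcategory $\AC_f$ of Definition \ref{def:acyclic_dg}, on which $f_\ast$ is an honest (non-singular, exact) dg form functor, and then appeal to the general naturality of the hyperbolic map with respect to dg form functors.

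First, I would realize $f_\ast: G(X) \to G(Y)$ by the same device used for $\GW$ in the proof of Theorem \ref{thm:pushforward_gw}. Since the inclusion $\AC_f \hookrightarrow \Ch^b_c(\M(X))$ induces an equivalence on associated triangulated categories (established for any choice of residue complex $I$ via the canonical map $A \to \sharp_I\sharp_I A$), Schlichting's invariance theorem for $K$-theory gives a stable equivalence $K(\AC_f) \xrightarrow{\simeq} G(X)$. On $\AC_f$ the pushforward $f_\ast$ is exact, so it descends to a map $K(\AC_f) \to G(Y)$, and the composition
\[
f_\ast: G(X) \xleftarrow{\simeq} K(\AC_f) \xrightarrow{f_\ast} G(Y)
\]
agrees with the usual derived pushforward. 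The analogous factorization for $\GW$ is precisely how $f_\ast: \GW^{[n]}(X, f^\Delta J) \to \GW^{[n]}(Y,J)$ was defined.

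Second, I would invoke the naturality of the hyperbolic map with respect to non-singular dg form functors, which is a standard feature of Schlichting's machinery (cf.\ \cite[Section 2.8]{schlichting2010mayer}): for any such $(F,\varphi): (\mathcal{A}, \sharp_\mathcal{A}, \can) \to (\mathcal{B}, \sharp_\mathcal{B}, \can)$, the square
\[
\xymatrix{
K(\mathcal{A}) \ar[r]^-{H} \ar[d]_{K(F)} & \GW(\mathcal{A}, \sharp_\mathcal{A}) \ar[d]^{(F,\varphi)_\ast}\\
K(\mathcal{B}) \ar[r]^-{H} & \GW(\mathcal{B}, \sharp_\mathcal{B})
}
\]
commutes, since on objects $F(A \oplus A^{\sharp_\mathcal{A}}) = F(A) \oplus F(A)^{\sharp_\mathcal{B}}$ via $\varphi$, and this identification carries the hyperbolic form on $A$ to the hyperbolic form on $F(A)$. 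Applying this to the form functor $(f_\ast, \zeta): (\AC_f, \sharp_{f^\Delta J}) \to (\Ch^b_c(\M(Y)), \sharp_J)$ of Proposition \ref{lma:ac_zeta} yields the commutative square
\[
\xymatrix{
K(\AC_f) \ar[r]^-{H} \ar[d]_{f_\ast} & \GW^{[n]}(\AC_f, \sharp_{f^\Delta J}) \ar[d]^{(f_\ast, \zeta)_\ast}\\
G(Y) \ar[r]^-{H} & \GW^{[n]}(Y, J).
}
\]

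Third, I would glue this square to the equivalences $K(\AC_f) \xrightarrow{\simeq} G(X)$ and $\GW^{[n]}(\AC_f, \sharp_{f^\Delta J}) \xrightarrow{\simeq} \GW^{[n]}(X, f^\Delta J)$. Compatibility of these equivalences with $H$ is automatic because both inclusions $\AC_f \hookrightarrow \Ch^b_c(\M(X))$ are non-singular dg form functors (with identity duality compatibility), so the naturality statement of the previous paragraph applies again. Pasting the two resulting squares produces the required diagram.

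The main obstacle is therefore not any calculation but keeping the functoriality bookkeeping straight: one must verify that the $H$ appearing on the top row for $\GW^{[n]}(X, f^\Delta J)$ really is the one induced by the hyperbolic on $\AC_f$ under the invariance equivalence, and similarly on $G(X)$. Once these identifications are made, everything commutes by the general naturality of $H$.
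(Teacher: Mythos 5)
Your proposal is correct and is essentially the paper's own argument: the paper proves the lemma via the same two-square diagram through $K(\AC_f)$ and $\GW^{[n]}(\AC_f,\sharp_{f^\Delta J})$, with both squares commuting by naturality of $H$ (functoriality of the algebraic Bott sequence) with respect to the dg form functors $\AC_f \hookrightarrow \Ch^b_c(\M(X))$ and $(f_\ast,\zeta)$. Your write-up just spells out the same bookkeeping in more detail.
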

\begin{proof}
	This follows from the following diagram
	\[
		\xymatrix{
		G(X) \ar[d]_-{H} & K(\AC_f) \ar[l] \ar[r]^-{f_\ast} \ar[d]_-{H} & G(Y) \ar[d]_-{H} \\
		\GW^{[n]}(X,f^\Delta J) & \GW^{[n]}(\AC_f,\sharp_{f^\Delta J}) \ar[l] \ar[r]^-{f_\ast} & \GW^{[n]}(Y,J)
		}
	\]
	where both squares commute by the functoriality of algebraic Bott sequences (cf. \cite[Section 6]{schlichting2017hermitian}).
\end{proof}

\section{Pullbacks for Hermitian \texorpdfstring{$K$}{K}-theory}\label{sec:pullback}
\label{sec:flat_pullback}

\subsection{On the functor $\mathrm{E}$}
Let $X$ be a scheme admitting a dualizing complex $K$, and $\mu_{K}:X \to \Z$ be the associated codimension function.

Let $\mathcal{F}$ be a sheaf on $X$. Suppose that $Z$ is a closed subset of $X$, and $U$ is an open subset of $X$. Recall the sections of $\mathcal{F}$ on $U$ with supports in $Z$ are defined to be
$$\Gamma_{Z}(U, \mathcal{F}): = \Big\{ s \in \Gamma(U) \mid \mathrm{Supp}(s) \subseteq Z \cap U \Big\}$$
where $\mathrm{Supp}(s) := \{x \in U \mid s_x \neq 0\}$. Define $\uG_Z(\mathcal{F})$ to be the sheaf
$U \mapsto \Gamma_Z(U, \mathcal{F})$. \footnote{ The presheaf $\uG_Z(\mathcal{F})$ is already a sheaf \cite{hartshorne1967local}. }

Let $Z^i:= \{x \in X \mid \mu_K(x) \geq i \}$.  Assume further that $K$ is represented by a complex $I$ of injective $\SO_X$-modules.\ Then, we have a filtration of closed subsets
$$ \emptyset= Z^m \subseteq \cdots \subseteq Z^{p+1} \subseteq Z^p \subseteq Z^{p-1} \subseteq \cdots \subseteq Z^n = X, $$
and a filtration of subcomplexes of $\SO_X$-modules
$$ \cdots \subseteq \uG_{Z_{p+1}}(I^\bullet) \subseteq \uG_{Z_p}(I^\bullet) \subseteq \uG_{Z_{p-1}}(I^\bullet) \subseteq \cdots . $$
Define
$$\uG_{Z^{p}/Z^{p+1}}(I^{p+q}) := \uG_{Z_p}(I^{p+q})/\uG_{Z_{p+1}}(I^{p+q})  $$
for any $p,q \in \Z$. The differential on $I$ induces a morphism of complexes
$$ \cdots  \rightarrow \uG_{Z^{p}/Z^{p+1}}(I^{p+q-1}) \xrightarrow{\delta^{p,q-1}} \uG_{Z^{p}/Z^{p+1}}(I^{p+q}) \xrightarrow{\delta^{p,q}} \uG_{Z^{p}/Z^{p+1}}(I^{p+q+1}) \rightarrow \cdots  $$
\begin{definition} The local cohomology sheaf $\uH^{p+q}_{Z^p/Z^{p+1}}(I^\bullet)$ is defined to be $\ker(\delta^{p,q})/ \im(\delta^{p,q-1})$.
\end{definition}
\begin{definition}[{\cite[p.225 Variation 8]{hartshorne1966residues}}]
	Let $x\in X$ and $\mathcal{F}$ be an $\SO_X$-module. Recall that $\Gamma_x(\mathcal{F})$ consists of elements $\overline{s}$ in the stalk $\mathcal{F}_x$, which have a representative $s \in \Gamma(U,\mathcal{F}|_U)$  in a suitable neighborhood $U$ of $x$, whose support is $\overline{\{x\}} \cap U$. Denote by $\mathrm{H}^{q}_x(I^\bullet) := \mathrm{H}^{q} (\Gamma_x(I^\bullet))$.
\end{definition}
\begin{theorem}
	There is a canonical functorial isomorphism
	\[\uH^{p+q}_{Z^p/Z^{p+1}}(I^\bullet) \cong \coprod_{\mu(x)=p}i_x(\mathrm{H}^{p+q}_x(I^\bullet))\]
\end{theorem}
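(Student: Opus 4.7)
The plan is to prove the isomorphism first at the level of a single injective sheaf and then extend to the complex $I^\bullet$ by passing to cohomology.

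\textbf{Step 1 (sheaf-level decomposition).} For any flasque $\SO_X$-module $\mathcal{F}$, in particular for each injective term $I^j$, I would construct a canonical isomorphism
\[
\Phi_\mathcal{F}:\ \bigoplus_{\mu(x)=p} i_x\bigl(\Gamma_x(\mathcal{F})\bigr) \xrightarrow{\ \cong\ } \uG_{Z^p/Z^{p+1}}(\mathcal{F}).
\]
The map sends a germ $\overline{s}\in\Gamma_x(\mathcal{F})$, represented by $s\in\Gamma(U,\mathcal{F})$ with $\mathrm{Supp}(s)=\overline{\{x\}}\cap U$, to the class of $s$ in $\Gamma_{Z^p}(U,\mathcal{F})/\Gamma_{Z^{p+1}}(U,\mathcal{F})$. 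This is well-defined since $\overline{\{x\}}\subseteq Z^p$ (codimension can only grow under specialization) and $\overline{\{x\}}\setminus\{x\}\subseteq Z^{p+1}$, so shrinking $U$ does not change the class modulo $Z^{p+1}$-sections. Injectivity is clear. For surjectivity one takes a section $t\in\Gamma_{Z^p}(U,\mathcal{F})$, decomposes $\mathrm{Supp}(t)\cap(Z^p\setminus Z^{p+1})$ locally as a finite disjoint union of generic points of codimension $p$, and uses flasqueness of $\mathcal{F}$ to extend the restrictions of $t$ to the complements of the other components back to $U$, realizing $t$ modulo $\Gamma_{Z^{p+1}}$ as a finite sum of germs supported at single points.

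\textbf{Step 2 (passage to complexes).} The construction $\Phi_\mathcal{F}$ is natural in $\mathcal{F}$ and hence commutes with the differentials of $I^\bullet$. Applying $\Phi$ degreewise therefore yields an isomorphism of complexes of sheaves
\[
\bigoplus_{\mu(x)=p} i_x\bigl(\Gamma_x(I^\bullet)\bigr) \xrightarrow{\ \cong\ } \uG_{Z^p/Z^{p+1}}(I^\bullet).
\]

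\textbf{Step 3 (taking cohomology).} Cohomology of complexes commutes with arbitrary direct sums, and each skyscraper functor $i_x$ is exact (as a direct image along the immersion of a point). Consequently
\[
\mathrm{H}^{p+q}\Bigl(\bigoplus_{\mu(x)=p} i_x\Gamma_x(I^\bullet)\Bigr) \cong \coprod_{\mu(x)=p} i_x\bigl(\mathrm{H}^{p+q}_x(I^\bullet)\bigr),
\]
while by definition the cohomology of the right-hand complex in Step 2 is $\uH^{p+q}_{Z^p/Z^{p+1}}(I^\bullet)$. Combining these gives the stated canonical isomorphism, and functoriality in $I^\bullet$ (in morphisms of complexes of injectives representing $K$) is transparent from the naturality of $\Phi$.

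\textbf{Main obstacle.} The substantive point is the surjectivity part of Step 1. Globally the support of a section in $\Gamma_{Z^p}$ can meet $Z^p\setminus Z^{p+1}$ in many points, so one cannot realize it as a finite sum of sections supported at single points without passing to small enough neighborhoods; the argument must use that the indexing in the target is a sheafified direct sum (so local finiteness suffices) together with the flasqueness of $\mathcal{F}$ to separate contributions from distinct generic points of $Z^p$ by extending restrictions across complements of closed subsets.
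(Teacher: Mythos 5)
The paper's ``proof'' is a bare citation to Hartshorne's \emph{Residues and Duality} (Motif~F, p.~225), so there is no in-paper argument to compare against; your proposal is essentially an expansion of what that reference does. The overall strategy --- establish the decomposition degreewise as an isomorphism of complexes of sheaves, then observe that cohomology (computed termwise, and over a Noetherian space, where sheaf direct sums of skyscrapers coincide with presheaf direct sums) commutes with the decomposition --- is the right one.

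The one point that needs tightening is exactly the one you flag: surjectivity in Step~1. Extending $t|_{V_i}$ back to $U$ using flasqueness of $\mathcal{F}$ alone does not control the support of the extension; an arbitrary flasque extension of $t|_{V_i}$ could have nonzero stalk at some $x_j$ with $j\neq i$, which would spoil the conclusion that $t-\sum_i t_i$ lies in $\Gamma_{Z^{p+1}}$. What makes the argument work is the stronger fact that $\uG_Z(\mathcal{F})$ is flasque whenever $\mathcal{F}$ is flasque and $Z$ is closed: given $s\in\Gamma_Z(V,\mathcal{F})$, glue $s$ with the zero section on $X\setminus Z$ to a section over $V\cup(X\setminus Z)$ and extend to $X$ by flasqueness of $\mathcal{F}$; the result automatically lies in $\Gamma_Z(X,\mathcal{F})$. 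Applying this with $Z=\overline{\{x_i\}}\cup Z^{p+1}$ lets you extend $t|_{V_i}$ within $\uG_{\overline{\{x_i\}}\cup Z^{p+1}}(\mathcal{F})$, so the extension $t_i$ has zero stalk at every codimension-$p$ point other than $x_i$, and checking stalks then gives $t\equiv\sum_i t_i \pmod{\Gamma_{Z^{p+1}}}$. The same flasqueness fact is also what ensures $\uG_{Z^p/Z^{p+1}}(\mathcal{F})$ is already the presheaf quotient (no sheafification), which Step~1 implicitly uses when passing from sections to classes, and it is needed once more to see that your map $\Phi$ is even well defined on the skyscraper summands (a germ at $x$ has representatives only on small neighborhoods, so producing a class over an arbitrary $U\ni x$ again requires a support-controlled extension). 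With these points made explicit, the argument is complete and matches the cited source.
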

\begin{proof}
	See \cite[p.225 Motif F]{hartshorne1966residues}.
\end{proof}
Notice that the differential on $\delta^{p,q}$ induces a well-defined map $\ker(\delta^{p,q}) \to \ker(\delta^{p+1,q})$, which induces a map on local cohomology
$$d : \uH^{p+q}_{Z^p/Z^{p+1}}(I^\bullet) \to \uH^{p+q+1}_{Z^{p+1}/Z^{p+2}}(I^\bullet) $$
which yields a complex
$$  \cdots \rightarrow \uH^{p+q-1}_{Z^{p-1}/Z^{p}}(I^\bullet) \xrightarrow{d} \uH^{p+q}_{Z^p/Z^{p+1}}(I^\bullet) \xrightarrow{d} \uH^{p+q+1}_{Z^{p+1}/Z^{p+2}}(I^\bullet) \rightarrow \cdots $$
When $q=0$, we have the following definition.
\begin{definition}[{\cite[p.241]{hartshorne1966residues}}]
	The Cousin complex $E(K)$ of dualizing complex $K$ is defined to be the residue complex
	$$ \cdots \rightarrow \coprod_{\mu(x)=p-1}i_x(\mathrm{H}^{p-1}_x(I^\bullet)) \xrightarrow{d} \coprod_{\mu(x)=p}i_x(\mathrm{H}^{p}_x(I^\bullet)) \xrightarrow{d} \coprod_{\mu(x)=p+1}i_x(\mathrm{H}^{p+1}_x(I^\bullet)) \rightarrow \cdots $$
\end{definition}

\begin{remark}
	Let $\Dual(X)_{Z^\bullet}$ be the category of dualizing complexes in $D^b_c(\M(X))$ with respect to the filtration $Z^\bullet$ (cf. \cite[p.284]{hartshorne1966residues}), and let $\Res(X)_{Z^\bullet}$ be the additive category of residue complexes. Then, the functor
	$$\mathrm{E}: \Dual(X)_{Z^\bullet} \to \Res(X)_{Z^\bullet}$$
	that sends a dualizing complex to its Cousin complex is an equivalence of categories, cf.\ \cite[Proposition VI.1.1(c), p.304]{hartshorne1966residues} (and \cite[Lemma 3.2.1]{conrad2000grothendieck} for clarification)).\ There is a canonical natural isomorphism $\id \to E$ from $\Dual(X)_{Z^\bullet}$ to itself, cf.\ \textit{loc.\ cit.}.
\end{remark}

\begin{lemma}\label{lma:lifting-cousin}
	Let $f: X \to Y$ be a flat morphism.\ Let $J,\tilde{J} \in \Res(Y)_{Z^\bullet}$.\ Assume that $f^\ast J$ is a dualizing complex on $X$.\ Then, $f^*\tilde{J}$ is a dualizing complex with the same associated codimension function as $f^\ast J$.\ Moreover, the canonical morphism
	\[
		\Hom_{Z^0\Ch^b_c(\M(X))}(f^\ast \tilde{J}, Ef^\ast J) \to \Hom_{D^b_c(\M(X))}(f^\ast \tilde{J}, Ef^\ast J).
	\]
	is a bijection.
\end{lemma}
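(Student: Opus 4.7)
The plan is to address the two assertions separately.

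For the first, I would invoke the classification of dualizing complexes on a connected scheme (cf. Hartshorne RD V.3.1): two dualizing complexes sharing the same codimension function differ only by tensoring with a line bundle. Since $J, \tilde J \in \Res(Y)_{Z^\bullet}$ both carry the codimension function of the filtration $Z^\bullet$, this yields $\tilde J \simeq J \otimes_{\SO_Y} L$ in $D^b_c(\M(Y))$ for some line bundle $L$ on $Y$ (treating each connected component separately). Flatness of $f$ makes $f^*$ exact, so the isomorphism pulls back to $f^* \tilde J \simeq f^* J \otimes_{\SO_X} f^* L$ in $D^b_c(\M(X))$. As $f^* J$ is dualizing and tensoring with a line bundle preserves both being a dualizing complex and the associated codimension function, the first assertion follows.

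For the bijection in the second assertion, I would factor the canonical morphism as
\[
    \Hom_{Z^0\Ch^b_c(\M(X))}(f^* \tilde J,\, Ef^* J)
    \twoheadrightarrow \Hom_{\mathrm{K}^b_c(\M(X))}(f^* \tilde J,\, Ef^* J)
    \xrightarrow{\;\cong\;} \Hom_{D^b_c(\M(X))}(f^* \tilde J,\, Ef^* J).
\]
The right arrow is a bijection because $Ef^* J$ is a residue complex, hence bounded and degreewise injective, hence $K$-injective. The left arrow, being the quotient by chain homotopy, is automatically surjective. Thus the entire content reduces to showing it is injective: every null-homotopic chain map $f^* \tilde J \to Ef^* J$ must already be zero.

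I expect this absence of nontrivial null-homotopies to be the main obstacle. The plan is to exploit the Cousin structure of both complexes. By the first part, $f^* \tilde J$ and $Ef^* J$ are dualizing on $X$ with the same codimension function $\mu$, so $R\sHom_{\SO_X}(f^* \tilde J, Ef^* J)$ is concentrated in degree zero and equal to a line bundle $M$; moreover, $K$-injectivity of $Ef^* J$ means the naive inner Hom complex $[f^* \tilde J, Ef^* J]_X$ already represents this $R\sHom$, so its cohomology sheaves vanish outside degree zero. To upgrade this to the vanishing of the degree-$(-1)$ differential on global sections (which is what rules out null-homotopies), I would carry out a stratum-by-stratum analysis along $Z^\bullet$: using that $(Ef^* J)^n = \coprod_{\mu(x)=n} i_x E(\kappa(x))$ and that $(f^* \tilde J)^n$ inherits a compatible filtration from the pulled-back Cousin filtration on $\tilde J$, one checks that graded morphisms of degree $-1$ between these stratified components produce trivial coboundaries in the Hom complex. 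This essentially reproduces the argument which makes $E\colon\Dual(X)_{Z^\bullet}\xrightarrow{\sim}\Res(X)_{Z^\bullet}$ an equivalence when restricted to residue complexes, now applied to the pair $(f^*\tilde J, Ef^* J)$.
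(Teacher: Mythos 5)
Your treatment of the first assertion (that $f^*\tilde J$ is dualizing with the same codimension function as $f^*J$) via the classification of dualizing complexes is sound, and it is in fact more explicit than what the paper's proof offers for that claim. Writing $\tilde J \cong J \otimes L$ for a line bundle $L$ on $Y$, pulling back by the flat $f$, and observing that tensoring with a line bundle preserves both the dualizing property and the codimension function is a clean argument.

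For the bijection, your reduction matches the paper's first step exactly: since $Ef^*J$ is bounded and degreewise injective, the quotient $\Hom_{K^b_c}\to \Hom_{D^b_c}$ is an isomorphism, so everything hinges on showing there are no nonzero degree-$(-1)$ maps, or at least no nontrivial null-homotopies. But from that point your proposal stops short of a proof. Two issues. First, the observation that $[f^*\tilde J, Ef^*J]_X$ has cohomology concentrated in degree $0$ does not actually help: vanishing of $H^{-1}$ controls the kernel of $d^{-1}$, not its image, and it is the image of $d^{-1}$ (the coboundaries in degree $0$) that has to vanish. Second, the ``stratum-by-stratum analysis'' is too vague to see that it closes the gap; in particular, $f^*\tilde J$ is \emph{not} a residue complex when $f$ is merely flat, so $(f^*\tilde J)^n$ does not inherit a Cousin decomposition in the naive sense, and the analogy with the equivalence $E\colon \Dual(X)_{Z^\bullet}\to\Res(X)_{Z^\bullet}$ does not transfer directly.

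The mechanism the paper actually uses, and which you should identify, is a codimension mismatch via supports. Working locally with $X=\Spec S$, $Y=\Spec R$, take a nonzero $t\colon \tilde J^p\otimes_R S \to (Ef^*J)^{p-1}$. Since $\tilde J^p$ is a sum of injective hulls at primes $\mathfrak p\subset R$ with $\mu(\mathfrak p)=p$, one may pick $r\in\tilde J^p$ with $\mathrm{Supp}(rR)=V(\mathfrak p)$, $\mu(\mathfrak p)=p$, and $t(r\otimes 1)\neq 0$. Flatness of $R\to S$ gives $\mathrm{Supp}((r\otimes 1)S)=V(\mathfrak p S)$. The crux is the inequality $\mu'(\mathfrak q)\geq p$ for every $\mathfrak q\in V(\mathfrak p S)$, with equality exactly for minimal primes of $\mathfrak p S$; this is established by the long exact $\mathrm{Ext}$ sequence against the dualizing complex $(J^\bullet)_{\mathfrak p}\otimes_{R_{\mathfrak p}} S_{\mathfrak q}$ together with going-down for the flat map $R\to S$. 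But $t(r\otimes 1)$ lives in $(Ef^*J)^{p-1}$, hence is supported at primes of $\mu'$-codimension $p-1$; since its support is also contained in $V(\mathfrak p S)$, where every prime has codimension $\geq p$, we get a contradiction. This forces $\Hom_X\bigl((f^*\tilde J)^p, (Ef^*J)^{p-1}\bigr)=0$ outright, which is stronger than what you were aiming for and immediately gives the bijection. Without some version of this codimension argument — the $\mathrm{Ext}$ computation at minimal primes of $\mathfrak p S$ and the going-down input — the proof is incomplete.
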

\begin{proof}
	Since $f^\ast J$ is a dualizing complex, $Ef^\ast J$ is degreewise injective and bounded. Thus, the canonical morphism
	\[
		\Hom_{K^b_c(\M(X))}(f^\ast \tilde{J}, Ef^\ast J) \to	\Hom_{D^b_c(\M(X))}(f^\ast \tilde{J}, Ef^\ast J).
	\]
	is an isomorphism, cf. \cite[Corollary 10.4.7]{weibel1994an}. All we need to do is to prove that
	\[
		\Hom_{X}((f^\ast \tilde{J})^{p}, (Ef^\ast J)^{p-1}) = 0
	\]
	for any $p \in \Z$.

	The question is local on $X$.\ Assume that $X=\Spec(S)$ and $Y=\Spec(R)$. By assumption,  $J^\bullet\otimes_R S$ is a dualizing complex on $X$.\ Let $E(J^\bullet\otimes_R S)$ be the associated Cousin complex of the $S$-module $J^\bullet\otimes_R S$.\ Denote the codimension function of the dualizing complex $J^\bullet$ (resp. $J^\bullet\otimes_R S$) by $\mu: \Spec(S) \to \Z$ (resp. $\mu ': \Spec(R) \to \Z$).

	Take any morphism $t \in \Hom_S (\tilde{J}^p \otimes_R S, E^{p-1}(J^\bullet\otimes_R S))$. Suppose that $t\neq 0$, then there exists $r \in \tilde{J}^p$ with $\mathrm{Supp}(rR) = V(\mathfrak{p})$ for some prime ideal $\mathfrak{p}$ in $R$ satisfying $\mu(\mathfrak{p})=p$, and $t(r\otimes 1) \neq 0$. It follows that the support of $t(r\otimes 1)$ is a finite union $V(\mathfrak{q}_1) \cup \ldots \cup V(\mathfrak{q}_n)$ with
	\begin{equation}\label{eq:contra}
		\mu'(\mathfrak{q}_i) = p-1
	\end{equation}
	and $\mathfrak{q}_i \in \Spec(S)$.\ Since $f$ is flat, the support of $(r\otimes 1) S \subseteq J^p \otimes_R S$ is $\mathrm{Supp}(rR \otimes_R S)$, which is $f^{-1}\mathrm{Supp}(rR)=V(\mathfrak{p}S)$, cf.\ \cite[Proposition 19, p.107]{bourbaki1972elements}.
	Let $\mathfrak{q}$ be a minimal prime ideal in $S$ containing $\mathfrak{p} S$. Since $f$ is flat, $\mathfrak{p} = f^{-1}(\mathfrak{q})$ by the going-down property (cf. \cite[Lemma 10.11, p.239]{eisenbud1995commutative}). Note that $S_{\mathfrak{q}} / \mathfrak{p} S_{\mathfrak{q}}$ is an artinian local ring.\ Let $\mathfrak{m}$ be the maximal ideal of $S_{\mathfrak{q}} / \mathfrak{p} S_{\mathfrak{q}}$. Note that $\mathfrak{m}$ is an $S_{\mathfrak{q}}$-module of finite length. As $S_{\mathfrak{q}}$-modules, $\mathfrak{m} \cong \mathfrak{q} S_{\mathfrak{q}} / \mathfrak{p} S_{\mathfrak{q}}$ and $S_{\mathfrak{q}} / \mathfrak{p} S_{\mathfrak{q}} \cong \kappa(\mathfrak{p}) \otimes_{R_{\mathfrak{p}}} S_{\mathfrak{q}}$.

	Set $\omega := (J^\bullet)_{\mathfrak{p}} \otimes_{R_{\mathfrak{p}}} S_{\mathfrak{q}}$.\ It is a dualizing complex, since the localization of a dualizing complex is dualizing, cf.\ \cite[Lemma V.2.3, p.259]{hartshorne1966residues}. The short exact sequence of $S_{\mathfrak{q}}$-modules
	\[
		0 \to \mathfrak{m} \to S_{\mathfrak{q}} / \mathfrak{p} S_{\mathfrak{q}} \to \kappa(\mathfrak{q}) \to 0
	\]
	induces a long exact sequence
	\[
		\begin{aligned}
			\cdots \to \ext^{q-1}_{S_{\mathfrak{q}}}(S_{\mathfrak{q}} / \mathfrak{p} S_{\mathfrak{q}}, \omega) \to \ext^{q-1}_{S_{\mathfrak{q}}}(\mathfrak{m}, \omega) \to \ext^{q}_{S_{\mathfrak{q}}}(\kappa(\mathfrak{q}), \omega) \to \ext^{q}_{S_{\mathfrak{q}}}(S_{\mathfrak{q}} / \mathfrak{p} S_{\mathfrak{q}}, \omega) \\ \to \ext^{q}_{S_{\mathfrak{q}}}(\mathfrak{m}, \omega) \to  \cdots
		\end{aligned}
	\]
	Note that
	\[
		\ext^{q}_{S_{\mathfrak{q}}}(S_{\mathfrak{q}} / \mathfrak{p} S_{\mathfrak{q}}, \omega) \cong \ext^{q}_{R_{\mathfrak{p}}}(\kappa(\mathfrak{p}), (J^\bullet)_{\mathfrak{p}}) \otimes_{R_{\mathfrak{p}}} S_{\mathfrak{q}} = 0, \quad \text{ for all } q\neq p.
	\]
	Hence $\ext^{q}(\kappa(\mathfrak{p}), \omega)  \cong \ext^{q-1}(\mathfrak{m}, \omega)$ for all $q<p$ and $\ext^{q+1}(\kappa(\mathfrak{p}), \omega)  \cong \ext^{q}(\mathfrak{m}, \omega)$ for all $q>p$.
	Since $\omega$ is a dualizing complex, $\ext^{q}(\kappa(\mathfrak{p}), \omega) = 0$ and $\ext^{q}(\mathfrak{m}, \omega) = \Hom^q(\mathfrak{m}, E(\kappa(\mathfrak{q}))[\mu'(\mathfrak{q})]) = 0$ if $q \neq \mu'(\mathfrak{q})$ (cf. \cite[\href{https://stacks.math.columbia.edu/tag/0A7Q}{Lemma 0A7Q}]{stacks-project}).
	It follows that $\ext^{q}(\kappa(\mathfrak{p}), \omega) = 0$ for all $q \neq p$. Thus $\mu'(\mathfrak{q}) = p$.

	Therefore for any $\mathfrak{q}$ in $V(\mathfrak{p} S)$, $\mu'(\mathfrak{q}) \geq p$ (cf. \cite[proposition V.7.1]{hartshorne1966residues}) and the equality holds if and only if $\mathfrak{q}$ is a minimal prime ideal containing $\mathfrak{p} S$. Since $ \mathrm{Supp}(t(r\otimes1)) \subseteq \mathrm{Supp}(r\otimes1) \subseteq V(\mathfrak{p}S)$,  this contradicts Equation \eqref{eq:contra}.
\end{proof}

\begin{corollary}\label{coro:lifting-cousin}
	Let $f: X \to Y$ be a flat morphism. Let $J$ be a residue complex on $Y$. Assume $f^\ast J$ is a dualizing complex on $X$. Denote by $\mu: Y \to \Z$ (resp. $\mu ': X \to \Z$) the codimension function of dualizing complexes $J$ (resp. $f^\ast J$). Then $\mu '(x) \geq \mu(f(x))$ for any $x \in X$ and the equality holds if and only $x$ is a generic point of $\overline{\{f^{-1}(f(x))\}}$.
\end{corollary}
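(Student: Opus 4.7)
The plan is to observe that the corollary is essentially a direct extraction from the local Ext calculation already performed in the proof of Lemma \ref{lma:lifting-cousin}, combined with the standard jump-by-one behavior of codimension functions of dualizing complexes along immediate specializations. Since the statement is local on $X$ and $Y$, I would first reduce to the affine setting $Y = \Spec(R)$, $X = \Spec(S)$, with $x$ corresponding to $\mathfrak{q} \in \Spec(S)$ and $f(x)$ corresponding to $\mathfrak{p} \in \Spec(R)$. Flatness of $f$ together with going-down forces $\mathfrak{p} = f^{-1}(\mathfrak{q})$, so the closure of the set-theoretic fiber $f^{-1}(f(x))$ in $X$ is $V(\mathfrak{p} S)$, and its generic points are the minimal primes of $S$ over $\mathfrak{p} S$. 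The corollary thus reduces to showing: for $\mathfrak{q} \in V(\mathfrak{p} S)$ one has $\mu'(\mathfrak{q}) \geq p := \mu(\mathfrak{p})$, with equality if and only if $\mathfrak{q}$ is minimal over $\mathfrak{p} S$.

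For the equality on minimal primes, suppose $\mathfrak{q}$ is minimal over $\mathfrak{p} S$, so that $S_{\mathfrak{q}}/\mathfrak{p} S_{\mathfrak{q}}$ is an Artinian local ring with maximal ideal $\mathfrak{m}$. I would set $\omega := (J^\bullet)_{\mathfrak{p}} \otimes_{R_\mathfrak{p}} S_{\mathfrak{q}}$, which is a dualizing complex on $S_\mathfrak{q}$ by flatness (cf. \cite[Lemma V.2.3]{hartshorne1966residues}). Combining the Ext-vanishing
\[
\ext^q_{S_\mathfrak{q}}(S_{\mathfrak{q}}/\mathfrak{p} S_\mathfrak{q}, \omega) \cong \ext^q_{R_\mathfrak{p}}(\kappa(\mathfrak{p}), (J^\bullet)_{\mathfrak{p}}) \otimes_{R_{\mathfrak{p}}} S_\mathfrak{q} = 0 \quad (q \neq p)
\]
with the long exact sequence of Ext coming from $0 \to \mathfrak{m} \to S_\mathfrak{q}/\mathfrak{p} S_\mathfrak{q} \to \kappa(\mathfrak{q}) \to 0$ forces $\ext^q_{S_\mathfrak{q}}(\kappa(\mathfrak{q}), \omega) = 0$ for $q < p$ and nonzero in degree $p$. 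Since the codimension function at a point is characterized as the unique degree in which this Ext is nonzero, $\mu'(\mathfrak{q}) = p$. This is precisely the computation already carried out inside the proof of Lemma \ref{lma:lifting-cousin}.

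Finally, for the general inequality and strict inequality off the generic points of the fiber, I would invoke that the codimension function attached to a residue complex jumps by exactly one across immediate specializations (cf.\ \cite[Proposition V.7.1]{hartshorne1966residues}). Given any $\mathfrak{q} \in V(\mathfrak{p} S)$, pick a minimal prime $\mathfrak{q}_0$ over $\mathfrak{p} S$ with $\mathfrak{q}_0 \subseteq \mathfrak{q}$; by the previous step $\mu'(\mathfrak{q}_0) = p$, and applying the defining property of $\mu'$ along a saturated chain from $\mathfrak{q}_0$ to $\mathfrak{q}$ yields $\mu'(\mathfrak{q}) = p + \mathrm{ht}(\mathfrak{q}/\mathfrak{q}_0) \geq p$, with equality if and only if $\mathfrak{q} = \mathfrak{q}_0$, i.e. if and only if $x$ is a generic point of $\overline{\{f^{-1}(f(x))\}}$. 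The main obstacle is the mild bookkeeping of translating between topological generic points of the fiber closure, minimal primes over $\mathfrak{p} S$, and the Ext-characterization of $\mu'$; the substantive content has already been done inside Lemma \ref{lma:lifting-cousin}, so no new homological input is needed.
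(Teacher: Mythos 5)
Your proof is correct and follows essentially the same route as the paper: the corollary is a direct read-off from the final sentences of the proof of Lemma \ref{lma:lifting-cousin}, where the paper itself asserts that $\mu'(\mathfrak{q}) \geq p$ for all $\mathfrak{q} \in V(\mathfrak{p}S)$ with equality precisely on minimal primes, citing Hartshorne's Proposition V.7.1 and the Ext computation just carried out. You correctly identify the two ingredients (the local Ext-vanishing argument giving $\mu'(\mathfrak{q}_0)=p$ on generic points of the fiber closure, and the jump-by-one property of codimension functions on immediate specializations giving the inequality and strictness off those points), and your spelled-out chain argument via $\mu'(\mathfrak{q}) = p + \mathrm{ht}(\mathfrak{q}/\mathfrak{q}_0)$ is a faithful unpacking of the paper's appeal to V.7.1; the only cosmetic slip is the sentence claiming that flatness and going-down "force $\mathfrak{p} = f^{-1}(\mathfrak{q})$," which is tautological given your setup — the genuine use of going-down is to ensure the minimal primes over $\mathfrak{p}S$ contract exactly to $\mathfrak{p}$, so that the generic points of $V(\mathfrak{p}S)$ are indeed in the fiber.
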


\begin{remark}\label{rmk:res_res}
	Lemma \ref{lma:lifting-cousin} implies that any natural isomorphism $f^*(-) \to Ef^*(-)$ (which exists, cf.\ \cite[Proof of Proposition IV.3.4, p.249]{hartshorne1966residues}) from $\Res(Y)_{Z^\bullet}$ to $D^b_c(\M(X))$ has a unique lifting which is a natural quasi-isomorphism from $\Res(Y)_{Z^\bullet}$ to $\Ch^b_c(\M(X))$.\ We  fix a natural quasi-isomorphism $$\rho: f^*(-) \to Ef^*(-)$$ from $\Res(Y)_{Z^\bullet}$ to $\Ch^b_c(\M(X))$.
\end{remark}

\subsection{Pullback of flat morphisms}
Let $f: X \to Y$ be a flat morphism throughout this subsection.\ Assume that $J$ is a residue complex on $Y$.
\begin{theorem}\label{thm:flat_pullback}
	Suppose that $f^\ast J$ is a dualizing complex on $X$. Then the exact dg functor $$f^\ast : \Ch^b_{c}(\M(Y)) \to \Ch^b_{c}(\M(X))$$ induces a map of Grothendieck-Witt spectra
	\[
		f^\ast: \GW^{[n]}\left(Y, J\right) \to \GW^{[n]}\left(X,Ef^\ast J\right)
	\]
	which respects composition, i.e. if $f$ and $g$ are two consecutive flat morphisms, then $f^\ast g^\ast = (gf)^\ast$.
\end{theorem}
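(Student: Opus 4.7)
The plan is to equip the flat pullback $f^\ast$ with a duality compatibility morphism $\beta$, check that $(f^\ast, \beta)$ is a non-singular exact dg form functor, and then invoke Schlichting's machinery exactly as in the proof of Theorem \ref{thm:pushforward_gw}. For $A \in \Ch^b_c(\M(Y))$, I would define $\beta_A$ as the composition
$$\beta_A \colon f^\ast [A, J]_Y \xrightarrow{\can} [f^\ast A, f^\ast J]_X \xrightarrow{\rho_\ast} [f^\ast A, Ef^\ast J]_X,$$
where the first arrow is the canonical map obtained by pulling back a sheaf $\sHom$ along $f$, and the second is induced by the natural quasi-isomorphism $\rho \colon f^\ast J \to Ef^\ast J$ fixed in Remark \ref{rmk:res_res}. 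Verifying that $(f^\ast, \beta)$ satisfies the dg form functor compatibility with $\can$ is then a diagram chase parallel to Proposition \ref{lma:ac_zeta}, combining the pullback analogues of Lemmas \ref{lem:push-can} and \ref{lem:mor-can} with the naturality of $\rho$.

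The main obstacle is showing that $\beta_A$ is a quasi-isomorphism for every $A$, which is what makes $(f^\ast, \beta)$ non-singular. The second arrow $\rho_\ast$ is a quasi-isomorphism because $\rho$ is one and $Ef^\ast J$ is a bounded complex of injectives (recall $f^\ast J$ is dualizing by hypothesis). For the first arrow, the statement is local, so I would reduce to $X = \Spec S$, $Y = \Spec R$ and argue that
$$\Hom^\bullet_R(A, J) \otimes_R S \longrightarrow \Hom^\bullet_S(A \otimes_R S, J \otimes_R S)$$
is a quasi-isomorphism for $A$ a bounded complex of coherent $R$-modules and $J$ a bounded complex of quasi-coherent injective $R$-modules. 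Since $S$ is $R$-flat, this is termwise true when $A$ is finitely presented, by the standard flat base change for $\Hom$, and extends to bounded complexes via a hyper-Ext/boundedness spectral sequence argument. Granted this, the exact non-singular dg form functor $(f^\ast, \beta)$ induces the required morphism $f^\ast \colon \GW^{[n]}(Y, J) \to \GW^{[n]}(X, Ef^\ast J)$ via \cite[Section~2.8]{schlichting2010mayer}.

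For the claim that pullback respects composition, let $g \colon Y \to Z$ be another flat morphism and $K$ a residue complex on $Z$ satisfying the appropriate dualizing hypotheses. Both $E(gf)^\ast K$ and $Ef^\ast Eg^\ast K$ are residue complexes on $X$ canonically quasi-isomorphic to $(gf)^\ast K = f^\ast g^\ast K$, so the uniqueness half of Lemma \ref{lma:lifting-cousin} produces a unique natural isomorphism $Ef^\ast Eg^\ast K \xrightarrow{\cong} E(gf)^\ast K$ in $Z^0 \Ch^b_c(\M(X))$ making the triangle of quasi-isomorphisms commute. Using naturality of $\rho$ one checks that the duality compatibility morphisms for $f^\ast \circ g^\ast$ and for $(gf)^\ast$ agree under this identification, whence $f^\ast g^\ast = (gf)^\ast$ as morphisms of Grothendieck-Witt spectra.
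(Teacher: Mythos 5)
Your overall strategy coincides with the paper's: define the duality-compatibility morphism $\tilde\beta_A$ as the composite of the canonical pull-back comparison for $\sHom$ with the resolution $\rho\colon f^\ast J \to Ef^\ast J$, verify that $(f^\ast,\tilde\beta)$ is a dg form functor by a diagram chase parallel to Proposition~\ref{lma:ac_zeta} using Lemmas~\ref{lem:pull-can} and~\ref{lem:mor-can}, and deduce the map of spectra via Schlichting's machinery. Your treatment of functoriality in $f$ also follows the paper's route through the uniqueness part of Lemma~\ref{lma:lifting-cousin} and naturality of $\rho$ (this is Lemma~\ref{lem:rho-compo} and Proposition~\ref{prop:composition-pullback} in the paper).

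However, the crux of the theorem is that each $\tilde\beta_A$ is a quasi-isomorphism, and your argument there has a genuine gap. You split $\tilde\beta_A$ into the canonical comparison $f^\ast[A,J]_Y \to [f^\ast A,f^\ast J]_X$ followed by $\rho_\ast\colon [f^\ast A,f^\ast J]_X \to [f^\ast A,Ef^\ast J]_X$ and claim the second is a quasi-isomorphism ``because $\rho$ is one and $Ef^\ast J$ is a bounded complex of injectives.'' That justification is invalid: $[f^\ast A,-]$ does not preserve quasi-isomorphisms for a general bounded source, and $[f^\ast A, f^\ast J]$ (with $f^\ast J$ not degreewise injective) need not compute $\mathrm{R}\sHom_X(f^\ast A, f^\ast J)$. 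In fact, once you show the first arrow is an isomorphism (your local/termwise argument for $A$ with coherent terms is correct for that), proving the second arrow is a quasi-isomorphism is \emph{exactly} the assertion that the derived base-change map $Lf^\ast\mathrm{R}\sHom_Y(A,J) \to \mathrm{R}\sHom_X(Lf^\ast A, Lf^\ast J)$ is a quasi-isomorphism — that is, flat base change for $\mathrm{R}\sHom$ with coherence hypotheses — so your decomposition does not make it any easier. This is precisely what the paper's Lemma~\ref{lma:flat_pullback_quasi_iso} obtains from \cite[Proposition~II.5.8]{hartshorne1966residues}. Your allusion to a ``hyper-Ext/boundedness spectral sequence argument'' could be developed into a direct proof of that base change (reduce to $A$ a single coherent module, compare the Ext spectral sequences using a finite free resolution), but as written it is attached to the wrong arrow and the key point — that $[f^\ast A, f^\ast J]$ agrees with $\mathrm{R}\sHom$ after base change — is not actually established. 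You should either cite a flat base change theorem for $\mathrm{R}\sHom$ as the paper does, or supply the spectral-sequence/free-resolution argument for the composite map $\tilde\beta_A$ rather than for $\rho_\ast$ alone.
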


\begin{remark}
	Note that $f^* J$ is a residue complex, if $f$ is residually stable, i.e.\ $f$ is Gorenstein, the fibres of $f$ are discrete and for all $x \in X$, the extension $\kappa(x) / \kappa(f(x))$ is algebraic (e.g. open immersion), cf.\ \cite[Proposition VI.5.3, p.352]{hartshorne1966residues} (and \cite[p.132]{conrad2000grothendieck} for clarification).\ If $f$ is a surjective Gorenstein morphism of finite type, then $f^*J$ is a dualizing complex if and only if $J$ is a dualizing complex, cf.  \cite[\href{https://stacks.math.columbia.edu/tag/0E4N}{Lemma 0E4N}]{stacks-project}.
\end{remark}

\begin{lemma}\label{lma:flat_pullback_quasi_iso}
	The canonical natural transformation
	\[
		\beta: f^\ast \left[-,J\right]_Y \to \left[ f^\ast -, f^\ast J\right]_X.
	\]
	from $\Ch^b_c(\M(Y))$ to $\Ch^b_c(\M(X))$ induces a natural quasi-isomorphism
	\[
		\tilde{\beta}: f^\ast \left[- ,J\right]_Y \xrightarrow{\beta}  \left[ f^\ast -, f^\ast J\right]_X \xrightarrow{\rho} \left[f^*-, Ef^\ast J\right]_X.
	\]
\end{lemma}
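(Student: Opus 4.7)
The plan is to recognise $\tilde{\beta}$ as the canonical flat base-change morphism for derived $\sHom$, and then invoke the classical fact that this base change is an isomorphism under our coherence and boundedness hypotheses. Since $J$ is a bounded complex of injective $\SO_Y$-modules, it is $K$-injective, so $[A, J]_Y$ represents $\mathrm{R}\sHom_Y(A, J)$; since $f$ is flat, $f^{*} = \mathrm{L}f^{*}$, hence the source $f^{*}[A, J]_Y$ represents $\mathrm{L}f^{*}\mathrm{R}\sHom_Y(A, J)$. On the other side, $Ef^{*}J$ is bounded and degreewise injective, hence $K$-injective, and the quasi-isomorphism $\rho : f^{*}J \to Ef^{*}J$ of Remark \ref{rmk:res_res} exhibits $Ef^{*}J$ as a $K$-injective replacement of $\mathrm{L}f^{*}J$, so $[f^{*}A, Ef^{*}J]_X$ represents $\mathrm{R}\sHom_X(\mathrm{L}f^{*}A, \mathrm{L}f^{*}J)$. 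Unwinding definitions, $\tilde{\beta}$ models the canonical base-change morphism between these two derived objects.

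The quasi-isomorphism claim is local on $X$ (and hence on $Y$), so I would reduce to the affine case $Y = \Spec R$, $X = \Spec S$ with $S$ a flat Noetherian $R$-algebra. By d\'evissage on the complex $A$ using truncation distinguished triangles and the five-lemma---noting that both the source functor $f^{*}[-, J]_Y$ and the target functor $[f^{*}(-), Ef^{*}J]_X$ convert distinguished triangles in $A$ into distinguished triangles in $\mathrm{D}^b(\M(X))$---it suffices to treat $A = \mathcal{F}$, a single coherent sheaf corresponding to a finitely generated $R$-module $M$. In this case the assertion reduces to the standard flat base-change isomorphism for derived $\Hom$ over Noetherian rings,
\[
\mathrm{R}\Hom_R(M, J) \otimes_R S \xrightarrow{\sim} \mathrm{R}\Hom_S(M \otimes_R S, J \otimes_R S),
\]
valid for $M$ finitely generated and $J$ a bounded complex of injectives; post-composing with the quasi-isomorphism induced by $\rho$ identifies the right-hand side with $\mathrm{R}\Hom_S(M \otimes_R S, Ef^{*}J)$.

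The hard part will be confirming that our specific composite $\tilde{\beta}$, built from the chosen $\rho$ of Remark \ref{rmk:res_res}, really coincides on the nose with the abstract base-change morphism, not merely up to some natural transformation in the derived category. This is where the uniqueness afforded by Lemma \ref{lma:lifting-cousin} is essential: it rigidifies $\rho$ on residue complexes and forces $\tilde{\beta}$ to represent the classical base-change map as a genuine morphism of complexes rather than just as a derived map.
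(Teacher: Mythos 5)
Your proof proposal is correct in spirit but takes a longer route than the paper. The paper's proof is a one-liner: after observing that $Ef^*J$ is a bounded complex of injectives and hence a $K$-injective replacement of $f^*J$ (so that $\sHom_X(-,Ef^*J)$ computes $\mathrm{R}\sHom_X(-,f^*J)$), it simply cites \cite[Proposition II.5.8]{hartshorne1966residues}, which is exactly the assertion that the flat base-change morphism for derived $\sHom$ is an isomorphism under the coherence and boundedness hypotheses in force. Your d\'evissage to the affine case and reduction to a single finitely generated module is, in effect, a re-proof of that proposition; it works, and it has the pedagogical virtue of being self-contained, but it duplicates the cited reference.

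One caveat about your final paragraph: the appeal to Lemma \ref{lma:lifting-cousin} is misplaced. That lemma concerns the \emph{uniqueness} of a chain-level lift of the natural isomorphism $f^*(-) \to Ef^*(-)$; it does not address whether $\tilde{\beta} = \rho_* \circ \beta$ represents the derived base-change morphism. The correct (and much simpler) justification is that $\beta$ is by construction the underived model of the base-change morphism, and post-composing with $\rho_*$ is the localization map $\sHom_X(f^*A, f^*J) \to \sHom_X(f^*A, Ef^*J)$ into a $K$-injective resolution of the target; since $\rho$ is a quasi-isomorphism and $Ef^*J$ is $K$-injective, this is the canonical derived comparison. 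No rigidification is required for that step, only for the later compatibility statements in the paper (e.g.\ Lemma \ref{lem:rho-compo}).
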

\begin{proof}
	Since the complex $Ef^*J$ is degreewise injective and represents $f^*J$,	we have
	$$
		\mathrm{R}\sHom_X(-,f^*J) \cong \sHom_X(-, Ef^*J)
	$$
	in $\mathrm{D}^b_c(\M(X))$. The result follows from \cite[Proposition II.5.8]{hartshorne1966residues}.
\end{proof}

\begin{lemma}\label{lem:pull-can}
	Let $f: X \to Y$ be a morphism of schemes, and let $\mathcal{F}$ and $\mathcal{G}$ be $\SO_Y$-modules.\ Then, the following diagram
	\[
		\xymatrix{f^\ast \mathcal{F} \ar[r]^-{f^\ast \can} \ar[d]_-{\can} & f^\ast \sHom_Y(\sHom_Y(\mathcal{F},\mathcal{G}),\mathcal{G}) \ar[d]_{\beta}\\
		\sHom_X(\sHom_X(f^\ast \mathcal{F}, f^\ast \mathcal{G}), f^\ast \mathcal{G}) \ar[r]^-{\beta^\sharp} & \sHom_X(f^\ast \sHom_Y(\mathcal{F},\mathcal{G}), f^\ast \mathcal{G})}
	\]
	commutes in $\M(X)$.
\end{lemma}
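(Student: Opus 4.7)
The plan is to verify commutativity by the same strategy as in Lemma \ref{lem:push-can}, but since $f^\ast$ does not admit a simple description on sections over an open set, the right substitute is to work at stalks. A morphism of $\SO_X$-modules is determined by its stalks, so I fix $x \in X$ with image $y := f(x) \in Y$ and set $R := \SO_{Y,y}$, $S := \SO_{X,x}$, $M := \mathcal{F}_y$, $N := \mathcal{G}_y$. Then $(f^\ast \mathcal{F})_x = M \otimes_R S$ and $(f^\ast \mathcal{G})_x = N \otimes_R S$, and the stalk of $\beta$ at $x$ is induced by the classical canonical map $\Hom_R(M,N) \otimes_R S \to \Hom_S(M \otimes_R S, N \otimes_R S)$ sending $t \otimes s$ to the $S$-linear map $m' \otimes s' \mapsto t(m') \otimes s s'$.

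I would then trace a pure tensor $m \otimes 1$ around both paths of the induced diagram of $S$-modules. The top-right path $\beta \circ f^\ast(\can)$ sends $m \otimes 1$ to the $S$-linear map $t \otimes 1 \mapsto t(m) \otimes 1$. The left-bottom path $\beta^\sharp \circ \can$ first sends $m \otimes 1$ to the evaluation-at-$(m \otimes 1)$ homomorphism and then precomposes with $\beta$; the resulting $S$-linear map sends $t \otimes 1$ to $\beta(t \otimes 1)(m \otimes 1) = t(m) \otimes 1$. The two coincide on pure tensors, and $R$-bilinearity together with $S$-linearity propagates agreement to all germs, hence gives equality at every stalk.

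Alternatively, a more conceptual argument uses that both compositions in the diagram, viewed as morphisms $f^\ast \mathcal{F} \to \sHom_X(f^\ast\sHom_Y(\mathcal{F},\mathcal{G}), f^\ast \mathcal{G})$, are adjunct under the tensor-hom adjunction on $\SO_X$-modules to the single morphism
\[
f^\ast \mathcal{F} \otimes_{\SO_X} f^\ast \sHom_Y(\mathcal{F},\mathcal{G}) \;\cong\; f^\ast\bigl(\mathcal{F} \otimes_{\SO_Y} \sHom_Y(\mathcal{F},\mathcal{G})\bigr) \xrightarrow{f^\ast \mathrm{ev}} f^\ast \mathcal{G},
\]
where the first isomorphism is provided by the symmetric monoidal structure of $f^\ast$ and $\mathrm{ev}$ is the evaluation pairing; uniqueness of adjoints then yields commutativity. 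Either route is essentially routine bookkeeping, analogous to Lemmas \ref{lem:push-can} and \ref{lem:mor-can}, so I do not anticipate any serious obstacle beyond keeping track of the canonical identifications involved.
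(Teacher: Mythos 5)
Your first route has a genuine gap: the stalk of a sheaf Hom is not the module Hom of stalks, i.e. the canonical map $\sHom_Y(\mathcal{F},\mathcal{G})_y \to \Hom_{\SO_{Y,y}}(\mathcal{F}_y,\mathcal{G}_y)$ is neither injective nor surjective for arbitrary $\SO_Y$-modules, and the same goes for the outer $\sHom_X$ target. So when you describe "the stalk of $\beta$" as the classical map $\Hom_R(M,N)\otimes_R S \to \Hom_S(M\otimes_R S,N\otimes_R S)$ and compute images of $m\otimes 1$ as $S$-linear maps, you are not actually identifying germs in $\sHom_X(f^\ast\sHom_Y(\mathcal{F},\mathcal{G}),f^\ast\mathcal{G})_x$; agreement after post-composing with the (non-injective) comparison map to $\Hom_S$ does not force equality of the stalk morphisms. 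The argument as written does not close without a finiteness hypothesis that the lemma does not assume.

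Your second route, by contrast, is sound and does the job cleanly: both composites are adjoint, under the tensor--hom adjunction on $\M(X)$, to $f^\ast(\mathrm{ev})$ read through the monoidal comparison isomorphism $f^\ast(-)\otimes_{\SO_X}f^\ast(-)\cong f^\ast((-)\otimes_{\SO_Y}(-))$; uniqueness of adjuncts finishes. This is genuinely different from what the paper does: the paper invokes the $f^\ast \dashv f_\ast$ adjunction to transport the square to one in $\M(Y)$ involving the unit $\eta:\id\to f_\ast f^\ast$, where $f_\ast$ has a concrete description on sections over opens $V\subseteq Y$ (namely sections over $f^{-1}V$), and then verifies commutativity section by section. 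The paper's strategy is the same local-verification idea you were reaching for in route one, but it avoids the $\sHom$-stalk pitfall by working with $f_\ast$ on opens rather than $f^\ast$ on stalks. Your tensor--hom adjunction route trades the explicit element chase for a slightly heavier formal apparatus, but it sidesteps all section/stalk bookkeeping and is arguably more robust; either the paper's route or your second one is acceptable, but you should drop the stalk computation or rewrite it to first apply the tensor--hom adjunction (so that the target becomes $f^\ast\mathcal{G}$, whose stalks \emph{do} behave) before passing to stalks.
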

\begin{proof}
	By the adjunction of $f^\ast$ and $f_\ast$, it suffices to prove that the following diagram
	\[
		\xymatrix{\mathcal{F} \ar[r]^-{\can} \ar[d]_-{\can \eta} & \sHom_Y(\sHom_Y(\mathcal{F},\mathcal{G}),\mathcal{G}) \ar[d]_{\beta \eta}\\
		f_\ast \sHom_X(\sHom_X(f^\ast \mathcal{F}, f^\ast \mathcal{G}), f^\ast \mathcal{G}) \ar[r]^-{f_\ast \beta^\sharp} & f_\ast \sHom_X(f^\ast \sHom_Y(\mathcal{F},\mathcal{G}), f^\ast \mathcal{G})}
	\]
	commutes in $\M(Y)$, where the natural transformation $\eta: \id \to f_\ast f^\ast$ is the unit of the adjunction. 	We check the commutativity locally by sections. Let $V$ be an open subset of $Y$.\ We show that the diagram
	\[
		\xymatrix{\mathcal{F}(V) \ar[r]^-{\can} \ar[d]_-{\can \eta} & \Hom_{V}(\sHom_Y(\mathcal{F},\mathcal{G})|_{V},\mathcal{G}|_{V}) \ar[d]_{\beta \eta}\\
		\Hom_{f^{-1}V}(\sHom_X(f^\ast \mathcal{F}, f^\ast \mathcal{G})|_{f^{-1}V}, f^\ast \mathcal{G}|_{f^{-1}V} ) \ar[r]^-{\beta^\sharp} & \Hom_{f^{-1}V}(f^\ast \sHom_Y(\mathcal{F},\mathcal{G})|_{f^{-1}V}, f^\ast \mathcal{G}|_{f^{-1}V})}
	\]
	commutes.\ Let $s\in \mathcal{F}(V)$. We explain the identity $\beta \eta(\can(s)) = \beta^\sharp(\can \eta(s))$.\
	Take an open subset $U$ of $f^{-1}V$. The left-hand side morphism
	$		\beta \eta(\can(s)): f^{*}\sHom_Y(\mathcal{F},\mathcal{G}) (U)  \to f^{*}\mathcal{G} (U) $ sends $$t \otimes u \in f^{*}\sHom_Y(\mathcal{F},\mathcal{G}) (U) :=  f^{-1}\sHom_Y(\mathcal{F},\mathcal{G}) \otimes_{f^{-1}\SO_Y } \SO_X(U) $$
	to the element  $t(s) \otimes u \in f^*\mathcal{G}(U) \cong f^{-1}\mathcal{G}\otimes_{f^{-1}\SO_Y} \SO_X(U)$.\ Strictly speaking, one should express $f^{-1}\sHom_Y(\mathcal{F},\mathcal{G})$ as a colimit and take a representative of $t$ to define the element $t(s)$. For the right-hand side, note first that the canonical morphism $\beta :  f^{*}\sHom_Y(\mathcal{F},\mathcal{G}) (U) \to  \sHom_X(f^*\mathcal{F},f^*\mathcal{G}) (U) $ sends an element $t\otimes u$ to the morphism $\beta(t\otimes u): f^\ast \mathcal{F}|_U \to f^\ast \mathcal{G}|_U$ such that $\beta(t\otimes u)(r \otimes v) := t(r) \otimes uv$. Hence, $\beta^\sharp(\can \eta(s))(t\otimes u)= \can(s\otimes 1)(\beta(t\otimes u)) = \beta(t\otimes u)(s\otimes 1) = t(s) \otimes u$.
\end{proof}

\begin{proposition}\label{prop:beta}
	The pullback functor
	\[
		f^\ast: \Ch^b_c(\M(Y)) \to \Ch^b_c(\M(X))
	\]
	induces a dg form functor
	\[ (f^\ast, \tilde{\beta}) : ( \Ch^b_c(\M(Y)) , \sharp_J) \to (\Ch^b_c(\M(X)), \sharp_{Ef^\ast J}).   \]
\end{proposition}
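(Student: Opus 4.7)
The plan is to verify that $(f^*, \tilde{\beta})$ is a dg form functor, which amounts to showing that the duality compatibility square
\[
\xymatrix{
f^* A \ar[r]^-{f^*\can} \ar[d]_-{\can} & f^*[[A, J]_Y, J]_Y \ar[d]^-{\tilde{\beta}_{[A,J]_Y}} \\
[[f^*A, Ef^*J]_X, Ef^*J]_X \ar[r]^-{\tilde{\beta}_A^\sharp} & [f^*[A, J]_Y, Ef^*J]_X
}
\]
commutes for every $A \in \Ch^b_c(\M(Y))$; that $\tilde{\beta}$ is a dg natural transformation and that $f^*$ preserves quasi-isomorphisms are immediate from flatness of $f$ and from $\beta$, $\rho$ being natural morphisms of complexes.

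The key idea is to exploit the factorization $\tilde{\beta}_A = \rho_* \circ \beta_A$, where $\rho_*$ denotes post-composition with the fixed natural quasi-isomorphism $\rho: f^*J \to Ef^*J$ from Remark \ref{rmk:res_res}. This induces a corresponding factorization $\tilde{\beta}_A^\sharp = \beta_A^\sharp \circ \rho^\sharp$, where $\rho^\sharp$ is pre-composition with $\rho_*: [f^*A, f^*J]_X \to [f^*A, Ef^*J]_X$. With these factorizations in place, the target square expands into three sub-squares whose commutativities I would establish separately. The square relating $f^*\can$ to $\can$ through $\beta$ and $\beta^\sharp$ is exactly Lemma \ref{lem:pull-can}. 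The square relating the two ways of bringing $\rho_*$ past $\beta_A^\sharp$ is the naturality of $[\beta_A, -]_X$ in the second argument, applied to $\rho$ (equivalently, associativity of composition). Finally, the square comparing the two canonical double-dual maps of $f^*A$ with respect to $f^*J$ and $Ef^*J$ is Lemma \ref{lem:mor-can} applied with $\psi = \rho$, which rewrites $\can: f^*A \to [[f^*A, Ef^*J]_X, Ef^*J]_X$ followed by $\rho^\sharp$ as $\can: f^*A \to [[f^*A, f^*J]_X, f^*J]_X$ followed by $\rho_*$. Pasting these three sub-squares together yields the desired commutativity.

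The only real obstacle in writing this up is careful bookkeeping — tracking which slot of which nested mapping complex each $\rho$ is being plugged into, and which sense of $\sharp$ is at play at each stage. Once the factorization $\tilde{\beta} = \rho_* \circ \beta$ is fixed and these conventions are pinned down, every piece of the diagram is handled by a single application of a previously established lemma or by a one-variable naturality argument, and no further mathematical content is needed; the structure of the proof is entirely parallel to that of Proposition \ref{lma:ac_zeta} for the pushforward.
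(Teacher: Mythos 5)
Your proposal is correct and follows essentially the same route as the paper: it factors $\tilde\beta = \rho_* \circ \beta$, expands the duality-compatibility square into exactly the same three sub-squares, and justifies them by the same applications of Lemma \ref{lem:pull-can}, Lemma \ref{lem:mor-can} (with $\psi=\rho$), and naturality, just as the paper's three labelled diagrams do.
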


\begin{proof}
	All we need to do is demonstrate that the diagram of functors
	\[
		\xymatrix{f^\ast \ar[rr]^-{f^\ast \mathrm{can}} \ar[d]_-{\mathrm{can}_{f^\ast}}&& f^\ast \sharp_{J}\sharp_{J} \ar[d]_-{\tilde{\beta}_{\sharp_{J}}} \\
		\sharp_{Ef^\ast J}\sharp_{Ef^\ast J} f^\ast \ar[rr]^-{\sharp_{Ef^\ast J} (\tilde{\beta})} && \sharp_{Ef^\ast J} f^\ast \sharp_{J}}
	\]
	commutes. The proof requires that we establish the commutativity of the following diagram
	$$
		\xymatrix{f^\ast A \ar@{=}[d] \ar[rr]^-{f^\ast \mathrm{can}} \ar@{}[drr]|-{\diagram \label{diag:pull_dual_1}} & & f^\ast [[A,J],J] \ar[d]_-{\beta}\\
		f^\ast A \ar[r]^-{\mathrm{can}}  \ar[d]_-{\mathrm{can}} \ar@{}[dr]|-{\diagram \label{diag:pull_dual_2}} &\left[\left[f^\ast A, f^\ast J\right], f^\ast J\right] \ar[r]^-{\beta^\sharp} \ar[d]_-{\rho} \ar@{}[dr]|-{\diagram \label{diag:pull_dual_3}} & \left[f^\ast [A,J], f^\ast J\right] \ar[d]_-{\rho} \\
		\left[\left[f^\ast A, Ef^\ast J\right], Ef^\ast J\right] \ar[r]^-{\rho^\sharp} &\left[\left[f^\ast A, f^\ast J\right], Ef^\ast J\right] \ar[r]^-{\beta^\sharp} & \left[f^\ast [A,J], Ef^\ast J\right]}
	$$
	for any object $A$ in $\Ch^b_c(\M(Y))$.\ The diagram $\diag{\ref{diag:pull_dual_1}}$ (resp.\ $\diag{\ref{diag:pull_dual_2}}$) is commutative by Lemma \ref{lem:pull-can} (resp.\ Lemma \ref{lem:mor-can}). The diagram $\diag{\ref{diag:pull_dual_3}}$ commutes by naturality.
\end{proof}

\begin{proof}[Proof of Theorem \ref{thm:flat_pullback}]
	By Lemma \ref{lma:flat_pullback_quasi_iso} and Proposition \ref{prop:beta}, we have a nonsingular exact form functor $(f^\ast, \tilde{\beta}):( \Ch^b_c(\M(Y)) , \quis, \sharp_J) \to (\Ch^b_c(\M(X)), \quis, \sharp_{Ef^\ast J})$, which induces the required map
	\[
		f^\ast: \GW^{[n]}\left(Y, J\right) \to \GW^{[n]}\left(X,Ef^\ast J\right).
	\]
	The fact that the pullback morphism respects composition follows from Proposition \ref{prop:composition-pullback} below.
\end{proof}

Let $f:X\to Y$ and $g:Y \to Z$ be flat morphisms. Let $K$ be a residue complex on $Z$. Assume that  $g^\ast K$ and $f^\ast g^\ast K$ are both dualizing complexes. Let $a: f^\ast g^\ast \to (gf)^\ast$ be the canonical natural isomorphism, cf. \cite[$(3.6.1)^*$, p.118]{lipman2009notes}.
The functor $E: \Dual(X) \to \Res(X)$ sends the isomorphism $f^*(\rho)$ to an isomorphism $E(f^*(\rho)): Ef^\ast g^\ast K \to Ef^\ast Eg^\ast K$. Define $b$ to be the composition
\[
	b: E(gf)^*K \xrightarrow{E(a^{-1})} Ef^*g^*K \xrightarrow{E(f^*(\rho))} Ef^* Eg^* K,
\]
which is an isomorphism.
\begin{lemma}\label{lem:rho-compo}
	The diagram
	\[
		\xymatrix{ f^*g^*K \ar[r]^-{a} \ar[d]_-{f^*(\rho)} & (gf)^*K \ar[r]^-{\rho} &  E (gf)^*K \ar[d]_-{b} \\
		f^*Eg^*K \ar[rr]^-{\rho} && Ef^*Eg^*K }
	\]
	commutes in $\Ch^b_c(\M(X))$.
\end{lemma}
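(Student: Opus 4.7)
The plan is to upgrade a commutativity in the derived category $D^b_c(\M(X))$ to chain-level commutativity by exploiting the uniqueness of the lifting furnished by Lemma \ref{lma:lifting-cousin}. By Remark \ref{rmk:res_res}, each instance of $\rho$ appearing in the diagram represents, in $D^b_c(\M(X))$, the corresponding component of the canonical natural isomorphism $\eta\colon \id \to E$ on $\Dual(X)_{Z^\bullet}$.

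First I would note that $a\colon f^\ast g^\ast K \to (gf)^\ast K$ is a chain isomorphism (it is the canonical isomorphism of pullback functors), and that $b$ is likewise a chain isomorphism, since the equivalence $E$ sends the derived isomorphisms $a^{-1}$ and $f^\ast(\rho)$ to chain-level isomorphisms in $\Res(X)_{Z^\bullet}$. Consequently, the asserted identity $b \circ \rho \circ a = \rho \circ f^\ast(\rho)$ of chain maps $f^\ast g^\ast K \to Ef^\ast Eg^\ast K$ is equivalent to the identity
\[
\rho = b^{-1} \circ \rho \circ f^\ast(\rho) \circ a^{-1}
\]
of chain maps $(gf)^\ast K \to E(gf)^\ast K$, where the leftmost $\rho$ stands for the component at $K$ of the natural quasi-isomorphism attached to the morphism $gf$.

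Next I would invoke Lemma \ref{lma:lifting-cousin} applied to the flat morphism $gf\colon X \to Z$ with $\tilde{J} = J = K$: this yields that the canonical map $\Hom_{Z^0\Ch^b_c(\M(X))}((gf)^\ast K, E(gf)^\ast K) \to \Hom_{D^b_c(\M(X))}((gf)^\ast K, E(gf)^\ast K)$ is a bijection. Hence the chain-level identity above may be verified in $D^b_c(\M(X))$. There, applying naturality of $\eta$ to the chain isomorphism $a$ gives $E(a^{-1}) \circ \eta_{(gf)^\ast K} = \eta_{f^\ast g^\ast K} \circ a^{-1}$, and applying it to $f^\ast(\rho)\colon f^\ast g^\ast K \to f^\ast Eg^\ast K$ gives $\eta_{f^\ast Eg^\ast K} \circ f^\ast(\rho) = E(f^\ast(\rho)) \circ \eta_{f^\ast g^\ast K}$. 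Composing these two identities and unfolding $b = E(f^\ast(\rho)) \circ E(a^{-1})$ produces $b \circ \eta_{(gf)^\ast K} = \eta_{f^\ast Eg^\ast K} \circ f^\ast(\rho) \circ a^{-1}$ in $D^b_c(\M(X))$, which is the desired derived equality after pre-composition by $b^{-1}$.

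The main obstacle is the passage from derived- to chain-level equality of maps, a subtlety that disappears for morphisms with target a Cousin complex by virtue of Lemma \ref{lma:lifting-cousin}; once that reduction is in place, the remainder is a formal computation using naturality of $\eta$ applied twice.
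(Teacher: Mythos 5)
Your proof is correct and takes essentially the same approach as the paper: both reduce the chain-level equality to a derived-category equality by invoking Lemma~\ref{lma:lifting-cousin}, and then verify the derived equality by naturality of the canonical transformation $\id \to E$. Your conjugation of the identity by the chain isomorphisms $a$ and $b$ to land in the $\Hom$-set for $(gf)^\ast K \to E(gf)^\ast K$ is a minor, equivalent reformulation of the paper's transfer of the bijection along those same isomorphisms, and your explicit two-step naturality computation simply unpacks what the paper dismisses with ``by naturality.''
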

\begin{proof}
	By Lemma \ref{lma:lifting-cousin}, we have a canonical isomorphism
	\[
		\Hom_{\Ch^b_c(\M(X))}((fg)^*K, E(fg)^*K) \cong \Hom_{D^b_c(\M(X))}((fg)^*K, E(fg)^*K).
	\]
	Since $f^*g^*K \cong (gf)^*K$ and $Ef^*Eg^*K \cong E (gf)^*K$ in $\Ch^b_c(\M(X))$, we see that the canonical morphism
	\[
		\Hom_{\Ch^b_c(\M(X))}(f^*g^*K, Ef^*Eg^*K) \to \Hom_{D^b_c(\M(X))}(f^*g^*K, Ef^*Eg^*K)
	\]
	is a bijection.\ The diagram in question is commutative (i.e. $b\rho a = \rho f^*(\rho)$) in $\mathrm{D}^b_c(\M(X))$ by naturality, and therefore it is commutative in $\Ch^b_c(\M(X))$.
\end{proof}

\begin{proposition}[Composition of flat pullback]\label{prop:composition-pullback}
	The following diagram
	\[
		\xymatrix{\GW^{[n]}\left(Z,K\right) \ar[r]^-{(gf)^\ast} \ar[d]_-{g^\ast} & \GW^{[n]}\left(X,E(gf)^\ast K\right) \ar[d]_{b} \\
		\GW^{[n]}\left(Y,Eg^\ast K\right) \ar[r]^-{f^\ast} & \GW^{[n]}\left(X,Ef^\ast Eg^\ast K\right)}
	\]
	commutes.
\end{proposition}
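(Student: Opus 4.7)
The plan is to exhibit both compositions as arising from non-singular exact dg form functors
\[
(\Ch^b_c(\M(Z)), \quis, \sharp_K) \longrightarrow (\Ch^b_c(\M(X)), \quis, \sharp_{Ef^\ast Eg^\ast K})
\]
and then produce a natural isomorphism of form functors between them, using the canonical isomorphism $a : f^\ast g^\ast \xrightarrow{\sim} (gf)^\ast$ of underlying dg functors. Once that is done, functoriality of the Grothendieck-Witt construction (cf.\ \cite[Section 2.8]{schlichting2010mayer}) yields the commutativity of the displayed diagram of spectra.

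Concretely, the composition $(f^\ast, \tilde{\beta}) \circ (g^\ast, \tilde{\beta})$ carries the pasted duality compatibility
\[
f^\ast g^\ast \sharp_K \xrightarrow{f^\ast(\tilde{\beta})} f^\ast \sharp_{Eg^\ast K} g^\ast \xrightarrow{\tilde{\beta}} \sharp_{Ef^\ast Eg^\ast K} f^\ast g^\ast,
\]
whereas the composition $(\id, b) \circ ((gf)^\ast, \tilde{\beta})$ carries
\[
(gf)^\ast \sharp_K \xrightarrow{\tilde{\beta}} \sharp_{E(gf)^\ast K} (gf)^\ast \xrightarrow{b^\sharp} \sharp_{Ef^\ast Eg^\ast K} (gf)^\ast.
\]
The key step is to verify that for each $A \in \Ch^b_c(\M(Z))$ the square
\[
\xymatrix{
(gf)^\ast \sharp_K A \ar[rr]^-{a^{-1}} \ar[d]_-{\tilde{\beta}} & & f^\ast g^\ast \sharp_K A \ar[d]^-{\tilde{\beta} \circ f^\ast \tilde{\beta}} \\
\sharp_{E(gf)^\ast K} (gf)^\ast A \ar[r]^-{b^\sharp} & \sharp_{Ef^\ast Eg^\ast K} (gf)^\ast A \ar[r]^-{(a^{-1})^\sharp} & \sharp_{Ef^\ast Eg^\ast K} f^\ast g^\ast A
}
\]
commutes in $\Ch^b_c(\M(X))$.

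To carry out this verification, I will unwind $\tilde{\beta} = \rho \circ \beta$ and split the square into two layers. The inner layer concerns only $\beta$: the canonical pullback-to-Hom transformation is compatible with composition of flat morphisms via $a$, i.e.\ the composite $f^\ast \beta_g$ followed by $\beta_f$ equals $\beta_{gf}$ after identifying $f^\ast g^\ast$ with $(gf)^\ast$ through $a$. This is a standard naturality property of the canonical pullback-Hom morphism that follows by reducing to sections, exactly as in Lemma \ref{lem:pull-can}. The outer layer concerns only $\rho$: its compatibility with composition of pullbacks of residue complexes is precisely the content of Lemma \ref{lem:rho-compo}, which says $b \circ \rho \circ a = \rho \circ f^\ast(\rho)$. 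Pasting these two layers together with naturality of $\beta$ with respect to $\rho$ and $a$ yields the commutativity of the above square.

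The main obstacle is not a conceptual one but the bookkeeping: a priori the diagram involves six distinct natural transformations ($a$, $b$, two instances of $\beta$, two instances of $\rho$), and one must keep the dualities and variances aligned. Once the square above is checked, the pair $(a^{-1}, \id)$ constitutes a natural isomorphism of dg form functors from $(\id, b)\circ((gf)^\ast,\tilde{\beta})$ to $(f^\ast,\tilde{\beta})\circ(g^\ast,\tilde{\beta})$; both form functors are non-singular and exact, and so they induce the same map on Grothendieck-Witt spectra, which is precisely the commutativity of the diagram in the statement.
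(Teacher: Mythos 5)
Your proposal is correct and takes essentially the same route as the paper: reduce to the commutativity of a diagram of dualities for each object $A$, then split it into a $\beta/a$-layer (the paper's Lemma \ref{lma:pullback_composition_hom}, which you propose to verify directly on sections rather than cite Lipman) and a $\rho/b$-layer (Lemma \ref{lem:rho-compo}), glued by naturality. One small slip: the bottom-right arrow in your key square should be labeled $a^\sharp$ (precomposition with $a_A : f^\ast g^\ast A \to (gf)^\ast A$), not $(a^{-1})^\sharp$; this is exactly the morphism $a^\sharp$ that appears in the paper's expanded diagram.
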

\begin{proof}
	It is enough to prove that the following diagram of form functor
	\[
		\xymatrix{(\Ch^b_{c}(\M (Z)), \sharp_{K}) \ar[d]_-{(g^\ast, \tilde{\beta})} \ar[r]^-{((gf)^\ast, \tilde{\beta})} & (\Ch^b_{c}(\M (X)), \sharp_{I}) \ar[d]_{(\id, b)}\\
		(\Ch^b_{c}(\M (Y)), \sharp_{Eg^\ast K}) \ar[r]^-{(f^\ast, \tilde{\beta})}  & (\Ch^b_{c}(\M (X)), \sharp_{I})}
	\]
	commutes up to a natural weak equivalence of form functors. It suffices to prove that the following diagram
	$$
		\xymatrix{f^\ast g^\ast \left[A, K\right] \ar[r]^-{a} \ar[d]_-{\beta} \ar@{}[drr]|-{\diagram \label{diag:pull_comp_1}} & (gf)^\ast \left[A, K\right] \ar[r]^-{\beta} & \left[(gf)^\ast A, (gf)^\ast K\right] \ar[d]^-{a^\sharp} \ar[r] & \left[(gf)^\ast A, E(gf)^\ast K\right] \ar[d]^-{a^\sharp} \\
		f^\ast \left[g^\ast A, g^\ast K\right] \ar[r]^-{\beta} \ar[d] & \left[f^\ast g^\ast A, f^\ast g^\ast K\right] \ar[r]^-{a} \ar[d] \ar@{}[drr]|-{\diagram \label{diag:pull_comp_2}} & \left[f^\ast g^\ast A, (gf)^\ast K\right] \ar[r] & \left[f^\ast g^\ast A, E(gf)^\ast K\right] \ar[d]_{b} \\
		f^\ast \left[g^\ast A, Eg^\ast K \right] \ar[r]^-{\beta} & \left[f^\ast g^\ast A, f^\ast Eg^\ast K\right] \ar[rr] && \left[f^\ast g^\ast A, Ef^\ast Eg^\ast K\right]}
	$$
	commutes for any $A \in \Ch^b_{c}(\M (Z))$, where $\diag{\ref{diag:pull_comp_1}}$ commutes by Lemma \ref{lma:pullback_composition_hom} and $\diag{\ref{diag:pull_comp_2}}$ commutes by Lemma \ref{lem:rho-compo}.
\end{proof}

\begin{lemma}\label{lma:pullback_composition_hom}
	For any $\SO_Z$-modules $\mathcal{F},\mathcal{G}$, the following diagram
	\[
		\xymatrix{f^\ast g^\ast \sHom_Z(\mathcal{F}, \mathcal{G}) \ar[r]^-{a} \ar[d]_-{\beta} & (gf)^\ast \sHom_Z(\mathcal{F}, \mathcal{G}) \ar[r]^-{\beta} & \sHom_X((gf)^\ast \mathcal{F}, (gf)^\ast \mathcal{G}) \ar[d]^-{a^\sharp} \\
		f^\ast \sHom_Y(g^\ast \mathcal{F}, g^\ast \mathcal{G}) \ar[r]^-{\beta} & \sHom_X(f^\ast g^\ast \mathcal{F}, f^\ast g^\ast \mathcal{G}) \ar[r]^-{a} & \sHom_X(f^\ast g^\ast \mathcal{F}, (gf)^\ast \mathcal{G})}
	\]
	commutes.
\end{lemma}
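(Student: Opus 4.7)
The plan is to check commutativity of the diagram pointwise on sections, following the template set by the proofs of Lemma \ref{lem:push-can} and Lemma \ref{lem:pull-can}. Since both composites are morphisms between the same two $\SO_X$-modules, it suffices to verify the equality after restriction to a sufficiently small open $U \subseteq X$ and evaluation on elementary tensors, then pass to sheafification.

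First, I would unwind the two structural natural transformations. For any $\SO_Z$-module $\mathcal{H}$, the canonical isomorphism $a: f^\ast g^\ast \mathcal{H} \to (gf)^\ast \mathcal{H}$ is assembled from the identifications $f^{-1} g^{-1} = (gf)^{-1}$ and the associativity of $\otimes$; at the level of sections it sends an elementary tensor of the form $(h \otimes u) \otimes w$ to $h \otimes uw$. The transformation $\beta: f^\ast \sHom_Y(\mathcal{F}', \mathcal{G}') \to \sHom_X(f^\ast \mathcal{F}', f^\ast \mathcal{G}')$ sends $\phi \otimes s$ to the $\SO_X$-linear morphism given by the rule $(r \otimes s') \mapsto \phi(r) \otimes ss'$, exactly as in the proof of Lemma \ref{lem:pull-can}.

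Next, I would trace both composites on a typical elementary section $(\phi \otimes u) \otimes w$ of $f^\ast g^\ast \sHom_Z(\mathcal{F}, \mathcal{G})$ over $U$, evaluated at an arbitrary source element $(r \otimes u') \otimes w'$ of $f^\ast g^\ast \mathcal{F}$. The top-and-right route applies $a$ to obtain $\phi \otimes uw$ in $(gf)^\ast \sHom_Z(\mathcal{F}, \mathcal{G})$, then $\beta$ to produce the morphism $r \otimes x \mapsto \phi(r) \otimes uwx$, which when precomposed with $a^\sharp$ yields $(r \otimes u') \otimes w' \mapsto \phi(r) \otimes uw u' w'$ in $(gf)^\ast \mathcal{G}$. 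The left-and-bottom route applies $\beta$ twice to produce the morphism $(r \otimes u') \otimes w' \mapsto (\phi(r) \otimes uu') \otimes ww'$ in $f^\ast g^\ast \mathcal{G}$, and then applies $a$ on the target to land on $\phi(r) \otimes uu' w w'$ in $(gf)^\ast \mathcal{G}$. Commutativity of $\SO_Y$- and $\SO_X$-sections identifies these two expressions.

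The principal difficulty is purely bookkeeping: a number of implicit isomorphisms (associativity of tensor products, commutativity of the structure sheaves, and the identification $f^{-1} g^{-1} = (gf)^{-1}$) must be tracked so that the compared sections live in the same module under the same presentation. No conceptual new input is required beyond what already appeared in Lemmas \ref{lem:push-can} and \ref{lem:pull-can}; the statement is a coherence lemma about pullback and internal hom, and the pointwise verification above is complete.
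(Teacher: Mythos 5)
Your proof is correct. The paper does not actually argue the lemma; it simply cites \cite[Exercise~3.7.1.1, p.126]{lipman2009notes}, which records this as part of the pseudo-functoriality of the pullback--internal-hom structure. Your argument amounts to a self-contained solution of that exercise by direct verification on elementary tensor sections, following the same template as the proofs of Lemma~\ref{lem:push-can} and Lemma~\ref{lem:pull-can}: unwind the formulas for $a$ (the comparison isomorphism $f^\ast g^\ast \cong (gf)^\ast$ coming from $f^{-1}g^{-1}=(gf)^{-1}$ and the structure maps) and for $\beta$ (the lax-monoidal comparison on $\sHom$), trace both composites on a typical section $(\phi\otimes u)\otimes w$ evaluated at $(r\otimes u')\otimes w'$, and observe that both land on $\phi(r)$ tensored with the same scalar in the commutative ring $\SO_X$. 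The only caveat, which you already flag as ``bookkeeping,'' is that sections of $f^{-1}(-)$ are colimits, so one should, as in the proof of Lemma~\ref{lem:pull-can}, choose representatives before writing expressions like $\phi(r)$; this is standard and harmless. The trade-off between the two approaches is the usual one: your route is elementary and entirely local, at the cost of tracking the implicit associativity/identification isomorphisms, while the paper's citation defers to the coherence machinery already assembled in Lipman's notes.
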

\begin{proof}
	See \cite[p.126 Exercise 3.7.1.1]{lipman2009notes}.
\end{proof}

\section{On the base change formula}
In this section, we prove a base change formula. 	Let
\[
	\xymatrix{{X'} \ar[r]^-{\bar{g}} \ar[d]_-{\bar{f}}& X \ar[d]_-{f}\\
	Y' \ar[r]^-{g}& Y}
\]
be a fibre product. Then, we have a canonical natural transformation
$$\varepsilon : g^* f_* \to \bar{f}_*\bar{g}^*$$
from $\M(X)$ to $\M(Y')$, which is adjoint to the morphism $f_\ast \to f_\ast \bar{g}_\ast \bar{g}^\ast  = g_\ast \bar{f}_\ast  \bar{g}^\ast $, cf. \cite[Section 12.2, p.327]{gortz2020algebraic}.\ If $f$ is proper and $g$ is flat, then the canonical morphism $\varepsilon: g^* f_*(\mathcal{F}) \to \bar{f}_*\bar{g}^*(\mathcal{F})$ is an isomorphism for any quasi-coherent sheaf $\mathcal{F}$, cf. \textit{loc.\ cit.}.
\begin{lemma}\label{lma:gamma}
	Assume that $f$ is proper and $g$ is an open immersion.\ Then there is a natural isomorphism
	\[
		\gamma: \bar{g}^\ast f^\Delta \xrightarrow{\cong} \bar{f}^\Delta g^\ast
	\]
	from $\Res(Y)$ to $\Res(X')$ such that the following diagram
	\[
		\xymatrix{\bar{f}_\ast \bar{g}^\ast f^\Delta \ar[d]_-{\gamma} \ar[r]^-{\varepsilon^{-1}}_-{\cong}& g^\ast f_\ast f^\Delta \ar[d]_-{\mathrm{Tr}} \\ \bar{f}_\ast \bar{f}^\Delta g^\ast \ar[r]^-{\mathrm{Tr}} & g^\ast}
	\]
	commutes.
\end{lemma}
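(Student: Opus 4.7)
The plan is to construct $\gamma$ via the universal property of $\bar{f}^\Delta$ provided by the trace map, and then check it is an isomorphism by a local argument. Since $g$ is an open immersion, so is its base change $\bar{g}$, and $\bar{f}$ is proper (as the base change of a proper morphism). Because open immersions are residually stable, pullback along $g$ and $\bar{g}$ preserves residue complexes; in particular $g^*J \in \Res(Y')$ and $\bar{g}^*f^\Delta J \in \Res(X')$, so the target $\bar{f}^\Delta g^*J$ is defined. Moreover, $\bar{g}^*f^\Delta J$ is degreewise injective and hence $\bar{f}_*$-acyclic, and since $f$ is proper and $g$ is flat, the base change morphism $\varepsilon$ evaluated at $f^\Delta J$ is an isomorphism.

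To define $\gamma_J$, I form the composite morphism
\[
\bar{f}_*\bar{g}^*f^\Delta J \xrightarrow{\varepsilon^{-1}} g^*f_*f^\Delta J \xrightarrow{g^*\Tr} g^*J
\]
in $Z^0\Ch^b_c(\M(Y'))$. By property (T1) of Theorem \ref{thm:residue} applied to the proper morphism $\bar{f}$ together with the $\bar{f}_*$-acyclicity of $\bar{g}^*f^\Delta J$, morphisms $\bar{g}^*f^\Delta J \to \bar{f}^\Delta g^*J$ in $\mathrm{D}^b_c(\M(X'))$ are in natural bijection---via post-composition with $\bar{f}_*$ and the trace $\Tr_{\bar{f}}$---with morphisms $\bar{f}_*\bar{g}^*f^\Delta J \to g^*J$ in $\mathrm{D}^b_c(\M(Y'))$. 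The composite above therefore determines a derived-category map which, by the rigidity recorded in Lemma \ref{lma:lifting-cousin}, lifts uniquely to a morphism of residue complexes $\gamma_J: \bar{g}^*f^\Delta J \to \bar{f}^\Delta g^*J$. The stated diagram commutes by construction, and naturality in $J$ is inherited from that of $\varepsilon$ and $\Tr$.

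To see that $\gamma_J$ is an isomorphism, I argue locally. Each term of a residue complex is a direct sum of skyscraper sheaves $i_x(E(\kappa(x)))$ indexed by points of a given codimension, and the open immersions $\bar{g}, g$ preserve the codimension function while simply discarding the summands indexed by points outside $X'$, $Y'$ respectively. Under this pointwise description, both $\bar{g}^*f^\Delta J$ and $\bar{f}^\Delta g^*J$ are identified with the same direct summand of $f^\Delta J$, and compatibility with the trace forces $\gamma_J$ to coincide with this tautological identification. The main technical subtlety is precisely the rigidity step---passing from the derived-category map produced by Grothendieck duality to an honest morphism of complexes of injective sheaves---which is exactly the content Lemma \ref{lma:lifting-cousin} is designed to provide.
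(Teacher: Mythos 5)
Your approach differs genuinely from the paper's. The paper's proof is a one-line citation to Hartshorne's Theorems VI.5.3, 5.5, 5.6 and Conrad's Lemma 3.4.3, which together package the construction of the base-change isomorphism for residual complexes along with its compatibility with the trace. You instead attempt a self-contained reconstruction of $\gamma$ from the duality adjunction supplied by (T1) together with the rigidity of morphisms into complexes of injectives. The construction of $\gamma_J$ is sound: since $\bar{g}$ is flat and $f$ proper, $\varepsilon$ is an isomorphism; residue complexes are degreewise injective, hence $\bar{f}_\ast$-acyclic, so (T1) applied to $\bar{f}$ with $A = \bar{g}^\ast f^\Delta J$ yields the adjunction bijection $\Hom_{D^b_c}(\bar{g}^\ast f^\Delta J, \bar{f}^\Delta g^\ast J) \cong \Hom_{D^b_c}(\bar{f}_\ast \bar{g}^\ast f^\Delta J, g^\ast J)$, and the composite $g^\ast\Tr\circ\varepsilon^{-1}$ pins down $\gamma_J$ uniquely; the stated square then commutes by this very characterization. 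Two cautions on the rigidity step: Lemma \ref{lma:lifting-cousin} is stated for $\Hom(f^\ast\tilde{J}, Ef^\ast J)$ with $f$ flat, which is not quite your situation. What you actually need is that $\bar{f}^\Delta g^\ast J$ is a bounded complex of injectives (so $\Hom_{K^b}\cong\Hom_{D^b}$) and that $\Hom\bigl((\bar{g}^\ast f^\Delta J)^p,(\bar{f}^\Delta g^\ast J)^{p-1}\bigr)=0$ for all $p$, and the latter requires verifying that the two residue complexes on $X'$ carry the same codimension function---true, but worth saying explicitly rather than gesturing at Lemma \ref{lma:lifting-cousin}.

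The genuine gap is your argument that $\gamma_J$ is an isomorphism. The assertion that ``both $\bar{g}^\ast f^\Delta J$ and $\bar{f}^\Delta g^\ast J$ are identified with the same direct summand of $f^\Delta J$, and compatibility with the trace forces $\gamma_J$ to coincide with this tautological identification'' is not available a priori: the trace-compatibility of that ``tautological'' identification is exactly what the cited Hartshorne--Conrad theorems establish, so invoking it here is circular. A non-circular argument within your framework would be: by the adjunction characterization, $\gamma_J$ is, in $D^b_c(\M(X'))$, precisely the flat base-change morphism $\bar{g}^\ast f^! J \to \bar{f}^! g^\ast J$, which is known to be an isomorphism for $f$ proper and $g$ flat (indeed an open immersion); hence $\gamma_J$ is a quasi-isomorphism. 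Since both $\bar{g}^\ast f^\Delta J$ and $\bar{f}^\Delta g^\ast J$ are residue complexes, i.e.\ minimal bounded complexes of injectives, a quasi-isomorphism between them is automatically an isomorphism of complexes. This closes the argument without appealing to the local trace computation your ``tautological identification'' silently presupposes.
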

\begin{proof}
	Note that residue complexes are degreewise quasi-coherent, so $\varepsilon$ is an isomorphism. See \cite[Theorem VI.5.3, p.352, 5.5, p.354 and 5.6, p.355]{hartshorne1966residues} and \cite[Lemma 3.4.3, p.149]{conrad2000grothendieck}.
\end{proof}

\begin{theorem}\label{thm:residually_stable_pullback}
	Assume that $f$ is proper and $g$ is an open immersion.\ Then, the diagram
	\[
		\xymatrix{\GW^{[n]}\left(X^\prime, \bar{f}^\Delta g^\ast   I\right) \ar[d]_{\bar{f}_\ast} & \ar[l]_{\gamma}^{\simeq} \GW^{[n]}\left(X^\prime, \bar{g}^\ast f^\Delta  I\right) & \ar[l]_(0.43){\bar{g}^\ast} \GW^{[n]}(X, f^\Delta I)  \ar[d]_{f_\ast} \\
		\GW^{[n]}(Y', g^\ast I) && \ar[ll]_-{g^\ast} \GW^{[n]}(Y, I)}
	\]
	of Grothendieck-Witt spectra commutes up to homotopy.
\end{theorem}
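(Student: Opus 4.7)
The plan is to interpret both compositions around the square as dg form functors on a common subcategory of $\Ch^b_c(\M(X))$ and to exhibit a natural weak equivalence between them whose underlying natural transformation is the base change isomorphism $\varepsilon \colon g^\ast f_\ast \xrightarrow{\cong} \bar f_\ast \bar g^\ast$. Since $g$ and $\bar g$ are open immersions, the complexes $g^\ast I$ and $\bar g^\ast f^\Delta I$ are already residue complexes, so no intervening Cousin functor $E$ is required and the two candidate targets of the diagram coincide with $\GW^{[n]}(Y', g^\ast I)$.

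Following the construction of the pushforward in the proof of Theorem \ref{thm:pushforward_gw}, I would work on the subcategory $\AC_f \subset \Ch^b_c(\M(X))$ on which $f_\ast$ already computes $\mathrm{R}f_\ast$. A preliminary check is that $\bar g^\ast$ sends $\AC_f$ into $\AC_{\bar f}$: for $A \in \AC_f$, applying the exact functor $g^\ast$ to the isomorphism $f_\ast A \xrightarrow{\cong} \mathrm{R}f_\ast A$ and combining this with flat base change $g^\ast \mathrm{R}f_\ast A \cong \mathrm{R}\bar f_\ast \bar g^\ast A$ and the sheaf-level base change $\varepsilon$ (which is an isomorphism since $f$ is proper and $g$ is flat) shows that $\bar f_\ast \bar g^\ast A \to \mathrm{R}\bar f_\ast \bar g^\ast A$ is an isomorphism.

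Both routes around the square are then represented by dg form functors from $(\AC_f, \sharp_{f^\Delta I})$ to $(\Ch^b_c(\M(Y')), \sharp_{g^\ast I})$: via $Y$ one obtains $(g^\ast, \tilde\beta) \circ (f_\ast, \zeta)$, and via $X'$ one obtains $(\bar f_\ast, \zeta) \circ (\id, \gamma_\sharp) \circ (\bar g^\ast, \tilde\beta)$, where $\gamma_\sharp$ is the duality comparison induced by the residue-complex isomorphism $\gamma \colon \bar g^\ast f^\Delta I \xrightarrow{\cong} \bar f^\Delta g^\ast I$ of Lemma \ref{lma:gamma}. The base change map $\varepsilon$ provides a natural quasi-isomorphism of the underlying dg functors, and it remains to upgrade this to a natural weak equivalence of form functors.

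That upgrade reduces, for each $A \in \AC_f$, to the commutativity of the diagram
\[
\xymatrix{
g^\ast f_\ast \left[A, f^\Delta I\right] \ar[r]^-{\varepsilon} \ar[d]_-{g^\ast \zeta} & \bar f_\ast \bar g^\ast \left[A, f^\Delta I\right] \ar[r]^-{\bar f_\ast \tilde\beta} & \bar f_\ast \left[\bar g^\ast A, \bar g^\ast f^\Delta I\right] \ar[d]^-{\bar f_\ast \gamma_\sharp} \\
g^\ast \left[f_\ast A, I\right] \ar[d]_-{\tilde\beta} & & \bar f_\ast \left[\bar g^\ast A, \bar f^\Delta g^\ast I\right] \ar[d]^-{\zeta} \\
\left[g^\ast f_\ast A, g^\ast I\right] \ar[rr]^-{\varepsilon^\sharp} & & \left[\bar f_\ast \bar g^\ast A, g^\ast I\right]}
\]
which decomposes into subdiagrams that commute by naturality of $\beta$, naturality of $\varepsilon$, the factorization $\zeta = \mathrm{Tr} \circ \phi$, and, decisively, the compatibility of $\gamma$ with the trace maps asserted in Lemma \ref{lma:gamma}. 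I expect this final piece, intertwining $\varepsilon$ with $\phi$ and $\mathrm{Tr}$ against $\gamma$, to be the main technical obstacle; it is precisely the content that Lemma \ref{lma:gamma} was engineered to deliver.
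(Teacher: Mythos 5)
Your proposal follows essentially the same approach as the paper's proof: restrict to $\AC_f$, verify $\bar g^\ast(\AC_f) \subseteq \AC_{\bar f}$ via base change, exhibit $\varepsilon$ as the natural quasi-isomorphism between the two composite form functors, and reduce to a diagram whose commutativity rests on naturality of $\beta$ and $\phi$, the factorization $\zeta = \Tr \circ \phi$, and the trace compatibility in Lemma \ref{lma:gamma}. The only cosmetic remark is that you write $\tilde\beta$ in the form-functor decomposition while having already (correctly) noted that for open immersions the Cousin resolution $\rho$ is the identity, so $\tilde\beta$ is simply $\beta$ here, which is what the paper uses.
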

\begin{proof}
	We first show that the functor $\bar{g}^\ast : \Ch^b_{c}(\M(X)) \to  \Ch^b_{c}(\M(X^\prime))$ satisfies $\bar{g}^*(\AC_{f}) \subseteq \AC_{\bar{f}}$.  By \cite[Proposition 3.9.5, p.142]{lipman2009notes} and \cite[Proposition IV 3.1.0, p.290]{SGA6}, we note that $\varepsilon : g^* \mathrm{R} f_* \to \mathrm{R} \bar{f}_*\bar{g}^*$ is a natural isomorphism from $\mathrm{D}_c^b(\M(X))$ to $\mathrm{D}_c^b(\M(Y'))$. Consider the following commutative diagram
	\[\xymatrix{\bar{f}_*\bar{g}^*(A) \ar[r]^-{\varepsilon} \ar[d] & g^* f_* (A)  \ar[d] \\
		\mathrm{R}\bar{f}_*\bar{g}^*(A)  \ar[r]^-{\varepsilon} & g^* \mathrm{R}f_* (A) } \]
	in $\mathrm{D}^b_c(\M(Y'))$ where $A $ is an object in $ \mathrm{Ch}_c^b(\M(X)) $, cf. \cite[Lemma 3.10.1.1, p.146]{lipman2009notes}.\ From this diagram, it is evident that the canonical morphism $\bar{f}_*\bar{g}^*(A) \to  \mathrm{R}\bar{f}_*\bar{g}^*(A) $ is an isomorphism if  the canonical morphism $A \to \mathrm{R}f_*A$ is an isomorphism. Therefore, $\bar{g}^*(A)$ is in $\AC_{\bar{f}}$, if $A$ is in the category $\AC_f$. Moreover, it becomes clear that $\varepsilon : g^* f_* \to \bar{f}_*\bar{g}^*$ is a natural quasi-isomorphism from $\AC_f$ to $\Ch^b_{c}(\M(Y^\prime))$.

	All we need to do is demonstrate that the diagram
	\[
		\xymatrix{\left(\AC_{\bar{f}}, \sharp_{\bar{f}^\Delta g^\ast I} \right) \ar[d]_{(\bar{f}_\ast, \zeta)} & \ar[l]_{(\id, \gamma)} \left(\AC_{\bar{f}}, \sharp_{\bar{g}^\ast f^\Delta  I}\right) & \ar[l]_-{(\bar{g}^\ast, \beta)} \left(\AC_{f}, \sharp_{f^\Delta I}\right)  \ar[d]_{(f_\ast, \zeta)} \\
		\left(\Ch^b_{c}(\M(Y')), \sharp_{g^\ast I}\right) && \ar[ll]_-{(g^\ast, \beta)} \left(\Ch^b_{c}(\M(Y)), \sharp_{I}\right)}
	\]
	commutes up to the natural quasi-isomorphism $\varepsilon$.\ The argument is finished, if we can prove the diagram
	\[
		\xymatrix{g^\ast f_\ast [A, f^\Delta I] \ar[r]^-{\zeta} \ar[d]_-{\varepsilon} & g^\ast [f_\ast A, I] \ar[rr]^-{\beta} && [g^\ast f_\ast A, g^\ast I] \\
		\bar{f}_\ast \bar{g}^\ast[A, f^\Delta I] \ar[r]^-{\beta} & \bar{f}_\ast [\bar{g}^\ast A, \bar{g}^\ast f^\Delta I] \ar[r]^-{\gamma} & \bar{f}_\ast[\bar{g}^\ast A, \bar{f}^\Delta g^\ast I]  \ar[r]^-{\zeta} & [\bar{f}_\ast \bar{g}^\ast A, \bar{g}^\ast I] \ar[u]^-{\varepsilon^{\sharp}}
		}
	\]
	commutes for every $A\in \AC_f$. This follows from the commutative diagram
	$$
		\xymatrix@C=8.7pt{\bar{f}_\ast \bar{g}^\ast[A, f^\Delta I] \ar[d]_-{\beta} \ar@{}[rrrrd]!UL|-{\diagram \label{diag:base_dual_1}} &&&& g^\ast f_\ast [A, f^\Delta I] \ar[d]_-{\phi} \ar[llll]_-{\varepsilon}\\
		\bar{f}_\ast [\bar{g}^\ast A, \bar{g}^\ast f^\Delta I] \ar[d]_-{\gamma} \ar[r]^-{\phi} & [\bar{f}_\ast \bar{g}^\ast A, \bar{f}_\ast \bar{g}^\ast f^\Delta I] \ar[d]_-{\gamma} \ar[r]^-{\varepsilon^{-1}} \ar@{}[dr]|-{\diagram \label{diag:base_dual_2}} & [\bar{f}_\ast \bar{g}^\ast A, g^\ast f_\ast f^\Delta I] \ar[d]_-{\Tr } \ar[r]^-{\varepsilon^{\sharp}} & [g^\ast f_\ast A, g^\ast f_\ast f^\Delta I] \ar[d]_-{\Tr } & g^\ast [f_\ast A, f_\ast f^\Delta I] \ar[d]_-{\Tr } \ar[l]_-{\beta}\\
		\bar{f}_\ast[\bar{g}^\ast A, \bar{f}^\Delta g^\ast I] \ar[r]^-{\phi} & [\bar{f}_\ast \bar{g}^\ast A, \bar{f}_\ast \bar{f}^\Delta g^\ast I] \ar[r]^-{\Tr} & [\bar{f}_\ast \bar{g}^\ast A, g^\ast I] \ar[r]^-{\varepsilon^{\sharp}} & [g^\ast f_\ast A, g^\ast I] & g^\ast [f_\ast A, I] \ar[l]_-{\beta}, }
	$$
	where $\diag{\ref{diag:base_dual_1}}$ commutes by \cite[Lemma 4.6.5]{lipman2009notes}, $\diag{\ref{diag:base_dual_2}}$ commutes by Lemma \ref{lma:gamma}, and other squares commute by naturality.
\end{proof}

\section{On the pullbacks in the regular case}
Let $f: X \to Y$ be a morphism of schemes.\ Suppose that $L$ is a line bundle on $Y$. One can always simply define a pullback morphism on the vector bundle $\GW$-theory as
\[f^*:\GW^{[i]}(Y,L) \to \GW^{[i]}(X,f^*L)\]
by the dg form functor $(f^*,\beta) :(\Ch^b(\V(Y)),\sharp_L) \to (\Ch^b(\V(X)),\sharp_{f^*L}) $ where $\beta: f^*[A,L] \to [f^*A,f^*L]$ is the canonical morphism, cf. \cite[Section 9.4]{schlichting2017hermitian}. In Section \ref{sec:pullback}, we have constructed the pullbacks in the case of flat morphisms.\ Unfortunately, the general pullback can not be simply defined in the coherent Hermitian $K$-theory. The situation is already reflected in the situation of coherent $K$-theory. If $f$ is of finite tor-dimension cf. \cite[II.4, p.99]{hartshorne1966residues},  then Serre's formula comes into the picture
\[f^*: G_0(Y) \to G_0(X), A \mapsto \sum_{i\geq 0} (-1)^i \mathrm{Tor}_{\SO_Y}^i(A,\SO_X).  \]
If we assume that $Y$ is a regular scheme, then the canonical morphism $K_0(Y)\to G_0(Y)$ is an isomorphism. Via this identification, the pullbacks on $K$-theory coincide with that on $G$-theory.

\subsection{Pullbacks in the regular case}
Suppose that $X$ and $Y$ are regular schemes throughout this subsection.\ Our aim in this section is to study the general pullbacks of $\GW$-theory  in the regular case.

\begin{theorem}\label{thm:reg_pullback}
	Suppose that $f: X \to Y$ is a morphism of regular schemes. Assume that $L$ is a line bundle on $Y$, and that $\rho: L\to EL$ is the associated residue complex of $L$.\ Then, the exact dg functor $$f^\ast : \Ch^b(\V(Y)) \to \Ch^b(\V(X))$$ induces a map of Grothendieck-Witt spectra
	\[
		f^\ast: \GW^{[n]}\left(Y, EL\right) \to \GW^{[n]}\left(X,Ef^\ast L\right)
	\]
	which respects composition,\ i.e.\ if $f$ and $g$ are two consecutive morphisms, then $f^\ast g^\ast = (gf)^\ast$.
\end{theorem}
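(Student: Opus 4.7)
The plan is to reduce the construction to the vector bundle setting, where pullback along an arbitrary morphism of schemes is already well-behaved, and then transport the resulting map to coherent Hermitian $K$-theory via Lemma \ref{lma:regular_coherent_vector_gw}.

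First I would construct the pullback on vector bundle Grothendieck--Witt spectra. The functor $f^*: \Ch^b(\V(Y)) \to \Ch^b(\V(X))$ is exact without any flatness hypothesis, since the pullback of a locally free sheaf is locally free. Moreover, for any complex of vector bundles $A$ on $Y$, the canonical morphism
\[
\beta: f^*[A,L]_Y \to [f^*A, f^*L]_X
\]
is a \emph{degreewise isomorphism}: for a vector bundle $V$ one has $[V,L]_Y = V^\vee \otimes L$ and $f^*(V^\vee \otimes L) \cong (f^*V)^\vee \otimes f^*L = [f^*V, f^*L]_X$. Following the argument of Proposition \ref{prop:beta} (using Lemma \ref{lem:pull-can} and Lemma \ref{lem:mor-can} for the duality compatibility diagram), the pair $(f^*, \beta)$ defines a non-singular exact dg form functor $(\Ch^b(\V(Y)), \quis, \sharp_L) \to (\Ch^b(\V(X)), \quis, \sharp_{f^*L})$, hence a map $f^*: \GW^{[n]}(Y, L) \to \GW^{[n]}(X, f^*L)$ on vector bundle GW.

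Next, the desired pullback on coherent GW is defined as the composition
\[
\GW^{[n]}(Y, EL) \xleftarrow[\simeq]{\rho} \GW^{[n]}(Y, L) \xrightarrow{f^*} \GW^{[n]}(X, f^*L) \xrightarrow[\simeq]{\rho} \GW^{[n]}(X, Ef^*L),
\]
where the outer arrows are the stable equivalences supplied by Lemma \ref{lma:regular_coherent_vector_gw} (applicable since $X$ and $Y$ are regular and $L$, $f^*L$ are line bundles). For compatibility with composition, given $X \xrightarrow{f} Y \xrightarrow{g} Z$ of regular schemes and a line bundle $L$ on $Z$, I would first verify the identity $f^* \circ g^* = (gf)^*$ at the level of vector bundle GW, up to the canonical natural isomorphism $a: f^*g^* \to (gf)^*$ of tensor pullbacks. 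This is a direct calculation modeled on Proposition \ref{prop:composition-pullback}, using Lemma \ref{lma:pullback_composition_hom} to compare the $\beta$'s; the proof simplifies here because all relevant $\beta$'s are already isomorphisms at the chain level. When one splices in the $\rho$-equivalences on each side of the composition, the two intermediate $\rho$'s (at $\GW^{[n]}(Y, Eg^*L)$) cancel up to homotopy, leaving the desired identity.

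The main obstacle is checking that the coherence data assembles strictly, i.e.\ that the natural quasi-isomorphism comparing $E(gf)^*L$ with $Ef^*Eg^*L$ on $X$ lifts uniquely to the chain level so that the vector-bundle analogue of Lemma \ref{lem:rho-compo} holds. This rigidity follows from Lemma \ref{lma:lifting-cousin}: since $f^*L$ (respectively $f^*g^*L$) is a line bundle on the regular scheme $X$ and hence a dualizing complex, any morphism in $D^b_c(\M(X))$ from such a line bundle to a residue complex lifts uniquely to $\Ch^b_c(\M(X))$. Consequently, the composition coherence diagrams need only be checked in the derived category, where they hold by naturality of $\beta$ and the uniqueness of residue-complex resolutions.
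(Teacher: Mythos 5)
Your proposal takes essentially the same route as the paper: construct the pullback on vector bundle Grothendieck--Witt spectra via the degreewise isomorphism $\beta$, then conjugate by the equivalences $\rho$ from Lemma \ref{lma:regular_coherent_vector_gw}, and for compositions invoke the vector-bundle compatibility plus cancellation of the inner $\rho$'s. That is exactly the paper's argument.

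One remark on your final paragraph: after the two inner $\rho$'s at $\GW^{[n]}(Y,g^*L)$ cancel, the only comparison left is between $\GW^{[n]}(X,Ef^*g^*L)$ and $\GW^{[n]}(X,E(gf)^*L)$, \emph{not} between $E(gf)^*L$ and $Ef^*Eg^*L$. The residue complex $Ef^*Eg^*L$ never enters this construction (it belongs to the flat-pullback picture of Theorem \ref{thm:flat_pullback} and Proposition \ref{prop:composition-pullback}, where Lemma \ref{lem:rho-compo} and Lemma \ref{lma:lifting-cousin} are genuinely needed). Here the needed identification is simply $E$ applied to the canonical isomorphism $a: f^*g^*L \to (gf)^*L$ of line bundles, which is immediate from the functoriality of $E$ and the naturality of $\rho$; Lemma \ref{lma:lifting-cousin} is not required. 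This is only an unnecessary detour, not a gap — the rest of the argument is sound.
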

\begin{proof}
	Form the following dg functors
	\[\Ch^b_c(\M(Y)) \leftarrow  \Ch^b(\V(Y)) \xrightarrow{f^*} \Ch^b(\V(X)) \to \Ch^b_c(\M(X)) \]
	which induce the following dg form functors
	\[
		(\Ch^b_c(\M(Y)),\sharp_{EL}) \xleftarrow{(\id, \rho)} (\Ch^b(\V(Y)), \sharp_{L}) \xrightarrow{(f^*, \beta)} (\Ch^b(\V(X)), \sharp_{f^*L}) \xrightarrow{(\id, \rho)} (\Ch^b_c(\M(X)), \sharp_{Ef^*L}).\]
	The required morphism of coherent Grothendieck-Witt spectra
	\[f^*: \GW^{[n]}(Y,EL) \to \GW^{[n]}(X,Ef^*L) \]
	is defined to be the composition
	\[\GW^{[n]}(Y,EL) \xleftarrow[\simeq]{\rho} \GW^{[n]}(Y,L) \xrightarrow{f^*} \GW^{[n]}(X,f^*L) \xrightarrow[\simeq]{\rho} \GW^{[n]}(Y,Ef^*L).  \]
	The final claim that pullback functor $f^*$ respects composition follows from the fact that the vector bundle $\GW$-theory respects the composition, cf. \cite[Section 9.3]{schlichting2017hermitian} or Lemma \ref{lma:pullback_composition_hom}.
\end{proof}

\begin{proposition}\label{prop:flat-compatible}
	Let $f:X\to Y$ be a flat morphism, and $L$ be a line bundle on $X$. Then, the isomorphism $E f^\ast(\rho): E f^\ast L \to E f^\ast EL$ induces an equivalence
	\[
		\GW^{[n]}\left(X, Ef^\ast L\right) \xrightarrow{\simeq} \GW^{[n]}\left(X, Ef^\ast EL\right)
	\]
	of Grothendieck-Witt spectra. Under this equivalence, two pullbacks defined in Theorem \ref{thm:flat_pullback} and Theorem \ref{thm:reg_pullback} coincide.
\end{proposition}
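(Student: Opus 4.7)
The plan is to first establish the claimed stable equivalence and then verify commutativity of a square of dg form functors.

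Since $\rho: L \to EL$ is a quasi-isomorphism in $\Ch^b_c(\M(Y))$ and $f$ is flat, the pullback $f^\ast\rho: f^\ast L \to f^\ast EL$ is a quasi-isomorphism between two dualizing complexes on $X$. Applying the equivalence of categories $E:\Dual(X)\to\Res(X)$ yields an isomorphism of residue complexes $E(f^\ast\rho): Ef^\ast L \to Ef^\ast EL$, and the resulting non-singular exact dg form functor $(\id, E(f^\ast\rho))$ then induces the required stable equivalence of Grothendieck-Witt spectra.

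For the compatibility of the two pullbacks, I would prove that the diagram of dg form functors
\[
\xymatrix@C=35pt{
(\Ch^b(\V(Y)), \sharp_L) \ar[r]^-{(f^\ast, \beta)} \ar[d]_-{(\id, \rho)} & (\Ch^b(\V(X)), \sharp_{f^\ast L}) \ar[d]^-{(\id,\, E(f^\ast\rho) \circ \rho)} \\
(\Ch^b_c(\M(Y)), \sharp_{EL}) \ar[r]_-{(f^\ast, \tilde{\beta})} & (\Ch^b_c(\M(X)), \sharp_{Ef^\ast EL})
}
\]
commutes up to a natural weak equivalence of form functors. Since the left column is a stable equivalence of Grothendieck-Witt spectra by Lemma \ref{lma:regular_coherent_vector_gw}, such commutativity will immediately identify the regular pullback post-composed with $(\id, E(f^\ast\rho))$ (the top-then-right route) with the flat pullback (the bottom route, pre-composed with the GW-equivalence $(\id,\rho)$).

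Both composites share the same underlying dg functor $A \mapsto f^\ast A$, viewed in $\Ch^b_c(\M(X))$, so the required natural weak equivalence is witnessed by the identity, provided that the duality-compatibility 2-cells agree on every $A$. Unwinding the compositions and using naturality of the canonical morphism $\beta$ in its second argument, both sides factor through $\beta: f^\ast[A,L]_Y \to [f^\ast A, f^\ast L]_X$, and the claim reduces to the equality of morphisms $f^\ast L \to Ef^\ast EL$
\[ E(f^\ast\rho) \circ \rho_{f^\ast L} \;=\; \rho_{f^\ast EL} \circ f^\ast\rho, \]
where $\rho_{f^\ast L}: f^\ast L \to Ef^\ast L$ and $\rho_{f^\ast EL}: f^\ast EL \to Ef^\ast EL$ denote the canonical quasi-isomorphisms to the respective Cousin complexes. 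This is exactly the naturality of $\rho: \id \to E$ applied to $f^\ast\rho$, which holds by Remark \ref{rmk:res_res}.

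The main technical subtlety is careful bookkeeping of the various residue complexes ($Ef^\ast L$ versus $Ef^\ast EL$) and the natural quasi-isomorphisms between them. The argument must be phrased as a commutativity of form functors rather than by inverting $(\id,\rho)$, since the latter is an equivalence only after passing to Grothendieck-Witt spectra, not an inverse at the level of dg form functors.
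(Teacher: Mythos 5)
Your proof is correct and takes essentially the same approach as the paper: both reduce the claim to the commutativity of the square of form functors (equivalently, diagram \eqref{eqn:regular_flat_pullback}), split off the naturality of $\beta$ in its second argument, and finally invoke the commutativity of the small square $E(f^\ast\rho)\circ\rho_{f^\ast L} = \rho_{f^\ast EL}\circ f^\ast\rho$ coming from naturality of the canonical map $\id\to E$ on dualizing complexes. The only minor quibble is the citation: Remark \ref{rmk:res_res} concerns the natural transformation $f^*(-)\to Ef^*(-)$ on $\Res(Y)_{Z^\bullet}$, whereas the square you need is the naturality of $\id\to E$ on $\Dual(X)_{Z^\bullet}$ applied to the morphism $f^*\rho$; the paper simply says ``by naturality'' and, to lift this commutativity from the derived category to $Z^0\Ch^b_c(\M(X))$, one implicitly uses that $(Ef^\ast EL)^{-1}=0$ so no nontrivial homotopies from $f^\ast L$ can exist, in the spirit of Lemma \ref{lma:lifting-cousin}.
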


\begin{proof}
	By naturality, there is a commutative diagram
	\[
		\xymatrix{
		f^\ast L \ar[r]^-{\rho} \ar[d]_-{f^\ast(\rho)} & E f^\ast L \ar[d]_-{E f^\ast(\rho)}\\
		f^\ast EL \ar[r]^-{\rho} & E f^\ast E L.
		}\]

	Let $\tilde{\rho}$ be the composition $f^\ast L \xrightarrow{f^{\ast} \rho} f^\ast EL \to Ef^\ast EL$.
	We shall show that the following diagram
	\begin{equation}\label{eqn:regular_flat_pullback}
		\xymatrix{\GW^{[n]}\left(X, f^\ast L\right) \ar[d]_-{\tilde{\rho}} & \GW^{[n]}\left(Y,L\right) \ar[l]_-{f^\ast} \ar[d]_-{\rho}\\
		\GW^{[n]}\left(X, Ef^\ast EL\right) & \GW^{[n]}\left(Y, EL\right) \ar[l]_-{f^\ast}}
	\end{equation}
	commutes.
	The result follows from the fact that the following square
	\[
		\xymatrix{[f^\ast A, f^\ast L] \ar[d]_-{f^\ast (\rho)}& f^\ast [A, L] \ar[l]_-{\beta } \ar[d]_-{\rho}\\
		[f^\ast A, f^\ast EL]  & f^\ast [A, EL] \ar[l]_-{\beta}  }
	\]
	commutes for any $A\in \Ch^b(\V (Y))$.
\end{proof}

\begin{remark}
	The diagram \eqref{eqn:regular_flat_pullback} still commutes if we assume more generally that $X$ is a Gorenstein scheme with an ample family of line bundles. However, $\rho$ and $\tilde{\rho}$ may not be equivalences in this generality.
\end{remark}

\subsection{Projection formula}
Let $\iota: Z \to X$ be a morphism, and let $\mathcal{G}$ be a sheaf of $\SO_X$-modules. There is a canonical morphism
\begin{equation}\label{eqn:projection_morphism}
	\wp: \iota_* \mathcal{F} \otimes \mathcal{G} \to \iota_*(\mathcal{F}\otimes \iota^*\mathcal{G})
\end{equation}
given by the composition
\[ \iota_* \mathcal{F} \otimes \mathcal{G} \to \iota_*\mathcal{F} \otimes \iota_*\iota^*\mathcal{G} \to \iota_*(\mathcal{F}\otimes \iota^*\mathcal{G}), \]
cf. \cite[Example 3.4.6, p.107]{lipman2009notes}. This morphism is an isomorphism if $\mathcal{G}$ is locally free.

Let $I$ be a residue complex and $L$ be a line bundle on $X$. By the local nature of the definition of $\iota^\Delta$, there is a natural isomorphism of complexes (cf. \cite[(3.3.9)]{conrad2000grothendieck}):
\[
	\theta: \iota^\Delta I \otimes \iota^\ast L \to \iota^\Delta(I \otimes L).
\]
Assume that $\iota$ is finite, by \cite[Lemma VI.4.1, p.335]{hartshorne1966residues}, the natural morphism
\[
	\Hom_{\Ch^b_c(\M(X))}(\iota_\ast (\iota^\Delta I \otimes \iota^\ast L), I \otimes L) \xrightarrow{\cong} \Hom_{D^b_c(\M(X))}(\iota_\ast (\iota^\Delta I \otimes \iota^\ast L), I \otimes L).
\]
is an isomorphism and then by \cite[Proposition III.6.9(c), p.174]{hartshorne1966residues} (and \cite[(2.2.10)]{conrad2000grothendieck} for clarification), the diagram
$$
	\xymatrix{
	\iota_\ast (\iota^\Delta I \otimes \iota^\ast L) \ar[r]^-{\wp^{-1}} \ar[d]_-{\theta} \ar@{}[dr]|-{\diagram \label{diag:projection_residue}}& \iota_\ast \iota^\Delta I \otimes L \ar[d]_-{\Tr} \\
	\iota_\ast \iota^\Delta(I \otimes L) \ar[r]^-{\Tr} & I \otimes L.
	}
$$
is commutative.
\begin{proposition}[Projection formula]\label{prop:projection_formula}
	Let $\iota: Z \to X$ be a finite morphism of regular schemes. Let $I$ be a residue complex on $X$ and let $L$ be a line bundle on $X$.
	Then the following diagram of Grothendieck-Witt spectra
	\[
		\xymatrix{\GW^{[n]}(Z, \iota ^\Delta I) \times \GW^{[m]}(X, L) \ar[r]^-{\iota_\ast \times \id} \ar[d]_-{\id \times \iota^\ast} & \GW^{[n]}(X, I) \times \GW^{[m]}(X, L) \ar[d]_-{\cup} \\
		\GW^{[n]}(Z, \iota ^\Delta I) \times \GW^{[m]}(Z, \iota^\ast L) \ar[d]_-{\cup} & \GW^{[n+m]}(X, I\otimes L) \\
		\GW^{[n+m]}(Z, \iota ^\Delta I \otimes \iota^\ast L) \ar[r]^-{\theta}_-{\simeq} & \GW^{[n+m]}(Z, \iota ^\Delta (I \otimes L)) \ar[u]^-{\iota_\ast}}
	\]
	commutes up to homotopy. In a formula, we have that
	\[ \iota_* (\alpha \cup \iota^* \beta) = \iota_*(\alpha) \cup \beta \]
	for any $\alpha \in \GW^{[n]}_i(Z, \iota^\Delta I)$ and $\beta \in \GW^{[m]}_j(X,L)$.
\end{proposition}
\begin{proof}
	Our goal is to prove that the following diagram
	\[\xymatrixcolsep{50pt}
		\xymatrix{
		\left(\Ch^b_c(\M(Z)), \sharp_{\iota^\Delta I}\right)\times \left(\Ch^b(\V(X)), \sharp_{L}\right) \ar[d]_-{\id \times (\iota^\ast, \beta)}\ar[r]^-{(\iota_\ast, \zeta) \times \id}& \left(\Ch^b_c(\M(X)), \sharp_{I}\right)\times \left(\Ch^b(\V(X)), \sharp_{L}\right) \ar[d]_-{\cup}\\
		\left(\Ch^b_c(\M(Z)), \sharp_{\iota^\Delta I}\right)\times \left(\Ch^b(\V(Z)), \sharp_{\iota^\ast L}\right) \ar[d]_-{\cup} & \left(\Ch^b_c(\M(X)), \sharp_{I\otimes L}\right)\\
		\left(\Ch^b_c(\M(Z)), \sharp_{\iota^\Delta I \otimes \iota^\ast L}\right) \ar[r]^-{(\id,\theta)}& \left(\Ch^b_c(\M(Z)), \sharp_{\iota^\Delta (I \otimes L)}\right) \ar[u]^-{(\iota_\ast, \zeta)}
		}
	\]
	of dg categories with duality, commutes up to the natural homotopy $\wp$. In other words, we need to show that the following diagram
	\[\xymatrix{\iota_*[A, \iota^\Delta I] \otimes [B,L] \ar[rr]^-{\tau \zeta} \ar[d]^-{\wp} && [\iota_*A \otimes B, I \otimes L]  \\
		\iota_*([A,\iota^\Delta I] \otimes \iota^*[B,L]) \ar[rr]^-{ \zeta \theta \tau  \beta} && [\iota_*(A \otimes \iota^*B), I \otimes L] \ar[u]_-{\wp^\sharp} } \]
	commutes for $A \in \Ch^b_c(\M(Z))$ and $B \in \Ch^b(\V(X))$, where $\tau$ is defined in \cite[(1.10)]{schlichting2017hermitian}. By the commutativity of the diagram $\diag{\ref{diag:projection_residue}}$, the result follows from the commutativity of the diagram
	\[
		\xymatrix{
		f_* [A,N] \otimes [B,L] \ar[r]^-{\phi} \ar[d]_{\wp} & [f_* A,f_* N] \otimes [B,L] \ar[r]^{\tau} & [f_* A \otimes B,f_* N \otimes L] \\
		f_*([A,N] \otimes f^*[B,L]) \ar[d]_{\beta} & & [f_*(A \otimes f^* B),f_* N \otimes L] \ar[u]_{\wp^\sharp} \\
		f_*([A,N] \otimes [f^* B,f^* L]) \ar[r]^-{\tau} & f_* [A \otimes f^* B,N \otimes f^* L] \ar[r]^-{\phi} & [f_*(A \otimes f^* B),f_*(N \otimes f^* L)] \ar[u]_{\wp^{-1}}
		},
	\]
	where $N \in \Ch^b_c(\M(Z))$. The commutativity of this diagram can be checked directly on sheaves or follows from \cite[Theorem 5.5.1]{calmes2009tensor}.
\end{proof}

\subsection{Projection formula applied to regular immersions} \label{subsec:self_intersection}

Let $\mathcal{E}$ be a vector bundle of rank $d$ over a regular scheme $X$ with a section $s: \mathcal{E}^\vee \to \SO_X$. Assume that the closed embedding $\iota : Z\hookrightarrow X$ of the zero locus $Z:= Z(s)$ is a regular immersion of codimension $d$.\ Recall the Koszul complex $K(\mathcal{E})$ given by the section $s$ is
\[0 \to \wedge^d \mathcal{E}^\vee \to \wedge^{d-1} \mathcal{E}^\vee \to \wedge^{d-1} \mathcal{E}^\vee \to \cdots \to \wedge^{2} \mathcal{E}^\vee \to \mathcal{E}^\vee \xrightarrow{s} \SO_X\to 0, \]
endowed with a symmetric form $\sigma: K(\mathcal{E}) \to \big[K(\mathcal{E}),\ \wedge^d\mathcal{E}^\vee[d]\big]$ on $K(\mathcal{E})$ induced by the canonical pairing
$$\wedge^i \mathcal{E}^\vee \otimes \wedge^{d-i} \mathcal{E}^\vee \to \wedge^d \mathcal{E}^\vee. $$
Note that the Koszul complex $K(\mathcal{E})$ is acyclic off $Z$.\ Therefore, we have a canonical element $\kappa(\mathcal{E}):=(K(\mathcal{E}), \sigma)$ in $\GW^{[d]}_{0}(X~\mathrm{on}~Z, \wedge^d \mathcal{E}^\vee)$.

\begin{remark}
	Note that the normal bundle $N_\iota$ of the embedding is isomorphic to the bundle $ \iota^* \mathcal{E}^\vee$. On the one hand, the exact sequence $0 \to \mathfrak{I} \to \SO_X \to \iota_* \SO_Z \to 0$ provides that $\mathfrak{I}/\mathfrak{I}^2 \cong L^1\iota^* \iota_* \SO_Z$. On the other hand, we have $L^1\iota^* \iota_* \SO_Z \cong H^1(\iota^* K(\mathcal{E})) \cong \iota^* \mathcal{E}^\vee$, since the Koszul complex $K(\mathcal{E})$ provides a finite flat resolution, and the differentials in $\iota^* K(\mathcal{E})$ vanish.
\end{remark}
Consider the residue complex $E(\det \mathcal{E}^\vee)$. By the fundamental local isomorphism (cf. \cite[Proposition III.7.2, p.179]{hartshorne1966residues}), we have that $\iota^\Delta E(\det \mathcal{E}^\vee) [d] \cong \SO_Z$ in the derived category $D^b_c(\M(X))$. Thus, we get a trivial symmetric form
$$1_Z: \SO_Z \to \iota^\Delta E(\det \mathcal{E}^\vee)[d]$$
in $\GW^{[d]}_0 (Z,\iota^\Delta E(\det \mathcal{E}^\vee))$.

\begin{proposition}\label{prop:koszul}
	Let $L$ be a line bundle on $X$.\ The following diagram
	\[
		\xymatrix{
		\GW^{[n]}\left(X, L \right) \ar[d]_-{\iota^*} \ar[r]^-{ \kappa(\mathcal{E})\cup } & \GW^{[n + d]}_{Z}\left(X,  \det \mathcal{E}^\vee\otimes L \right) \ar[r]^-{\rho} & \GW^{[n+d]}_{Z}(X,E(\det \mathcal{E}^\vee)\otimes  L )  \\
		\GW^{[n]}(Z, \iota^*L) \ar[r]^-{1_Z  \cup} & \GW^{[n+d]}(Z, \iota^\Delta E(\det \mathcal{E}^\vee) \otimes \iota^*L  ) \ar[r]^-{\theta}& \GW^{[n+d]}(Z, \iota^\Delta  (E(\det \mathcal{E}^\vee) \otimes L) ) \ar[u]^-{\iota_\ast}
		}
	\]
	commutes up to homotopy. In a formula, we have that
	\[ \iota_* \iota^* (\alpha) = \kappa(\mathcal{E}) \cup \alpha \]
	for any $\alpha \in \GW^{[n]}_i(X,L)$.
\end{proposition}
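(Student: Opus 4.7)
The plan is to deduce the statement from the projection formula (Theorem~\ref{theo:projection-formula}) applied to the finite regular immersion $\iota$, with residue complex $I = E(\det\mathcal{E}^\vee)$ and the distinguished element $1_Z \in \GW^{[d]}_0(Z, \iota^\Delta E(\det\mathcal{E}^\vee))$. That formula yields
\[
\iota_*\bigl(\theta(1_Z \cup \iota^*\alpha)\bigr) \;=\; \iota_*(1_Z) \cup \alpha
\]
in $\GW^{[n+d]}_i\bigl(X, E(\det\mathcal{E}^\vee)\otimes L\bigr)$ for every $\alpha \in \GW^{[n]}_i(X,L)$. The left-hand side is precisely the value of the lower composition in the diagram of Proposition~\ref{prop:koszul}. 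Since the upper composition is, by naturality of $\rho$ with respect to tensor pairings, the cup product with $\rho(\kappa(\mathcal{E}))$, the claim reduces to the single identity
\[
\iota_*(1_Z) \;=\; \rho\bigl(\kappa(\mathcal{E})\bigr) \qquad \text{in } \GW^{[d]}_0\bigl(\Ch^b_{c,Z}(\M(X)),\, \sharp_{E(\det\mathcal{E}^\vee)}\bigr).
\]

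To establish this identity I would exhibit a quasi-isomorphism of symmetric form complexes
\[
\phi : \bigl(K(\mathcal{E}),\, \rho\circ\sigma\bigr) \xrightarrow{\ \simeq\ } \bigl(\iota_*\iota^\Delta E(\det\mathcal{E}^\vee),\, \mathrm{can}\bigr)
\]
inside $\Ch^b_{c,Z}(\M(X))$ endowed with the duality $\sharp_{E(\det\mathcal{E}^\vee)}$. Both complexes have cohomology concentrated in degree $d$ equal to $\iota_*\SO_Z$: the Koszul complex is the canonical vector bundle resolution, while on the right the fundamental local isomorphism (cf.~\cite[Proposition III.7.2]{hartshorne1966residues}) identifies $\iota^\Delta E(\det\mathcal{E}^\vee)[d]$ with $\SO_Z$. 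Consequently $\phi$ exists and is a quasi-isomorphism, and it defines the required equality of Grothendieck--Witt classes once it is shown to commute with the symmetric forms.

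The main obstacle is therefore verifying the compatibility $\phi^\sharp \circ \rho \circ \sigma = \mathrm{can}\circ\phi$. Structurally this amounts to the statement that the Koszul self-pairing corresponds, under the fundamental local isomorphism, to the pairing on $\iota_*\iota^\Delta E(\det\mathcal{E}^\vee)$ induced by the trace map $\Tr : \iota_*\iota^\Delta \to \id$, which is governed by the commutative square $\diag{\ref{diag:projection_residue}}$. The verification is local: on an affine chart $\Spec A \subset X$ over which $\mathcal{E}$ is trivialised and $Z$ is cut out by a regular sequence $x_1,\dots,x_d$, both pairings reduce to the standard pairing $\bigwedge^i A^d \otimes \bigwedge^{d-i}A^d \to \bigwedge^d A^d$ matched with the Hartshorne--Conrad residue evaluated on the local cohomology class $1/(x_1\cdots x_d)$, whence the equality follows from a direct calculation.
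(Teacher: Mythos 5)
Your reduction via the projection formula to the single identity $\iota_*(1_Z)\simeq\rho(\kappa(\mathcal{E}))$ in $\pi_0\GW^{[d]}_Z(X,E(\det\mathcal{E}^\vee))$ is exactly the paper's argument; the divergence is only in how that identity is treated. The paper simply cites \cite[Proposition 7.1]{calmes2011push}, whereas you sketch a direct proof through a symmetric quasi-isomorphism and a local residue calculation. That plan is sound in outline (it is essentially how Calm\`es--Hornbostel establish their Proposition 7.1), but as written it has two slips and one real gap. The slips: with the normalisation used here $K(\mathcal{E})$ has $\SO_X$ in degree $0$, so its cohomology is $\iota_*\SO_Z$ concentrated in degree $0$, not $d$; and the target of $\phi$ should be $\iota_*\SO_Z$ (or an $\iota_*$-acyclic representative of it), not the complex $\iota_*\iota^\Delta E(\det\mathcal{E}^\vee)$ itself, which lives around degree $d$ and is not quasi-isomorphic to $K(\mathcal{E})$ without a shift. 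The gap: the compatibility of $\phi$ with the symmetric forms --- i.e.\ matching the Koszul self-pairing against the residue pairing induced by $\Tr:\iota_*\iota^\Delta\to\id$ --- is precisely the nontrivial local computation; you assert that it ``follows from a direct calculation'' but do not carry it out, and this is the entire content of the cited result. In short: same skeleton as the paper, but you would still need either to reproduce the Calm\`es--Hornbostel computation in detail or, as the paper does, invoke it.
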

\begin{proof}
	By the projection formula (cf.\ Proposition \ref{prop:projection_formula}), the following diagram
	\[
		\xymatrix{
		\GW^{[n]}\left(X, L \right) \ar[d]_-{\iota^*} \ar[rr]^-{\iota_*(1_Z) \cup } & & \GW^{[n+d]}_{Z}(X, L \otimes E(\det \mathcal{E}^\vee))\\
		\GW^{[n]}(Z, \iota^*L) \ar[r]^-{1_Z \cup } & \GW^{[n+d]}(Z, \iota^*L \otimes \iota^\Delta E(\det \mathcal{E}^\vee)) \ar[r]^-{\theta}& \GW^{[n+d]}(Z, \iota^\Delta (L \otimes E(\det \mathcal{E}^\vee))) \ar[u]^-{\iota_\ast}
		}
	\]
	commutes. Note that $\iota_*(1_Z) \simeq \rho (\kappa(\mathcal{E}))$ in $ \pi_0 \GW^{[d]}_{Z}(X , E(\det \mathcal{E}^\vee))$, cf. \cite[Proposition 7.1]{calmes2011push}. The result follows.
\end{proof}

\subsection{Some formulas in the regular case}
Let $f:X\to Y$ be a smooth morphism between regular schemes of relative dimension $d$. Define the relative canonical bundle $\omega_{X/Y} := \mathrm{det}\,\Omega_{X/Y}$. Let $L$ be a line bundle on $Y$. Then, the map $\rho: L \to EL$ induces a stable equivalence
\[
	\rho: \GW^{[n]}(Y,L) \rightarrow \GW^{[n]}(Y,EL).
\]
Note that, according to \cite[Theorem VI.3.1(d), p.318]{hartshorne1966residues}, there exists a resolution $\rho'$ given by the composition
\[
	\rho': f^\ast L \otimes \omega_{X/Y} [d] \xrightarrow{\rho} E\left(f^\ast L \otimes \omega_{X/Y} [d]\right) \cong E f^\sharp L  \xrightarrow{\rho} E f^\sharp EL \xrightarrow{\cong} f^\Delta EL.
\]
Recall that $f^\sharp = L f^\ast (-) \otimes \omega_{X/Y} [d]$. Consequently, $\rho'$ induces a stable equivalence
\[
	\rho':	\GW^{[n+d]}(X,f^\ast L \otimes \omega_{X/Y}) \rightarrow \GW^{[n]}(Y,f^\Delta EL).
\]

\begin{definition}
	The pushforward map
	\[
		f_\ast: \GW^{[n+d]}(X,f^\ast L \otimes \omega_{X/Y}) \rightarrow \GW^{[n]}(Y,L)
	\]
	of a smooth morphism $f$ in the regular case is defined as the composite
	\[
		\GW^{[n+d]}(X,f^\ast L \otimes \omega_{X/Y}) \xrightarrow{\rho '} \GW^{[n]}(Y,f^\Delta EL) \xrightarrow{f_\ast}  \GW^{[n]}(Y,EL) \xrightarrow{\rho^{-1}} \GW^{[n]}(Y,L).
	\]
\end{definition}

\begin{proposition}\label{prop:regular_smooth_base_change}
	Let
	\[
		\xymatrix{{X'} \ar[r]^-{\bar{g}} \ar[d]_-{\bar{f}}& X \ar[d]_-{f}\\
		Y' \ar[r]^-{g}& Y}
	\]
	be a fibre product of regular schemes, where $f$ is smooth of relative dimension $d$ and $g$ is an open immersion. Then,
	\[
		g^\ast f_\ast \simeq \bar{f}_\ast \bar{g}^\ast
	\]
	as maps from $\GW^{[n+d]}(X,f^\ast L \otimes \omega_{X/Y})$ to $\GW^{[n]}(Y', g^\ast L)$.
\end{proposition}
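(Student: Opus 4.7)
The plan is to reduce the claim to the coherent base change formula of Theorem \ref{thm:residually_stable_pullback} by unfolding the smooth pushforward through its defining resolutions $\rho$ and $\rho'$. Assuming $f$ is smooth and proper (so that $\bar{f}$ is as well), I would first expand both composites into the diagram
\[\xymatrix@C=8pt{
\GW^{[n+d]}(X, f^* L \otimes \omega_{X/Y}) \ar[r]^-{\rho'} \ar[d]_-{\bar{g}^*} & \GW^{[n]}(X, f^\Delta EL) \ar[r]^-{f_*} \ar[d]_-{\bar{g}^*} & \GW^{[n]}(Y, EL) \ar[r]^-{\rho^{-1}} \ar[d]_-{g^*} & \GW^{[n]}(Y, L) \ar[d]_-{g^*}\\
\GW^{[n+d]}(X', \bar{f}^* g^* L \otimes \omega_{X'/Y'}) \ar[r]^-{\rho'} & \GW^{[n]}(X', \bar{f}^\Delta E g^* L) \ar[r]^-{\bar{f}_*} & \GW^{[n]}(Y', E g^* L) \ar[r]^-{\rho^{-1}} & \GW^{[n]}(Y', g^* L),
}\]
in which $g^* \circ f_*$ is the composite along the top row and $\bar{f}_* \circ \bar{g}^*$ is the one along the bottom. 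Here I tacitly use the isomorphisms $\bar{g}^* \omega_{X/Y} \cong \omega_{X'/Y'}$, $\bar{g}^* f^* L \cong \bar{f}^* g^* L$, and $\bar{g}^* f^\Delta EL \cong \bar{f}^\Delta g^* EL \cong \bar{f}^\Delta E g^* L$, the last chain being supplied by Lemma \ref{lma:gamma} and the naturality of the Cousin functor $E$ under residually stable morphisms (Remark \ref{rmk:res_res}).

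Next I would verify the three inner squares. The middle square is precisely Theorem \ref{thm:residually_stable_pullback} applied to the proper smooth $f$ and the open immersion $g$. The right square is the compatibility between the coherent flat pullback and the vector-bundle pullback furnished by Proposition \ref{prop:flat-compatible} for $g$. The left square amounts to the naturality of $\rho'$ under $\bar{g}^*$: unfolding the definition
\[f^* L \otimes \omega_{X/Y}[d] \xrightarrow{\rho} E(f^* L \otimes \omega_{X/Y}[d]) \cong E f^\sharp L \xrightarrow{E\rho} E f^\sharp EL \xrightarrow{\cong} f^\Delta EL,\]
one applies $\bar{g}^*$ and pastes together the naturality of $\rho$ under the residually stable $\bar{g}$ (Remark \ref{rmk:res_res}) with the derived base-change isomorphism $\bar{g}^* f^\sharp \cong \bar{f}^\sharp g^*$ to land in the corresponding resolution on $X'$.

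The main obstacle will be pinning down the coherence between the derived base-change isomorphism $\bar{g}^* f^\sharp \cong \bar{f}^\sharp g^*$ and the residual base change $\gamma: \bar{g}^* f^\Delta \cong \bar{f}^\Delta g^*$ of Lemma \ref{lma:gamma}, intertwined by the canonical identification $E f^\sharp \cong f^\Delta$. At the derived-category level the agreement follows from Grothendieck-duality coherence and lifts to $\Ch^b_c(\M(X'))$ by the uniqueness statement in Lemma \ref{lma:lifting-cousin}; nevertheless, tracing through the construction of $f^\Delta$ out of $f^\sharp$ on an injective resolution to make this coherence strict is the delicate point.
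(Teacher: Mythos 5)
Your proof is correct and follows the same route the paper takes: the paper's one-line proof cites exactly Theorem \ref{thm:residually_stable_pullback} (your middle square) and Proposition \ref{prop:flat-compatible} (your right square), so you have simply unfolded what that citation means once the smooth pushforward is expanded through its defining resolutions. The left square — compatibility of $\rho'$ with $\bar{g}^*$, which you flag as the delicate coherence between $\bar{g}^\ast f^\sharp \cong \bar{f}^\sharp g^\ast$, $\gamma$, and $E f^\sharp \cong f^\Delta$ — is left implicit in the paper's proof, but your identification of it and your appeal to the uniqueness in Lemma \ref{lma:lifting-cousin} is exactly how one would justify it.
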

\begin{proof}
	By Theorem \ref{thm:residually_stable_pullback} and Proposition \ref{prop:flat-compatible}.
\end{proof}

\begin{proposition}\label{prop:projection_projective_space}
	Let $X$ be a regular scheme, and let $L, L'$ be line bundles on $X$. Let $\mathcal{E}$ be a vector bundle over $X$ of rank $r+1$, and let $p: \mathbb{P}(\mathcal{E}) \to X$ be the associated projective bundle. Then we have
	\[
		p_* (\alpha \cup p^* \beta) = p_*(\alpha) \cup \beta
	\]
	for any $\alpha \in \GW^{[n]}_i(\mathbb{P}(\mathcal{E}), p^\ast L \otimes \SO(-r-1))$ and $\beta \in \GW^{[m]}_j(X,L')$.
\end{proposition}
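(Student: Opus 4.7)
The plan is to reduce the statement to a projection formula in the coherent Hermitian $K$-theory setting, and then to establish that coherent version by adapting the proof of Theorem \ref{theo:projection-formula} from the case of a finite morphism to the case of the proper smooth morphism $p$.

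First, by the definition of the smooth pushforward in the preceding subsection, $p_\ast$ factors as
\[
    \GW^{[n]}(\mathbb{P}(\mathcal{E}), p^\ast L \otimes \SO(-r-1)) \xrightarrow{\rho'} \GW^{[n-r]}(\mathbb{P}(\mathcal{E}), p^\Delta E L'') \xrightarrow{p_\ast} \GW^{[n-r]}(X, E L'') \xrightarrow{\rho^{-1}} \GW^{[n-r]}(X, L''),
\]
where $L''$ is a line bundle on $X$ absorbing the relative canonical twist $\omega_p \otimes \SO(r+1)$. Since the natural transformations $\rho$ and $\rho'$ are compatible with tensor products by complexes of vector bundles, both the pullback $p^\ast$ and the cup product with $\beta$ transport across these resolutions. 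It therefore suffices to prove the coherent projection formula
\[
    p_\ast(\tilde{\alpha} \cup p^\ast \beta) = p_\ast(\tilde{\alpha}) \cup \beta
\]
where $\tilde{\alpha}$ is represented by a coherent complex, $\beta$ is represented by a vector-bundle complex, and $p_\ast$ is the coherent pushforward of Theorem \ref{thm:pushforward_gw}.

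Second, the coherent version is proved by imitating the diagram chase in the proof of Theorem \ref{theo:projection-formula}. The essential dg form functor identity
\[
    (p_\ast \mathcal{F}) \otimes \mathcal{G} \cong p_\ast(\mathcal{F} \otimes p^\ast \mathcal{G})
\]
is furnished by the natural morphism $\wp$, which is an isomorphism whenever $\mathcal{G}$ is locally free --- in particular, when $\mathcal{G}$ is a bounded complex of vector bundles on $X$. One works inside the pretriangulated subcategory $\AC_p \subset \Ch^b_c(\M(\mathbb{P}(\mathcal{E})))$ used to define the coherent pushforward; since tensoring with a bounded complex of vector bundles preserves $p_\ast$-acyclicity, the whole construction stays within $\AC_p$. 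The required commutativity then reduces, as in Theorem \ref{theo:projection-formula}, to the compatibility of $\wp$ with the canonical map $\phi$ from \cite[Theorem 5.5.1]{calmes2009tensor}, together with the compatibility of the trace $\Tr: p_\ast p^\Delta \to \id$ with $\wp$ and the isomorphism $\theta: p^\Delta I \otimes p^\ast L' \to p^\Delta(I \otimes L')$.

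The principal obstacle is the generalization of the square \diag{\ref{diag:projection_residue}} from finite morphisms to the proper morphism $p$. For finite morphisms the trace admits an explicit local description and the commutativity is checkable by hand; for the proper smooth morphism $p$, the trace on residue complexes is defined via the Cousin complex formalism of \cite[Chapter VI]{hartshorne1966residues}, and its compatibility with $\wp$ and $\theta$ must instead be extracted from the projection formula of Grothendieck duality for the twisted inverse image pseudofunctor \cite[Chapter 4]{lipman2009notes}, specialized back to residue complexes via the functor $E$ and the rigidity provided by Lemma \ref{lma:lifting-cousin}.
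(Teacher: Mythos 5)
Your overall strategy matches the paper's proof: reduce via resolutions to the coherent projection formula, work inside the pretriangulated subcategory $\AC_p$, verify that $A \otimes p^\ast B$ stays in $\AC_p$ because tensoring with a bounded complex of vector bundles commutes with $\mathrm{R}p_\ast$, and then reduce to verifying the trace compatibility square analogous to $\diag{\ref{diag:projection_residue}}$. You also correctly identify that square as the principal obstacle and correctly observe that the finite-case argument does not carry over.

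However, there is a genuine gap at the crux step. You propose to settle the trace compatibility by lifting the derived-level commutativity (from Grothendieck duality) to a strict chain-level commutativity via ``the rigidity provided by Lemma \ref{lma:lifting-cousin}.'' That lemma does not apply here: it is a rigidity statement for the \emph{pullback} $f^\ast \tilde{J}$ of a residue complex along a \emph{flat} morphism, and its mechanism is the going-down property for flat morphisms, which forces the codimension of a prime lying over $\mathfrak{p}S$ to be at least $\mu(\mathfrak{p})$. What the present square requires is the opposite direction: a vanishing result for chain maps out of the \emph{pushforward} $p_\ast(p^\Delta I \otimes p^\ast L)$, namely $\Hom_{\M(X)}\big((p_\ast p^\Delta I)^p,\, I^{p-1}\big) = 0$. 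The controlling codimension fact there is not going-down for flat morphisms but Hartshorne's Proposition VI.3.4, which governs the behavior of the codimension function under proper morphisms, i.e.\ $\mu(p(x)) \geq \mu'(x)$ for $x \in \mathbb{P}(\mathcal{E})$. This is a distinct statement that needs its own proof; the paper supplies it as Lemma \ref{lma:projective_bundle_projection_residue}. Your sketch gestures at the right kind of argument but invokes a lemma that is not applicable, so the proof is incomplete as written.
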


\begin{proof}
	Recall that the pushforward $p_*: \GW^{[n]}(\mathbb{P}(\mathcal{E}), p^\Delta EL) \to \GW^{[n-r]}(X, EL ) $ is defined to be the composition
	\[ \GW^{[n]}(\mathbb{P}(\mathcal{E}), p^\Delta EL) \xleftarrow{\simeq} \GW^{[n]}(\AC_p, \sharp_{p^\Delta EL}) \xrightarrow{f_\ast} \GW^{[n-r]}(X, EL )\]
	cf.\ Section \ref{sec:pushforward}.\ Take any $A \in \AC_p$.\ By the definition of $\AC_p$ (cf. Definition  \ref{def:acyclic_dg}), the canonical morphism $p_\ast A \to \mathrm{R}p_\ast A$ is an isomorphism in $D^b_c(\M(X))$. Then for any $B \in \Ch^b(\V(X))$, the following diagram
	\[
		\xymatrix{
		p_\ast (A \otimes p^\ast B) \ar[d] \ar[r]^-{\wp} & p_\ast A \otimes B \ar[d]\\
		\mathrm{R} p_\ast (A \otimes p^\ast B) \ar[r]^-{\wp_1} & \mathrm{R}p_\ast A \otimes B
		}
	\]
	commutes by \cite[(3.2.1.2), (3.1.2.3) and Proposition 3.2.4]{lipman2009notes}, where $\wp$ is the isomorphism defined in \eqref{eqn:projection_morphism}, and $\wp_1$ is the quasi-isomorphism given by \cite[Proposition 3.9.4]{lipman2009notes}. The right vertical map is also a quasi-isomorphism, since $B$ is flat. Therefore $A \otimes p^\ast B \in \AC_p$, i.e. the image of the following composition
	\[
		\AC_p \times \Ch^b(\V(X)) \xrightarrow{\id \times p^\ast} \AC_p \times \Ch^b(\V(Z)) \xrightarrow{\cup} \Ch^b_c(\M(X))
	\]
	of dg functor lies in $\AC_p$. Our goal turns to show that the following diagram
	\[\xymatrixcolsep{50pt}
		\xymatrix{
		\left(\AC_p, \sharp_{p^\Delta EL}\right)\times \left(\Ch^b(\V(X)), \sharp_{L'}\right) \ar[dd]_-{\id \cup (p^\ast, \beta)}\ar[r]^-{(p_\ast, \zeta) \times \id}& \left(\Ch^b_c(\M(X)), \sharp_{EL}\right)\times \left(\Ch^b(\V(X)), \sharp_{L'}\right) \ar[d]_-{\cup}\\
		& \left(\Ch^b_c(\M(X)), \sharp_{EL\otimes L'}\right)\\
		\left(\AC_p, \sharp_{p^\Delta EL \otimes p^\ast L'}\right) \ar[r]^-{(\id,\theta)}& \left(\AC_p, \sharp_{p^\Delta (EL \otimes L')}\right) \ar[u]^-{(p_\ast, \zeta)}
		}
	\]
	of dg categories with duality, commutes up to the natural homotopy $\wp: p_* A \otimes B \to p_*(A\otimes p^* B)$ for any $A\in \AC_p, B \in \Ch^b(\V(X))$.
	By the proof of Proposition \ref{prop:projection_formula}, the result follows from Lemma \ref{lma:projective_bundle_projection_residue} below.
\end{proof}
\begin{lemma}\label{lma:projective_bundle_projection_residue}
	Let $\mathcal{E}$ be a vector bundle over $X$ of rank $r+1$, and let $p: \mathbb{P}(\mathcal{E}) \to X$ be the associated projective bundle.
	Then, the following diagram
	$$
		\xymatrix{
		p_\ast (p^\Delta I \otimes p^\ast L) \ar[r]^-{\wp^{-1}} \ar[d]_-{\theta} & p_\ast p^\Delta I \otimes L \ar[d]_-{\Tr} \\
		p_\ast p^\Delta(I \otimes L) \ar[r]^-{\Tr} & I \otimes L.
		}
	$$
	commutes in $Z^0\Ch^b_c(\M(X))$ for any dualizing complex $I$ on $X$ and any line bundle $L$ on $\mathbb{P}(\mathcal{E})$.
\end{lemma}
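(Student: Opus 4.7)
The plan is to exploit the naturality of $\wp$, $\theta$ and $\Tr$ in $L$ in order to reduce the problem to the trivial case $L = \SO_X$, where the commutativity is tautological.

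First I would observe that all three morphisms $\wp$, $\theta$ and $\Tr$ appearing in the diagram are natural in $L$ as morphisms in $Z^0\Ch^b_c(\M(X))$. For $\wp$ this naturality is immediate from its construction as the composition $p_\ast \mathcal{F} \otimes \mathcal{G} \to p_\ast \mathcal{F} \otimes p_\ast p^\ast \mathcal{G} \to p_\ast(\mathcal{F} \otimes p^\ast \mathcal{G})$ (cf.\ \cite[Example 3.4.6]{lipman2009notes}); for $\theta$ the local construction of \cite[(3.3.9)]{conrad2000grothendieck} produces an isomorphism of complexes which is functorial in $L$; for $\Tr$ naturality in $L$ follows from the fact that the trace is an $\SO_X$-linear morphism of complexes. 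Consequently, any morphism $\lambda : L \to L'$ of line bundles fits into a commutative cube relating the diagram for $L$ to the diagram for $L'$.

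Next I would argue that the commutativity of the diagram can be checked locally on $X$, since all four vertices are complexes of sheaves and equality of sheaf morphisms is a local condition. Choosing an open cover $\{U_\alpha\}$ of $X$ on which $L$ trivializes and applying the naturality from the previous step together with the trivialization isomorphisms $L|_{U_\alpha} \xrightarrow{\cong} \SO_{U_\alpha}$, the problem reduces to checking the diagram for $L = \SO_X$. In this case the maps $\wp$ and $\theta$ become the canonical isomorphisms induced by the unit laws $\mathcal{F} \otimes \SO_X \cong \mathcal{F}$ and $p^\ast \SO_X \cong \SO_{\mathbb{P}(\mathcal{E})}$, and under these identifications both compositions of the diagram collapse to the trace map $\Tr: p_\ast p^\Delta I \to I$, making the commutativity obvious.

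The main obstacle I anticipate is verifying the naturality of $\theta$ \emph{on the nose} in $Z^0\Ch^b_c(\M(\mathbb{P}(\mathcal{E})))$, rather than merely up to chain homotopy in $\mathrm{D}^b_c(\M(\mathbb{P}(\mathcal{E})))$. Conrad's construction is local and produces a genuine isomorphism of complexes, so strict naturality should hold after unravelling definitions, but the bookkeeping is delicate. An alternative strategy that bypasses the local trivialization would be to invoke the general projection-formula compatibility of the trace in Grothendieck duality in $\mathrm{D}^b_c(\M(X))$ and then lift to the chain level using the rigidity afforded by Lemma \ref{lma:lifting-cousin} with the degreewise injective target $I \otimes L$; this route avoids explicit trivializations but requires a careful comparison of two canonical derived-category isomorphisms.
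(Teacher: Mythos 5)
Your primary strategy (naturality in $L$ together with local trivialization to reduce to $L=\SO_X$) is correct and genuinely different from the paper's. The paper instead quotes Hartshorne \cite[Proposition III.4.4, p.158]{hartshorne1966residues} (with Conrad \cite[Theorem 2.3.2]{conrad2000grothendieck} for clarification) for commutativity of the square in $\mathrm{D}^b_c(\M(X))$, and then lifts this to $Z^0\Ch^b_c(\M(X))$ by a rigidity argument: since $I\otimes L$ is bounded and degreewise injective one has $\Hom_{K^b_c}(-,I\otimes L)\cong\Hom_{D^b_c}(-,I\otimes L)$, so the only remaining ambiguity is a chain homotopy, which is killed by proving $\Hom_{\M(X)}\bigl((p_\ast p^\Delta I)^p, I^{p-1}\bigr) = 0$ for all $p$ via a support/codimension computation: a nonzero map would produce a section of $I^{p-1}$ supported at the images $p(x_i)$ of points with $\mu'(x_i)=p$, forcing $\mu(p(x_i))\geq p$, which is incompatible with $I^{p-1}$ being supported in codimension exactly $p-1$. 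This is essentially the "alternative strategy" you sketch at the end, with the caveat that the rigidity input is not Lemma \ref{lma:lifting-cousin} (which handles flat pullbacks $f^\ast\tilde J$) but an analogous, independent vanishing for the proper pushforward $p_\ast p^\Delta I$, which the paper proves from scratch. Your main route sidesteps the codimension argument entirely, at the price of checking strict chain-level naturality in $L$ of $\wp$, $\theta$, $\Tr$, their compatibility with restriction to opens of $X$ (for $\Tr$ this is the content of Lemma \ref{lma:gamma}), and the normalizations of $\wp$ and $\theta$ at $L=\SO_X$; you flag the first as delicate, and the other two also need a word. Given that the derived-category statement is already in the literature, the paper's route is somewhat shorter, but your reduction to the trivial line bundle would also work.
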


\begin{proof}
	By \cite[Proposition III.4.4, p.158]{hartshorne1966residues} (and \cite[Theorem 2.3.2]{conrad2000grothendieck} for clarification), the diagram commutes in derived categories $D^b_c(\M(X))$. Since $I\otimes L$ is degreewise injective and bounded, if there is no homotopy between complexes $p_\ast (p^\Delta I \otimes p^\ast L)$ and $I \otimes L$, then
	\[
		\Hom_{Z^0\Ch^b_c(\M(X))}(p_\ast (p^\Delta I \otimes p^\ast L), I \otimes L) \cong \Hom_{D^b_c(\M(X))}(p_\ast (p^\Delta I \otimes p^\ast L), I \otimes L),
	\]
	and the result follows, cf. \cite[Corollary 10.4.7]{weibel1994an}.
	Under isomorphism $\wp$, it suffices to show that $\Hom_{\M(X)}((p_\ast p^\Delta I )^p, I^{p-1}) = 0$ for any $p \in \Z$. Take any morphism $t\in \Hom_{\M(X)}((p_\ast p^\Delta I )^p, I^{p-1})$. Denote by $\mu: X \to \Z$ and $\mu': \mathbb{P}(\mathcal{E}) \to \Z$ the codimensions associated with $I$ and $p^\Delta I$ respectively. Suppose that $t \neq 0$. Then there exists $r \in \Gamma(\mathbb{P}(\mathcal{E}), (p^\Delta I)^{p})$ such that $0 \neq t(p_\ast(r)) \in I^{p-1}$. The support of $r$ is a finite union $\overline{\{x_1\}} \cup \ldots \cup \overline{\{x_n\}}$ with $\mu ' (x_i) = p$. Then the support of $t(p_\ast(r))$ is contained in $\overline{\{p(x_1)\}} \cup \ldots \cup \overline{\{p(x_n)\}}$, and $\mu(p(x_i)) \geq p$ by \cite[Proposition VI.3.4, p.333]{hartshorne1966residues} (and \cite[(3.1.25)]{conrad2000grothendieck} for clarification), which is a contradiction.
\end{proof}

Let $\iota: Z \to X$ be the morphism with the same conditions as presented in Subsection \ref{subsec:self_intersection}. Set $\omega_{Z/X} = \mathrm{det}\, \iota^\ast \mathcal{E} \cong \iota^\ast \mathrm{det}\, \mathcal{E}$.\ Let $L$ be a line bundle on $X$, then there is a stable equivalence
\[
	\GW^{[n]}(X,L) \xrightarrow{\rho} \GW^{[n]}(X,EL).
\]
By the fundamental local isomorphism (cf. \cite[Proposition III.7.2, p.179]{hartshorne1966residues}), we have that $\iota^\ast L \otimes \omega_{Z/X} [-d] \cong \iota^\Delta EL$ in the derived category $D^b_c(\M(Z))$.
Thus there is a resolution $\rho ': \iota^\ast L \otimes \omega_{Z/X} [-d] \to \iota^\Delta EL$, hence a stable equivalence
\[
	\rho ':	\GW^{[n-d]}(Z,\iota^\ast L \otimes \omega_{Z/X}) \xrightarrow{} \GW^{[n]}(Z,\iota^\Delta EL).
\]
Then we define pushforward map
\[
	\iota_\ast:	\GW^{[n-d]}(Z,\iota^\ast L \otimes \omega_{Z/X}) \xrightarrow{ } \GW^{[n]}(X,L).
\]
of regular immersion $\iota$ in the regular case as the composite
\[
	\GW^{[n-d]}(Z,\iota^\ast L \otimes \omega_{Z/X}) \xrightarrow{\rho '} \GW^{[n]}(Z,\iota^\Delta EL) \xrightarrow{\iota _\ast}  \GW^{[n]}(X,EL) \xrightarrow{\rho^{-1}} \GW^{[n]}(X,L).
\]

\begin{lemma}\label{lma:koszul_regular}
	Let $\iota:Z\to X$ be the regular immersion as above, and let $L$ be a line bundle on $X$, then the following diagram
	\[
		\xymatrix{
		\GW^{[n]}(X,L) \ar[d]_-{\iota^\ast} \ar[r]^-{\kappa (\mathcal{E}) \cup} & \GW^{[n+d]}(X,\mathrm{det}\, \mathcal{E}^\vee \otimes L)\\
		\GW^{[n]}(Z,\iota^\ast L) \ar[r]_-{\simeq}& \GW^{[n]}(Z,\iota^\ast \left(\mathrm{det}\, \mathcal{E}^\vee \otimes L\right) \otimes \omega_{Z/X}) \ar[u]^-{\iota_\ast}
		}
	\]
	commutes.
\end{lemma}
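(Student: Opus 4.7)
My plan is to deduce Lemma \ref{lma:koszul_regular} from Proposition \ref{prop:koszul}, which is the coherent/residue-complex analogue of the same statement, by translating every map across the stable equivalences $\rho$ (Lemma \ref{lma:regular_coherent_vector_gw}) and the resolution $\rho'$ that defines the regular-case pushforward. First I would unfold the definitions. Writing $L' := \det\mathcal{E}^\vee \otimes L$, the right-hand pushforward $\iota_\ast : \GW^{[n]}(Z, \iota^\ast L' \otimes \omega_{Z/X}) \to \GW^{[n+d]}(X, L')$ is by definition the composite
\[
\GW^{[n]}(Z, \iota^\ast L' \otimes \omega_{Z/X}) \xrightarrow{\rho'} \GW^{[n+d]}(Z, \iota^\Delta EL') \xrightarrow{\iota_\ast^{\mathrm{coh}}} \GW^{[n+d]}(X, EL') \xrightarrow{\rho^{-1}} \GW^{[n+d]}(X, L').
\]
The regular pullback $\iota^\ast$, pre-composed with $\rho$, agrees with the coherent pullback by Theorem \ref{thm:reg_pullback}, and the cup product $\kappa(\mathcal{E}) \cup -$ factors through the support-$Z$ group $\GW^{[n+d]}_Z(X, L')$ followed by the forgetful map. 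The bottom isomorphism labeled $\simeq$ in the lemma is induced by the canonical trivialization $\iota^\ast\det\mathcal{E}^\vee \otimes \omega_{Z/X} = \iota^\ast\det\mathcal{E}^\vee \otimes \det\iota^\ast \mathcal{E} \cong \SO_Z$, and thus coincides with cup-product with the trivial form $1_Z$.

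Next I would stack a commutative ladder: the top square is exactly the diagram of Proposition \ref{prop:koszul} (with coefficients $E(\det\mathcal{E}^\vee) \otimes L$ on the right and $\iota^\Delta E(\det\mathcal{E}^\vee)\otimes \iota^\ast L$ on the bottom), and the bottom square is the one we want to prove commutative; the two are linked by vertical stable equivalences built from $\rho$, $\rho'$, and $\theta$. Along the right column the vertical equivalence is precisely $\rho$ combined with the forget-supports map; along the left column it is $\rho'$. Under $\rho'$, the identification of the two bottom arrows — the trivialization $\iota^\ast L \simeq \iota^\ast L' \otimes \omega_{Z/X}$ on our side versus the composite $(1_Z\cup -) \circ \theta$ on Proposition \ref{prop:koszul}'s side — is exactly the fundamental local isomorphism $\iota^\Delta E(\det\mathcal{E}^\vee) \cong \omega_{Z/X}[-d]$ used to define $\rho'$ in the first place. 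Commutativity of the remaining four connecting squares then follows from naturality of $\rho$ with respect to $\iota^\ast$ (as in Proposition \ref{prop:flat-compatible}), naturality of the forget-supports map, and functoriality of cup products across $\rho$.

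The main obstacle is verifying this last coherence: one must know that two a priori different degreewise-injective liftings of the same derived isomorphism — namely $\theta$ at the coherent level and the trivialization $\iota^\ast\det\mathcal{E}^\vee \otimes \omega_{Z/X} \cong \SO_Z$ pushed through $\rho'$ — are actually equal as maps of complexes. The clean way to handle this is to invoke the uniqueness-of-lifting principle: by (the analogue for the finite morphism $\iota$ of) Lemma \ref{lma:lifting-cousin} and Remark \ref{rmk:res_res}, any natural quasi-isomorphism between a fixed bounded complex and a bounded complex of injectives is uniquely determined by its image in the derived category. Since both candidate lifts realize the same map in $\mathrm{D}^b_c(\M(Z))$, they must coincide on the nose, and the ladder squares commute on the level of dg form functors. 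The lemma then follows by composing with Proposition \ref{prop:koszul}.
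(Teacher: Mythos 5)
Your strategy matches the paper's: the entire proof there is ``By Proposition \ref{prop:koszul},'' and what you have written is the unfolding of that one-liner through the definition of the regular-case pushforward as $\rho^{-1}\circ\iota_\ast^{\mathrm{coh}}\circ\rho'$. Your ladder decomposition and identification of the key coherence point (that the trivialization $\iota^\ast\det\mathcal{E}^\vee\otimes\omega_{Z/X}\cong\SO_Z$ must be compatible under $\rho'$ with the composite $(1_Z\cup)\circ\theta$) are exactly what ``By Proposition \ref{prop:koszul}'' silently requires.

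One technical caveat: you invoke ``(the analogue for the finite morphism $\iota$ of) Lemma \ref{lma:lifting-cousin},'' but that lemma is stated and proved for a \emph{flat} morphism $f$ — its proof uses the going-down property and the support computation for $(-)\otimes_R S$ over a flat $R\to S$, none of which is available for a closed immersion, and the paper does not assert such an analogue. The correct device here is the one the paper uses in the parallel situation of Lemma \ref{lma:projective_bundle_projection_residue}: since $\iota^\Delta EL'$ is bounded and degreewise injective, $\Hom_{K^b_c(\M(Z))}(-,\iota^\Delta EL')\cong\Hom_{D^b_c(\M(Z))}(-,\iota^\Delta EL')$ by \cite[Corollary 10.4.7]{weibel1994an}, giving uniqueness of the lifting up to homotopy; if one wants uniqueness on the nose, one supplies the degree-by-degree vanishing $\Hom_{\M(Z)}(A^p,(\iota^\Delta EL')^{p-1})=0$ by a direct codimension-function argument as in that lemma, rather than appealing to an unproved flat-case analogue. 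Also, your left column needs no translation at all — the $\iota^\ast$ in both diagrams is literally the same vector-bundle-theory pullback, so the appeals to Theorem \ref{thm:reg_pullback} and Proposition \ref{prop:flat-compatible} there are superfluous. With these adjustments the argument is sound and is precisely the intended proof.
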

\begin{proof}
	By Proposition \ref{prop:koszul}.
\end{proof}

\section{Geometric description of the connecting homomorphism}
\label{sec:connecting}
In this section, we incorporate the assumption presented by Balmer and Calm\`es \cite{balmer2009geometric} together with Appendix \ref{sect:connecting-and-cone} to investigate the connecting homomorphism in Hermitian $K$-theory.
\subsection{Hypothesis}
\label{hypo:BC} Consider a regular immersion $\iota: Z \hookrightarrow X$ of codimension $c \geq 2$. Let $\pi: B \to X$ be the blow-up of $X$ along $Z$, and $E$ be the exceptional fibre. Define $U := X - Z \cong B - E$ as the open complement. We assume that $v: U \hookrightarrow X$ and $\tilde{v}: U \hookrightarrow B$ are corresponding embeddings.\ Suppose that there exists a scheme $Y$ and an auxiliary flat morphism $\tilde{\alpha}: B \to Y$ such that $\alpha := \tilde{\alpha} \tilde{v}$ forms an affine bundle, i.e., a Zariski locally trivial bundle. This setup can be illustrated in the following diagram:
\[
	\xymatrix{
	Z \ar@{^{(}->}[r]^-{\iota} & X  & \ar@{_{(}->}[l]_-{v}  \ar@{_{(}->}[ld]_-{\tilde{v}} U \ar[d]^-{\alpha}  \\
	E \ar[u]^-{\widetilde{\pi}} \ar@{^{(}->}[r]_-{\tilde{\iota}} & B  \ar[u]^-{\pi} \ar@{-->}[r]_-{\tilde{\alpha}} & Y.
	}
\]

\subsection{The main theorem}
Let $L$ be a line bundle on a regular scheme $X$. Under the isomorphism
$$
	\Pic(B) \cong \Pic(X)\oplus \Z,
$$
we have
$$
	\tilde{\alpha}^* (\alpha^*)^{-1} v^*(L) \cong \pi^*(L) \otimes \SO(E)^{\otimes \lambda(L)}
$$
for some integer $\lambda(L)$ (cf. \cite[Remark 2.2]{balmer2009geometric}). Let $L_{Y} := (\alpha^*)^{-1} v^*(L)$. The aim of this section is to prove the following result.

\begin{theorem}\label{theo:connecting-hom} Suppose that  $Z, X$ are both regular schemes and that Hypothesis \ref{hypo:BC} holds.
	\begin{enumerate}[leftmargin=20pt,label={\rm (\Alph*)}]
		\item If $\lambda(L) \equiv c-1 \mod 2$, then we have
		      \[ \GW^{[n]}(X,L)  \simeq \GW^{[n]}(Y,L_{Y}) \oplus \GW^{[n-c]}(Z, \omega_\iota \otimes \iota^\ast L).  \]
		\item If $\lambda(L) \equiv c \mod 2$, then the following diagram
		      \begin{equation}\label{eqn:main_b}
			      \xymatrix{
			      \GW^{[n]}(U,v^\ast L) \ar[d]^-{\simeq}_-{(\alpha^*)^{-1}} \ar[r]^-{\partial} \ar[r] & \GW^{[n-c]}(Z, \omega_\iota \otimes \iota^\ast L)[1]  & \ar[l]_-{\eta \cup} \GW^{[n-c +1]}(Z, \omega_\iota \otimes \iota^\ast L )  \\
			      \GW^{[n]}(Y,L_{Y})  \ar[r]^-{ \tilde{\iota}^* \tilde{\alpha}^*}  & \GW^{[n]}(E, \tilde\iota^\ast \tilde\alpha^\ast L_Y) \ar[r]^-{\simeq} & \GW^{[n]}(E,  \omega_{\widetilde{\pi}} \otimes \widetilde{\pi}^*(\omega_\iota \otimes \iota^\ast L) ) \ar[u]^-{\widetilde{\pi}_*}
			      }
		      \end{equation}
		      is commutative up to homotopy. Note that $\eta$ is the Bott element defined in \cite[(6.1)]{schlichting2017hermitian}.
	\end{enumerate}
\end{theorem}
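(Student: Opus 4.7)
The proof adapts the blowup strategy of Balmer--Calm\`es \cite{balmer2009geometric} from Witt theory to Hermitian $K$-theory; the genuinely new feature is the use of the algebraic Bott sequence (Theorem \ref{theo:bott_seq_L}) to explain the factor $\eta \cup$ appearing in case (B). The main tools are the codimension-one localization triangle (Theorem \ref{coro:loc_seq}) on the pair $(B, E)$ with line bundle $M := \tilde{\alpha}^\ast L|_Y$, devissage (Theorem \ref{thm:devissage}), the base change, projection, and Koszul formulas (Theorems \ref{thm:residually_stable_pullback} and \ref{theo:projection-formula}, Proposition \ref{prop:koszul}), and the homotopy invariance of $\GW$ along the affine bundle $\alpha$, which supplies $(\alpha^\ast)^{-1}$.

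The first step is a line bundle identification on $E$. Using $\tilde{\alpha}^\ast L|_Y \cong \pi^\ast L \otimes \SO_B(E)^{\otimes \lambda(L)}$, the identities $\tilde{\iota}^\ast \SO_B(E) \cong \SO_E(-1)$ and $\omega_{\tilde{\iota}} \cong \SO_E(-1)$, together with the projective bundle formula $\omega_{\tilde{\pi}} \cong \tilde{\pi}^\ast \omega_\iota \otimes \SO_E(-c)$ modulo squares (since $\tilde{\pi}: E \to Z$ is a projective bundle of relative dimension $c-1$ coming from the normal bundle of $\iota$), a direct computation shows that the two line bundles $\omega_{\tilde{\iota}} \otimes \tilde{\iota}^\ast M$ and $\omega_{\tilde{\pi}} \otimes \tilde{\pi}^\ast(\omega_\iota \otimes L)$ on $E$ differ by $\SO_E(c - 1 - \lambda(L))$ modulo squares. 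The parity of this exponent distinguishes cases (A) and (B). In case (A), the discrepancy is a square and the two $\GW$-groups coincide; the identity $\tilde{v}^\ast \circ \tilde{\alpha}^\ast = \alpha^\ast$ exhibits the right-hand map of the localization triangle on $(B, E)$ as a split surjection (using that $\alpha^\ast$ is an equivalence by affine bundle invariance), and combining this section with devissage and the pushforward $\tilde{\pi}_\ast$ from $E$ to $Z$ yields the direct sum decomposition.

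In case (B), the discrepancy $\SO_E(c-1-\lambda(L))$ is an odd power of $\SO_E(1)$, i.e. $\SO_E(1)$ modulo squares. The plan is to compare the codimension-$c$ localization triangle on $(X, Z)$ with the codimension-$1$ triangle on $(B, E)$ via the commutative square of $\iota, \pi, \tilde{\iota}, \tilde{\pi}$, applying the base change, projection, and Koszul formulas, so as to obtain a relation between $\partial_X \circ \alpha^\ast$ and the composition $\tilde{\pi}_\ast \circ \tilde{\iota}^\ast \circ \tilde{\alpha}^\ast$. The target of this composition sits in $\GW^{[n-c+1]}(Z, \omega_\iota \otimes L)$, which is one $\GW$-shift above the target $\GW^{[n-c]}(Z, \omega_\iota \otimes L)[1]$ of $\partial_X$; the cup product with $\eta$ from the algebraic Bott sequence on $Z$ is precisely what bridges this one-shift gap. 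The main obstacle will be verifying this Bott-mediated identification together with the careful tracking of line bundle twists modulo squares, which will require the multiplicativity and naturality of the Bott sequence (Lemmas \ref{lem:FH_cup} and \ref{lma:push_FH}) as well as the Koszul class computation for the normal bundle inclusion (Lemma \ref{lma:koszul_regular}).
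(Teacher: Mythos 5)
Your case (A) argument is essentially the paper's: identify the line bundle twist on $E$, observe that $\tilde v^\ast \circ \tilde\alpha^\ast = \alpha^\ast$ exhibits a right splitting of $v^\ast$ via $\pi_\ast$ and the two-periodicity $\SO(E)^{\lambda(L)} \leftrightarrow \SO(E)^{c-1}$, and conclude the direct sum decomposition from the localization triangle. That part is sound.

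For case (B), however, there is a genuine gap. You correctly observe that the composition $\tilde\pi_\ast \tilde\iota^\ast \tilde\alpha^\ast$ lands one $\GW$-shift above the target of $\partial$, and you assert that "the cup product with $\eta$ ... is precisely what bridges this one-shift gap," to be verified using the Bott multiplicativity lemmas and the Koszul computation. But this is a heuristic, not a mechanism. The paper reduces (B) to a \emph{codimension-one statement} (the paper's Lemma on the pair $(B,E)$ and the divisor $\tilde\iota$): it shows that the connecting map $\partial$ of the codimension-one localization on $(B,E)$, composed with the twist, equals $\eta \cup \tilde\iota_\ast \tilde\iota^\ast$. Proving that identity is the real content, and it does not follow from the lemmas you cite (those treat $H$, $F$, and the composition $H\circ F$, not the third map $\eta\cup$ of the Bott triangle). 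What it actually needs is (i) the cone construction on the morphism category $\mathrm{Fun}_w([1],\AC)$ with the symmetric form on the object $\sigma_E : \SO(E)^\vee\to\SO_B$; (ii) Balmer's product formula from \cite{balmer2005products} comparing the cup with $\cone(\sigma_E)$ to the Koszul class; and crucially (iii) the appendix result (the compatibility of the localization connecting map with the cone functor and the $\eta\cup$-map, coming from \cite[Proposition 4.9]{schlichting2017hermitian} via an octahedron/(T4) argument). Without (iii) there is simply no way to convert "a one-shift gap" into the assertion that the gap is filled by the Bott map rather than by some other element of $\GW^{[1]}_0$. You should make the codimension-one case an explicit lemma, and identify the cone-construction input; once that lemma is in place, the comparison of localization triangles via base change/projection/Koszul that you outline does close the proof.
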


\begin{proof}
	(A)	In view of the localization sequence
	$$	\GW^{[n-c]}(Z, \omega_{\iota} \otimes \iota^\ast L) \xrightarrow{\iota_*} \GW^{[n]}(X, L) \xrightarrow{v^*} \GW^{[n]}(U, v^\ast L) \xrightarrow{\partial} \GW^{[n]}(Z, \omega_{\iota} \otimes \iota^\ast L)[1], $$
	it is enough to show that $v^*$ has a right splitting. To find this splitting, we form a diagram as follows
	$$
		\xymatrix{
		\GW^{[n]}(X, L) \ar[r]^-{v^\ast} \ar@{}[dr]|-{\diagram \label{diag:conn_split_1}} &  \GW^{[n]}(U, v^* L)   & \ar[l]_-{b}^-{\simeq} \GW^{[n]}(U, \alpha^*(\alpha^*)^{-1}v^*L)  \\
		\GW^{[n]}(B, \omega_\pi \otimes \pi^*L ) \ar[r]^-{\tilde{v}^*} \ar[u]^-{\pi_*} & \GW(U, \tilde{v}^*(\omega_\pi \otimes \pi^*L)) \ar[u]^-{\gamma}_-{\simeq}    \\
		\GW^{[n]}(B, {\tilde{\alpha}}^* (\alpha^*)^{-1} v^*L )  \ar[u]_-{\simeq}^-{\epsilon} \ar[r]^-{\tilde{v}^*} \ar@{}[drr]|-{\diagram \label{diag:conn_split_2}} & \GW^{[n]}(U, \tilde{v}^*{\tilde{\alpha}}^* (\alpha^*)^{-1} v^*L) \ar[r]^-{a}_-{\simeq} \ar[u]^-{\tilde{v}^*(\epsilon)}_-{\simeq} &  \GW^{[n]}(U, \alpha^*(\alpha^*)^{-1}v^*L) \ar@{=}[uu]\\
		\GW^{[n]}(B, {\tilde{\alpha}}^* (\alpha^*)^{-1} v^*L ) \ar@{=}[u]& & \ar[ll]_-{\tilde{\alpha}^*} \GW^{[n]}(Y,(\alpha^*)^{-1}v^*L) \ar[u]^-{\alpha^*}_-{\simeq}. }
	$$
	The squares $\diag{\ref{diag:conn_split_1}}$ and $\diag{\ref{diag:conn_split_2}}$ are commutative by the base change formula (cf. Proposition \ref{prop:regular_smooth_base_change}) and the functoriality of pullback respectively, and the isomorphisms $a$ and $\gamma$ are fixed by these properties. The isomorphism $\epsilon$ can be chosen to be the following composition
	$$ \xymatrix{ \GW^{[n]}(B, \SO(E)^{\otimes \lambda(L)} \otimes \pi^*L ) \ar[r]^-{\psi}_-\simeq &  \GW^{[n]}(B, \SO(E)^{\otimes (c-1)} \otimes \pi^*L ) \ar[d]_-{(\id,\rho_1)}^-{\simeq} \\
		\GW^{[n]}(B, {\tilde{\alpha}}^* (\alpha^*)^{-1} v^*L ) \ar[u]^-{(\id,\rho_0)}_-{\simeq} \ar[r]^-{\epsilon} &\GW^{[n]}(B, \omega_\pi \otimes \pi^*L )  } $$
	where $\psi$ is the two-periodicity on the twists, and $\rho_0$ (resp. $\rho_1$) is the isomorphism $  {\tilde{\alpha}}^* (\alpha^*)^{-1} v^*L  \cong \SO(E)^{\otimes \lambda(L)} \otimes \pi^*L$ (resp. $ \SO(E)^{\otimes (c-1)} \otimes \pi^*L  \cong \omega_\pi \otimes \pi^*L$), cf. \cite[Proposition A.11]{balmer2009geometric}.
	Then, we construct $b$ to be the composition $ \gamma \tilde{v}^*(\epsilon) a^{-1}$. It is more difficult to prove (B), and we postpone this case to Section \ref{subsec:proof-conn-hom} below.  \end{proof}

\begin{remark}
	In Theorem \ref{theo:connecting-hom} (B), compared to the work of Balmer-Calm\`es \cite{balmer2009geometric}, the additional data of cupping the Bott element become necessary to provide a geometric description of the connecting homomorphism in Hermitian $K$-theory.
\end{remark}

\subsection{On the codimension one case}

Let $B$ be a scheme with a line bundle $L$ and $\tilde\iota : E \hookrightarrow B$ be a prime divisor, and $U := B - E$ be its open complement. Let $\tilde v: U \to B$ be the open immersion. Let $\SO(E)$ be the line bundle on $B$ associated to $E$. There is a canonical morphism of line bundles
\[
	\sigma_E: \SO(E)^\vee \to \SO_{B}.
\]
Note that the morphism $\sigma_E$ induces an isomorphism $t:\tilde v^\ast \SO(E) \xrightarrow{\cong} \tilde v^\ast \SO_B$.\ By Theorem \ref{coro:loc_seq}, there is a distinguished triangle
\[
	\GW^{[n]}_E\left(B, L \right) \xrightarrow{} \GW^{[n]}\left(B, L \right) \xrightarrow{\tilde v^\ast}  \GW^{[n]}\left(U, \tilde v^\ast L\right) \xrightarrow{\partial} \GW^{[n]}_E\left(B, L \right)[1].
\]
in $\mathcal{SH}$.

\begin{lemma}\label{lem:codim-one}
	The following diagram  of Grothendieck-Witt spectra
	\[
		\xymatrix{{\GW^{[n]}(B, L \otimes \SO(E))} \ar[r]^-{\tilde\iota^\ast} \ar[d]_-{\tilde v^\ast} & {\GW^{[n]}\left(E,\tilde{\iota}^\ast L \otimes \omega_{\tilde{\iota}}\right)} \ar[r]^-{\tilde\iota_\ast} & {\GW^{[n+1]}_E(B, L)} \ar[d]_-{\eta \cup} \\
		{\GW^{[n]}\left(U,\tilde v^\ast \left(L\otimes \SO(E)\right)\right)} \ar[r]_-{\simeq}^-{t} & {\GW^{[n]}\left(U,\tilde v^\ast L\right)} \ar[r]^-{\partial} & {\GW^{[n]}_E(B, L)}[1]}
	\]
	commutes up to homotopy.
\end{lemma}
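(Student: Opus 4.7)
The plan is to compute the two compositions in the diagram, reduce both to cup products against universal classes supported on $E$, and then verify the resulting universal identity.

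For the top path, apply the self-intersection formula (Proposition~\ref{prop:koszul}) to the line bundle $\mathcal{E} = \SO(E)$ with its canonical section $\sigma_E$, which cuts out the codimension-one regular immersion $\tilde\iota$. For any $\alpha \in \GW^{[n]}_i(B, L \otimes \SO(E))$ this yields
\[
\tilde\iota_*\tilde\iota^*(\alpha) \;=\; \kappa(\SO(E)) \cup \alpha \;\in\; \GW^{[n+1]}_{E,i}(B, L),
\]
so that the top-right composition sends $\alpha$ to $\bigl(\eta \cup \kappa(\SO(E))\bigr) \cup \alpha$ after reassociating the cup product.

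For the bottom path, observe that the twist isomorphism $t$ is realised as cup product with the class $\tau \in \GW^{[0]}_0(U,\SO(E)^\vee|_U)$ represented by the non-degenerate form $(\SO_U,\ \sigma_E|_U^{-1}\colon \SO_U\xrightarrow{\cong}\SO(E)^\vee|_U)$. The connecting homomorphism $\partial$ of Theorem~\ref{coro:loc_seq} is a morphism of $\GW^*(B)$-modules, and hence satisfies the projection formula $\partial(\tilde v^*\gamma \cup \delta) = \gamma \cup \partial\delta$. Applied with $\gamma = \alpha$ and $\delta = \tau$, this gives
\[
\partial\bigl(t(\tilde v^*\alpha)\bigr) \;=\; \partial\bigl(\tilde v^*\alpha \cup \tau\bigr) \;=\; \alpha \cup \partial\tau,
\]
so that the commutativity of the diagram reduces to the universal identity
\[
\partial\tau \;=\; \eta \cup \kappa(\SO(E)) \qquad \text{in}\ \pi_0\bigl(\GW^{[0]}_E(B,\SO(E)^\vee)[1]\bigr).
\]

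To establish this identity, I would first check that both sides vanish under the forgetful functor $F\colon \GW \to K$: on the left, $F(\partial\tau) = \partial_K F(\tau) = \partial_K(1_U) = 0$ by the naturality of $F$ and the $K$-theoretic localization sequence; on the right, $F\circ(\eta\cup-)=0$ is part of the algebraic Bott sequence (Theorem~\ref{theo:bott_seq_L}). Hence both classes lift, modulo the image of $H\colon K_E(B)\to\GW^{[1]}_E(B,\SO(E)^\vee)$, to classes in $\GW^{[1]}_E(B,\SO(E)^\vee)$. By devissage (Theorem~\ref{thm:devissage}), $\kappa(\SO(E))$ corresponds to the unit $1_E\in\GW^{[0]}(E,\SO_E)$ (cf.\ \cite[Proposition~7.1]{calmes2011push}), and $\partial\tau$ can be computed explicitly by lifting $\tau$ to $B$ via a null-homotopy of the restricted Koszul complex $K(\SO(E))|_U$; tracing these representatives through devissage identifies the two lifts.

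The main obstacle is this last universal identity, where the Bott element $\eta$ genuinely enters and marks the point at which our analysis diverges from the Witt-theoretic treatment of \cite{balmer2009geometric}. Everything else is a formal manipulation with projection formulas, associativity of cup products, and devissage.
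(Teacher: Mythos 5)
Your overall strategy — reduce both paths to cup products against universal classes, then verify a universal identity — is sensible and matches the spirit of the paper's argument, but there is a genuine gap precisely at the universal identity, which is the heart of the matter.

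The reduction step has two assumptions you do not justify. First, the claimed module formula $\partial(\tilde v^\ast\gamma\cup\delta)=\gamma\cup\partial\delta$ for the connecting map in the localization triangle is a Leibniz-type statement that requires proof at the spectrum level for Hermitian $K$-theory; it is not a formal consequence of functoriality. (In the paper this role is played by \cite[Theorem 5.2]{balmer2005products} applied inside a morphism-category model, i.e.\ the compatibility of the cone with cup products, not a direct module statement about $\partial$.) Second, and more seriously, the identity $\partial\tau=\eta\cup\kappa(\SO(E))$ is essentially the content of the lemma with $\alpha$ a unit, so you have moved the difficulty rather than resolved it. Your proposed verification — both sides vanish under $F$, hence lie in the image of $\eta\cup$, hence admit preimages in $\GW^{[1]}_E(B,\SO(E)^\vee)$ well-defined modulo $\im H$ — only shows that the two classes live in a common subgroup; it does not identify them. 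The final sentence, that ``tracing these representatives through devissage identifies the two lifts,'' is exactly the computation that is missing, and it is not carried out nor is it clear how to carry it out within your framework.

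The paper's proof circumvents this by two pieces you do not use: an explicit model of $t(\tilde v^\ast\alpha)$ and $\tilde\iota_\ast\tilde\iota^\ast\alpha$ inside the Grothendieck–Witt spectrum of the morphism category $\mathrm{Fun}_w([1],\AC)$ (with the symmetric form $\sigma_E$ as the universal datum), and, crucially, Lemma~\ref{lem:balmer-schlichting-coin} from the Appendix, which establishes the identity $\partial\circ P=(\eta\cup)\circ\mathrm{cone}$ at the level of spectra. That appendix lemma is the Hermitian $K$-theory analogue of Balmer's description of the Witt-group connecting map via cones, and it is what makes the Bott element appear. Without some version of this lemma, your reduction cannot be closed, because the connecting homomorphism $\partial$ in the Grothendieck–Witt localization sequence does not admit the naive cone description available in Witt theory. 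If you want to pursue your route, you should first prove (or isolate) the statement $\partial\circ P=(\eta\cup)\circ\mathrm{cone}$; at that point your $\partial\tau$ computation becomes the cone of the Koszul form $\sigma_E$ cupped with $\eta$, which is visibly $\eta\cup\kappa(\SO(E))$.
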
\begin{proof}
	Let $\AC$ be the dg category $\Ch^b_{c}(\V(B))$. We define the set $w \subset Z^0\AC$ such that $\phi \in w$ if and only if $\tilde v^\ast(\phi) \in \mathrm{quis}$. Note that $\mathrm{quis} \subset w$ since the pullback of a quasi-isomorphism on $B$ via $\tilde{v}$ remains a quasi-isomorphism on $U$. Additionally, the morphism $\sigma_E$ belongs to $w$.\ Consider the full subcategory $\mathrm{Fun}_{w}([1], \AC)$ of $\mathrm{Fun}([1], \AC)$. It consists of functors $[1]\to \AC$ whose images are in $w$ (see \cite[p.367]{schlichting2010mayer}). Inside $\mathrm{Fun}_w([1], \AC)$, we have an object $\sigma_E$ equipped with a symmetric form:
	\[ \xymatrix{
		\SO(E)^\vee \ar[r]^-{\sigma_E} \ar[d]^-{1} & \SO_{B} \ar[d]^-{1} \\
		\SO(E)^\vee \ar[r]^-{\sigma_E} & \SO_{B}
		}\]
	with respect to the duality $\sharp_{\SO(E)^\vee}: \mathrm{Fun}_w([1], \AC)^\op \to \mathrm{Fun}_w([1], \AC)$.\ Now, let us examine the following diagram:
	$$
		\xymatrix{
		\GW^{[n]}\left(E,\tilde\iota^\ast \left(L \otimes \SO(E)\right)\right)  \ar[rr]^-{\simeq} \ar@{}[drr]|-{\diagram \label{diag:conn_codi_1}} & & {\GW^{[n]}\left(E,\tilde{\iota}^\ast L \otimes \omega_{\tilde{\iota}}\right)} \ar[d]_-{\tilde\iota_\ast}\\
		{\GW^{[n]}(B, L \otimes \SO(E))} \ar[u]^-{\tilde\iota^\ast}  \ar[rr]^-{ \cone(\sigma_E) \cup } \ar@{}[drr]!<-30pt,10pt>|-{\diagram \label{diag:conn_codi_2}} \ar[dr]^-{\sigma_E \cup} \ar[dd]_-{\tilde v^\ast} &&  {\GW^{[n+1]}_E(B, L)}  \ar[dd]_-{ \eta \cup} \\
		\ar@{}[dr]|-{\diagram \label{diag:conn_codi_3}} & \ar[ur]^-{\mathrm{cone}} \GW^{[n]}(\mathrm{Fun}_{w}([1], \AC), \sharp_{L})  \ar[d]_-{P} \ar@{}[dr]|-{\diagram \label{diag:conn_codi_4}} &   \\
		{\GW^{[n]}\left(U,\left(L \otimes \SO(E)\right)|_{U}\right)} \ar[r]^-{\simeq}_-t & {\GW^{[n]}\left(\AC, w ,\sharp_{L} \right)} \ar[r]^-{\partial} & {\GW^{[n]}_E(B, L)[1]}
		}
	$$
	where the diagram $\diag{\ref{diag:conn_codi_1}}$ is commutative up to homotopy, as stated in Lemma \ref{lma:koszul_regular}. The commutativity of the diagram $\diag{\ref{diag:conn_codi_2}}$ follows from \cite[Theorem 5.2]{balmer2005products}. The diagram $\diag{\ref{diag:conn_codi_3}}$ is commutative since the trivialization $t$ is induced by $\sigma_E$ via $\tilde{v}^*$. Lastly, the diagram $\diag{\ref{diag:conn_codi_4}}$ is commutative by Lemma \ref{lem:balmer-schlichting-coin}.
\end{proof}

\subsection{Proof of Theorem \ref{theo:connecting-hom} (B)}\label{subsec:proof-conn-hom}

Consider the following  diagram:
$$
	\xymatrix@C=13pt{
	\GW^{[n]}(U, v^* L) \ar@{}[dr]|-{\diagram \label{diag:conn_conn_1}} \ar[r]^-{\partial} & \GW_Z^{[n]}(X,L)[1] \ar@{}[dr]|-{\diagram \label{diag:conn_conn_2}} & \ar[l]_-{\iota_*} \GW^{[n-d]}(Z,\iota^\ast L \otimes \omega_{\iota})[1] \\
	\GW^{[n]}(U, \tilde{v}^* \left(\pi^\ast L \otimes \omega_{\pi}\right)) \ar[u]_-{\simeq} \ar[r]^-{\partial} \ar@{}[drr]|-{\diagram \label{diag:conn_conn_3}} & \GW^{[n]}_E(B, \pi^\ast L \otimes \omega_{\pi})[1] \ar[u]_{\pi_*} & \ar[l]_-{\tilde{\iota}_*} \GW^{[n-1]}(E, \tilde{\iota}^\ast \left(\pi^\ast L \otimes \omega_{\pi}\right) \otimes \omega_{\tilde{\iota}})[1] \ar[u]_-{\widetilde{\pi}_*} \\
	\GW^{[n]}(B, \SO(E)\otimes \pi^\ast L \otimes \omega_{\pi}) \ar[u]^-{\tilde{v}^*} \ar[rr]^-{\tilde{\iota}^*} && \GW^{[n]}(E, \tilde{\iota}^\ast \left(\pi^\ast L \otimes \omega_{\pi}\right) \otimes \omega_{\tilde{\iota}}) \ar[u]_{\eta \cup}
	}
$$
The diagram $\diag{\ref{diag:conn_conn_2}}$ commutes due to the composition of pushforwards. The commutativity of the diagram $\diag{\ref{diag:conn_conn_3}}$ follows from Lemma \ref{lem:codim-one} and the projection formula (cf. Proposition \ref{prop:projection_formula}) applied to the Bott elements. Lastly, the diagram $\diag{\ref{diag:conn_conn_1}}$ commutes by Lemma \ref{lma:conn_map_of_seq} below. Note that $\widetilde{\pi}_\ast$ commutes with cupping Bott element by Proposition \ref{prop:projection_projective_space}. Then \eqref{eqn:main_b} commutes since $\tilde{\alpha} \tilde{v} = \alpha$ and
\[
	\tilde{\alpha}^\ast L_{Y} = \pi^*(L) \otimes \SO(E)^{\otimes \lambda(L)} = \SO(E)^{\otimes (\lambda(L) - c - 1)}\otimes \pi^\ast L \otimes \omega_{\pi} = \SO(E)\otimes \pi^\ast L \otimes \omega_{\pi}
\]
in $\Pic(B) / 2$.

\begin{lemma}\label{lma:conn_map_of_seq}
	There is a map of homotopy fibration sequences
	\[
		\xymatrix{
		\GW^{[n]}_{E}(B, \pi^\ast L \otimes \omega_{\pi}) \ar[r]^-{ } \ar[d]_-{\pi_\ast }& \GW^{[n]}(B, \pi^\ast L \otimes \omega_{\pi}) \ar[d]_-{\pi_\ast} \ar[r]^-{\tilde{v}^*} & \GW^{[n]}(U, \tilde{v}^* \left(\pi^\ast L \otimes \omega_{\pi}\right)) \ar[d]^-{\simeq}\\
		\GW_{Z}^{[n]}(X, L) \ar[r]^-{ }& \GW^{[n]}(X, L) \ar[r]^-{v^*} & \GW^{[n]}(U, v^*L)
		}
	\]
\end{lemma}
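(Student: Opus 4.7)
The plan is to exhibit the square diagram as a map between two localization distinguished triangles from Theorem \ref{coro:loc_seq}, with the vertical maps induced by the pushforward $\pi_\ast$ of Theorem \ref{thm:pushforward_gw}. The bottom row is the localization triangle for the closed immersion $\iota: Z \hookrightarrow X$ with open complement $v: U \hookrightarrow X$, and the top row is the localization triangle for the regular immersion $\tilde{\iota}: E \hookrightarrow B$ (both $B$ and $E$ are regular since $X$ is regular and $\iota$ is a regular immersion) with open complement $\tilde{v}: U \hookrightarrow B$; under d\'evissage (Theorem \ref{thm:devissage}) these localization triangles take the supported form shown in the lemma.

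The middle vertical arrow $\pi_\ast$ is the one supplied by Theorem \ref{thm:pushforward_gw} applied to the proper morphism $\pi: B \to X$, translated to the vector-bundle setting by means of the resolutions $\rho$ of Lemma \ref{lma:regular_coherent_vector_gw} (this is precisely the regular-case pushforward reviewed in Section 6; here $\omega_\pi$ plays the role of the relative dualizing line bundle for $\pi$, so that $\pi^\Delta E(L) \simeq E(\pi^\ast L \otimes \omega_\pi)$). The right vertical equivalence comes from the fact that $\pi$ restricts to an isomorphism $\pi^{-1}(U) \xrightarrow{\simeq} U$, which makes $\omega_\pi \restr{U}$ canonically trivial and identifies $\tilde{v}^\ast(\pi^\ast L \otimes \omega_\pi)$ with $v^\ast L$; under this identification the right vertical arrow is just the identity.

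The right square is commutative up to homotopy by the base change formula (Theorem \ref{thm:residually_stable_pullback}) applied to the Cartesian square
\[
\xymatrix{
U \ar[r]^-{\tilde{v}} \ar[d]_-{\mathrm{id}} & B \ar[d]^-{\pi} \\
U \ar[r]^-{v} & X
}
\]
in which $\pi$ is proper and $v$ is an open immersion. This homotopy then induces, on the homotopy fibers of $\tilde{v}^\ast$ and $v^\ast$, a canonical map $\GW^{[n]}_E(B, \pi^\ast L \otimes \omega_\pi) \to \GW^{[n]}_Z(X,L)$ making the whole square into a map of distinguished triangles in $\mathcal{SH}$. It remains to identify this induced map with $\pi_\ast$: the dg functor $\pi_\ast: \Ch^b_c(\M(B)) \to \Ch^b_c(\M(X))$ carries $\Ch^b_{c,E}(\M(B))$ into $\Ch^b_{c,Z}(\M(X))$ because $\pi(E) \subseteq Z$ set-theoretically, and this restriction is compatible (on the nose) with the inclusions $\Ch^b_{c,E} \hookrightarrow \Ch^b_c$ and $\Ch^b_{c,Z} \hookrightarrow \Ch^b_c$, so the restricted pushforward is the unique map of fibers inducing the commutativity of the right square.

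The main obstacle is the bookkeeping of coefficient sheaves: one must check carefully that the base change isomorphism $\varepsilon$ of Theorem \ref{thm:residually_stable_pullback} matches the canonical trivialization of $\omega_\pi$ on $U$ that defines the right vertical equivalence, so that the homotopy produced by the base change formula is compatible with the identification used to write the top and bottom rows in terms of vector-bundle GW-spectra via $\rho$. Once this identification is settled, the commutativity of the left square is automatic from the fact that the supported pushforward and the unsupported pushforward are strictly compatible through the inclusion $\Ch^b_{c,E}(\M(B)) \hookrightarrow \Ch^b_c(\M(B))$.
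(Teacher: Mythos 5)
Your overall plan---recognize the rows as localization fibrations and exploit compatibility of the pushforward with support conditions---is pointed in the right direction, and your observations about the right square commuting via base change and about $\pi_*$ sending $E$-supported complexes to $Z$-supported ones are correct. However, the argument as written has a genuine gap at the decisive step. You first establish (via Theorem~\ref{thm:residually_stable_pullback}) that the right square commutes up to homotopy in $\mathcal{SH}$, then invoke a fill-in to get a map on fibers, and then try to identify that induced map with the restricted pushforward $\pi_*$. Fill-ins in a triangulated or stable homotopy category are \emph{not} unique, so the assertion that ``the restricted pushforward is the unique map of fibers inducing the commutativity of the right square'' does not follow. A different choice of homotopy witnessing the right square would in general produce a different map on fibers; to identify your map with $\pi_*$ you would need to show that the base-change homotopy is the one coming from the strict dg construction of the pushforward, which you do not do. A second, related omission is that the pushforward on Grothendieck--Witt spectra is not given by the naive dg functor $\pi_\ast:\Ch^b_c(\M(B))\to\Ch^b_c(\M(X))$---it is the composite through the acyclic subcategory $\AC_\pi$ of Definition~\ref{def:acyclic_dg}, and your ``compatible on the nose'' remark neither introduces $\AC_\pi$ nor checks that the $E$-supported version of $\AC_\pi$ still computes $\GW^{[n]}_E(B,-)$.

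The paper's proof sidesteps both issues by never leaving the dg level: it enlarges the weak equivalences to $\tilde w\subset Z^0\AC_\pi$ and $w\subset Z^0\Ch^b_c(\M(X))$ (morphisms becoming quasi-isomorphisms after restriction to $U$), uses ordinary flat base change for the open immersion $v$ only to record the elementary inclusion $\pi_*(\tilde w)\subset w$, and then observes that $(\pi_*,\zeta)$ is a single nonsingular exact form functor between the two squares of dg categories with weak equivalences and duality that encode both localization sequences. The Grothendieck--Witt machinery then delivers a map of homotopy fibration sequences with all three vertical arrows supplied simultaneously, so there is no fill-in to choose and no identification to verify. If you want to salvage your route, you would have to replace the $\mathcal{SH}$-level fill-in argument by this strict dg-level construction (or by an equivalent argument keeping track of the homotopies), at which point you essentially recover the paper's proof.
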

\begin{proof}
	Denote by $\AC = \AC_{\pi}$ the full dg subcategory of $\Ch^b_c(\M(B))$ as defined in Definition \ref{def:acyclic_dg}. Denote by $\mathcal{B} = \Ch^b_c(\M(X))$. Let $w \subset Z^0 \mathcal{B}$ (resp. $\tilde{w} \subset Z^0 \AC$) be the set of morphisms which are quasi-isomorphism after pullback along $v$ (resp. $\tilde{v}$). Note that there is a cartesian square
	\[
		\xymatrix{
		B \ar[d]_-{\pi} & \ar[l]_-{\tilde{v}} U \ar@{=}[d]\\
		X & \ar[l]_-{v} U.
		}
	\]
	Then by base change theorem, $\pi_\ast (\tilde{w}) \subset w$.
	Thus $(\pi_\ast, \zeta)$ is a non-singular exact form functor between squares of dg categories with weak equivalences and duality
	\[
		\xymatrix{
			(\AC^{\tilde{w}}, \quis) \ar[r] \ar[d] & (\AC, \quis) \ar[d] & (\mathcal{B}^w, \quis) \ar[r] \ar[d] & (\mathcal{B}, \quis) \ar[d] \\
			(\AC^{\tilde{w}},\tilde{w}) \ar[r] & (\AC, \tilde{w}) & (\mathcal{B}^w,w) \ar[r] & (\mathcal{B}, w)
		}
	\]
	Then $(\pi_\ast, \zeta)$ induces a map of homotopy fibre sequences of Grothendieck-Witt spectra.
\end{proof}

\section{Excess intersection formula}

\subsection{Euler class}
Let $L$ be a line bundle on a regular scheme $X$. Let $p:V \to X$ be a vector bundle of rank $d$ and let $s: X \to V$ be the zero section. The \textit{Euler class map} of $V$ is the composition of maps of spectra
\[ \GW^{[n]}(X, {L}) \xrightarrow{s_*}  \GW^{[n+d]} ({V}, p^*(\det {V}^\vee \otimes {L})) \xrightarrow{(p^*)^{-1}}  \GW^{[n+d]} (X, \det {V}^\vee \otimes {L})  \]
which shall be denoted by $c({V})_L$.\ From Projection Formula (Proposition \ref{prop:projection_formula}), we see that
\begin{equation}\label{eqn:zero_section_pushforward}
	c(V)_L(\alpha) =   (p^*)^{-1}s_*(1\cup s^*p^*(\alpha)) = (p^*)^{-1} (s_*(1) \cup p^*(\alpha)) = [(p^*)^{-1} s_*(1)] \cup \alpha,
\end{equation}
for $\alpha \in \GW^{[n]}_i(X,L)$, where the last equality follows from the fact that the pullback is compatible with the cup product (See the line before  \cite[Remark 9.4]{schlichting2017hermitian}). Thus, the key information of the morphism $c(V)_L$ is determined by the image of $1 \in \GW_0(X)$.

\begin{definition}
	The Euler class $e(V)$ of $V$ is defined to be the image of the element $1_X \in \GW_0(X)$ under the composition of maps
	\[ \GW_0(X) \xrightarrow{s_*}  \GW^{[d]}_0 ({V}, p^*\det {V}^\vee) \xrightarrow{(p^*)^{-1}}  \GW^{[d]}_0 (X, \det {V}^\vee)  \]
	That is we define $e(V):=(p^*)^{-1}s_*(1_X)$.
\end{definition}

\begin{proposition}\label{prop:euler}
	Let $X,Y$ be regular schemes and let $p: V \to X$ be a vector bundle of rank $d$.
	\begin{enumerate}[leftmargin=20pt]
		\item If $f:Y \to X$ is a morphism of schemes, then $e(f^*V)= f^* e(V)$.
		\item If $0 \to V' \to V \to V'' \to 0$ is an exact sequence of vector bundles over $X$, then $e(V)= e(V'')\cup e(V')$.
	\end{enumerate}
\end{proposition}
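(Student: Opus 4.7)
The plan is to reduce both parts to computations with the Koszul-complex description of zero-section pushforwards provided by Proposition~\ref{prop:koszul}.

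For part (i), I would apply Proposition~\ref{prop:koszul} with $\iota = s$ and $\mathcal{E} = p^\ast V$ (whose tautological section has zero locus exactly the zero section of $V$) to obtain $s_\ast(1_X) = \kappa(p^\ast V)$ in $\GW^{[d]}_0(V, p^\ast \det V^\vee)$. Given $f: Y \to X$, form the cartesian square
\[\xymatrix{Y \ar[r]^-{s'} \ar[d]_-{f} & f^\ast V \ar[d]^-{\bar f} \\ X \ar[r]^-{s} & V.}\]
The tautological section of $(p')^\ast(f^\ast V) \cong \bar f^\ast (p^\ast V)$ is the $\bar f$-pullback of the tautological section of $p^\ast V$, and the symmetric form on the Koszul complex pulls back compatibly, so $\kappa((p')^\ast(f^\ast V)) = \bar f^\ast \kappa(p^\ast V)$ and therefore $s'_\ast(1_Y) = \bar f^\ast s_\ast(1_X)$. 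Combined with the identity $(p')^\ast \circ f^\ast = \bar f^\ast \circ p^\ast$ coming from the cartesian square, this yields $e(f^\ast V) = ((p')^\ast)^{-1} s'_\ast(1_Y) = f^\ast (p^\ast)^{-1} s_\ast(1_X) = f^\ast e(V)$.

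For part (ii), the sub-bundle inclusion $i: V' \hookrightarrow V$ is a regular immersion of codimension equal to the rank of $V''$, and it coincides with the zero locus of the section $\sigma: V \to p^\ast V''$ of the pulled-back bundle $p^\ast V''$ on $V$ induced by the quotient $V \twoheadrightarrow V''$. Factor the zero section of $V$ as $s = i \circ s'$, where $s': X \to V'$ is the zero section of $V'$. Then $s_\ast(1_X) = i_\ast(p'^\ast e(V'))$ by definition of $e(V')$, and using $p' = p \circ i$ together with the projection formula (Theorem~\ref{theo:projection-formula}),
\[s_\ast(1_X) \;=\; i_\ast\bigl(i^\ast p^\ast e(V')\bigr) \;=\; i_\ast(1_{V'}) \cup p^\ast e(V').\]
By Proposition~\ref{prop:koszul} applied to the bundle $p^\ast V''$ with section $\sigma$, $i_\ast(1_{V'}) = \kappa(p^\ast V'')$. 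I would then identify this Koszul class with the Euler class $e(p^\ast V'')$, which by part~(i) equals $p^\ast e(V'')$; this gives $s_\ast(1_X) = p^\ast\bigl(e(V'') \cup e(V')\bigr)$, and applying $(p^\ast)^{-1}$ yields $e(V) = e(V'') \cup e(V')$.

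The main obstacle is showing $\kappa(\mathcal{E}) = e(\mathcal{E})$ for an arbitrary regular section $\sigma$ of a vector bundle $\mathcal{E}$. I would argue via the cartesian square
\[\xymatrix{Z \ar[r]^-{\iota} \ar[d]_-{\iota} & X \ar[d]^-{\sigma} \\ X \ar[r]^-{s_0} & \mathcal{E}}\]
(with $s_0$ the zero section and $\iota: Z \hookrightarrow X$ the zero locus of $\sigma$), where a base-change identity $\sigma^\ast \circ s_{0 \ast} \simeq \iota_\ast \circ \iota^\ast$ gives $\sigma^\ast s_{0 \ast}(1_X) = \iota_\ast(1_Z) = \kappa(\mathcal{E})$. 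Since $\sigma^\ast \circ p^\ast = \id$ and $p^\ast$ is a stable equivalence by homotopy invariance, $\sigma^\ast = (p^\ast)^{-1}$, so $\sigma^\ast s_{0 \ast}(1_X) = e(\mathcal{E})$ by the very definition of the Euler class. The required base change extends Theorem~\ref{thm:residually_stable_pullback} beyond the open-immersion case, but it holds here because $s_0$ and $\sigma$ are transverse regular immersions (the square is Tor-independent); alternatively, one can invoke $\mathbb{A}^1$-invariance of the Euler class along the family of sections $\{t\sigma\}_{t \in \mathbb{A}^1}$ interpolating between $s_0$ and $\sigma$ to conclude the same identity.
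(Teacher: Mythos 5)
The paper's own proof of Proposition \ref{prop:euler} is just a citation to \cite[Proposition 2.4]{fasel2009the}, so your self-contained argument built from the Koszul-class machinery of the paper is necessarily a different route; it is in spirit close to Fasel's and is essentially correct. Part (i) works: $s_*(1_X) = \kappa(p^*V)$ by Proposition \ref{prop:koszul} applied to the tautological section of $p^*V$ on the total space, the Koszul complex together with its symmetric form pulls back along $\bar f$, and the identity $\bar f^*p^* = (p')^*f^*$ from the cartesian square finishes it. Part (ii) correctly reduces, via the factorization $s = i \circ s'$ and the projection formula (Theorem \ref{theo:projection-formula}), to the comparison $\kappa(\mathcal{E}) = e(\mathcal{E})$ for an arbitrary regular section $\sigma$ of a bundle $\mathcal{E}$ (applied with $\mathcal{E} = p^*V''$ and $\sigma$ the composite of the tautological section with $p^*V \twoheadrightarrow p^*V''$), plus part (i) for $e(p^*V'') = p^*e(V'')$.

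You correctly identify that comparison as the crux, and your base-change idea $\sigma^*s_{0*} \simeq \iota_*\iota^*$ does close it, but rather than gesturing at Tor-independence you should invoke Proposition \ref{prop:regular_base_change} directly: take $\iota = s_0$ and $\pi = \sigma$ there (note $\sigma$ is the zero locus of $\tau - p^*\sigma \in \Gamma(\mathcal{E}, p^*\mathcal{E})$ with $\tau$ the tautological section, so its hypotheses are met, and both projections out of the fiber product are the inclusion $Z(\sigma)\hookrightarrow W$). Evaluating on $1_W$ gives $\sigma^*s_{0*}(1_W) = \iota_*(1_{Z(\sigma)}) = \kappa(\mathcal{E})$ on one side and $\sigma^*s_{0*}(1_W) = \sigma^*p^*e(\mathcal{E}) = e(\mathcal{E})$ on the other, since $p\sigma = \id$. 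Because Proposition \ref{prop:regular_base_change} is proved from D\'evissage, Lemma \ref{lma:koszul_regular} and the projection formula only, never from Proposition \ref{prop:euler}, there is no circularity, although this reverses the paper's presentation order. Your alternative $\mathbb{A}^1$-deformation sketch is morally right but as stated omits the specialization argument (flatness of the family zero locus over $\mathbb{A}^1$ and comparison of the fibers at $t=0$ and $t=1$); with Proposition \ref{prop:regular_base_change} already available, I would drop it.
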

\begin{proof}
	See \cite[Proposition 2.4]{fasel2009the}.
\end{proof}

\begin{remark}
	By \cite[Proposition 2.5]{fasel2009the}, we have $\kappa(V)= p^*e(V)$. Since $ps = \id$, $s^\ast \kappa(V) = e(V)$. Thus our definition of Euler class coincide with the definition in \cite[Section 2.5]{fasel2009the}.
\end{remark}

\subsection{Base change}

Consider the following fibre product
\[
	\xymatrix{X \ar[r]^-{\iota} & Y  \\ V \ar[r]^-{\tilde \iota} \ar[u]^-{\tilde \pi} & Z  \ar[u]^-{\pi}}
\]
of regular schemes, where $\iota$ is a regular immersion of codimension $c$, and $\pi: Z \to Y$ is the zero locus of a section $s: \mathcal{E}^\vee \to \SO_Y$, where $\mathcal{E}$ is a vector bundle on $Y$ of rank $d$. Suppose that $Z$ meets $X$ properly. Let $N_{Y} X$ denote the normal cone to $X$ in $Y$.

\begin{proposition}\label{prop:regular_base_change}
	Let $L$ be a line bundle on $Y$. Then, for any $\alpha \in \GW^{[n]}_i(X, \det N_{Y} X \otimes \iota ^\ast L )$, we have
	\[
		\tilde{\iota}_\ast \widetilde{\pi}^\ast (\alpha) = \pi^\ast \iota_\ast (\alpha)
	\]
	in $\GW^{[n+c]}_i(Z, \pi^\ast L)$.
\end{proposition}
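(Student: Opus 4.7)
My plan is to exploit the fact that both $\pi$ and $\tilde{\pi}$ are regular immersions cut out by sections (of $\mathcal{E}$ on $Y$ and of $\iota^\ast \mathcal{E}$ on $X$), so that Proposition \ref{prop:koszul} provides concrete Koszul-class descriptions of $\pi_\ast \pi^\ast$ and $\tilde{\pi}_\ast \tilde{\pi}^\ast$. Combining these with the projection formula for the finite closed immersion $\iota$ (Theorem \ref{theo:projection-formula}) and the composition of pushforwards (Proposition \ref{prop:composition-push-forward}) should turn the base change identity into an equality whose two sides become equal after applying $\pi_\ast$; d\'evissage (Theorem \ref{thm:devissage}) will then allow one to cancel $\pi_\ast$ and conclude.

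Concretely, Proposition \ref{prop:koszul} applied to $\pi$ with input $\beta = \iota_\ast(\alpha)$ gives
\[
\pi_\ast \pi^\ast \iota_\ast(\alpha) \;=\; \kappa(\mathcal{E}) \cup \iota_\ast(\alpha)
\]
in the supported group $\GW^{[n+c+d]}_{i,Z}(Y, \det \mathcal{E}^\vee \otimes L)$ (up to the residue-complex identifications $\rho$ and $\theta$). The projection formula for $\iota$ rewrites the right-hand side as $\iota_\ast\bigl(\alpha \cup \iota^\ast \kappa(\mathcal{E})\bigr)$. The naturality of the Koszul construction under pullback of vector bundles along $\iota$ gives $\iota^\ast \kappa(\mathcal{E}) = \kappa(\iota^\ast \mathcal{E})$ in $\GW^{[d]}_{0,V}(X, \iota^\ast \det \mathcal{E}^\vee)$, so a second application of Proposition \ref{prop:koszul}, now to $\tilde{\pi}$, yields $\alpha \cup \kappa(\iota^\ast \mathcal{E}) = \tilde{\pi}_\ast \tilde{\pi}^\ast(\alpha)$. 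Composing pushforwards and using $\iota \circ \tilde{\pi} = \pi \circ \tilde{\iota}$ then produces
\[
\pi_\ast \pi^\ast \iota_\ast(\alpha) \;=\; \iota_\ast \tilde{\pi}_\ast \tilde{\pi}^\ast(\alpha) \;=\; (\iota \tilde{\pi})_\ast \tilde{\pi}^\ast(\alpha) \;=\; (\pi \tilde{\iota})_\ast \tilde{\pi}^\ast(\alpha) \;=\; \pi_\ast \tilde{\iota}_\ast \tilde{\pi}^\ast(\alpha).
\]

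To finish, I will invoke Theorem \ref{thm:devissage}: using the canonical identification $\omega_\pi \cong \pi^\ast \det \mathcal{E}$, the pushforward
\[
\pi_\ast \colon \GW^{[n+c]}(Z, \pi^\ast L) \xrightarrow{\;\simeq\;} \GW^{[n+c+d]}_Z(Y, \det \mathcal{E}^\vee \otimes L)
\]
is an equivalence of spectra. Since both $\pi^\ast \iota_\ast(\alpha)$ and $\tilde{\iota}_\ast \tilde{\pi}^\ast(\alpha)$ lie in $\GW^{[n+c]}_i(Z, \pi^\ast L)$ and have the same image under $\pi_\ast$, injectivity on homotopy groups gives the desired identity $\pi^\ast \iota_\ast(\alpha) = \tilde{\iota}_\ast \tilde{\pi}^\ast(\alpha)$.

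The main technical obstacle I anticipate is the bookkeeping of twist line bundles: tracing how $\det N_Y X$, $\omega_\pi \cong \pi^\ast \det \mathcal{E}$, $\omega_{\tilde{\pi}} \cong \tilde{\pi}^\ast \iota^\ast \det \mathcal{E}$, and the normal-bundle identification $N_{V/Z} \cong \tilde{\pi}^\ast N_{X/Y}$ supplied by the proper meeting assumption all fit together into a single supported Grothendieck-Witt group before cancellation, and how the residue-complex isomorphisms $\rho, \theta$ intervene along the way. A secondary check is the equality $\iota^\ast \kappa(\mathcal{E}) = \kappa(\iota^\ast \mathcal{E})$, which should be immediate from the functoriality of the Koszul complex together with its canonical symmetric pairing under the pullback of vector bundles along $\iota$.
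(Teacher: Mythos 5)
Your proposal is correct and takes essentially the same approach as the paper's proof: both reduce via D\'evissage (Theorem \ref{thm:devissage}) to checking the identity after applying $\pi_\ast$, then use the Koszul self-intersection formula (Proposition \ref{prop:koszul}, Lemma \ref{lma:koszul_regular}), the projection formula (Theorem \ref{theo:projection-formula}), the compatibility $\iota^\ast \kappa(\mathcal{E}) = \kappa(\iota^\ast \mathcal{E})$, and composition of pushforwards. The only difference is presentational---you chain the equalities linearly, whereas the paper evaluates both $\pi_\ast \pi^\ast \iota_\ast(\alpha)$ and $\pi_\ast \tilde{\iota}_\ast \tilde{\pi}^\ast(\alpha)$ separately to the common value $\kappa(\mathcal{E}) \cup \iota_\ast(\alpha)$.
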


\begin{proof}
	Note that $\widetilde{\pi}: V \to X$ is the zero locus of the section $\iota^\ast (s): \iota^\ast \mathcal{E} \to \iota^\ast \SO_Y \cong \SO_X$, and $\kappa(\iota^\ast \mathcal{E}) = \iota^\ast \kappa(\mathcal{E})$ in $\GW^{[d]}_0(X, \det \iota^\ast \mathcal{E}^\vee)$.	Since $\pi_\ast: \GW^{[n+c]}(Z, f^\ast L) \to \GW^{[n+c+d]}_{Z}(Y, L \otimes \det \mathcal{E})$ is an equivalence by D\'evissage Theorem \ref{thm:devissage}, it suffices to prove
	\[
		\pi_\ast \pi^\ast \iota_\ast (\alpha) = \pi_\ast \tilde{\iota}_\ast \widetilde{\pi}^\ast (\alpha).
	\]
	On the left-hand side, by Lemma \ref{lma:koszul_regular}, we have $\pi_\ast \pi^\ast \iota_\ast (\alpha)  = \kappa(\mathcal{E}) \cup \iota_\ast (\alpha)$. On the right-hand side, $\pi_\ast \tilde{\iota}_\ast \widetilde{\pi}^\ast (\alpha) = \iota_\ast \widetilde{\pi}_\ast \widetilde{\pi}^\ast (\alpha) = \iota_\ast (\kappa(\iota^\ast \mathcal{E}) \cup \alpha)$. Then by the projection formula (Proposition \ref{prop:projection_formula}), we have $\iota_\ast (\kappa(\iota^\ast \mathcal{E}) \cup \alpha) = \iota_\ast (\iota^\ast \kappa(\mathcal{E}) \cup \alpha) = \kappa(\mathcal{E}) \cup \iota_\ast  \alpha$. The result follows.
\end{proof}

\subsection{Deformation to the normal cone}
Consider a regular immersion $f: X \to Y$ of regular schemes of codimension $c$. Denote by $N_{Y} X$ the normal cone to $X$ in $Y$ and by $D(X,Y)$ the deformation to the normal cone space, as discussed in \cite[Section 3]{nenashev2007gysin}. Define $s: X \to N_X Y$ as the embedding corresponding to the zero section. There exists a closed immersion $\mu: X\times \mathbb{A}^1 \to D(X,Y)$ and a commutative diagram
\[
	\xymatrix{X \ar[r]^-{\tilde{\delta}_0}  \ar[d]_-{s} & X\times \mathbb{A}^1 \ar[d]_-{\mu} & \ar[l]_-{\tilde{\delta}_1} X \ar[d]_-{f}\\
	N_Y X \ar[r]^-{\delta_0} & D(X,Y) & \ar[l]_-{\delta_1} Y,}
\]
whose square are fibre products.

\begin{lemma}
	The following maps
	\[
		\GW^{[n]}_X(N_Y X, \delta_0^\ast L) \xleftarrow{\delta_0^\ast } \GW^{[n]}_{X \times \mathbb{A}^1}(D(X,Y), L) \xrightarrow{\delta_1^\ast} \GW^{[n]}_{X}(Y, \delta_1^\ast L)
	\]
	are stable equivalences for any line bundle $L$ over $D(X,Y)$.
\end{lemma}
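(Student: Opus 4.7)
The plan is to reduce both pullbacks to pullbacks along sections of the trivial $\mathbb{A}^1$-bundle $p: X \times \mathbb{A}^1 \to X$ (which are stable equivalences by homotopy invariance of Hermitian $K$-theory), via devissage and the base change formula of Proposition \ref{prop:regular_base_change}.

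The first step is to observe that each $\delta_i$ ($i = 0, 1$) is a codimension-one regular immersion, realized as the zero locus of the section $t - i$ of the trivial line bundle on $D(X,Y)$, where $t: D(X,Y) \to \mathbb{A}^1$ is the canonical projection. The pullback $\mu^*(t-i)$ cuts out exactly $\tilde{\delta}_i(X) = X \times \{i\}$ in $X \times \mathbb{A}^1$, so $\tilde{\delta}_i$ is the zero locus of $\iota^*s$ in the notation of Proposition \ref{prop:regular_base_change}. Since $\mu$ has codimension $c$ and the intersection $X = Z_i \cap (X \times \mathbb{A}^1)$ (with $Z_0 = N_Y X$, $Z_1 = Y$) has codimension $c+1$ in $D(X,Y)$, the proper-intersection hypothesis holds; regularity of $D(X,Y)$ is standard, since it is a regular open subscheme of the blow-up of $Y \times \mathbb{A}^1$ along the regular center $X \times \{0\}$.

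Applying Proposition \ref{prop:regular_base_change} to each of the two Cartesian squares yields a commuting diagram
\[
\xymatrix{
\GW^{[n-c]}(X \times \mathbb{A}^1, \omega_\mu \otimes \mu^*L) \ar[r]^-{\mu_*}_-\simeq \ar[d]_-{\tilde{\delta}_i^*} & \GW^{[n]}_{X \times \mathbb{A}^1}(D(X,Y), L) \ar[d]^-{\delta_i^*}\\
\GW^{[n-c]}(X, \omega_{s_i} \otimes (\mu \tilde{\delta}_i)^*L) \ar[r]^-{(s_i)_*}_-\simeq & \GW^{[n]}_X(Z_i, \delta_i^*L),
}
\]
where $s_0 = s$ is the zero section of $N_Y X$ and $s_1 = f$ is the original regular immersion. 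The horizontal maps are equivalences by devissage (Theorem \ref{thm:devissage}); to match the twists through $\tilde{\delta}_i^*$ I would use the identification $N_\mu \cong p^* N_Y X$ on $X \times \mathbb{A}^1$, which is a defining feature of the deformation to the normal cone and gives $\tilde{\delta}_i^* \omega_\mu \cong \omega_{s_i}$.

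Finally, $\tilde{\delta}_i$ is a section of the affine-bundle projection $p : X \times \mathbb{A}^1 \to X$, so homotopy invariance of Hermitian $K$-theory (which gives $p^*$ is a stable equivalence) forces $\tilde{\delta}_i^* = (p^*)^{-1}$ to be an equivalence; the commuting square then forces $\delta_i^*$ to be an equivalence as well. The main obstacle I anticipate is the twist bookkeeping—particularly verifying the identification $N_\mu \cong p^* N_Y X$ cleanly and matching the resulting canonical line bundles with those required by devissage. Once that is in hand, the conclusion is a formal consequence of devissage, base change, and homotopy invariance.
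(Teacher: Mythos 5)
The paper's own proof is a one-line citation: it invokes Karoubi induction (via the algebraic Bott sequence, cf.\ \cite[Lemma 6.4]{schlichting2017hermitian}) together with the known homotopy invariance of the deformation space in $K$-theory (Panin, \cite[Section 2.2.7]{panin2003oriented}) and in Witt theory (Nenashev, \cite{nenashev2007gysin}). Your argument is genuinely different: you avoid the external inputs entirely and give a direct proof by sandwiching $\delta_i^*$ between the two d\'evissage equivalences $\mu_*$ and $(s_i)_*$, identifying the induced map with $\tilde\delta_i^*$ via the base-change formula, and invoking homotopy invariance along $X\times\mathbb{A}^1 \to X$. This has the appeal of being self-contained and of foregrounding the key geometric input $N_\mu\cong p^*N_YX$ (which is indeed a standard property of the deformation space and gives the needed twist identification $\tilde\delta_i^*\omega_\mu\cong\omega_{s_i}$); the paper's Karoubi-induction route has the advantage of not requiring the supported base-change diagram at all.

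However, there is a gap you should close. Your commuting square has supported Grothendieck--Witt groups $\GW^{[n]}_{X\times\mathbb{A}^1}(D(X,Y),L)$ and $\GW^{[n]}_X(Z_i,\delta_i^*L)$ on the right-hand column, and you need the commutativity of the square \emph{in these supported groups} in order to transfer the equivalence from $\tilde\delta_i^*$ to $\delta_i^*$. But Proposition \ref{prop:regular_base_change} as stated only gives the identity $\tilde\iota_*\tilde\pi^*(\alpha)=\pi^*\iota_*(\alpha)$ in the unsupported group $\GW^{[n+c]}_i(Z,\pi^*L)$, and the forgetful map $\GW^{[n]}_X(Z_i,-)\to\GW^{[n]}(Z_i,-)$ is not injective in general, so you cannot simply conclude. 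In fact the proof of Proposition \ref{prop:regular_base_change} does produce the stronger supported identity, since both sides are reduced (via d\'evissage along $\pi$) to the class $\kappa(\mathcal{E})\cup\iota_*\alpha$, which naturally lives in $\GW_{V}$-supported cohomology, and d\'evissage applied to $V\hookrightarrow Z$ and $V\hookrightarrow Y$ shows that $\pi_*$ on the $V$-supported groups is still an equivalence; but you would need to spell this out, as it is not what the cited proposition asserts. Once that supported version is in hand, the rest of your argument (the regularity of $D(X,Y)$, the identification of the normal bundle, the codimension count for the proper-intersection hypothesis, and the conclusion via homotopy invariance) is correct.
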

\begin{proof}
	The result follows from the Karoubi induction, cf. \cite[Lemma 6.4]{schlichting2017hermitian}. The $K$-theory case is proved in \cite[Section 2.2.7]{panin2003oriented}, and the Witt theory case has been proved by \cite{nenashev2007gysin}.
\end{proof}
The composition $\delta_1^\ast (\delta_0^\ast)^{-1}$ is denoted by $d(X,Y)$.

\begin{proposition}\label{prop:gysin_pushforward}
	For any $\alpha \in \GW^{[n]}_i(X, \det N_{Y} X \otimes f^\ast L )$, we have
	\[
		f_\ast (\alpha) = d(X,Y) s_\ast (\alpha),
	\]
	in $\pi_i \GW^{[n+c]}_{X}(Y, L)$.
\end{proposition}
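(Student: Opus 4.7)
The plan is to construct a single element on the deformation space $D(X,Y)$ which restricts to $s_\ast(\alpha)$ on the special fiber $N_YX$ and to $f_\ast(\alpha)$ on the fiber $Y$, and then read off the equality from the definition $d(X,Y) = \delta_1^\ast (\delta_0^\ast)^{-1}$.

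First I would set up the twists. Let $\tilde\pi : X \times \mathbb{A}^1 \to X$ be the projection, and let $\rho : D(X,Y) \to Y$ be the canonical morphism that restricts to the identity on $\delta_1(Y)$ and to the bundle projection $p: N_YX \to X \hookrightarrow Y$ on $\delta_0(N_YX)$. Put $\tilde L := \rho^\ast L$, so $\delta_1^\ast \tilde L = L$, $\delta_0^\ast \tilde L = p^\ast f^\ast L$, and $\mu^\ast \tilde L = \tilde\pi^\ast f^\ast L$. Using the standard identification $\det N_\mu \cong \tilde\pi^\ast \det N_YX$ (the regular immersion $\mu$ of codimension $c$ deforms $f$ and $s$), define
\[
\beta := \mu_\ast \tilde\pi^\ast(\alpha) \in \pi_i\GW^{[n+c]}_{X \times \mathbb{A}^1}\bigl(D(X,Y),\, \tilde L\bigr).
\]

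Next I would apply the base change formula, Proposition \ref{prop:regular_base_change}, to each of the two Cartesian squares
\[
\xymatrix{X \ar[r]^-{\tilde\delta_i} \ar[d]_-{f_i} & X \times \mathbb{A}^1 \ar[d]_-{\mu} \\ Y_i \ar[r]^-{\delta_i} & D(X,Y)}
\]
with $(f_0, Y_0) = (s, N_YX)$ and $(f_1, Y_1) = (f, Y)$. Each $\delta_i$ is cut out as the zero locus of a section of the trivial line bundle on $D(X,Y)$ (pulled back from $t - i$ on $\mathbb{A}^1$), so the hypotheses are satisfied; the intersection is proper as the square is Cartesian. Combined with the elementary identity $\tilde\delta_i^\ast \tilde\pi^\ast = \id$ (each $\tilde\delta_i$ is a section of $\tilde\pi$), this gives
\[
\delta_i^\ast \beta = f_{i,\ast}\, \tilde\delta_i^\ast\, \tilde\pi^\ast(\alpha) = f_{i,\ast}(\alpha).
\]
Specialising to $i = 0, 1$ yields $\delta_0^\ast \beta = s_\ast(\alpha)$ and $\delta_1^\ast \beta = f_\ast(\alpha)$, whence
\[
d(X,Y)\bigl(s_\ast(\alpha)\bigr) = \delta_1^\ast (\delta_0^\ast)^{-1} \delta_0^\ast \beta = \delta_1^\ast \beta = f_\ast(\alpha).
\]

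The main obstacle is the bookkeeping of the line bundle twists: exhibiting $\tilde L$ on $D(X,Y)$ with the required restrictions under $\delta_0$, $\delta_1$ and $\mu$, and pinning down the canonical isomorphism $\det N_\mu \cong \tilde\pi^\ast \det N_YX$ compatibly so that $\beta$ genuinely lives in the stated spectrum. Once these identifications are made explicit, the proof reduces to two applications of the base change formula together with the $\mathbb{A}^1$-homotopy invariance encoded in $\tilde\delta_i^\ast \tilde\pi^\ast = \id$.
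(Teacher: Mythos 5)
Your argument is correct and is precisely the deformation-to-the-normal-cone argument that the paper invokes via Fasel's Proposition 2.9 together with Proposition~\ref{prop:regular_base_change}: push the pulled-back class along $\mu$ to $D(X,Y)$, restrict over $t=0$ and $t=1$ using the base change formula for the divisors cut out by $t$ and $t-1$, and read off the identity from the definition $d(X,Y)=\delta_1^\ast(\delta_0^\ast)^{-1}$. The twist bookkeeping you flag (via $\rho^\ast L$ and $\det N_\mu \cong \tilde\pi^\ast\det N_YX$) is the standard compatibility underlying the cited reference, so this is essentially the paper's proof spelled out in full.
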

\begin{proof}
	This result can be derived from the proof of \cite[Proposition 2.9]{fasel2009the} and by invoking Proposition \ref{prop:regular_base_change}.
\end{proof}

\subsection{Excess intersection formula}
Consider the following fibre product of regular schemes:
\[
	\xymatrix{X \ar[r]^-{\iota} & Y  \\ V \ar[r]^-{\tilde \iota} \ar[u]^-{\tilde \pi} & Z  \ar[u]^-{\pi}.}
\]
Suppose that $\iota$ (resp. $\tilde{\iota}$) is a regular immersion of codimension $c$ (resp. $c'$), and that $\pi$ (resp. $\pi'$) is an arbitrary morphism. Let $N_{Y} X$ (resp. $N_{Z} V$) denote the normal cone to $X$ in $Y$ (resp. $V$ in $Z$), and define $E = \widetilde{\pi}^\ast N_{Y} X / N_{Z} V$.

\begin{theorem}\label{thm:excess_intersection}
	Given a line bundle $L$ on $Y$, for any $\alpha \in \GW^{[n]}_i(X, \det N_{Y} X \otimes \iota ^\ast L )$, we have:
	\[
		\pi^\ast \iota_\ast (\alpha) = \tilde{\iota}_\ast \left(e(E) \cup \widetilde{\pi}^\ast (\alpha)\right)
	\]
	in $\GW^{[n+c]}_i(Z, \pi^\ast L)$.
\end{theorem}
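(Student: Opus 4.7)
The plan is to proceed in two steps: first, reduce to the case where both $\iota$ and $\tilde\iota$ are zero sections of vector bundles via deformation to the normal cone; second, compute both sides explicitly using the Euler class formalism and its multiplicativity.

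For the reduction, write $N := N_Y X$, $N' := N_Z V$, and let $s_N : X \to N$, $s_{N'} : V \to N'$ denote the zero sections. There is a canonical map $q : N' \to N$ given by the composition of the subbundle inclusion $N' \hookrightarrow \tilde\pi^\ast N$ with the projection $\tilde\pi^\ast N \to N$; one checks that the resulting square with $s_N, s_{N'}, \tilde\pi, q$ is Cartesian with the same excess bundle $E$. The morphism $\pi \times \id_{\mathbb{A}^1} : Z \times \mathbb{A}^1 \to Y \times \mathbb{A}^1$ sends $V \times \{0\}$ into $X \times \{0\}$ and therefore induces, by the universal property of blow-ups, a morphism $\pi_D : D(V,Z) \to D(X,Y)$ that specializes to $\pi$ at $t=1$ and to $q$ at $t=0$. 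The functoriality of pullbacks on coherent Grothendieck-Witt spectra with supports then yields the identity
\[
\pi^\ast \circ d(X,Y) = d(V,Z) \circ q^\ast.
\]
Combined with $\iota_\ast = d(X,Y) \circ (s_N)_\ast$ from Proposition \ref{prop:gysin_pushforward} and the analogous identity for $\tilde\iota$, this reduces the claim to proving $q^\ast (s_N)_\ast(\alpha) = (s_{N'})_\ast(e(E) \cup \tilde\pi^\ast \alpha)$ in the zero section setting.

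For the zero section case, the projection formula (Theorem \ref{theo:projection-formula}) together with $p_N \circ s_N = \id$ and the definition of the Euler class gives $(s_N)_\ast(\alpha) = p_N^\ast(e(N) \cup \alpha)$. Pulling back by $q$ and using $p_N \circ q = \tilde\pi \circ p_{N'}$ yields
\[
q^\ast (s_N)_\ast(\alpha) = p_{N'}^\ast\big(\tilde\pi^\ast e(N) \cup \tilde\pi^\ast \alpha\big) = p_{N'}^\ast\big(e(\tilde\pi^\ast N) \cup \tilde\pi^\ast \alpha\big)
\]
by Proposition \ref{prop:euler}~(i). Applying Proposition \ref{prop:euler}~(ii) to the excess exact sequence $0 \to N' \to \tilde\pi^\ast N \to E \to 0$ gives $e(\tilde\pi^\ast N) = e(E) \cup e(N')$, and a further application of the zero section formula, this time to $s_{N'}$, identifies the right-hand side with $(s_{N'})_\ast(e(E) \cup \tilde\pi^\ast \alpha)$.

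The main obstacle I expect is the first step, namely a rigorous verification that $\pi_D$ intertwines the specialization equivalences $\delta_0^\ast, \delta_1^\ast$ on Grothendieck-Witt spectra with supports, so that $\pi^\ast \circ d(X,Y) = d(V,Z) \circ q^\ast$ holds on the nose. This is a compatibility statement for a double Cartesian square of regular immersions that must be pushed through the Karoubi induction underlying the lemma preceding Proposition \ref{prop:gysin_pushforward}. The remaining bookkeeping is to track line bundle twists consistently (the isomorphism $\det \tilde\pi^\ast N \cong \det N' \otimes \det E$ ensures that the twists on both sides coincide) and to observe that the Euler classes of $N'$ and $E$ commute under the cup product, since they are represented by Koszul symmetric forms whose tensor product is itself symmetric; specialising the result to the transverse case $E=0$ recovers Proposition \ref{prop:regular_base_change} as a sanity check.
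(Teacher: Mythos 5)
Your proof is correct and follows essentially the same route as the paper: reduce to the zero‑section case via deformation to the normal cone using Proposition \ref{prop:gysin_pushforward}, then compute using the zero‑section pushforward formula, Proposition \ref{prop:euler}, and the projection formula. The one place where you reorder things — applying the zero‑section formula for $s_{N'}$ at the end rather than the projection formula as the paper does — is equivalent, and the compatibility $\pi^\ast d(X,Y)=d(V,Z)\,q^\ast$ you flag as the main obstacle is indeed asserted rather than fully argued in the paper as well, so your caution there is well placed but does not constitute a deviation.
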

\begin{proof}
	Let $D(X,Y)$ (resp. $D(V,Z)$) represent the deformation to the normal cone space associated to $\iota:X\to Y$ (resp. $\tilde{\iota}:V \to Z$). This gives us a commutative diagram:
	\[
		\xymatrix{X \ar[r]^-{s} & N_X Y \ar[r]^-{} & D(X,Y) & \ar[l]_-{} Y \\
		V \ar[r]^-{\tilde{s}} \ar[u]^-{\widetilde{\pi}}& N_Z V \ar[r]^-{} \ar[u]^-{\widetilde{\pi}_N} & D(V,Z) \ar[u]^-{\widetilde{\pi}_D} & \ar[l]_-{} Z \ar[u]^-{\pi}.}
	\]
	Here, $s: X \to N_X Y$ and $\tilde{s}: V \to N_Z V$ represent the embeddings as zero sections. Let $p: N_X Y \to X$ and $\tilde{p}: N_Z V \to V$ be the projections, leading to another commutative diagram:
	\[
		\xymatrix{X & N_X Y \ar[l]_-{p} \\
		V  \ar[u]^-{\widetilde{\pi}}& N_Z V \ar[u]^-{\widetilde{\pi}_N} \ar[l]_-{\tilde{p}}.
		}
	\]
	We can then refer to \eqref{eqn:zero_section_pushforward} to obtain:
	\[
		\widetilde{\pi}_N ^\ast s_\ast (\alpha) = \widetilde{\pi}_N ^\ast p^\ast (e( N_X Y) \cup \alpha) =  \tilde{p}^\ast \widetilde{\pi} ^\ast(e( N_X Y) \cup \alpha) = \tilde{p}^\ast (\widetilde{\pi} ^\ast e( N_X Y) \cup \widetilde{\pi} ^\ast \alpha) .
	\]
	According to Proposition \ref{prop:euler}, we find:
	\[
		\tilde{p}^\ast (\widetilde{\pi} ^\ast e( N_X Y) \cup \widetilde{\pi} ^\ast \alpha)  = \tilde{p}^\ast (e(\widetilde{\pi} ^\ast  N_X Y) \cup \widetilde{\pi} ^\ast \alpha) = \tilde{p}^\ast (e(E) \cup e(N_Z V) \cup \widetilde{\pi} ^\ast \alpha).
	\]
	Also, by invoking \eqref{eqn:zero_section_pushforward} once more, we have:
	\[
		\tilde{p}^\ast (e(E) \cup e(N_Z V) \cup \widetilde{\pi} ^\ast \alpha) = \tilde{p}^\ast e(E) \cup \tilde{p}^\ast (e(N_Z V) \cup \widetilde{\pi} ^\ast \alpha)  = \tilde{p}^\ast e(E) \cup \tilde{s}_\ast (\widetilde{\pi} ^\ast \alpha).
	\]
	Then, by applying the projection formula (cf.\ Proposition \ref{prop:projection_formula}), we arrive at:
	\[
		\tilde{p}^\ast e(E) \cup \tilde{s}_\ast (\widetilde{\pi} ^\ast \alpha) =  \tilde{s}_\ast ( \tilde{s}^\ast \tilde{p}^\ast e(E) \cup (\widetilde{\pi} ^\ast \alpha)) = \tilde{s}_\ast ( e(E) \cup (\widetilde{\pi} ^\ast \alpha)).
	\]
	Finally, using Proposition \ref{prop:gysin_pushforward}, we conclude that:
	\[
		\begin{aligned}
			\pi^\ast \iota_\ast (\alpha) & = \pi^\ast d(X,Y) s_\ast (\alpha) = d(V,Z) \widetilde{\pi}_N ^\ast s_\ast (\alpha)                                                             \\
			                             & = d(V,Z) \tilde{s}_\ast ( e(E) \cup (\widetilde{\pi} ^\ast \alpha)) = \tilde{\iota}_\ast \left(e(E) \cup \widetilde{\pi}^\ast (\alpha)\right).
		\end{aligned}
	\]
	The result follows.
\end{proof}

\section{Projective bundles}

Let $ \mathcal{E} \rightarrow S$ be a vector bundle of rank $r+1$ over a regular scheme $S$.
Let $q :  \mathbb{P}(\mathcal{E}) \rightarrow S$ be the projective bundle associated to the vector bundle $\mathcal{E}$.  The sequence of vector bundles on $\mathbb{P}(\mathcal{E})$
\[\xymatrix{ 0\ar[r] &\SO_{\mathcal{E}}(-1) \ar[r] & q^* \mathcal{E} \ar[r] & \mathcal{Q} \ar[r] & 0 }\]
is exact where $\mathcal{Q}$ is called the canonical quotient bundle.\ Following the ideas of Fasel \cite[Section 5]{fasel2013the}, we make the following construction.
\begin{definition}
	For any $m \geq 1$. Define the twisting map
	\[
		\mu^{m}:
		\K(S) \to \GW^{[n]}(\mathbb{P}(\mathcal{E}), \SO(-m))
	\]
	as the following composition
	\[
		\K(S) \xrightarrow{H q^\ast} \GW^{[n - m + 1]}(\mathbb{P}(\mathcal{E}), \SO(-1)) \xrightarrow{c(\SO(1))^{m-1}} \GW^{[n]}(\mathbb{P}(\mathcal{E}), \SO(-m)).
	\]
	Then we define
	\[
		\Theta_{\textnormal{even}}: \bigoplus_{2 \leq m \leq r }^{m \textnormal{ even}}
		\K_i(S) \xrightarrow{\sum t_m\mu^m} \GW^{[n]}_i(\mathbb{P}(\mathcal{E}))
		\textnormal{\quad \quad}
		\Theta_{\textnormal{odd}}: \bigoplus_{1 \leq m \leq r }^{m \textnormal{ odd}}
		\K_i(S) \xrightarrow{\sum t_m\mu^m} \GW^{[n]}_i(\mathbb{P}(\mathcal{E}), \SO(1))\]
	where $t_m$ are the two-periodicity on the twists.
\end{definition}
By \cite[Proposition A.9(i)]{balmer2009geometric}, we have that the canonical bundle $\omega_{\mathbb{P}(\mathcal{E})/S} $ is isomorphic to $ q^\ast \mathrm{det}\, \mathcal{E} \otimes \SO_{\mathbb{P}(\mathcal{E})}(-r-1)$.\
The aim of this section is to prove the following results.
\begin{theorem}\label{thm:projective_bundle_thm}
	Let $S$ be a regular scheme.
	\begin{enumerate}[label={\rm (\Alph*)}, leftmargin=30pt]
		\item Suppose that $r$ is even. The map
		      \[   (\Theta_{\textnormal{even}}, q^*): \bigoplus_{2 \leq m \leq r }^{m \textnormal{ even}} \K_i(S) \oplus \GW^{[n]}_i(S ) \xrightarrow{\cong} \GW^{[n]}_i(\mathbb{P}(\mathcal{E})) \]
		      is an isomorphism of abelian groups in the even twisted case.
		\item Suppose that $r$ is even.
		      The map
		      \[(\Theta_{\textnormal{odd}}, c(\mathcal{Q}) q^*) \colon \bigoplus_{1 \leq m \leq r }^{m \textnormal{ odd}} \K_i(S) \oplus \GW^{[n-r]}_i(S,\det \mathcal{E}) \xrightarrow{\cong} \GW^{[n]}_i(\mathbb{P}(\mathcal{E}),\SO(1))
		      \]
		      is an isomorphism of groups in the odd twisted case.
		\item Suppose that $r$ is odd. The map
		      \[
			      \Theta_{\textnormal{odd}} \colon \bigoplus_{1 \leq m \leq r }^{m \textnormal{ odd}}
			      \K_i(S)\xrightarrow{\cong} \GW^{[n]}_i(\mathbb{P}(\mathcal{E}), \SO(1))
		      \]
		      is an isomorphism of abelian groups in the odd twisted case.
		\item Suppose that $r$ is odd. The following sequence
		      \begin{equation}\label{eqn:projective_space_d}
			      \xymatrix@C=30pt{
			      \ldots  \ar[r] & \bigoplus\limits_{2 \leq m \leq r }^{m \textnormal{ even}} \K_{i}(S)\oplus \GW^{[n]}_{i}(S) \ar[r]^-{(\Theta_{\mathrm{even}}, q^*)} & \GW^{[n]}_{i}(\mathbb{P}(\mathcal{E})) \ar[r]^-{q_\ast} &    \GW^{[n-r]}_{i}(S, \det \mathcal{E}) \ar[d]^-{\everymath={\scriptstyle}\begin{pmatrix} 0 \\[3pt] \eta\cup   c(\mathcal{E})  \end{pmatrix}} \\
			      \ldots & \ar[l] \GW^{[n-r]}_{i-1}(S, \det \mathcal{E}) &  \ar[l]_-{q_*} \GW^{[n]}_{i-1}(\mathbb{P}(\mathcal{E})) &  \ar[l]_-{(\Theta, q^*)}  \bigoplus\limits_{2 \leq m \leq r }^{m \textnormal{ even}}
			      \K_{i-1}(S)\oplus \GW^{[n]}_{i-1}(S)
			      }
		      \end{equation}
		      of abelian groups is exact in the even twisted case.
	\end{enumerate}
\end{theorem}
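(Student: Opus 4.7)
The plan is a simultaneous induction on $r$, in steps of two, using the blow-up setup of Hypothesis~\ref{hypo:BC} as already analysed in Theorem~\ref{thm:connecting-homo-1}. For the inductive step, fix a short exact sequence $0 \to \mathcal{E}'\to \mathcal{E}\to \mathcal{F}\to 0$ with $\operatorname{rank}\mathcal{E}' = r-1$ and $\operatorname{rank}\mathcal{F} = 2$, set $Z := \mathbb{P}(\mathcal{E}')\hookrightarrow X := \mathbb{P}(\mathcal{E})$ (a regular immersion of codimension $c=2$), let $B := \operatorname{Bl}_Z X$ with exceptional divisor $E$, and let $\tilde\alpha\colon B\to Y := \mathbb{P}(\mathcal{F})\cong \mathbb{P}^1_S$ be the linear projection induced by $\mathcal{E}\twoheadrightarrow\mathcal{F}$. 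Then $\tilde\alpha$ is flat (identifying $B$ as a $\mathbb{P}^{r-1}$-bundle over $\mathbb{P}^1_S$), $\alpha := \tilde\alpha\circ\tilde v\colon U\to Y$ is an $\mathbb{A}^{r-1}$-bundle, and a short computation on $\Pic(B)$ comparing $\pi^\ast\SO_X(1)$ with $\tilde\alpha^\ast\SO_Y(1)$ up to $\SO(E)$ gives $\lambda(\SO_X(-m))\equiv m\pmod 2$. With $c=2$, the $\SO(1)$-twisted case therefore falls under Theorem~\ref{thm:connecting-homo-1}(A) and the untwisted case under Theorem~\ref{thm:connecting-homo-1}(B).

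For cases~(B) and~(C), Theorem~\ref{thm:connecting-homo-1}(A) yields
\[
\GW^{[n]}(\mathbb{P}(\mathcal{E}),\SO(1)) \simeq \GW^{[n]}(\mathbb{P}^1_S,\SO(1)|_{\mathbb{P}^1_S})\oplus \GW^{[n-2]}\bigl(\mathbb{P}(\mathcal{E}'),\omega_\iota\otimes\SO(1)|_Z\bigr).
\]
Using $\omega_\iota \cong \SO(2)|_Z \otimes q^\ast\det\mathcal{F}$, the twist on $\mathbb{P}(\mathcal{E}')$ remains odd, so the inductive hypothesis (case (B) or (C), according to the parity of $r-1$) applies directly to the second summand. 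Identifying each $K_i(S)$-summand with the appropriate $\mu^m$ uses the projection formula (Theorem~\ref{theo:projection-formula}), Proposition~\ref{prop:koszul} to rewrite $\iota_\ast(1)$ as cupping with a Koszul/Euler class, and Lemma~\ref{lem:FH_cup} to commute $H$ with $c(\SO(1))$-cupping; the $\GW^{[n-r]}(S,\det\mathcal{E})$-summand in (B) appears via $c(\mathcal{Q})\circ q^\ast$ after unwinding $e(\mathcal{Q}) = e(\mathcal{E})\cdot e(\SO(-1))^{-1}$ via Proposition~\ref{prop:euler}. The base case $r=1$ on $\mathbb{P}^1_S$ is handled separately below.

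For cases~(A) and~(D), Theorem~\ref{thm:connecting-homo-1}(B) gives a distinguished triangle
\[
\GW^{[n]}(\mathbb{P}(\mathcal{E})) \to \GW^{[n]}(\mathbb{P}^1_S) \xrightarrow{\partial}\GW^{[n-2]}\bigl(\mathbb{P}(\mathcal{E}'),\omega_\iota\bigr)[1]
\]
whose connecting map factors as $(\eta\cup)\circ\tilde\pi_\ast\circ\tilde\iota^\ast\circ\tilde\alpha^\ast\circ(\alpha^\ast)^{-1}$. The twist on $\mathbb{P}(\mathcal{E}')$ is even, so the inductive hypothesis (case (D) when $r$ is even, case (A) when $r$ is odd) decomposes the target into $K_i(S)$-summands together with at most one $GW$-summand of the form $\GW^{[n-r]}(S,\det\mathcal{E}')[\pm 1]$. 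On every $K_i(S)$-summand $\partial$ vanishes because it factors through the composition $\eta\cup H$, which is zero by the Bott triangle (Theorem~\ref{theo:bott_seq_L}). When $r$ is \emph{even}, the inductive case~(D) applied to $\mathbb{P}(\mathcal{E}')$ shows no $GW$-summand survives in the target in the relevant degree, hence $\partial=0$ and the triangle splits, giving~(A). When $r$ is \emph{odd}, a single $GW$-summand survives and I must identify the restriction of $\partial$ to it with $\eta\cup c(\mathcal{E})$, which together with a splicing of the two long exact sequences produces~(D) with the correct number of $K_i(S)$-summands.

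The main obstacle is this last identification of $\partial$ with $\eta\cup c(\mathcal{E})$ on the surviving $GW$-summand. To attack it I would unwind the image of $q_Y^\ast\colon \GW^{[n-r+2]}(S,\det\mathcal{F}) \to \GW^{[n-r+2]}(\mathbb{P}^1_S,\det\mathcal{F})$ (from the $\mathbb{P}^1$ base case) and compute $\tilde\pi_\ast\tilde\iota^\ast\tilde\alpha^\ast q_Y^\ast$; base change (Theorem~\ref{thm:residually_stable_pullback}) reduces this to the pushforward-pullback sequence along the cartesian square of $\tilde\iota,\tilde\pi$ against $\tilde\alpha, q_Y$, and the excess intersection formula (Theorem~\ref{thm:excess_intersection}) applied to the ambient regular immersions $\mathbb{P}(\mathcal{E}')\hookrightarrow \mathbb{P}(\mathcal{E})$ and $E\hookrightarrow B$ produces a cupping by the Euler class of the excess bundle $\tilde\pi^\ast\iota^\ast\mathcal{F}$. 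Combining with multiplicativity $e(\mathcal{E}) = e(\mathcal{E}')\cdot e(\mathcal{F})$ from Proposition~\ref{prop:euler}(ii) and the projection formula rearranges the result into $c(\mathcal{E})$; the bookkeeping of twists is supplied by Proposition~\ref{prop:flat-compatible}. The base case $r=1$ is established directly from the codimension-one localisation Theorem~\ref{coro:loc_seq} applied to $\mathbb{P}(\mathcal{L})\hookrightarrow\mathbb{P}^1_S$ for a line sub-bundle $\mathcal{L}\subset\mathcal{E}$, combined with affine-bundle invariance.
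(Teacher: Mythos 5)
Your plan takes a genuinely different route from the paper: the paper first establishes the trivial bundle $\mathbb{P}^r$ case by repeated codimension-one localizations along hyperplanes (Theorem \ref{thm:projective_space}), extends to general $\mathcal{E}$ by Mayer--Vietoris for (A) and (C), uses the auxiliary $\mathbb{P}(\mathcal{E}\oplus\SO_S)$ for (A), invokes the blow-up/connecting-homomorphism theorem only in (B) (and there with the codimension-$(r+1)$ immersion $\mathbb{P}(\SO_S)\hookrightarrow\mathbb{P}(\mathcal{E}\oplus\SO_S)$, $Y=\mathbb{P}(\mathcal{E})$), and proves (D) by a delicate mapping-cone argument with candidate triangles. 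You instead run a uniform codimension-two induction using $\mathbb{P}(\mathcal{E}')\hookrightarrow\mathbb{P}(\mathcal{E})$ and the blow-up projection to $\mathbb{P}(\mathcal{F})=\mathbb{P}^1_S$. This is an interesting and economical idea in principle, and your $\lambda(\SO(-m))\equiv m\pmod 2$ computation is correct.

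The gap that blocks the argument as written is existence: you begin by fixing a short exact sequence $0 \to \mathcal{E}' \to \mathcal{E} \to \mathcal{F} \to 0$ with $\operatorname{rank}\mathcal{F}=2$, and at the $r=1$ base case a line sub-bundle $\mathcal{L}\subset\mathcal{E}$. A vector bundle on a regular scheme need not admit any global sub-bundle of a prescribed rank, and continuing the induction would further require $\mathcal{E}'$ to admit a corank-two sub-bundle in turn, i.e., a complete flag. So your argument really covers only split/filtered $\mathcal{E}$; to reach the general case you would have to invoke Mayer--Vietoris (as the paper explicitly does), which your proposal never mentions. The paper deliberately avoids this problem by always adjoining a trivial summand (working on $\mathbb{P}(\mathcal{E}\oplus\SO_S)$, where $\mathcal{E}$ is a canonical corank-one sub-bundle) and by Mayer--Vietoris reduction to the split case.

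A secondary but real problem lies in (A) and (D). After applying Theorem~\ref{theo:connecting-hom}(B), the source of $\partial$ is $\GW^{[n]}(\mathbb{P}^1_S)$ with even twist, where by case (D) you have a long exact sequence, not a direct-sum decomposition — so speaking of a $\K_i(S)$-summand of the source on which $\partial$ vanishes is not yet well-defined. And the identification of $\partial$ with $\eta\cup c(\mathcal{E})$ on the remaining piece via excess intersection is a plausible idea (the paper uses a similar ingredient in the commutativity of $\diag{\ref{diag:projective_bundle_odd_diag_3}}$), but making it into a proof requires, as in the paper's Sublemmas~\ref{sublem:map-disting-trian} and~\ref{sublem:distingsh}, a careful construction of compatible maps of candidate triangles and a verification that the resulting triangle is distinguished — far more bookkeeping than your two-sentence sketch suggests.
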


We will prove Theorem \ref{thm:projective_bundle_thm} when the bundle $\mathcal{E}$ is trivial, and then apply the Mayer-Vietoris theorem to conclude the global case.

\subsection{The trivial bundle case}
Define $\mathbb{P}^r:= \mathbb{P}(\SO_S^{r+1})$.\ Let $\iota: \mathbb{P}^{r-1} \to \mathbb{P}^{r}$ be a hyperplane, and let $v: U \to \mathbb{P}^{r}$ be its open complement. Inside  $\mathbb{P}^{r-1}$ we can choose a hyperplane $\mathbb{P}^{r-2}$, and we have a composition of closed embeddings $\mathbb{P}^{r-2} \to \mathbb{P}^{r-1} \to \mathbb{P}^{r}$, which we still denote by $\iota$ for simplicity if no confusion occurs.

\begin{theorem}\label{thm:projective_space}
	Let $S$ be a regular scheme.
	\begin{enumerate}[label={\rm (\alph*)}, leftmargin=30pt]
		\item The map
		      \[
			      (\iota_* , q^*)  :\GW^{[n-1]}(\mathbb{P}^{r-1},\SO(1)) \oplus \GW^{[n]}(S) \to \GW^{[n]}(\mathbb{P}^r)
		      \]
		      is a stable equivalence of spectra in $\mathcal{SH}$.

		\item The map
		      \[
			      (H q^\ast, \iota_*  )  : \K(S) \oplus \GW^{[n-2]}(\mathbb{P}^{r-2},\SO(1)) \to \GW^{[n]}(\mathbb{P}^r, \SO(-1))
		      \]
		      is a stable equivalence of spectra in $\mathcal{SH}$.
	\end{enumerate}
\end{theorem}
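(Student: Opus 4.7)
\medskip
\noindent\emph{Plan.} Both parts rest on the localization sequence (Theorem \ref{coro:loc_seq}) combined with the geometric description of the connecting homomorphism (Theorem \ref{theo:connecting-hom}), and in each case one must identify the correct blow-up setup and track the parity of $\lambda(L)$.

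For (a), I would apply the localization sequence to the codimension one hyperplane $\iota : \mathbb{P}^{r-1} \hookrightarrow \mathbb{P}^r$ with complement $v : \mathbb{A}^r_S \hookrightarrow \mathbb{P}^r$. Since the normal bundle of a hyperplane is $\iota^\ast \SO(1)$, one has $\omega_\iota \cong \iota^\ast \SO(1)$, producing the fibre sequence
\[
\GW^{[n-1]}(\mathbb{P}^{r-1},\SO(1)) \xrightarrow{\iota_\ast} \GW^{[n]}(\mathbb{P}^r) \xrightarrow{v^\ast} \GW^{[n]}(\mathbb{A}^r_S).
\]
The composition $v^\ast q^\ast$ coincides with pullback along the affine bundle projection $\mathbb{A}^r_S \to S$, and is therefore an equivalence by the homotopy invariance of Hermitian $K$-theory for affine bundles (the same property that underlies the use of $(\alpha^\ast)^{-1}$ in Hypothesis \ref{hypo:BC}). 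Hence $v^\ast$ is split surjective, the triangle splits, and $(\iota_\ast, q^\ast)$ is the resulting equivalence.

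For (b), I would instantiate Theorem \ref{theo:connecting-hom} with $X = \mathbb{P}^r$, $Z = \mathbb{P}^{r-2}$ (a linear subspace of codimension $c = 2$), $B = \mathrm{Bl}_{\mathbb{P}^{r-2}}(\mathbb{P}^r)$, and $\tilde{\alpha} : B \to Y = \mathbb{P}^1$ the resolution of the linear projection; the map $\tilde{\alpha}$ is a $\mathbb{P}^{r-1}$-bundle and $\alpha : U \to \mathbb{P}^1$ is the corresponding $\mathbb{A}^{r-1}$-bundle. A direct Picard computation shows $\tilde{\alpha}^\ast \SO_{\mathbb{P}^1}(1) \cong \pi^\ast \SO(1) \otimes \SO(-E)$, so for $L = \SO(-1)$ one has $\lambda(L) = 1 \equiv c-1 \pmod 2$. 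Case (A) of Theorem \ref{theo:connecting-hom} then yields a splitting
\[
\GW^{[n]}(\mathbb{P}^r, \SO(-1)) \simeq \GW^{[n]}(\mathbb{P}^1, \SO(-1)) \oplus \GW^{[n-2]}(\mathbb{P}^{r-2}, \SO(1)),
\]
where the second summand is realized by $\iota_\ast$ (using $\omega_\iota = \iota^\ast \SO(2)$ for the codimension two linear embedding, so that $\omega_\iota \otimes \iota^\ast \SO(-1) = \iota^\ast \SO(1)$). The remaining identification $\GW^{[n]}(\mathbb{P}^1,\SO(-1)) \cong K(S)$ via $Hq^\ast$ is the $r=1$ base case of (b). I would prove it separately, combining the algebraic Bott sequence (Theorem \ref{theo:bott_seq_L}) with the localization sequence for a rational point $\mathrm{Spec}\,S \hookrightarrow \mathbb{P}^1$ and part (a) for $r=1$; the argument is that the forgetful map $F$ from $\GW^{[\bullet]}(\mathbb{P}^1,\SO(-1))$ to $K(\mathbb{P}^1)$ is null on the generators, which via the Bott sequence forces $H \circ q^\ast : K(S) \to \GW^{[n]}(\mathbb{P}^1,\SO(-1))$ to be an equivalence.

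\medskip
\noindent\emph{Main obstacle.} The chief difficulty is the $r=1$ base case of (b). It falls outside the scope of Theorem \ref{theo:connecting-hom} because the codimension hypothesis $c \geq 2$ cannot be arranged on $\mathbb{P}^1$, so the geometric picture of the connecting homomorphism is unavailable. One must instead argue by hand, interlacing the Bott sequence with the $K$-theoretic projective bundle formula for $\mathbb{P}^1$ and checking that the forgetful functor $F$ vanishes on the relevant twisted Grothendieck-Witt generators. This is precisely the step at which the Bott element $\eta$ genuinely enters the calculation and distinguishes the present result from its Witt-theoretic analogue.
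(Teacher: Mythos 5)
For part (a), your argument coincides with the paper's: localization for the hyperplane, affine space for the complement, and homotopy invariance to split off $\GW^{[n]}(S)$ via $q^* \circ (\alpha^*)^{-1}$. Nothing to add there.

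For part (b), you propose a genuinely different route: instantiate Theorem~\ref{theo:connecting-hom}(A) with the codimension-two linear subspace $\mathbb{P}^{r-2}\hookrightarrow\mathbb{P}^r$ and the linear projection to $Y=\mathbb{P}^1$, then invoke a separately-proved $r=1$ base case. The paper instead stays in codimension one throughout: it takes the localization triangle for $\mathbb{P}^{r-1}\hookrightarrow\mathbb{P}^r$ (with twist $\SO(-1)$, so that $\omega_\iota\otimes\iota^*\SO(-1)$ is untwisted) and maps into it the direct sum of (i) the degenerate triangle coming from the composite $\mathbb{P}^{r-2}\hookrightarrow\mathbb{P}^{r-1}\hookrightarrow\mathbb{P}^r$ and (ii) the algebraic Bott sequence for $S$, carried in by $q^*$, $Hq^*$, $\alpha^*$. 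This handles all $r\geq 1$ uniformly and, crucially, works directly with the maps $Hq^*$ and $\iota_*$ appearing in the statement.

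There is a real gap in your route. Theorem~\ref{theo:connecting-hom}(A) produces its splitting by exhibiting $\pi_*\circ\epsilon\circ\tilde\alpha^*$ as a right splitting of $v^*$, so the inclusion of the first summand is
\[
\pi_*\epsilon\tilde\alpha^*:\GW^{[n]}(\mathbb{P}^1,\SO(-1))\longrightarrow\GW^{[n]}(\mathbb{P}^r,\SO(-1)),
\]
not $Hq^*$. Even after settling the $r=1$ equivalence $Hq_1^*:K(S)\xrightarrow{\simeq}\GW^{[n]}(\mathbb{P}^1,\SO(-1))$, you would still have to show $\pi_*\epsilon\tilde\alpha^*\,Hq_1^*\simeq Hq^*$; this requires a blow-up projection formula for the hyperbolic functor (roughly $\pi_*H\pi^*q^*=Hq^*$ using $\pi_*\pi^*=\id$ on $K$-theory of $B$) plus bookkeeping of the twist $\epsilon$, none of which is in your proposal. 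Without it you obtain only an abstract isomorphism, not the one in the statement. Relatedly, your sketch of the $r=1$ base case (``$F$ is null on the generators\ldots forces $Hq^*$ to be an equivalence'') is too thin: the Bott sequence alone does not force $Hq^*$ to be an equivalence. One must map the Bott sequence for $S$ into the localization sequence for $S\hookrightarrow\mathbb{P}^1$ via $q^*$, $Hq^*$, $\alpha^*$, verify the three commuting squares (this is where Lemma~\ref{lma:koszul_regular}, Lemma~\ref{lem:codim-one}, and Lemma~\ref{lem:FH_cup} enter), and conclude by the two-out-of-three property; that is exactly the paper's argument, and it already handles all $r$ at once, so the detour through the blow-up buys nothing here.
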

\begin{proof}
	(a) The localization sequence (cf.\ Theorem \ref{coro:loc_seq}) yields a distinguished triangle
	\[ \GW^{[n-1]}(\mathbb{P}^{r-1}, \SO(1))\xrightarrow{\iota_*} \GW^{[n]}(\mathbb{P}^{r}) \xrightarrow{v^*} \GW^{[n]}(U) \xrightarrow{\partial} \GW^{[n-1]}(\mathbb{P}^{r-1}, \SO(1))[1]\]
	in $\mathcal{SH}$ where $\omega_{\iota} = \SO(1)$.\ Note that $U \cong \mathbb{A}^{r}$ and we can find a left splitting of $v^*$ by the map $q^*(\alpha^*)^{-1}$ as follows:
	$$
		\xymatrix@R=8ex{ \GW^{[n]}(\mathbb{P}^{r}) \ar[r]^-{v^*} & \GW^{[n]}(U)  \\
		& \GW^{[n]}(S) \ar[u]^-{\alpha^\ast}_-{\simeq} \ar[ul]^-{q^\ast}_{\diagram \label{diag:proj_split}}},
	$$
	where $\alpha$ is the projection map, $\alpha^*$ is an equivalence by the homotopy invariance, and the diagram $\diag{\ref{diag:proj_split}}$ commutes by the composition of pullbacks. It follows that the map $(\iota_*, q^*)$ induces a equivalence of spectra.

	(b)
	The localization theorem provides another distinguished triangle
	\[
		\xymatrix{\GW^{[n-1]}(\mathbb{P}^{r-1}) \ar[r]^-{\iota_\ast} & \GW^{[n]}(\mathbb{P}^{r}, \SO(-1)) \ar[r]^-{v^*} & \GW^{[n]}(U) \ar[r]^-{\partial} & \GW^{[n-1]}(\mathbb{P}^{r-1})[1]}
	\]
	in $\mathcal{SH}$, where we choose a trivialization $v^\ast \SO(-1) \cong \SO_{U}$. It is enough to show that the diagrams
	\begin{equation}\label{eqn:projective_space_local_seq_1}
		\xymatrix@C=22pt{
		\GW^{[n-1]}(\mathbb{P}^{r-2},\SO(1)) \ar[d]^-{\iota_*} \ar[r]^{\id} & \GW^{[n-1]}(\mathbb{P}^{r-2},\SO(1)) \ar[d]^-{\iota_*}\ar[r] & 0 \ar[d]\ar[r]                  & \ar[d]^-{\iota_*[1]} \GW^{[n-1]}(\mathbb{P}^{r-2},\SO(1))[1]
		\\ 
		\GW^{[n-1]}(\mathbb{P}^{r-1}) \ar[r]^-{\iota_\ast}                  & \GW^{[n]}(\mathbb{P}^{r}, \SO(-1)) \ar[r]^-{v^*}             & \GW^{[n]}(U) \ar[r]^-{\partial} & \GW^{[n-1]}(\mathbb{P}^{r-1})[1]
		}
	\end{equation}
	and
	$$
		\xymatrix{
		\GW^{[n-1]}(S) \ar[d]^-{q^*} \ar[r]^{F} \ar@{}[dr]|-{\diagram \label{diag:projective_space_1}} &   K(S) \ar[d]^-{Hq^*}\ar[r]^-{H} \ar@{}[dr]|-{\diagram \label{diag:projective_space_2}} &
		\ar@{}[dr]|-{\diagram \label{diag:projective_space_3} }
		\GW^{[n]}(S) \ar[d]^-{\alpha^*}_-{\simeq}\ar[r]^-{\eta \cup } & \ar[d]^-{q^*[1]} \GW^{[n-1]}(S)[1]
		\\ 
		\GW^{[n-1]}(\mathbb{P}^{r-1}) \ar[r]^-{\iota_\ast} & \GW^{[n]}(\mathbb{P}^{r}, \SO(-1)) \ar[r]^-{v^*} & \GW^{[n]}(U) \ar[r]^-{\partial} & \GW^{[n-1]}(\mathbb{P}^{r-1})[1]
		}
	$$
	are maps of distinguished triangles, where the distinguished triangle
	$$
		\xymatrix{
		\GW^{[n-1]}(S) \ar[r]^{F} & K(S) \ar[r]^-{H} & \GW^{[n]}(S) \ar[r]^-{\eta \cup } & \GW^{[n-1]}(S)[1]
		}
	$$
	is the algebraic Bott sequence, cf. \cite[Theorem 6.1]{schlichting2017hermitian}.
	Taking the sum of both maps of distinguished triangles we see that the map $\big(Hq^\ast, \iota_*\big)$ is a stable equivalence, since the map $\alpha^*$ (resp.\ $(\iota_*, q^*)$) is a stable equivalence by the homotopy invariance (resp. (a)). All the squares in the diagram \eqref{eqn:projective_space_local_seq_1} are commutative.\ The commutativity of the diagram $\diag{\ref{diag:projective_space_1}}$ can be deduced from the commutativity of the following diagram
	$$
		\xymatrix{
		\GW^{[n-1]}(\mathbb{P}^{r-1}) \ar@{=}[d] \ar@{}[dr]|-{\diagram \label{diag:projective_space_pullback}}& \GW^{[n-1]}(S) \ar[l]_-{q^\ast} \ar[d]_-{q^\ast} \ar[r]^-{F} & K(S) \ar[d]_-{q^\ast}
		\\
		\GW^{[n-1]}(\mathbb{P}^{r-1}) \ar@{=}[d] \ar@{}[dr]|-{\diagram \label{diag:projective_space_projection}}  & \GW^{[n-1]}(\mathbb{P}^{r}) \ar[r]^-{F} \ar[d]_-{\iota_\ast (1) \cup} \ar[l]_-{1 \cup \iota^\ast (-)} \ar@{}[dr]|-{\diagram \label{diag:FH_cup}}& K(\mathbb{P}^{r}) \ar[d]^-{H}
		\\
		\GW^{[n-1]}(\mathbb{P}^{r-1}) \ar[r]^-{\iota_\ast} &\GW^{[n]}(\mathbb{P}^{r}, \SO(-1)) \ar@{=}[r]& \GW^{[n]}(\mathbb{P}^{r}, \SO(-1)).}$$
	The diagram $\diag{\ref{diag:projective_space_pullback}}$ commutes by composition of pullback. The diagram $\diag{\ref{diag:projective_space_projection}}$ commutes by projection formula (cf. Lemma \ref{lma:koszul_regular}). The diagram $\diag{\ref{diag:FH_cup}}$ commutes by the fact that $\iota_\ast (1) \simeq \SO(-1) [1] \oplus \SO = H(1) \in \GW^{[1]}_0(\mathbb{P}^{r}, \SO(-1))$ and Lemma \ref{lem:FH_cup}.\ The commutativity of the diagram $\diag{\ref{diag:projective_space_2}}$ follows from $v^*H = H v^* $, $\alpha^* H = H \alpha^*$ and $v^* q^* = \alpha^*$.\ For the commutativity of the diagram $\diag{\ref{diag:projective_space_3}}$, we consider the following commutative diagrams
	\[
		\xymatrix{
		\mathbb{P}^{r-1} \ar[rd]_{q} \ar[r]^-{\iota} & \mathbb{P}^{r} \ar[d]_-{q} & U \ar[l]_-{v} \ar[dl]^-{p}\\
		&   S
		}
	\]
	of schemes.\ The results follow from the diagram below:
	$$
		\xymatrix{
		& \GW^{[n-1]}(\mathbb{P}^{r-1})[1]\\
		\GW^{[n]}(\mathbb{P}^{r-1}) \ar[ur]^-{\eta \cup} &\GW^{[n]}(\mathbb{P}^{r}) \ar[l]_-{\iota^\ast} \ar[r]^-{v^\ast} \ar@{}[u]|-{\diagram \label{diag:proj_space_partial}}& \GW^{[n]}(U) \ar[ul]_-{\partial} \\
		& \GW^{[n]}(S) \ar[u]^-{q^\ast} \ar[ur]_-{\alpha^\ast} \ar[ul]^-{q^\ast} \\
		}
	$$
	where the diagram $\diag{\ref{diag:proj_space_partial}}$ commutes by Lemma \ref{lem:codim-one}.
\end{proof}

\begin{lemma}\label{lma:twist_closed_immersion}
	The following diagram
	\[
		\xymatrix{
		K(S) \ar[r]^-{\mu^m} \ar[d]_-{q^\ast}& \GW^{[n]}(\mathbb{P}^{r}, \SO(-m)) \\
		K(\mathbb{P}^{r-m+1}) \ar[r]^-{H} & \GW^{[n - m + 1]}(\mathbb{P}^{r - m + 1}, \SO(-1)) \ar[u]^-{\iota_\ast}
		}
	\]
	commutes for any $1 \leq  m \leq r$. Hence, the formula $\iota_* \mu^{m-1} = \mu^m$ holds.
\end{lemma}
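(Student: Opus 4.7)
The plan is to exploit the compatibilities of the hyperbolic map $H$ with both cup products (Lemma~\ref{lem:FH_cup}) and with proper pushforwards (Lemma~\ref{lma:push_FH}) to reduce the asserted commutativity to a routine identity in $K$-theory. Writing $p : \mathbb{P}^r \to S$ for the structure map, we have $q = p \circ \iota$, so $q^* = \iota^* p^*$ (here I assume $\SO(-s)$ in the target is a typo for $\SO(-m)$, as dictated by the definition of $\mu^m$).

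For the left-hand composition, the definition of $\mu^m$ gives
\[
    \mu^m(x) \;=\; c(\SO(1))^{m-1} \cup Hp^*(x),
\]
where $c(\SO(1)) \in \GW^{[1]}_0(\mathbb{P}^r,\SO(-1))$ is the Euler class. Applying Lemma~\ref{lem:FH_cup} iteratively $(m-1)$ times moves the cup product across $H$, yielding
\[
    \mu^m(x) \;=\; H\bigl( p^*(x) \cup F(c(\SO(1)))^{\,m-1} \bigr).
\]
The next step is to identify $F(c(\SO(1))) \in K(\mathbb{P}^r)$ with the class $1 - [\SO(-1)]$ of the Koszul complex for $\SO(1)$; this follows from the construction of the Euler class as the pushforward of $1$ along the zero section together with the standard Koszul resolution.

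For the right-hand composition, Lemma~\ref{lma:push_FH} applied to the proper map $\iota$ gives
\[
    \iota_* \circ H \circ q^* \;=\; \iota_* \circ H \circ \iota^* \circ p^* \;=\; H \circ \iota_* \iota^* \circ p^*.
\]
The $K$-theoretic projection formula yields $\iota_*\iota^*(y) = \iota_*(1) \cup y$, and since the linear subspace $\mathbb{P}^{r-m+1} \subset \mathbb{P}^r$ is the common zero locus of $m-1$ sections of $\SO(1)$, the Koszul resolution of $\iota_*\SO_{\mathbb{P}^{r-m+1}}$ on $\mathbb{P}^r$ identifies $\iota_*(1)$ with $(1-[\SO(-1)])^{m-1}$ in $K(\mathbb{P}^r)$. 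Thus both sides of the diagram equal $H\bigl((1-[\SO(-1)])^{m-1} \cup p^*(x)\bigr)$, which proves commutativity.

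For the final claim $\iota_* \mu^{m-1} = \mu^m$, one factors the inclusion $\mathbb{P}^{r-m+1} \hookrightarrow \mathbb{P}^r$ as $\mathbb{P}^{r-m+1} \hookrightarrow \mathbb{P}^{r-1} \stackrel{\iota}{\hookrightarrow} \mathbb{P}^r$ and applies the main diagram to both $\mathbb{P}^{r-1}$ (computing $\mu^{m-1}$) and $\mathbb{P}^r$ (computing $\mu^m$), then combines with the composition property of pushforwards (Proposition~\ref{prop:composition-push-forward}). The main technical hurdle is the identification $F(c(\SO(1))) = 1 - [\SO(-1)]$; once this compatibility between Euler classes and Koszul resolutions under the forgetful functor is established, the remainder of the argument is formal.
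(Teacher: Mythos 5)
Your proof is correct in outline but takes a genuinely different route from the paper's, and both are instructive. The paper also begins by factoring $q^* = \iota^* p^*$, but then it stays entirely inside $GW$-theory: it reduces the problem to the commutativity of the right-hand square (the identity $c(\SO(1))^{m-1} = \iota_\ast \iota^\ast$) and proves that by induction on $m$, with the base case $m=2$ supplied by the self-intersection formula (Lemma~\ref{lma:koszul_regular}) and the inductive step by Proposition~\ref{prop:euler}. You instead push the entire diagram down to $K$-theory using $H$'s compatibility with pushforwards (Lemma~\ref{lma:push_FH}) and with cup products (Lemma~\ref{lem:FH_cup}), then verify the resulting identity $\iota_*(1)\cup p^*(x) = (1-[\SO(-1)])^{m-1}\cup p^*(x)$ by identifying Koszul classes in $K_0(\mathbb{P}^r)$. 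Your route is arguably more transparent and avoids the induction altogether, which is a genuine gain.

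Two points you should be careful about. First, the cup-handedness: by \eqref{eqn:zero_section_pushforward} the operator $c(\SO(1))$ cups with $e(\SO(1))$ \emph{on the left}, whereas Lemma~\ref{lem:FH_cup} as stated gives $H(y)\cup\alpha = H(y\cup F(\alpha))$ with cupping on the \emph{right}. To move $e(\SO(1))^{m-1}\cup -$ through $H$ you either need a left-handed version of Lemma~\ref{lem:FH_cup} (which does follow from the same multiplicativity of the Bott sequence, but should be stated) or you should phrase the step as $H$ intertwining the Euler-class \emph{operators} $c_K(\SO(1))$ and $c_{GW}(\SO(1))$ — the cleanest way being to observe that $c(V)$ is by definition a composite of $s_*$ and $(p^*)^{-1}$, both of which commute with $H$ by Lemma~\ref{lma:push_FH} and naturality. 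Second, the identification $F(e(\SO(1))) = 1-[\SO(-1)]$ needs a one-line justification that the Euler class construction commutes with the forgetful functor, which again reduces to Lemma~\ref{lma:push_FH} applied to the zero section; this is the "technical hurdle" you flagged, and it is a real step rather than a formality. Once these are filled in, your argument is complete, and your factorization-plus-composition argument for the final claim $\iota_*\mu^{m-1} = \mu^m$ matches the intended reading of the paper's terse "Hence".
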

\begin{proof}
	To show the result, we expand the diagram
	$$
		\xymatrix{
		K(S) \ar[d]_-{q^\ast} \ar[r]^-{q^\ast} & K(\mathbb{P}^{r}) \ar[r]^-{H} & \GW^{[n - m + 1]}(\mathbb{P}^{r}, \SO(-1)) \ar[r]^-{c(\SO(1))^{m-1}} \ar[d]_-{\iota^\ast} & \GW^{[n]}(\mathbb{P}^{r}, \SO(-m)) \ar@{=}[d]
		\\
		K(\mathbb{P}^{r-m+1}) \ar[rr]^-{H} && \GW^{[n - m + 1]}(\mathbb{P}^{r-m+1}, \SO(-1)) \ar@{}[ur]|-{\diagram \label{diag:proj_bundle_twist_1}} \ar[r]^-{\iota_*} & \GW^{[n]}(\mathbb{P}^{r}, \SO(-m))  .
		}
	$$
	We show the commutativity of the diagram $\diag{\ref{diag:proj_bundle_twist_1}}$ by induction on $m$. When $m=2$, the diagram $\diag{\ref{diag:proj_bundle_twist_1}}$ commutes by Lemma \ref{lma:koszul_regular}.
	The induction step follows from the following commutative diagram
	$$
		\xymatrix@C=37pt@R=30pt{
		\GW^{[n - m + 1]}(\mathbb{P}^{r}, \SO(-1)) \ar[r]^-{c(\SO(1))^{m-2}} \ar[d]_-{\iota^\ast} \ar@{}[dr]|-{\diagram \label{diag:pullback_euler}}& \GW^{[n - 1]}(\mathbb{P}^{r}, \SO(-m+1)) \ar[r]^-{c(\SO(1))} \ar[d]_-{\iota^\ast} & \GW^{[n]}(\mathbb{P}^{r}, \SO(-m)) \\
		\GW^{[n - m + 1]}(\mathbb{P}^{r-1}, \SO(-1)) \ar[r]^-{c(\SO(1))^{m-2}} \ar[d]_-{\iota^\ast} & \GW^{[n - 1]}(\mathbb{P}^{r-1}, \SO(-m + 1)) \ar[ru]_-{\iota_\ast} &  \\
		\GW^{[n - m + 1]}(\mathbb{P}^{r-m+1}, \SO(-1)) \ar[ru]_-{\iota_\ast} &  &
		}
	$$
	where the diagram $\diag{\ref{diag:pullback_euler}}$ commutes by Proposition \ref{prop:euler} and pullbacks commute with products.
\end{proof}

\begin{proposition}\label{thm:proj-bundle-E-trivial}
	Suppose that the bundle $\mathcal{E}$ is trivial.
	\begin{enumerate}[label={\rm (\alph*)}, leftmargin=30pt]
		\item Theorem \ref{thm:projective_bundle_thm} \textnormal{(A)} is true.
		\item If $r$ is even, the map
		      \[
			      (\Theta, \iota_\ast)  : \bigoplus_{1 \leq m \leq r }^{m \textnormal{ odd}}  \K(S) \oplus \GW^{[n-r]}(S) \to \GW^{[n]}(\mathbb{P}^{r},\SO(1))
		      \]
		      is an equivalence of spectra.
		\item Theorem \ref{thm:projective_bundle_thm} \textnormal{(C)} is true.
		\item If $r$ is odd, the map
		      \[
			      (\Theta, q^*, \iota_\ast)  : \bigoplus_{1 \leq m \leq r }^{m \textnormal{ even}}  \K(S) \oplus \GW^{[n]}(S) \oplus \GW^{[n-r]}(S) \to \GW^{[n]}(\mathbb{P}^{r})
		      \]
		      is an equivalence of spectra.
	\end{enumerate}
\end{proposition}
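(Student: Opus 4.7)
The plan is an induction on $r$, carried out in two stages. First I would establish parts (b) and (c) simultaneously, which form a self-contained inductive family under Theorem \ref{thm:projective_space}~(b). Then parts (a) and (d) would follow from (b) and (c) combined with Theorem \ref{thm:projective_space}~(a). Throughout, the key compatibility is Lemma \ref{lma:twist_closed_immersion}, whose identity $\iota_* \mu^{m-1} = \mu^m$ ensures that iterated codimension-one pushforwards along the flag $\mathbb{P}^0 \hookrightarrow \mathbb{P}^1 \hookrightarrow \cdots \hookrightarrow \mathbb{P}^r$ of the basic class $Hq^*$ recover exactly the maps $\mu^m$ in the definitions of $\Theta_{\textnormal{even}}$ and $\Theta_{\textnormal{odd}}$.

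The base cases are $r = 0$ for (a) and (b) (where $\mathbb{P}^0 = S$, the indexed $K$-theory sum is empty, and $\iota = q = \id$) and $r = 1$ for (c) and (d), where the claim reduces directly to Theorem \ref{thm:projective_space} (noting that $\mathbb{P}^{-1}$ contributes a zero spectrum). For the inductive step of (c) with $r$ odd, I would apply Theorem \ref{thm:projective_space}~(b) to split
\[
\GW^{[n]}(\mathbb{P}^r, \SO(-1)) \simeq K(S) \oplus \GW^{[n-2]}(\mathbb{P}^{r-2}, \SO(1))
\]
via $(Hq^*, \iota_*)$, absorb the difference between the twists $\SO(-1)$ and $\SO(1)$ into the two-periodicity isomorphism $t_m$ (these twists differ by the square $\SO(-1)^{\otimes 2}$), and apply the inductive hypothesis (c) for $\mathbb{P}^{r-2}$ to the second summand. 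Two iterations of Lemma \ref{lma:twist_closed_immersion} then send each $\mu^{m'}_{\mathbb{P}^{r-2}}$-summand to a $\mu^{m'+2}_{\mathbb{P}^r}$-summand, and combined with the $\mu^1 = Hq^*$ factor on the left this exactly produces $\Theta_{\textnormal{odd}}^{\mathbb{P}^r}$. The inductive step of (b) is identical, with the additional $\GW^{[n-r]}(S)$ summand tracked through the composition $\mathbb{P}^0 \hookrightarrow \mathbb{P}^{r-2} \hookrightarrow \mathbb{P}^r$ by functoriality of pushforwards.

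Once (b) and (c) are established, the inductive step for (a) with $r$ even is obtained by applying Theorem \ref{thm:projective_space}~(a) to decompose $\GW^{[n]}(\mathbb{P}^r) \simeq \GW^{[n-1]}(\mathbb{P}^{r-1}, \SO(1)) \oplus \GW^{[n]}(S)$, invoking (c) for the odd-dimensional $\mathbb{P}^{r-1}$, and reindexing the resulting odd $m' = m - 1$ to even $m$ via Lemma \ref{lma:twist_closed_immersion}; the remaining $\GW^{[n]}(S)$ factor is exactly the map $q^*$. Part (d) with $r$ odd is symmetric, invoking (b) for the even $\mathbb{P}^{r-1}$ and tracking the extra $\GW^{[n-r]}(S)$ summand through a further codimension-one pushforward by composition. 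The hard part will not be conceptual but bookkeeping: one must keep track of the four flavors of twist and degree shift (untwisted versus $\SO(1)$, $r$ versus $r-1$), and verify at each step that the indexing produced by iterated applications of Lemma \ref{lma:twist_closed_immersion} matches the prescribed indexing of $\Theta_{\textnormal{even}}$ and $\Theta_{\textnormal{odd}}$ exactly.
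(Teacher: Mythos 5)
Your proposal is correct and follows essentially the same route as the paper: both establish (c) and (b) by a two-step induction on $r$ via Theorem \ref{thm:projective_space}~(b) and iterated applications of $\iota_*\mu^{m-1}=\mu^m$ from Lemma \ref{lma:twist_closed_immersion}, and then deduce (a) and (d) from Theorem \ref{thm:projective_space}~(a) plus the same pushforward identity. The only (cosmetic) difference is the order of the four cases: the paper goes (c),(a),(b),(d), while you group (b) with (c) and then (a) with (d); since the even and odd induction chains are independent, the two orderings are interchangeable.
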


\begin{proof}
	(c) We prove this formula by induction.\ By Theorem \ref{thm:projective_space} (b), we have that the map $Hq^*: K(S) \to \GW^{[n]}(\mathbb{P}^1,\SO(1))$ is an equivalence. Note that $\mu^1 = Hq^*$ by definition. The induction step follows from the formula  $\iota_*\mu^{m-1} = \mu^{m}$ (cf. Lemma \ref{lma:twist_closed_immersion}) and the composition of pushforward.

	(a) We apply Theorem \ref{thm:projective_space} (a). Then, we use the formula $\iota_*\mu^{m-1} = \mu^{m}$ again together with (c) proved above.

	(b) It follows directly from Theorem \ref{thm:projective_space} (b) that the map
	$$ (H q^\ast, \iota_*  )  : \K(S) \oplus \GW^{[n-2]}(S) \to \GW^{[n]}(\mathbb{P}^2, \SO(1))$$
	is an equivalence of spectra. We deduce the general case by inductively applying Theorem \ref{thm:projective_space} (b), the formula $\iota_* \mu^{m-2} = \mu^m $ and the composition of pushforward.

	(d) If $r$ is odd and the twist is even, this follows from (b), the formula $\iota_*\mu^{m-1} = \mu^{m}$ and Theorem \ref{thm:projective_space} (a). \end{proof}
\begin{proposition}\label{prop:projective_space_even_sequence}
	Suppose that $r$ is even. Then, the sequence
	\[ 0 \to \bigoplus_{1 \leq m \leq r }^{m \textnormal{ odd}} K_i(S) \xrightarrow{\Theta_{\mathrm{odd}}} \GW_i^{[n]}(\mathbb{P}^r,\SO(1))  \xrightarrow{q_*} \GW^{[n-r]}_i(S) \to 0 \]
	of abelian groups is split exact.
\end{proposition}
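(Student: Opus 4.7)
The short exact sequence will be derived as a direct consequence of Proposition \ref{thm:proj-bundle-E-trivial}(b), which provides a stable equivalence
\[
(\Theta_{\mathrm{odd}}, \iota_\ast) \colon \bigoplus_{m\text{ odd}} \K(S) \oplus \GW^{[n-r]}(S) \xrightarrow{\simeq} \GW^{[n]}(\mathbb{P}^r, \SO(1)).
\]
Passing to $\pi_i$ yields a direct sum decomposition, from which $\Theta_{\mathrm{odd}}$ is injective as a summand inclusion. Surjectivity of $q_\ast$ follows by exhibiting a section: since $\iota\colon S = \mathbb{P}^0 \hookrightarrow \mathbb{P}^r$ is a section of $q$ (i.e.\ $q\circ\iota = \id_S$), the composition of pushforwards (Proposition \ref{prop:composition-push-forward}) gives $q_\ast \circ \iota_\ast = (q\circ\iota)_\ast = \id$ on $\GW^{[n-r]}_i(S)$.

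The heart of the argument is the identity $q_\ast \circ \Theta_{\mathrm{odd}} = 0$. By Lemma \ref{lma:twist_closed_immersion}, each $\mu^m$ factors as $j_\ast \circ H \circ p^\ast$, where $j\colon \mathbb{P}^{r-m+1} \hookrightarrow \mathbb{P}^r$ is the iterated hyperplane embedding and $p\colon \mathbb{P}^{r-m+1} \to S$ is the projection; in particular, $q \circ j = p$. Composition of pushforwards then identifies $q_\ast \circ j_\ast$ with $p_\ast$; Lemma \ref{lma:push_FH} commutes $p_\ast$ past the hyperbolic functor, giving $p_\ast \circ H = H \circ p_\ast$; and the $K$-theoretic projection formula, combined with $p_\ast(1)=1$, yields $p_\ast \circ p^\ast = \id_{\K(S)}$. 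Alternatively, using Lemma \ref{lem:FH_cup} to rewrite $\mu^m(x) = H\bigl(F(c(\SO(1))^{m-1})\cup q^\ast x\bigr)$ reduces the computation to evaluating $q_\ast$ of a cup-product in $K$-theory via the projection formula. Tracking the two-periodicity isomorphisms $t_m$ through the identifications $\SO(-m) \cong \SO(1)$ in $\Pic/2$ then yields the desired vanishing.

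Granted these two facts, the exactness is a straightforward chase: for any $x \in \ker q_\ast$, write $x = \Theta_{\mathrm{odd}}(a) + \iota_\ast(b)$ using the equivalence of Proposition (b); applying $q_\ast$ and using $q_\ast\Theta_{\mathrm{odd}} = 0$ together with $q_\ast\iota_\ast = \id$ forces $b=0$, whence $x = \Theta_{\mathrm{odd}}(a)\in \im\Theta_{\mathrm{odd}}$. The reverse inclusion is immediate from $q_\ast\Theta_{\mathrm{odd}} = 0$, and the splitting is supplied by $\iota_\ast$. The principal obstacle is the careful bookkeeping required to verify $q_\ast\circ\Theta_{\mathrm{odd}} = 0$, as the two-periodicity isomorphisms $t_m$ and the various index shifts interact nontrivially with the hyperbolic functor $H$ and the smooth pushforward $q_\ast$; the precise verification weaves together the compatibilities in Lemmas \ref{lma:twist_closed_immersion}, \ref{lma:push_FH}, and \ref{lem:FH_cup}.
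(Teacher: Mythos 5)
Your proof has a genuine gap in the central claim $q_\ast\circ\Theta_{\mathrm{odd}} = 0$. Tracing your first computation: you write $\mu^m = j_\ast\circ H\circ p^\ast$, then conclude $q_\ast\circ\mu^m = p_\ast\circ H\circ p^\ast = H\circ p_\ast\circ p^\ast = H\circ\id = H$. But $H\colon K_i(S)\to\GW^{[n-r]}_i(S)$ is the hyperbolic map, which is not zero in general, so this computation does not prove the desired vanishing — it disproves it. What went wrong is that you dropped the twists. The composition $q_\ast\circ j_\ast = p_\ast$ is valid only when the dualities match up, but $j_\ast$ lands in $\GW^{[n]}(\mathbb{P}^r,\SO(-m))$ while $q_\ast$ is defined (up to two-periodicity) on $\GW^{[n]}(\mathbb{P}^r,\SO(-r-1))$. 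Inserting the required two-periodicity isomorphism between them is not an innocuous bookkeeping step: by Lemma \ref{lem:FH_cup}, it has the effect of cupping the underlying $K$-theory class by $\SO(-r')$ with $r'=r/2$, and this is precisely where the vanishing enters. The essential ingredient, which your argument never invokes, is the cohomology vanishing $\mathrm{R}q_\ast(\SO(-r'))\cong 0$ for $1\leq r'\leq r-1$ (EGA III, Chapter III Proposition 2.1.15). Without it, the $K$-theoretic projection formula gives a nontrivial pushforward, and there is no reason for $q_\ast\Theta_{\mathrm{odd}}$ to vanish.

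Your "alternatively" paragraph does move in the right direction — rewriting $\mu^m(x) = H\bigl(F(\cdot)\cup q^\ast x\bigr)$ via Lemma \ref{lem:FH_cup} and then pushing the $K$-theoretic cup product forward is exactly the paper's strategy — but you leave the verification as "careful bookkeeping" without identifying what makes the composition zero. The paper's proof also first reduces to the case $m=1$ via $q_\ast\mu^m = q_\ast\iota_\ast\mu^1 = q_\ast\mu^1$ (using $\iota_\ast\mu^{m-1}=\mu^m$ and composition of pushforwards), which avoids the need to track $j_\ast$ for general $m$ at all. The split-exactness chase in your last paragraph, and the identification of $\iota_\ast$ as a section of $q_\ast$, are correct and match the paper.
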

\begin{proof}
	We claim that the composition $q_*\mu^1$ vanishes, that is the composition
	\[
		\K(S) \xrightarrow{H q^\ast} \GW^{[n]}(\mathbb{P}^{r},\SO(-1)) \xrightarrow{t} \GW^{[n]}(\mathbb{P}^{r},\SO(-r-1)) \xrightarrow{q_\ast} \GW^{[n-r]}(S)
	\]
	is homotopic to zero, where $t$ is the two-periodicity on the twists.

	Set $r' = r/2$. Consider the following diagram
	$$
		\xymatrix{
		K(\mathbb{P}^{r},\SO(-1)) \ar[r]^-{ \SO(-r') \cup } \ar[d]^-{H} \ar@{}[dr]|-{\diagram \label{diag:projective_space_H_t}} & K(\mathbb{P}^{r},\SO(-r-1)) \ar[r]^-{q_\ast} \ar[d]^-{H} \ar@{}[dr]|-{\diagram \label{diag:projective_space_H_q}} & K(S) \ar[d]^-{H}\\
		\GW^{[n]}(\mathbb{P}^{r},\SO(-1)) \ar[r]^-{t} & \GW^{[n]}(\mathbb{P}^{r},\SO(-r-1)) \ar[r]^-{q_\ast} & \GW^{[n-r]}(S)
		}
	$$
	where the diagram $\diag{\ref{diag:projective_space_H_t}}$ commutes by Lemma \ref{lem:FH_cup} and the diagram $\diag{\ref{diag:projective_space_H_q}}$ commutes by Lemma \ref{lma:push_FH}. Then to show that $q_*\mu^1$ vanishes, it suffices to prove that the following composition
	\[
		\K(S) \xrightarrow{q^\ast} K(\mathbb{P}^{r}) \xrightarrow{\SO(-r')\cup} K(\mathbb{P}^{r}) \xrightarrow{q_\ast} K(S)
	\]
	is homotopic to zero, which is true by the projection formula (cf. \cite[Proposition 3.17]{thomason1990higher}) and the fact that $\mathrm{R}q_\ast (\SO(-r')) \cong 0$ (cf. \cite[Chapter III Proposition 2.1.15]{ega31}).\

	For all $m$ such that $1 \leq m \leq r$, we note that the composition $q_\ast  \mu^m$ is also trivial, as  $q_\ast \mu^m = q_* \iota_* \mu^1 = q_* \mu^1 = 0$.    Since $q \iota = \id$,  we see  that $\iota_*$ provides a right splitting of $q_*$. Therefore, in view of Proposition \ref{thm:proj-bundle-E-trivial} (b), the sequence
	\[
		0\to \bigoplus_{1 \leq m \leq r }^{m \textnormal{ odd}} \K_i(S) \xrightarrow{\Theta} \GW^{[n]}_i(\mathbb{P}^{r},\SO(1)) \xrightarrow{q_\ast} \GW^{[n-r]}_i(S) \to 0
	\]
	is split exact.
\end{proof}

\begin{remark}\label{rmk:projective_space_even_sequence}
	Suppose that $r$ is even. If we consider the projective bundle $\mathbb{P}(\mathcal{E})$, the composition
	\[\bigoplus_{1 \leq m \leq r }^{m \textnormal{ odd}} K_i(S) \xrightarrow{\Theta} \GW_i^{[n]}(\mathbb{P}(\mathcal{E}),\SO(1))  \xrightarrow{q_*} \GW^{[n-r]}_i(S, \det \mathcal{E})\]
	is still trivial, which follows from the same argument as in Proposition \ref{prop:projective_space_even_sequence}.
\end{remark}

\subsection{The general non-split case}
Consider the projective bundle $p: \mathbb{P}(\mathcal{E} \oplus \SO) \to X $.\ The canonical split exact sequence
$0 \rightarrow \SO \rightarrow \mathcal{E} \oplus \SO \rightarrow \mathcal{E} \rightarrow 0$ induces two closed embeddings
$\iota : \mathbb{P}(\SO_S) \rightarrow \mathbb{P}(\mathcal{E} \oplus \SO) \textnormal{ and } \iota':\mathbb{P}(\mathcal{E}) \rightarrow \mathbb{P}(\mathcal{E} \oplus \SO) .$
Let $v: U \hookrightarrow \mathbb{P}(\mathcal{E} \oplus \SO)$ (resp. $v': U' \hookrightarrow \mathbb{P}(\mathcal{E} \oplus \SO)$) be the associated open complements of $\iota$ (resp. $\iota '$). Note that $U' \cong \mathcal{E}$ and $U \cong \SO_{\mathbb{P}(\mathcal{E})}(1)$. Let $\alpha \colon U \to \mathbb{P}(\mathcal{E})$ (resp. $\alpha'\colon U' \to S$) be the projection.

\begin{proof}[Proof of Theorem \ref{thm:projective_bundle_thm} \textnormal{(C)}]
	By the Mayer-Vietoris theorem (cf. \cite[Theorem 1]{schlichting2010mayer}), it suffices to prove this case locally when $\mathcal{E}$ is trivial, i.e. $\mathbb{P}(\mathcal{E}) \cong \mathbb{P}^{r}$, which follows from Proposition \ref{thm:proj-bundle-E-trivial} (c).
\end{proof}

\begin{proof}[Proof of Theorem \ref{thm:projective_bundle_thm} \textnormal{(A)}]
	By the localization theorem, the sequence
	\[
		\xymatrix{\GW^{[n]}(\mathbb{P}(\mathcal{E})) \ar[r]^-{\iota '_\ast} & \GW^{[n+1]}(\mathbb{P}(\mathcal{E}\oplus \SO_S), \SO(1)) \ar[r]^-{v'^\ast} & \GW^{[n+1]}(U')\ar[r]^-{\partial} & \GW^{[n]}(\mathbb{P}(\mathcal{E}))[1] }
	\]
	is a distinguished triangle in $\mathcal{SH}$.\ It is enough to show that the diagrams
	$$
		\xymatrix{
		\GW^{[n]}(S) \ar[d]^-{q^*} \ar@{}[dr]|-{\diagram \label{diag:projective_iota_prime_HF_1}} \ar[r]^-{F} & K(S) \ar[d]^-{\mu^1}\ar[r]^{H} & \GW^{[n+1]}(S) \ar[d]^-{\alpha'^*}\ar[r]^-{\eta \cup} & \GW^{[n]}(S)[1] \ar[d]^-{q^*[1]}
		\\	
		\GW^{[n]}(\mathbb{P}(\mathcal{E})) \ar[r]^-{\iota '_\ast} & \GW^{[n+1]}(\mathbb{P}(\mathcal{E}\oplus \SO_S), \SO(1)) \ar[r]^-{v'^\ast} & \GW^{[n+1]}(U')\ar[r]^-{\partial} & \GW^{[n]}(\mathbb{P}(\mathcal{E}))[1]
		}
	$$
	and
	$$
		\xymatrix{
		\bigoplus\limits_{2 \leq m \leq r }^{m \textnormal{ even}} K(S)  \ar@{}[dr]|-{\diagram \label{diag:projective_iota_prime_HF_2}}  \ar[d]^-{\Theta_{\mathrm{even}}} \ar[r]^-{\id} & \bigoplus\limits_{2 \leq m \leq r }^{m \textnormal{ even}} K(S) \ar[d]^-{\sum \mu^{m+1}}\ar[r] \ar@{}[dr]|-{\diagram \label{diag:projective_iota_prime_HF_3}} & 0 \ar[d] \ar[r] &  \bigoplus\limits_{2 \leq m \leq r }^{m \textnormal{ even}} K(S)[1] \ar[d]^-{\Theta_{\mathrm{even}}[1]}
		\\ 
		\GW^{[n]}(\mathbb{P}(\mathcal{E})) \ar[r]^-{\iota '_\ast} & \GW^{[n+1]}(\mathbb{P}(\mathcal{E}\oplus \SO_S), \SO(1)) \ar[r]^-{v'^\ast} & \GW^{[n+1]}(U')\ar[r]^-{\partial} & \GW^{[n]}(\mathbb{P}(\mathcal{E}))[1]
		}
	$$
	are maps of distinguished triangles. All the squares in the upper ladder diagram commute by similar arguments as in the proof of Theorem \ref{thm:projective_space} (b). The diagram $\diag{\ref{diag:projective_iota_prime_HF_2}}$ commutes by Lemma \ref{lma:twist_closed_immersion}. The diagram $\diag{\ref{diag:projective_iota_prime_HF_3}}$ commutes since $\mu^{m+1}$ factor through $\iota'_\ast$. Take the sum of two maps of distinguished triangles, and note that $\Theta_{\mathrm{odd}}=\big(\sum\mu^{m+1},~  \mu^{1} \big) $ is a stable equivalence by (a). It follows that $(\Theta_{\mathrm{even}},q^*)$ is a stable equivalence.
\end{proof}

\begin{proof}[Proof of Theorem \ref{thm:projective_bundle_thm} \textnormal{(B)}]
	Recall the following setting (cf. \cite[(A.2)]{hudson2022witt})
	\[
		\xymatrix{
		\mathbb{P}(\SO_S) \ar@{^{(}->}[r]^-{\iota} & \mathbb{P}(\mathcal{E}\oplus \SO_S)  & \ar@{_{(}->}[l]_-{v}  \ar@{_{(}->}[ld]_-{\tilde{v}} U \ar[d]^-{\alpha}  \\
		E\ar[u]^-{\widetilde{\pi}} \ar@{^{(}->}[r]_-{\tilde{\iota}} & B  \ar[u]^-{\pi} \ar@{-->}[r]_-{\tilde{\alpha}} & \mathbb{P}(\mathcal{E}),
		}
	\]
	where $\mathbb{P}(\SO_S) \cong S, E = \mathbb{P}(\SO_{\mathbb{P}(\mathcal{E})}) \cong \mathbb{P}(\mathcal{E})$ and $B = \mathbb{P}(\SO_{\mathbb{P}(\mathcal{E})}(-1) \oplus \SO_{\mathbb{P}(\mathcal{E})})$. This setting satisfies Hypothesis \ref{hypo:BC}, and so by Theorem \ref{theo:connecting-hom} (A), we have the following diagram
	$$
		\xymatrix{
		0 \ar[r] &   \GW^{[n-r]}_i(S,\det \mathcal{E}) \ar[r]^-{\iota_\ast} \ar[d]_-{c(\mathcal{Q})  q^\ast} \ar@{}[dr]|-{\diagram \label{diag:proj_bundle_even_twsit_excess}}
		&
		\GW^{[n+1]}_i(\mathbb{P}(\mathcal{E}\oplus \SO_S)) \ar[r]^-{v^\ast} \ar[d]_-{\pi^\ast}
		&
		\GW^{[n+1]}_i(U) \ar@{=}[d] \ar[r] &0
		\\ 
		0 \ar[r] & \GW^{[n]}_i(\mathbb{P}(\mathcal{E}), \SO(1)) \ar[r]^-{\tilde{\iota}_\ast} \ar[d]_-{q_\ast}
		&
		\GW^{[n+1]}_i(B) \ar[r]^-{\tilde{v}^\ast} \ar[d]_-{\pi_\ast} \ar@{}[dr]|-{\diagram \label{diag:proj_bundle_even_bese}}
		&
		\GW^{[n+1]}_i(U) \ar@{=}[d] \ar[r] &0
		\\ 
		0 \ar[r] &\GW^{[n-r]}_i(S,\det \mathcal{E}) \ar[r]^-{\iota_\ast}
		&
		\GW^{[n+1]}_i(\mathbb{P}(\mathcal{E}\oplus \SO_S)) \ar[r]^-{v^\ast}
		&
		\GW^{[n+1]}_i(U) \ar[r] &0
		}
	$$
	consists of short split exact sequences in horizontal lines (note that $\tilde{v}^*$ has a right splitting $ \tilde{\alpha}^* (\alpha^*)^{-1}$).\ The diagram $\diag{\ref{diag:proj_bundle_even_twsit_excess}}$ commutes by the excess intersection formula (cf.\ Theorem \ref{thm:excess_intersection}).\ The diagram $\diag{\ref{diag:proj_bundle_even_bese}}$ commutes by the base change formula (cf.\ Lemma \ref{prop:regular_smooth_base_change}).
	Since the map $\pi$ factors as the composition $ B \stackrel{i}\hookrightarrow \mathbb{P}(\mathcal{E}) \times_S \mathbb{P}(\mathcal{E} \oplus \SO_S) \xrightarrow{\wp} \mathbb{P}(\mathcal{E} \oplus \SO_S)$
	where $i$ is a regular immersion and $\wp$ is the projection, we see that $\pi_\ast \pi^\ast = \mathrm{Id}$. Moreover, we have that 
	\[
		\begin{aligned}
			\pi_\ast  \pi^\ast(y) & =\wp_*i_* i^* \wp^*(y) && \text{(Composition of pushforwards and pullbacks)}\\
			                           & =  \wp_*i_* (1_S \cup i^* \wp^*(y)) && \text{($1_S \in \GW_0^{[0]}(S)$ is the identity element)}\\
			                           & =  \wp_*i_* (1_S) \cup y && \text{(Projection formulas, cf. Proposition  \ref{prop:projection_formula} and  \ref{prop:projection_projective_space})}\\
			                           & =  \pi_* (1_S) \cup y && \text{(Composition of pushforwards)}\\
			                           & =  y && \text{($\pi_*(1_S) = 1$, cf.\ \cite[Proposition 3.15 (a)]{balmer2012witt})}
		\end{aligned}
	\]
	It follows that $c(\mathcal{Q} )q^\ast$ is a right splitting of $q_*$, i.e., $q_\ast c(\mathcal{Q} ) q^\ast  = \id$. Therefore, the map
	\[(\Theta_{\textnormal{odd}}, c(\mathcal{Q}) q^*) \colon \bigoplus_{1 \leq m \leq r }^{m \textnormal{ odd}} \K_i(S) \oplus \GW^{[n-r]}_i(S,\det \mathcal{E}) \xrightarrow{\simeq} \GW^{[n]}_i(\mathbb{P}^{r},\SO(1))
	\]
	is an isomorphism by Proposition \ref{prop:projective_space_even_sequence}.\ For the general case, the result follows from the Mayer-Vietoris theorem.
\end{proof}

\begin{proof}[Proof of Theorem \ref{thm:projective_bundle_thm} \textnormal{(D)}]
	We want to show that the candidate triangle
	\begin{equation}\label{eq:proj-bundle-odd-gw-exact}
		\xymatrix{ \mathbf{X}' \ar[r]^-{\mathbf{u}'} & \mathbf{Y}' \ar[r]^-{\mathbf{v}'} & \mathbf{Z}' \ar[r]^-{\mathbf{w}'} & \mathbf{X}'[1]}
	\end{equation}
	is a distinguished triangle in the stable homotopy category $\mathcal{SH}$ of spectra, where
	\[\mathbf{X}'= \GW^{[n]}(\mathbb{P}(\mathcal{E})), \quad \mathbf{Y}' = \GW^{[n-r]}(S,\det\mathcal{E}), \quad \mathbf{Z}' = \bigoplus_{2 \leq m \leq r }^{m \textnormal{ even}}  \K(S)[1] \oplus \GW^{[n]}(S)[1]\]
	and
	\[\mathbf{u}' = q_\ast,  \quad \mathbf{v}' = {\begin{pmatrix} 0\\ \eta \cup c(\mathcal{E})\end{pmatrix}}, \quad \mathbf{w}'= \big(\Theta_{\mathrm{even}}, q^\ast \big).\]

	1) Let $p : \mathbb{P}(\mathcal{E}\oplus \SO_S) \to S$ be the canonical projection.\ By Lemma \ref{lem:codim-one} and the diagram $\diag{\ref{diag:proj_space_partial}}$, the following sequence
	\[ \xymatrix{\mathbf{X} \ar[r]^-{\mathbf{f}} & \mathbf{Y} \ar[r]^-{\mathbf{v}} & \mathbf{Z} \ar[r]^-{\mathbf{w}} & \mathbf{X}[1]} \]
	is a distinguished triangle, where
	\[\mathbf{X}= \GW^{[n]}(\mathbb{P}(\mathcal{E})), \quad \mathbf{Y}=  \GW^{[n+1]}(\mathbb{P}(\mathcal{E}\oplus \SO_S), \SO(1)), \quad \mathbf{Z} = \GW^{[n+1]} (S) \]
	and
	\[\mathbf{f}= \iota '_\ast, \quad \mathbf{v} = (\alpha^\ast)^{-1} v'^\ast,  \quad \mathbf{w} =\eta \cup q^\ast .\]

	2) Note that $\mathbf{X} = \mathbf{X}'$, and we form the following diagram
	\begin{equation}\label{eq:map-of-dis-triang}
		\xymatrix{
		\mathbf{X} \ar@{=}[d] \ar[r]^-{\mathbf{f}} & \mathbf{Y} \ar[d]_-{\mathbf{g}} \ar[r]^-{\mathbf{v}} & \mathbf{Z}
		\ar[r]^-{\mathbf{w}}\ar[d]_-{\mathbf{h}} & \mathbf{X}[1] \ar@{=}[d]
		\\
		\mathbf{X}'  \ar[r]^-{\mathbf{u}'} & \mathbf{Y}' \ar[r]^-{\mathbf{v}'} & \mathbf{Z}' \ar[r]^-{\mathbf{w}'} & \mathbf{X}'[1] }
	\end{equation}
	where
	$$\mathbf{g} = p_\ast, \quad \mathbf{h} = \begin{pmatrix} 0 \\ \eta\cup \end{pmatrix}.$$
	Note that $\mathbf{gf} = p_\ast \iota '_\ast = q_\ast = \mathbf{u}'$.

	3) To show the candidate triangle \eqref{eq:proj-bundle-odd-gw-exact} is a distinguished triangle, it is enough to prove that the diagram \eqref{eq:map-of-dis-triang} is a map of candidate triangles and the cone of the diagram \eqref{eq:map-of-dis-triang} is a distinguished triangle.

	\begin{sublemma}\label{sublem:map-disting-trian}
		The diagram \eqref{eq:map-of-dis-triang} is a map of candidate triangles.
	\end{sublemma}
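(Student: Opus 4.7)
The plan is to check that each of the three squares in the diagram \eqref{eq:map-of-dis-triang} commutes.

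The two outer squares are straightforward. The square $\mathbf{g}\mathbf{f} = \mathbf{u}'$ unwinds to $p_* \iota'_* = q_*$, which is immediate from the factorization $q = p \circ \iota'$ and functoriality of pushforwards in the regular case. The square $\mathbf{w}' \mathbf{h} = \mathbf{w}$ reduces, after noting that both $\mathbf{h}$ and $\mathbf{w}'$ have vanishing $\K$-component, to the identity $q^*(\eta \cup \beta) = \eta \cup q^* \beta$ for $\beta \in \GW^{[n+1]}(S)$, a consequence of the multiplicativity of the algebraic Bott sequence (Theorem \ref{theo:bott_seq_L}) under pullback.

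The real obstacle is the middle square $\mathbf{h}\mathbf{v} = \mathbf{v}' \mathbf{g}$. First I would observe that $\mathbf{v} = \iota^*$: writing $\iota = v' \circ s$ where $s \colon S \to U' \cong \mathcal{E}$ is the zero section of the affine bundle $\alpha' \colon U' \to S$, one has $s^* \alpha'^{*} = (\alpha' \circ s)^* = \id$, hence $(\alpha'^{*})^{-1} = s^*$ and $(\alpha'^{*})^{-1} v'^{*} = s^* v'^{*} = (v' \circ s)^* = \iota^*$. Since the first components of $\mathbf{h}$ and $\mathbf{v}'$ both vanish, commutativity of the square reduces to
\[
\eta \cup \iota^* \;=\; \eta \cup \bigl( c(\mathcal{E}) \circ p_* \bigr) \qquad \text{as maps } \mathbf{Y} \longrightarrow \GW^{[n]}(S)[1].
\]

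To prove this I would decompose $\mathbf{Y}$ by applying Theorem \ref{thm:projective_bundle_thm} (B) to the rank-$(r+2)$ bundle $\mathcal{E} \oplus \SO_S$ (valid because its relative dimension $r+1$ is even when $r$ is odd), obtaining
\[
\mathbf{Y} \;\cong\; \bigoplus_{\substack{1 \le m \le r+1 \\ m \textnormal{ odd}}} \K(S) \;\oplus\; \GW^{[n-r]}(S, \det\mathcal{E})
\]
via $(\Theta_{\textnormal{odd}}, c(\mathcal{Q}') \circ p^*)$, where $\mathcal{Q}'$ is the tautological rank-$(r+1)$ quotient bundle on $\mathbb{P}(\mathcal{E} \oplus \SO_S)$, and then verify the identity on each summand. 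Restricting the tautological exact sequence on $\mathbb{P}(\mathcal{E} \oplus \SO_S)$ along $\iota$ identifies $\iota^* \mathcal{Q}' \cong \mathcal{E}$, so on the $\GW$-summand
\[
\iota^* \bigl( c(\mathcal{Q}') p^* \gamma \bigr) \;=\; e(\mathcal{E}) \cup \gamma \;=\; c(\mathcal{E}) \, p_* \bigl( c(\mathcal{Q}') p^* \gamma \bigr),
\]
the last equality using the right splitting $p_* \circ c(\mathcal{Q}') \circ p^* = \id$ established in the proof of Theorem \ref{thm:projective_bundle_thm} (B); no $\eta$ is needed there. On each $\K$-summand, the vanishing $\iota^* e(\SO(1)) = e(\SO_S) = 0$ gives $\iota^* \mu^m = 0$ for odd $m \geq 3$ and $\iota^* \mu^1 = H$ (using $\iota^* H = H \iota^*$), while $p_* \mu^m = 0$ for every odd $m$ in the relevant range by the argument of Proposition \ref{prop:projective_space_even_sequence} applied to $\mathbb{P}(\mathcal{E} \oplus \SO_S)$. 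Hence $\iota^* - c(\mathcal{E}) p_*$ factors through $H \colon \K(S) \to \GW^{[n+1]}(S)$ on this part, and cupping with $\eta$ kills it by the algebraic Bott sequence.

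The main difficulty lies in the bookkeeping for the $c(\mathcal{Q}') \circ p^*$ summand: identifying $\iota^* \mathcal{Q}' \cong \mathcal{E}$ via the tautological sequence and invoking the splitting $p_* \circ c(\mathcal{Q}') \circ p^* = \id$ are precisely what force the two sides of the square to agree without any $\eta$-correction on that part; the $\eta$ is needed only to absorb the hyperbolic contribution $H$ coming from $\mu^1$ on the $\K$-summands.
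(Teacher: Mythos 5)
Your argument is correct and follows the same overall strategy as the paper: reduce to the middle square, apply the splitting $(\Theta_{\textnormal{odd}}, c(\mathcal{Q}')\circ p^*)$ of $\mathbf{Y}$ from Theorem \ref{thm:projective_bundle_thm}(B) for $\mathbb{P}(\mathcal{E}\oplus\SO_S)$, and verify the identity summand by summand. There is, however, a genuine reorganization that is worth noting. You first rewrite $\mathbf{v}=(\alpha'^*)^{-1}v'^*$ as $\iota^*$, using the factorization of $\iota$ through the zero section $s\colon S\to U'$ and the relation $s^*\alpha'^*=\id$. This lets you compute on $S$ rather than on the open complement $U'$, and in particular on the $\GW$-summand the identity $\iota^*(c(\mathcal{Q}')p^*\gamma)=e(\iota^*\mathcal{Q}')\cup\gamma=e(\mathcal{E})\cup\gamma$ follows elementarily from $\iota^*\mathcal{Q}'\cong\mathcal{E}$, $p\iota=\id$ and Proposition \ref{prop:euler}(i), whereas the paper invokes \cite[Theorem A.7]{hudson2022witt} to get $v'^*c(\mathcal{G})p^*=\alpha'^*c(\mathcal{E})$ in the diagram $\diag{\ref{diag:projective_bundle_odd_diag_3}}$. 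Both arguments are correct and equivalent after applying $s^*$, but yours avoids the external reference. On the $\K$-summands there is a similar divergence: you deduce $\iota^*\mu^m=0$ for odd $m\geq 3$ from $\iota^*e(\SO(1))=e(\SO_S)=0$, while the paper derives $v'^*\mu^m=0$ from the factorization $\mu^m=\iota'_*\mu^{m-1}$ and the vanishing $v'^*\iota'_*=0$ in the localization sequence. Your route is slicker but tacitly uses that the Euler class of the trivial line bundle vanishes in $\GW^{[1]}_0$; this does hold (one has $e(\SO_S)=H(1)=H(F(\langle 1\rangle))=0$ since $H\circ F=0$ in the algebraic Bott triangle), but it deserves a one-line justification, and the paper's argument via localization exactness is arguably cleaner at this point. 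Also note a small inaccuracy in your statement about the outer square $\mathbf{w}'\mathbf{h}=\mathbf{w}$: $\mathbf{w}'$ does not have vanishing $\K$-component (it is $\Theta_{\textnormal{even}}$), but since $\mathbf{h}$ has vanishing $\K$-component the $\Theta_{\textnormal{even}}$ piece is annihilated anyway, so the reduction still goes through.
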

	\begin{proof}
		All we need to show is that $\mathbf{v'}\mathbf{g} = \mathbf{h}\mathbf{v} $. Consider the following exact sequence:
		\[
			0 \to \SO_{\mathbb{P}(\mathcal{E}\oplus \SO_S)}(-1) \to p^\ast (\mathcal{E}\oplus \SO_S) \to \mathcal{G} \to 0
		\]
		with $\mathcal{G}$ the universal quotient bundle. The diagrams
		$$
			\xymatrix@C=10pt{
			\K(S)    \ar[d]_-{\mu^{1}} \ar[r]^-{ H } \ar@{}[dr]|-{\diagram \label{diag:projective_bundle_odd_diag_2}} &     	 \GW^{[n-r]}(S, \det \mathcal{E})    \ar[d]_-{\alpha'^\ast}
			& \GW^{[n+1]} (S) \ar[d]_-{ c(\mathcal{G}) p^\ast}  \ar[r]^-{ c(\mathcal{E})} \ar@{}[dr]|-{\diagram \label{diag:projective_bundle_odd_diag_3}} &\GW^{[n+1]} (S) \ar[d]_-{\alpha'^\ast}
			\\ 
			\GW^{[n+1]}(\mathbb{P}(\mathcal{E}\oplus \SO_S), \SO(1)) \ar[r]^-{v'^\ast}         &
			\GW^{[n+1]} (U')  & 	\GW^{[n+1]}(\mathbb{P}(\mathcal{E}\oplus \SO_S), \SO(1)) \ar[r]^-{v'^\ast}         &
			\GW^{[n+1]} (U')
			}
		$$
		commute. Note that $\mu^{1} = H p^\ast$ and $p v' = \alpha'$. Hence, $v'^\ast  \mu^{1} \simeq H  \alpha'^\ast \simeq \alpha'^\ast H$, which shows the commutativity of the diagram $\diag{\ref{diag:projective_bundle_odd_diag_2}}$. The commutativity of the diagram $\diag{\ref{diag:projective_bundle_odd_diag_3}}$ follows from the proof of \cite[Theorem A.7]{hudson2022witt}. Moreover, since $\eta \cup H =0 $ and $\alpha'^*$ is an isomorphism, we obtain the following formula
		\[
			\eta \cup  \big(\alpha'^\ast\big)^{-1}v'^\ast (\mu^{1} , c(\mathcal{G}) p^\ast)  \simeq \eta \cup (H, c(\mathcal{E}))   \simeq  (0, \eta\cup c(\mathcal{E})).
		\]
		Furthermore, $v'^\ast \mu^m = 0$ for $m \neq 1$ odd by the diagram $\diag{\ref{diag:projective_iota_prime_HF_3}}$. It follows that
		$$ \eta \cup (\alpha'^\ast)^{-1}v'^\ast \Theta_{\mathrm{odd}} = 0.$$
		In view of (B) and Remark \ref{rmk:projective_space_even_sequence}, we have $p_\ast \big(\Theta_{\mathrm{odd}}, c(\mathcal{G}) p^\ast\big) = (0,\id)$. Therefore,
		\[
			\eta \cup (\alpha'^\ast)^{-1}v'^\ast \big(\Theta_{\mathrm{odd}}, c(\mathcal{G}) p^\ast\big)
			\simeq \big(0,\eta\cup c(\mathcal{E})\big)          \simeq \eta\cup c(\mathcal{E}) p_\ast \big(\Theta_{\mathrm{odd}}, c(\mathcal{G}) p^\ast\big).
		\]
		By (B) the map $\big(\Theta_{\mathrm{odd}}, c(\mathcal{G}) p^\ast\big)  $
		is an equivalence. Hence, $\eta \cup (\alpha'^\ast)^{-1}v'^\ast = \eta\cup c(\mathcal{E}) p_\ast$.
	\end{proof}

	\begin{sublemma}\label{sublem:distingsh}
		The mapping cone of the diagram \eqref{eq:map-of-dis-triang} is a distinguished triangle.
	\end{sublemma}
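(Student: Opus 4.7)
The plan is to interpret the \emph{mapping cone of the diagram} \eqref{eq:map-of-dis-triang} as the candidate triangle
\[ C(\id_{\mathbf{X}}) \to C(\mathbf{g}) \xrightarrow{\bar{\mathbf{v}}} C(\mathbf{h}) \to C(\id_{\mathbf{X}})[1] \]
in $\mathcal{SH}$ formed from the cones of the vertical maps of \eqref{eq:map-of-dis-triang}. Since $C(\id_{\mathbf{X}}) \simeq 0$, this cone triangle is distinguished if and only if the induced map $\bar{\mathbf{v}} \colon C(\mathbf{g}) \to C(\mathbf{h})$ is a stable equivalence. Combined with Sublemma \ref{sublem:map-disting-trian} and the distinguishedness of the top row, the $3 \times 3$-lemma (a standard consequence of Verdier's octahedral axiom) then implies that the bottom row of \eqref{eq:map-of-dis-triang} is distinguished, which is exactly what part 3) of the proof of (D) requires.

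Next, I would compute the two cones explicitly. Applying Theorem \ref{thm:projective_bundle_thm}\,(B) to the projective bundle $p \colon \mathbb{P}(\mathcal{E}\oplus\SO_S) \to S$, which has even dimension $r+1$ since $r$ is odd, the map $\mathbf{g} = p_\ast$ admits a right splitting by $c(\mathcal{G}) p^\ast$ whose complementary summand $\bigoplus_{m\text{ odd},\, 1\le m\le r+1} K(S)$ is embedded via $\Theta_{\mathrm{odd}}$. Hence $C(\mathbf{g}) \simeq \bigoplus_{m \text{ odd}} K(S)[1]$. For $C(\mathbf{h})$, since $\mathbf{h} = \begin{pmatrix} 0 \\ \eta \cup \end{pmatrix}$ factors through the $\GW^{[n]}(S)[1]$-summand of $\mathbf{Z}'$, the cone decomposes as $C(\mathbf{h}) \simeq A[1] \oplus C(\eta \cup)$ where $A := \bigoplus_{m\text{ even},\, 2\le m\le r} K(S)$; the algebraic Bott sequence (Theorem \ref{theo:bott_seq_L}) identifies $C(\eta \cup) \simeq K(S)[1]$. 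Thus both $C(\mathbf{g})$ and $C(\mathbf{h})$ are direct sums of $(r+1)/2$ copies of $K(S)[1]$.

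The main obstacle is to verify that $\bar{\mathbf{v}}$ realizes these decompositions as a componentwise equivalence. On the $m = 1$ summand of $C(\mathbf{g})$, the computation $\mathbf{v} \circ \mu^1 \simeq H$ carried out in the proof of Sublemma \ref{sublem:map-disting-trian} identifies the image of $\bar{\mathbf{v}}$ with the canonical equivalence onto $C(\eta \cup) \simeq K(S)[1]$ coming from the Bott fiber sequence. For odd $m \ge 3$, one has $\mathbf{v} \circ \mu^m = 0$ (since $v'^\ast \SO(1) \cong \SO_{U'}$ and $\eta \cup H = 0$ by the Bott sequence); the canonical map on cones then arises from a null-homotopy and should identify the $m$-th summand of $C(\mathbf{g})$ with the $(m-1)$-st summand of $A[1] \subset C(\mathbf{h})$ via the index shift $m \mapsto m-1$. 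The key technical step is to make this index-shift identification precise by comparing the odd-twist splitting provided by Theorem \ref{thm:projective_bundle_thm}\,(B) against the even-twist splitting provided by Theorem \ref{thm:projective_bundle_thm}\,(A) on $\mathbb{P}(\mathcal{E}\oplus\SO_S)$, and to check sign compatibility. Once this is done, $\bar{\mathbf{v}}$ is an equivalence and the sublemma follows.
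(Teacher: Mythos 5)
Your proposal takes a genuinely different route from the paper, but it has a conceptual gap at the very first step, and the key computation is left unverified where the paper does the work.

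The paper uses Neeman's mapping cone of the morphism of candidate triangles, which for the diagram \eqref{eq:map-of-dis-triang} is the candidate triangle
\[
\mathbf{Y}\oplus\mathbf{X}' \longrightarrow \mathbf{Z}\oplus\mathbf{Y}' \longrightarrow \mathbf{X}[1]\oplus\mathbf{Z}' \longrightarrow \mathbf{Y}[1]\oplus\mathbf{X}'[1],
\]
and exploits the identities in the first and fourth columns to split this as a direct sum, leaving the candidate triangle \eqref{eq:candidate-triangle}. Your ``triangle of cones'' $C(\id_{\mathbf{X}}) \to C(\mathbf{g}) \to C(\mathbf{h}) \to C(\id_{\mathbf{X}})[1]$ is a different object — its terms are the cofibers of the vertical maps, not direct sums of objects appearing in the two rows — and it is not what the reduction in step 3) refers to, nor what the paper cites Neeman (Lemma 1.4.3) for. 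The two notions coincide (up to the ``good morphism'' machinery) only once one already knows that both rows are distinguished, which is exactly what is at stake. So ``reinterpreting'' the mapping cone as the cone triangle is a circular move, and the invocation of the $3\times 3$-lemma to pass from ``$\bar{\mathbf{v}}$ is an equivalence'' to ``the bottom row is distinguished'' is not a drop-in replacement for Neeman's framework of candidate triangles and mapping cones.

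That said, your computations of the cofibers are correct: by Theorem \ref{thm:projective_bundle_thm}(B) applied to $p\colon\mathbb{P}(\mathcal{E}\oplus\SO_S)\to S$ (relative dimension $r+1$, even since $r$ is odd), the split surjection $p_\ast$ has cofiber $\bigoplus_{m\text{ odd}} K(S)[1]$, and $C(\mathbf{h})\simeq A[1]\oplus C(\eta\cup)$ with $C(\eta\cup)\simeq K(S)[1]$ by the algebraic Bott sequence, so both cofibers consist of $(r+1)/2$ copies of $K(S)[1]$. But the ``main obstacle'' you identify — showing that $\bar{\mathbf{v}}$ realizes a componentwise equivalence with the index shift $m\mapsto m-1$ on the higher odd summands — is exactly where the paper does the real work, and you only sketch it. You would need to track the null-homotopies of $p_\ast\mu^m$ and of $\mathbf{v}\,\mu^m$ through the cofiber construction and verify that they match the splitting of $C(\mathbf{h})$ coming from Theorem \ref{thm:projective_bundle_thm}(A); the key input is the relation $\iota'_\ast\mu^m=\mu^{m+1}$ from Lemma \ref{lma:twist_closed_immersion} together with $\mathbf{w}'=(\Theta_{\mathrm{even}},q^\ast)$, which appears in the paper's verification of the diagram $\diag{\ref{diag:proejctible_bundle_D_9}}$. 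The paper sidesteps all of the cofiber bookkeeping by constructing three explicit morphisms from known distinguished triangles into \eqref{eq:candidate-triangle} (one carrying the Bott sequence, one carrying $\mathbf{Y}'$, one carrying $\mathbf{E}$), verifying a finite list of commutative squares, and then observing that the summed vertical maps are equivalences by (B). If you want to pursue your version, you should abandon the claim that the cone triangle ``is'' the mapping cone, instead prove directly (via a careful octahedral argument or the stable $\infty$-categorical $3\times 3$-square) that $\bar{\mathbf{v}}$ being an equivalence and $\mathbf{f}=\id$ imply the bottom row of \eqref{eq:map-of-dis-triang} is a cofiber sequence, and then actually carry out the index-shift identification on summands rather than assert it.
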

	\begin{proof}

		The mapping cone of the diagram \eqref{eq:map-of-dis-triang} can be written as a direct sum of the following two candidate triangles (cf. \cite[the proof of Lemma 1.4.3]{neeman2001triangulated})
		\[\xymatrix{\mathbf{X} \ar[r] & 0 \ar[r] & \mathbf{X}[1] \ar[r]^-{1} & \mathbf{X}[1] } \]
		and
		\begin{equation}\label{eq:candidate-triangle}
			\xymatrix{
			\mathbf{Y} \ar[r]^-{\everymath={\scriptstyle}\begin{pmatrix}\mathbf{g} \\ -\mathbf{v} \end{pmatrix}} & \mathbf{Y}' \oplus \mathbf{Z} \ar[r]^-{\big(\mathbf{v}', \mathbf{h}\big)} & \mathbf{Z}' \ar[r]^-{\mathbf{f}[1] \mathbf{w}'} & \mathbf{Y}[1]
			}
		\end{equation}
		It is enough to show that the candidate triangle \eqref{eq:candidate-triangle} is distinguished.\ Consider the following three maps from distinguished triangles to the candidate triangle \eqref{eq:candidate-triangle}:
		$$
			\xymatrix@R=30pt@C=60pt{
			\mathbf{Y}' \ar[r]^-{\id} \ar[d]_-{c(\mathcal{G})p^*} \ar@{}[dr]|-{\diagram \label{diag:proejctive_bundle_D_1}} & \mathbf{Y}' \ar[r] \ar[d]_-{\everymath={\scriptstyle}\begin{pmatrix}
				\id \\ -c(\mathcal{E})
			\end{pmatrix}} & 0 \ar[r] \ar[d]  &   \mathbf{Y}'[1] \ar[d]_-{c(\mathcal{G})p^*}
			\\ 
			\mathbf{Y} \ar[r]_-{\everymath={\scriptstyle}\begin{pmatrix}
				p_* \\ - (\alpha'^*)^{-1} v'^*
			\end{pmatrix}} & \mathbf{Y}' \oplus \mathbf{Z} \ar[r]_-{\everymath={\scriptstyle}\begin{pmatrix}
				0 & 0 \\ \eta\cup c(\mathcal{E}) & \eta\cup
			\end{pmatrix}} & \mathbf{Z}' \ar[r]_-{ \iota '_\ast \big(\Theta_{\mathrm{even}}, q^\ast \big) } & \mathbf{Y}[1] \\
			\mathbf{A} \ar[r]^-{H} \ar[d]_-{\mu^1} \ar@{}[dr]|-{\diagram \label{diag:projective_bundle_D_4}} & \mathbf{Z} \ar[r]^-{\eta \cup } \ar[d]_-{\everymath={\scriptstyle}\begin{pmatrix}
				0 \\ -\id
			\end{pmatrix}} & \mathbf{C} \ar[r]^-{-F} \ar[d]_-{\everymath={\scriptstyle}\begin{pmatrix}
				0 \\ -\id
			\end{pmatrix}} \ar@{}[dr]|-{\diagram \label{diag:proejctive_bundle_D_6}} &   \mathbf{A}[1] \ar[d]_-{\mu^1}
			\\ 
			\mathbf{Y} \ar[r]_-{\everymath={\scriptstyle}\begin{pmatrix}
				p_* \\ - (\alpha'^*)^{-1} v'^*
			\end{pmatrix}} & \mathbf{Y}' \oplus \mathbf{Z} \ar[r]_-{\everymath={\scriptstyle}\begin{pmatrix}
				0 & 0 \\ \eta\cup c(\mathcal{E}) & \eta\cup
			\end{pmatrix}} & \mathbf{Z}' \ar[r]_-{ \iota '_\ast \big(\Theta_{\mathrm{even}}, q^\ast \big) } & \mathbf{Y}[1] \\
			\mathbf{E} \ar[r] \ar[d]_-{\sum\mu^{m+1}} \ar@{}[dr]|-{\diagram \label{diag:proejctive_bundle_D_7}} & 0  \ar[r] \ar[d] & \mathbf{E}[1] \ar[r]^-{\id} \ar[d]_-{\everymath={\scriptstyle}\begin{pmatrix}
				\id \\ 0
			\end{pmatrix}} \ar@{}[dr]|-{\diagram \label{diag:proejctive_bundle_D_9}} &   \mathbf{E}[1] \ar[d]_-{\sum \mu^{m+1}}
			\\ 
			\mathbf{Y} \ar[r]_-{\everymath={\scriptstyle}\begin{pmatrix}
				p_* \\ - (\alpha'^*)^{-1} v'^*
			\end{pmatrix}} & \mathbf{Y}' \oplus \mathbf{Z} \ar[r]_-{\everymath={\scriptstyle}\begin{pmatrix}
				0 & 0 \\ \eta\cup c(\mathcal{E}) & \eta\cup
			\end{pmatrix}} & \mathbf{Z}' \ar[r]_-{ \iota '_\ast \big(\Theta_{\mathrm{even}}, q^\ast \big) } & \mathbf{Y}[1]}
		$$
		where $ \mathbf{E} = \bigoplus\limits _{2 \leq m \leq r }^{m \textnormal{ even}}  \K(S)$ and the sequence
		\[ \xymatrix{\mathbf{A} \ar[r]^-{H} & \mathbf{Z} \ar[r]^-{\eta \cup } & \mathbf{C} \ar[r]^-{-F} & \mathbf{A}[1]} \]
		is the (shifted) algebraic Bott sequence
		\[ \xymatrix{\K(S) \ar[r]^-{H} & \GW^{[n+1]}(S) \ar[r]^-{\eta \cup} & \GW^{[n]}(S)[1] \ar[r]^-{-F} &\K(S)[1]}.  \]

		\begin{itemize}
			\item [-] The diagram $\diag{\ref{diag:proejctive_bundle_D_1}}$ commutes by (B) and the diagram $\diag{\ref{diag:projective_bundle_odd_diag_3}}$.
			\item [-] The diagram $\diag{\ref{diag:projective_bundle_D_4}}$ commutes by the diagram $\diag{\ref{diag:projective_bundle_odd_diag_2}}$.
			\item [-] The diagram $\diag{\ref{diag:proejctive_bundle_D_6}}$ commutes by the diagram $\diag{\ref{diag:projective_iota_prime_HF_1}}$.
			\item [-] The diagram $\diag{\ref{diag:proejctive_bundle_D_7}}$ commutes by the diagram $\diag{\ref{diag:projective_iota_prime_HF_3}}$ and Remark \ref{rmk:projective_space_even_sequence}.
			\item [-] The diagram $\diag{\ref{diag:proejctive_bundle_D_9}}$ commutes as $\iota '_\ast \mu^m = \mu^{m+1}$.
			\item [-] Other diagrams commute by definition.
		\end{itemize}
		Finally, we form the sums of these three maps of candidate triangles
		\[
			\xymatrix{
				\mathbf{Y'} \oplus \mathbf{A} \oplus \mathbf{E} \ar[r] \ar[d] & \mathbf{Y'}\oplus \mathbf{Z} \ar[r]  \ar[d]  & \mathbf{C} \oplus \mathbf{E}[1] \ar[r] \ar[d]  &   \mathbf{A}[1] \oplus \mathbf{E}[1] \ar[d]
				\\ 
				\mathbf{Y} \ar[r] & \mathbf{Y}' \oplus \mathbf{Z} \ar[r]
				& \mathbf{Z}' \ar[r] & \mathbf{Y}[1]  }
		\]
		where all the vertical maps are equivalences (Note that the left vertical map is an equivalence by (B).\ The upper horizontal line is a distinguished triangle, and the result follows.
	\end{proof}

	Theorem \ref{thm:projective_bundle_thm} (D) follows from Sublemma \ref{sublem:map-disting-trian} and \ref{sublem:distingsh}.
\end{proof}

Suppose that $\phi : S' \to S$ is a morphism of regular schemes. Then, we can form the following fibre product
\[\xymatrix{\mathbb{P}(\mathcal{E})  \times_S S' \ar[r]^-{\phi'} \ar[d]^-{q'} &   \mathbb{P}(\mathcal{E}) \ar[d]^-{q} \\
	S' \ar[r]^-{\phi} & S     }\]
Note that $\mathbb{P}(\mathcal{E})  \times_S S' \cong \mathbb{P}(\phi^* \mathcal{E})$ which is a projective bundle over $S'$, cf. \cite[Remark 13.27 p.\ 383]{gortz2020algebraic}.
The following result is a direct application of the projective bundle theorem:
\begin{proposition}\label{prop:base-change-formula-proj-bundle}
	For any line bundle $L$ on $S$, the following diagram
	\[\xymatrix{ \GW^{[n]}(\mathbb{P}(\phi^*\mathcal{E}), \omega_{q'} \otimes q'^*\phi^*L ) \ar[d]^-{q'_*} & \ar[l]_-{\phi'^*}  \GW^{[n]}(\mathbb{P}(\mathcal{E}), \omega_q \otimes q^*L) \ar[d]^-{q_*} \\
		\GW^{[n-r]}( S', \phi^*L) & \ar[l]_-{\phi^*}  \GW^{[n-r]}(S, L)
		}\]
	of spectra	commutes up to homotopy.
\end{proposition}
\begin{proof}
	Suppose that the rank of the bundle $\mathcal{E}$ is odd (i.e. $r$ is even). Since we have the isomorphism
	$$(\Theta_{\textnormal{odd}}, c(\mathcal{Q}) q^*) \colon \bigoplus_{1 \leq m \leq r }^{m \textnormal{ odd}} \K_i(S) \oplus \GW^{[n-r]}_i(S,\det \mathcal{E}) \xrightarrow{\simeq} \GW^{[n]}_i(\mathbb{P}(\mathcal{E}),\SO(1)),   $$
	it is enough to show that
	$$ q'_\ast \phi'^* (\Theta_{\textnormal{odd}}, c(\mathcal{Q})  q^*) = \phi^* q_\ast (\Theta_{\textnormal{odd}}, c(\mathcal{Q})  q^*) .$$
	Note that $\phi'^*\Theta_{\textnormal{odd}} = \Theta'_{\textnormal{odd}} \phi^* $ follows from the definition of $\Theta$. Therefore, we see that
	$$ q'_\ast \phi'^* \Theta_{\textnormal{odd}} = q'_\ast \Theta'_{\textnormal{odd}} \phi^* = 0 = \phi^* q_\ast \Theta_{\textnormal{odd}} .$$
	Moreover, we have that
	$$ q'_\ast \phi'^*  c(\mathcal{Q})  q^* = q'_\ast c(\phi'^*\mathcal{Q}) \phi'^* q^* = q'_\ast c(\phi'^*\mathcal{Q}) q'^* \phi^* = \phi^* = \phi^* q_\ast c(\mathcal{Q})  q^* .$$

	Now, assume that the rank of bundle $\mathcal{E}$ is even (i.e. $r$ is odd). Consider the following diagram
	\[ \xymatrix{
		\mathbb{P}(\phi^*\mathcal{E}) \ar[r]^-{\iota'}  \ar[d]^-{\phi'} & \mathbb{P}(\phi^*(\mathcal{E} \oplus \SO_{S}) ) \ar[d]^-{\phi''} \ar[r]^-{p'} & S' \ar[d]^-{\phi}
		\\
		\mathbb{P}(\mathcal{E}) \ar[r]^-{\iota}& \mathbb{P}(\mathcal{E} \oplus \SO_S ) \ar[r]^-{p} & S
		}\]
	where we have $q=\iota p$ and $q'= \iota' p' $. Note that the left square is tor-independent. It follows that
	$$
		\phi^* q_*  =   \phi^* p_* \iota_*
		=     p'_* \phi''^* \iota_*
		=     p'^* \iota'_* \phi'^*
		= q'_* \phi'^*
	$$
	where the second equation follows from the case when $r$ is even, and the third equation is true by the excess intersection formula (cf. Theorem \ref{thm:excess_intersection}).
\end{proof}

\begin{corollary}[Base change formula for projective morphisms]\label{coro:projective_bundle_base_change}
	Let
	\[
		\xymatrix{{X'} \ar[r]^-{\bar{g}} \ar[d]_-{\bar{f}}& X \ar[d]_-{f}\\
		Y' \ar[r]^-{g}& Y}
	\]
	be a fibre product of regular schemes. Suppose that $f$ is flat and $g$ is a projective morphism. Then, the formula
	$$f^* g_* =\bar{g}_* \bar{f}^*$$
	holds on the level of $\GW$-spectra.
\end{corollary}
\begin{proof}
	This follows from Theorem \ref{thm:excess_intersection} and Proposition \ref{prop:base-change-formula-proj-bundle}.
\end{proof}
\section{Grassmannians}\label{sect:grassmanian}
Let $S$ be a regular scheme. Let $\SV$ be a vector bundles over $S$ of rank $r$. Following \cite{balmer2012witt}, we adopt the notation $\SP_d \lhd \SV$ to indicate that $\SP_d$ is a subbundle of $\SV$ of rank $d$ which is locally a direct summand (i.e. $\SV/\SP_d$ is a vector bundle), and we call $\SP_d$ an \textit{admissible subbundle} of $\SV$.

\begin{definition}[{\cite[Theorem 9.7.4]{ega1}}] Let $d$ be an integer satisfying $0\leq d \leq r$.\ The \textit{Grassmann functor} is the functor
	\[\arraycolsep=1pt
		\begin{array}{lcl}
			\Gr_{d}(\SV)(-) \colon
			 & \Sch_S & \to \Sets                                  \\
			 & X      & \mapsto \Big\{ \SP_{d} \lhd f^*\SV  \Big\}
		\end{array}
	\]
	for every given $S$-scheme $f: X \to S$.\ The $S$-scheme representing the functor $ \Gr_{d}(\SV)(-)$ is denoted by $p: \Gr_{d}(\SV) \to S $ and is called the \textit{Grassmann variety} over $S$.\ The universal element of the Grassmann functor is given by a unique admissible subbundle $\mathcal{T}_d \lhd \SV_{\Gr_{d}(\SV)}$ called the \textit{tautological bundle} on $\Gr_{d}(\SV)$.
	Define $m:= r-d$ and let
	$$\Gr_d(m): = \Gr_{d}( \SO_S^{\oplus r}). $$
\end{definition}

The determinant line bundle of the tautological bundle   $\mathcal{T}_d$ which is denoted by $\Delta_d:=\det(\mathcal{T}_d)$ plays an important role in our computation.

\subsection{The flag variety}
Fix a complete flag
\[
	0 = \SV_{0} \lhd \SV_{1} \lhd \ldots \lhd \SV_{r} = \SV
\]
of admissible subbundles of $\SV$ with $\mathrm{rank}(\SV_s) = s$. Recall the following construction in {\cite[Definition 1.6]{balmer2012witt}}.

\begin{definition}\label{def:flag-variety}
	Given a pair of $k$-tuples $ \underline{d} = (d_1 , \ldots , d_k)$ and $\underline{m}:=(m_1, \ldots, m_k)$ such that $$0<d_1 < d_2 < \ldots < d_k \quad \textnormal{   and   } \quad d_1 + m_1 \leq d_2 + m_2 \leq \ldots \leq d_k+m_k .$$ Define $\underline{r} : = (r_1 ,\ldots, r_k)$ where $r_i : = d_i +m_i$ for $i = 1, \ldots, k$. Set
	$$\SV_{\bullet }: = \big\{ \SV_{r_1} \lhd \ldots  \lhd \SV_{r_k} \big\}$$ The \textit{flag functor} is the functor
	\[\arraycolsep=1pt
		\begin{array}{lcl}
			\mathrm{Fl}_{\underline{d}}(\SV_{\bullet })(-) \colon
			 & \Sch_S & \to \Sets                                               \\
			 & X      & \mapsto 	\mathrm{Fl}_{\underline{d}}(\SV_{\bullet })(X)
		\end{array}
	\]
	defined by
	\[
		\mathrm{Fl}_{\underline{d}}(\SV_{\bullet })(X)  :=
		{\left\{\begin{matrix}\SP_{d_1}        & \lhd & \SP_{d_2}        & \lhd & \ldots & \lhd & \SP_{d_k}        \\
              \triangle        &      & \triangle        &      &        &      & \triangle        \\
              f^\ast \SV_{r_1} & \lhd & f^\ast \SV_{r_2} & \lhd & \ldots & \lhd & f^\ast \SV_{r_k}\end{matrix}\right\}}
	\]
	for any $S$-scheme $f:X\to S$. The $S$-scheme representing the functor $ \mathrm{Fl}_{\underline{d}}(\SV_{\bullet })(-)$ is denoted by $p: \mathrm{Fl}_{\underline{d}}(\SV_{\bullet }) \to S $ and is called the \textit{flag variety} over $S$. If no confusion occurs, we shall drop the mention of $f^\ast$ in the sequel.   The universal element of the functor $\mathrm{Fl}_{\underline{d}}(\SV_{\bullet })(-)$ is given by
	\[\begin{matrix}\mathcal{T}_{d_1} & \lhd & \mathcal{T}_{d_2} & \lhd & \ldots & \lhd & \mathcal{T}_{d_k}\\ \triangle & & \triangle & &  & & \triangle \\  p^\ast \SV_{r_1} & \lhd & p^\ast \SV_{r_2} & \lhd & \ldots & \lhd & p^\ast \SV_{r_k}. \end{matrix}\]
	The determinant line bundle of the vector bundle  $\mathcal{T}_{d_i}$ is denoted by $\Delta_{d_i}:=\det (\mathcal{T}_{d_i})$ for $1 \leq i \leq k$.
\end{definition}

\begin{remark}
	Note that $\Gr_{d}(\SV) = \Fl_{d}(\SV)$. The flag variety $p:\mathrm{Fl}_{\underline{d}}(\SV_{\bullet }) \to S $ is smooth over $S$ (cf. \cite[Proposition 1.13]{balmer2012witt}).
\end{remark}

\begin{proposition}\label{prop:Pic-flag}
	The map
	\[
		\begin{aligned}
			\Pic(S) \oplus \bigoplus_{1\leq i \leq k} \Z\Delta_{d_i} & \xrightarrow{\cong} \Pic(\mathrm{Fl}_{\underline{d}}(\SV_{\bullet }))                       \\
			(L, m_1, m_2, \ldots, m_k)                                          & \mapsto p^\ast L \otimes \bigotimes_{1\leq i \leq k} \Delta_{d_i}^{\otimes m_i}.
		\end{aligned}
	\]
	is an isomorphism.
\end{proposition}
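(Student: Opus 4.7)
My plan is to prove the statement by induction on the length $e$ of the flag, exploiting the tower structure of flag varieties as iterated Grassmann bundles. Writing $F_i := \mathcal{F}l(d_1, \ldots, d_i; V_1, \ldots, V_i)$ with structure morphism $p_i : F_i \to S$ and $F_0 := S$, the forgetful map $\pi_i : F_i \to F_{i-1}$ can be naturally identified with the Grassmann bundle
\[ \Gr_{d_i - d_{i-1}}\!\bigl(p_{i-1}^{\ast} V_i / \mathcal{T}_{d_{i-1}}\bigr) \longrightarrow F_{i-1}, \]
because extending a given flag $\SP_{d_1} \lhd \cdots \lhd \SP_{d_{i-1}}$ on $F_{i-1}$ to a flag $\SP_{d_1} \lhd \cdots \lhd \SP_{d_i} \lhd p_i^{\ast} V_i$ on $F_i$ is the same as choosing an admissible subbundle $\SP_{d_i}/\mathcal{T}_{d_{i-1}}$ of $p_{i-1}^{\ast} V_i / \mathcal{T}_{d_{i-1}}$ of rank $d_i - d_{i-1}$. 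Moreover, by this universal property the pullback along $\pi_i$ of the tautological subbundles $\mathcal{T}_{d_j}$ on $F_{i-1}$ (for $j < i$) coincides with the corresponding tautological subbundles on $F_i$.

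The inductive engine is the Picard group formula for a single Grassmann bundle: for any vector bundle $\mathcal{E}$ of rank $r$ over a regular scheme $X$ and any $0 \leq d \leq r$, the projection $q : \Gr_d(\mathcal{E}) \to X$ induces an isomorphism $\Pic(X) \oplus \Z \xrightarrow{\cong} \Pic(\Gr_d(\mathcal{E}))$ sending the second factor to the class of $\Delta_d = \det \mathcal{T}_d$. This handles the base case $e = 1$ directly. For the inductive step, applying the formula to $\pi_i$ gives $\Pic(F_i) \cong \Pic(F_{i-1}) \oplus \Z \cdot [\det \mathcal{S}_i]$, where $\mathcal{S}_i$ denotes the tautological subbundle of the fibrewise Grassmannian $\pi_i$. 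From the universal short exact sequence
\[ 0 \to \pi_i^{\ast} \mathcal{T}_{d_{i-1}} \to \mathcal{T}_{d_i} \to \mathcal{S}_i \to 0 \]
on $F_i$ one obtains $\Delta_{d_i} \cong \pi_i^{\ast} \Delta_{d_{i-1}} \otimes \det \mathcal{S}_i$, so that after an elementary change of basis the classes $\Delta_{d_1}, \ldots, \Delta_{d_i}$ together with $p_i^{\ast} \Pic(S)$ freely generate $\Pic(F_i)$, completing the induction.

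The main obstacle is really just the Picard group computation for a single Grassmann bundle $\Gr_d(\mathcal{E}) \to X$, which requires genuine geometric input rather than a formal argument. A standard route is to establish the formula first for the trivial bundle $\mathcal{E} = \SO_X^{d+m}$ via a Schubert cell decomposition (so that fibrewise one recovers the classical fact $\Pic(\Gr_d(k^{d+m})) = \Z$ generated by the Plücker line bundle) combined with the homotopy invariance of $\Pic$ on regular schemes, and then globalize by a Mayer--Vietoris argument over a Zariski cover of $X$ that trivializes $\mathcal{E}$. Since this is a classical statement, one may simply quote it from \cite{balmer2012witt} or \textit{EGA}, so no new work is needed beyond the inductive bookkeeping described above.
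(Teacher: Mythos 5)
Your proposed argument is correct in outline, but it is worth flagging that the paper itself contains no proof of this proposition --- it simply cites \cite[Proposition 1.13]{balmer2012witt}, which is exactly the statement in question. So your task here is effectively to reconstruct the proof in that reference, and your plan does essentially that: the reduction of the flag variety to an iterated Grassmann bundle $\pi_i : F_i \cong \Gr_{d_i-d_{i-1}}(p_{i-1}^*V_i/\mathcal{T}_{d_{i-1}}) \to F_{i-1}$ is correct and is the same tower structure Balmer--Calm\`es use, and the change of basis via the short exact sequence $0 \to \pi_i^*\mathcal{T}_{d_{i-1}} \to \mathcal{T}_{d_i} \to \mathcal{S}_i \to 0$ (which gives $\Delta_{d_i} = \pi_i^*\Delta_{d_{i-1}} \otimes \det\mathcal{S}_i$, an invertible upper-triangular substitution) is exactly the right bookkeeping.

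The one place your sketch is slightly too casual is the globalization of the single-Grassmann-bundle Picard formula from trivial $\mathcal{E}$ to general $\mathcal{E}$. Because $\Pic = H^1_{\mathrm{Zar}}(-,\SO^\times)$ is not a Zariski sheaf, a ``Mayer--Vietoris argument'' is not simple patching: one has to run a five-lemma comparison of the two long exact Mayer--Vietoris sequences (for $X = U\cup V$ and for $\Gr_d(\mathcal{E}) = q^{-1}U \cup q^{-1}V$), using that $q_*\SO = \SO_X$ to identify the $H^0(\SO^\times)$ terms, and then keep track of the $\Z$-summands. A cleaner and more standard route to the same lemma bypasses Schubert cells and Mayer--Vietoris entirely: injectivity of $\Pic(X) \oplus \Z \to \Pic(\Gr_d(\mathcal{E}))$ follows from $q_*\SO = \SO_X$ plus the fact that $\Delta_d$ has nonzero degree on a fibrewise Schubert $\mathbb{P}^1$; surjectivity follows from the seesaw principle --- any $L$ has constant fibrewise degree $n$ in the fibre Picard groups $\Pic(\Gr_d(\kappa(x))) \cong \Z$, and $L\otimes\Delta_d^{-n}$, being fibrewise trivial, descends to $X$ by cohomology and base change. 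Alternatively, and perhaps closest in spirit to your induction, one can deduce the Grassmann bundle case from the projective bundle formula by sandwiching $\Gr_d(\mathcal{E})$ between the iterated projective bundles $\mathcal{F}l(1,\dots,d;\mathcal{E},\dots,\mathcal{E}) \to \Gr_d(\mathcal{E}) \to X$. Since you ultimately fall back on citing \cite{balmer2012witt}, none of this is a genuine gap, but if you intend the sketch as an actual proof you should replace the Mayer--Vietoris step with one of these.
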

\begin{proof}
	See \cite[Proposition 1.13]{balmer2012witt}.
\end{proof}
\begin{remark}
	In particular, Proposition \ref{prop:Pic-flag} states that $\Pic(\Gr_{d}(\SV)) \cong \Pic(S) \oplus \Z\Delta_{d}$. Since $\Pic(S)$ is a direct summand of $\Pic(\Gr_{d}(\SV))$, we may omit $p^\ast$ and denote by $L$ the corresponding class in $\Pic(\Gr_{d}(\SV))$ for $L \in \Pic(S)$.

\end{remark}
\subsection{Canonical maps between flag varieties}\label{sect:maps-flag-variety}
Suppose that we have another $k$-tuple $\underline{\tilde{r}} : = (\tilde{r}_1, \ldots, \tilde{r}_k)$  such that  $\tilde{r}_1\leq \ldots\leq  \tilde{r}_k$ and $r_i \leq \tilde{r}_i$ for $1 \leq i \leq k$. Set $\widetilde{\SV}_{\bullet }$ to be the flag $\SV_{\tilde{r}_1} \lhd \ldots \lhd \SV_{\tilde{r}_k}$. Then, we have a regular closed embedding
$$
	\begin{aligned}
		\iota: \mathrm{Fl}_{\underline{d}}(\SV_{\bullet }) & \hookrightarrow \Fl_{\underline{d}}(\widetilde{\SV}_{\bullet })
		\\
		{\left\{\begin{matrix}\SP_{d_1} & \lhd & \ldots & \lhd & \SP_{d_k} \\
              \triangle &      &        &      & \triangle \\
              \SV_{r_1} & \lhd & \ldots & \lhd & \SV_{r_k}\end{matrix}\right\}}
		                                                   & \mapsto
		{\left\{\begin{matrix}\SP_{d_1}         & \lhd & \ldots & \lhd & \SP_{d_k}         \\
              \triangle         &      &        &      & \triangle         \\
              \SV_{\tilde{r}_1} & \lhd & \ldots & \lhd & \SV_{\tilde{r}_k}\end{matrix}\right\}}.
	\end{aligned}
$$

Now, let $J= \{j_1, \ldots, j_{\widehat{k}}\}$ be a subset of the set $I:=\{1, \ldots, k\}$. Consider the $\widehat{k}$-tuples $\underline{\widehat{d}}:= (d_{j_1} , \ldots , d_{j_{\widehat{k}}})$.  Set $\widehat{\SV}_{\bullet } $ to be the flag $\SV_{r_{j_1}} \lhd \ldots \lhd \SV_{r_{j_{\widehat{k}}}}$.\ Then, one can construct a natural projective morphism
$$
	\begin{aligned}
		\pi : \mathrm{Fl}_{\underline{d}}(\SV_{\bullet }) & \rightarrow \Fl_{\underline{\widehat{d}}}(\widehat{\SV}_{\bullet })
		\\
		{\left\{\begin{matrix}\SP_{d_1} & \lhd & \ldots & \lhd & \SP_{d_k} \\
              \triangle &      &        &      & \triangle \\
              \SV_{r_1} & \lhd & \ldots & \lhd & \SV_{r_k}\end{matrix}\right\}}
		                                                  & \mapsto
		{\left\{\begin{matrix}\SP_{d_{j_1}} & \lhd & \ldots & \lhd & \SP_{d_{j_{\widehat{k}}}} \\
              \triangle     &      &        &      & \triangle                 \\
              \SV_{r_{j_1}} & \lhd & \ldots & \lhd & \SV_{r_{j_{\widehat{k}}}}\end{matrix}\right\}}.
	\end{aligned}
$$
\begin{remark}
	If neccesary, we may decorate the notations $\iota$ and $\pi$ without defining them cumbersomely, e.g. $\iota_+$ and $\pi_+$ etc..
\end{remark}

\subsection{The blow-up  setup}\label{subsec:grassmannian_setup}
Consider the regular closed embedding $\iota: \Gr_{d}(\SV^1) \hookrightarrow \Gr_{d}(\SV)$, of which the open complement shall be denoted by $v: \Gr_{d}(\SV) \backslash \Gr_{d}(\SV^1)  \to \Gr_{d}(\SV)$.
The scheme $\Gr_{d}(\SV) \backslash \Gr_{d}(\SV^1)$ can also be interpreted as the non-vanishing locus of the composition $\mathcal{T}_d \to p^*\SV \to p^*\SV/p^*\SV^1$. By forming the fibre square
\[
	\xymatrix{
		p^\ast \SV^1 \ar[r] & p^\ast \SV
		\\
		\mathcal{T}_d \cap p^*\SV^1 \ar[u] \ar[r] & \mathcal{T}_d \ar[u]
	}
\]
we observe that the points of $\Gr_{d}(\SV) \backslash \Gr_{d}(\SV^1)$ can be characterized by the set
\[
	\Big\{ \SP_d  \lhd \SV : \SP_d   \overline{\ntriangleleft} \SV^1 \Big\},
\]
where $\SP_d   \overline{\ntriangleleft} \SV^1$ signifies $\SP_d \cap \SV^1 \lhd \SV^1$ and $\SP_d \not \subset \SV^1$.\ Recall the diagram from \cite[Section 5]{balmer2012witt}:
\begin{equation}\label{eqn:grass_blow_up_pre}
	\xymatrix{
	\Gr_{d}( \SV^{1}) \ar@{^{(}->}[r]^-{\iota} & \Gr_{d}(\SV)  & \ar[l]_-{v}  \ar[ld]_-{\tilde{v}} \Gr_{d}(\SV) \backslash \Gr_{d}(\SV^1)  \ar[d]^-{\alpha}  \\
	\Fl_{d-1,d}(\SV^1,\SV^1) \ar[u]^-{\widetilde{\pi}} \ar@{^{(}->}[r]^-{\tilde{\iota}} & \Fl_{d-1,d}(\SV^1,\SV)  \ar[u]^-{\pi} \ar[r]^-{\tilde{\alpha}} & \Gr_{d-1}( \SV^{1})
	}
\end{equation}
where the maps are defined on the functors of points as follows:
\[
	\xymatrix@C=55pt{
	{\left\{\begin{smallmatrix}\SP_{d}\\ \triangle \\ \SV^{1}\end{smallmatrix}\right\}} \ar@{^{(}->}[r] & {\left\{\begin{smallmatrix}\SP_{d}\\ \triangle \\ \SV\end{smallmatrix}\right\}}  & \ar[l]  \ar[ld]  {\left\{\begin{smallmatrix}\SP_{d} & \overline{\ntriangleleft} \SV^{1}\\ \triangle \\ \SV\end{smallmatrix}\right\}} \ar[d]
	\\
	{\left\{\begin{smallmatrix}\SP_{d-1} & \lhd & \SP_{d}\\ \triangle & & \triangle \\ \SV^{1} & = & \SV^{1} \end{smallmatrix}\right\}} \ar[u] \ar@{^{(}->}[r] & {\left\{\begin{smallmatrix}\SP_{d-1} & \lhd & \SP_{d}\\ \triangle & & \triangle \\ \SV^{1} & \lhd & \SV \end{smallmatrix}\right\}}  \ar[u] \ar[r] & {\left\{\begin{smallmatrix}\SP_{d-1}\\ \triangle \\ \SV^{1}\end{smallmatrix}\right\}}.
	}
\]
Here $\tilde{\iota}, \pi, \tilde{\alpha}$ and $\widetilde{\pi}$ are the canonical morphisms defined in Section \ref{sect:maps-flag-variety}. The morphism $\tilde{v}$ maps $\SP_d$ to the flag $\SP_{d-1} \lhd \SP_d$ with $\SP_{d-1} = \SP_d \cap \SV^{1}$. Moreover, $\alpha$ is defined as $\tilde{\alpha} \tilde{v}$. The diagram (\ref{eqn:grass_blow_up_pre}) satisfies the assumption of Theorem \ref{theo:connecting-hom}.

\subsection{Notations on line bundles}\label{subsec:notation_line_bundle}
Let $L$ be a line bundle over $S$.

\begin{table}[H]
	\begin{tabular}{|l|l|c|c|}
		\hline
		Line bundle                                                  & Scheme               & $d$ odd                                              & $d$ even                                           \\ \hline \hline
		$\widetilde{\mathcal{H}}_d^{(0)}= \widetilde{\mathcal{H}}  $ & $\Gr_d(\SV)$         & $L$                                                  & $L \otimes \Delta_d$                               \\ \hline
		$\widetilde{\mathcal{H}}^{(1)}_d$                            & $\Gr_d(\SV^{1})$     & $ L\otimes(\SV/\SV^1)\otimes\Delta_{d}$              & $L$                                                \\ \hline
		$\widetilde{\mathcal{H}}^{(1)}_{d-1}$                        & $\Gr_{d-1}(\SV^{1})$ & $L$                                                  & $L\otimes(\SV/\SV^1)\otimes\Delta_{d-1}$           \\ \hline \hline
		$\mathcal{H}_d^{(0)}= \mathcal{H}$                           & $\Gr_d(\SV)$         & $L \otimes \Delta_d$                                 & $L$                                                \\ \hline
		$\mathcal{H}^{(2)}_d$                                        & $\Gr_d(\SV^{2})$     & $L\otimes\mathrm{det}(\SV/\SV^2)\otimes\Delta_{d}$   & $L$                                                \\ \hline
		$\mathcal{H}^{(2)}_{d-2}$                                    & $\Gr_{d-2}(\SV^{2})$ & $L\otimes\mathrm{det}(\SV/\SV^2)\otimes\Delta_{d-2}$ & $L$                                                \\ \hline \hline
		$\mathcal{H}^{(1)}_{d}$                                      & $\Gr_d(\SV^{1})$     & $L \otimes (\SV/\SV^{1})$                            & $L \otimes \Delta_d $                              \\ \hline
		$\mathcal{H}^{(1)}_{d-1}$                                    & $\Gr_{d-1}(\SV^{1})$ & $L \otimes (\SV/\SV^{1}) \otimes \Delta_{d-1}$       & $L$                                                \\ \hline
		$\mathcal{H}^{(2)}_{d-1}$                                    & $\Gr_{d-1}(\SV^{2})$ & $L \otimes (\SV/\SV^{1})$                            & $L \otimes (\SV^{1}/\SV^{2}) \otimes \Delta_{d-1}$ \\ \hline
	\end{tabular}
	\caption{Notations of line bundles on the corresponding Grassmannians}
 \label{tab:tab-grass-line}
\end{table}

Moreover, we introduce the following notations
$$\widetilde{\mathbb{G}}_e(\SV^j): = (\Gr_e(\SV^j), \widetilde{\mathcal{H}}_e^{(j)}) \textnormal{ and } \mathbb{G}_e(\SV^j): = (\Gr_e(\SV^j), \mathcal{H}_e^{(j)})$$
for Grassmannians with fixed line bundles.
\subsection{The induction step}
The subsequent theorem is crucial for the proof of Theorem \ref{theo:GW-Grassmannian}.
\begin{theorem}\label{thm:grass_induction}
	Suppose that $d\geq2$ and $r \geq d+2$.
	\begin{enumerate}[leftmargin=20pt]
		\item [\rm (A)] The map
		      \[
			      \GW^{[n]}(\widetilde{\mathbb{G}}_d(\SV)) \xleftarrow[\simeq]{\big(\iota_\ast, \pi_\ast \tilde{\alpha}^\ast\big)} \GW^{[n-d]}(\widetilde{\mathbb{G}}_d(\SV^{1})) \oplus \GW^{[n]}(\widetilde{\mathbb{G}}_{d-1}(\SV^{1}))
		      \]
		      is an equivalence of spectra.
		\item [\rm (B)] There is an equivalence
		      \[
			      \GW^{[n]}(\mathbb{G}_d(\SV)) \simeq   \K(\Gr_{d-1}(\SV^{2})) \oplus \GW^{[n-2d]}(\mathbb{G}_d(\SV^{2})) \oplus \GW^{[n]}(\mathbb{G}_{d-2}(\SV^{2}))
		      \]
		      of spectra.
	\end{enumerate}
\end{theorem}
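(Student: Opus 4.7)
The plan is to derive both statements from Theorem \ref{theo:connecting-hom} applied to the blow-up setup \eqref{eqn:grass_blow_up} with $e = d$ and $j = 1$, which satisfies Hypothesis \ref{hypo:BC} with $\iota_{d,1}$ a regular immersion of codimension $c = d$, $\tilde{\alpha}_{d,1}$ flat, and $\alpha_{d,1}$ an affine bundle.

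For part (A), I would first verify the parity condition $\lambda(\widetilde{\mathcal{H}}) \equiv d - 1 \pmod{2}$, which is a Picard group computation using Proposition \ref{prop:Pic-flag} and the description of the exceptional divisor class in terms of $\Delta_{d-1}$ and $\Delta_d$; the line bundle $\widetilde{\mathcal{H}}$ is designed precisely to produce this parity. Theorem \ref{theo:connecting-hom}(A) then yields the claimed direct sum, whose splittings are $\iota_\ast$ and $\pi_\ast\tilde{\alpha}^\ast$. To identify the two summands, I would use the standard formula $N_{\iota_{d,1}} \cong \mathcal{T}_d^\vee \otimes (\SV/\SV^1)$, hence $\omega_{\iota_{d,1}} \cong \Delta_d \otimes (\SV/\SV^1)^{\otimes(-d)}$, and compare with the line bundles tabulated in Section \ref{subsec:grassmannian_setup} modulo squares in Picard to obtain $\omega_{\iota_{d,1}} \otimes \iota_{d,1}^\ast \widetilde{\mathcal{H}} \equiv \widetilde{\mathcal{H}}_d^{(1)}$ and $(\alpha_{d,1}^\ast)^{-1} v_{d,1}^\ast \widetilde{\mathcal{H}} \equiv \widetilde{\mathcal{H}}_{d-1}^{(1)}$.

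For part (B), I apply Theorem \ref{theo:connecting-hom}(B) to the same diagram but with twist $\mathcal{H}$, where an analogous Picard computation gives $\lambda(\mathcal{H}) \equiv d \pmod{2}$. This yields a distinguished triangle
\[
\GW^{[n-d]}(\Gr_d(\SV^1), \mathcal{H}^{(1)}_d) \to \GW^{[n]}(\mathbb{G}_d(\SV)) \to \GW^{[n]}(\Gr_{d-1}(\SV^1), \mathcal{H}^{(1)}_{d-1}) \xrightarrow{\partial} \GW^{[n-d]}(\Gr_d(\SV^1), \mathcal{H}^{(1)}_d)[1]
\]
with $\partial = (\eta \cup) \circ \tilde{\pi}_\ast \tilde{\iota}^\ast \tilde{\alpha}^\ast (\alpha^\ast)^{-1}$. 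The next step is to apply part (A), now established, to each endpoint term using the blow-up \eqref{eqn:grass_blow_up} with $j = 2$: a parity computation shows that with the shifts $n - d$ and $n$, the line bundles $\mathcal{H}_d^{(1)}$ and $\mathcal{H}_{d-1}^{(1)}$ play the role of the tilded bundle for the second blow-up. This decomposes the two endpoint terms into four summands on $\SV^2$: the corner summands $\GW^{[n-2d]}(\mathbb{G}_d(\SV^2))$ and $\GW^{[n]}(\mathbb{G}_{d-2}(\SV^2))$, together with a pair of summands indexed by $\Gr_{d-1}(\SV^2)$ sharing a common twist and having consecutive $\GW$-shifts $n-d$ and $n-d+1$.

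The final step is to show that $\partial$ vanishes on the two corner summands and acts on the pair of intermediate $\Gr_{d-1}(\SV^2)$ summands as the Bott cup map $\eta \cup$ of Theorem \ref{theo:bott_seq_L}; the fiber of this action is $\K(\Gr_{d-1}(\SV^2))$ via $H$. Combined with the original localization triangle, this replaces the pair of intermediate summands by a single $K$-theory spectrum while leaving the two corner summands untouched, yielding the three-piece decomposition of the claim. The hard part will be this last identification: it requires a coordinated use of base change (Theorem \ref{thm:residually_stable_pullback} and Proposition \ref{prop:regular_smooth_base_change}), the projection formula (Theorem \ref{theo:projection-formula}), and the projective bundle theorem (Theorem \ref{thm:projective_bundle_thm}) applied to the $\mathbb{P}^{d-1}$-bundle $\tilde{\pi}$ at the appropriate twist, while tracking all line bundles modulo squares through the two iterated blow-up diagrams.
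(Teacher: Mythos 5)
Your proposal matches the paper's argument in both parts: (A) is the paper's one-line deduction from Theorem~\ref{theo:connecting-hom}(A) once $\lambda(\widetilde{\mathcal{H}}) \equiv d-1 \pmod 2$ is checked, and your plan for (B) --- decompose both endpoints of the localization triangle by (A) at level $\SV^2$, show $\partial$ annihilates the corner summands and is the Bott map $\eta\cup$ on the intermediate $\Gr_{d-1}(\SV^2)$ pair with shifts $n-d+1$ and $n-d$, then pass to its $K$-theory fiber --- is precisely what the paper does via Sublemma~\ref{sublem:key-sublem-B} (the matrix computation for $\tilde\pi_\ast\tilde\pi'^\ast$) followed by summing three maps of distinguished triangles against the algebraic Bott sequence. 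One omission in your stated toolkit for the ``hard part'': the sublemma crucially uses the excess intersection formula (Theorem~\ref{thm:excess_intersection}) on two tor-independent cartesian squares of flag varieties, together with the vanishing $(p_{13})_\ast p_{13}^\ast = 0$ from Theorem~\ref{thm:projective_bundle_thm}(D) and the projective-bundle base change Corollary~\ref{coro:projective_bundle_base_change}; base change and the projection formula alone will not move the maps $\iota_+$ and $\pi_+\tilde\alpha_+^\ast$ across $\tilde\pi'^\ast$.
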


In order to apply Theorem \ref{theo:connecting-hom} to prove Theorem \ref{thm:grass_induction},  one needs to know the number $\lambda(\widetilde{\mathcal{H}}) -d$ and $\lambda(\mathcal{H}) -d $ in $\Z/2\Z $. Recall that $\lambda(L)=0$ and $\lambda(L\otimes \Delta_{d})=1$ for any $L \in \mathrm{Pic}(S)$,  cf. \cite[Remark 5.6]{balmer2012witt}.

\begin{proof}[Proof of Theorem \ref{thm:grass_induction} {\rm (A)}]
	In this case, we have $\lambda(\widetilde{\mathcal{H}}) \equiv d - 1 \mod 2$. Note that
	$
		\omega_{\iota} \cong (\SV/\SV^{1})^{d} \otimes\Delta_d^{-1},
	$
	by {\it loc.\ cit.}.\ The result follows, since the homotopy fibration sequence
	\[
		\xymatrix@C=40pt{
		\GW^{[n-d]}(\widetilde{\mathbb{G}}_d(\SV^1)) \ar[r]^-{\iota_\ast} & \GW^{[n]}(\widetilde{\mathbb{G}}_d(\SV)) \ar[r]^-{(\alpha^\ast)^{-1}v^\ast} & \GW^{[n]}(\widetilde{\mathbb{G}}_{d-1}(\SV^1))
		}
	\]
	splits with a right splitting given by $\pi_\ast \tilde{\alpha}^\ast$ (cf.\ Theorem \ref{theo:connecting-hom}(A)).
\end{proof}

\begin{proof}[Proof of Theorem \ref{thm:grass_induction} {\rm (B)}]
	By the localization theorem, the following sequence
	\begin{equation}\label{eqn:grass_conn_loc}
		\resizebox{\textwidth}{!}{$\GW^{[n-d]}(\mathbb{G}_d(\SV^1)) \xrightarrow{\iota_\ast}  \GW^{[n]}(\mathbb{G}_d(\SV)) \xrightarrow{(\alpha^\ast)^{-1}v^\ast}  \GW^{[n]}(\mathbb{G}_{d-1}(\SV^1)) \xrightarrow{\partial}  \GW^{[n-d]}(\mathbb{G}_d(\SV^1))[1]$}
	\end{equation}
	is a distinguished triangle in $\mathcal{SH}$.\ Since $\lambda(\mathcal{H}) \equiv d \mod 2$, we see that Theorem \ref{theo:connecting-hom} (B) applies to this case. Therefore, the connecting homomorphism $\partial$ can be interpreted geometrically by the following commutative diagram:
	\[
		\xymatrix@C=5pt{
		\GW^{[n]}(\mathbb{G}_{d-1}(\SV^1))  \ar[r]^-{\partial} \ar[d]_-{\breve{\pi}^\ast} & \GW^{[n-d]}(\mathbb{G}_d(\SV^1))[1] & \GW^{[n-d+1]}(\mathbb{G}_d(\SV^1)) \ar[l]_-{\eta \cup}
		\\
		\GW^{[n]}(\mathbb{F}l_{d-1,d}(\SV^1,\SV^1))  \ar[rr]^-{\simeq} &&\GW^{[n]}(\mathbb{F}l_{d-1,d}(\SV^1,\SV^1)) \ar[u]^-{\widetilde{\pi}_\ast} }
	\]
	where 	$\breve{\pi}$ is defined as the composition $\tilde{\alpha} \tilde{\iota}$, and $$\mathbb{F}l_{d-1,d}(\SV^1,\SV^1):= (\Fl_{d-1,d}(\SV^1,\SV^1),\breve{\pi}^\ast\mathcal{H}^{(1)}_{d-1}) \quad  \mathbb{F}l_{d-1,d}(\SV^1,\SV^1):=(\Fl_{d-1,d}(\SV^1,\SV^1), \widetilde{\pi}^*\mathcal{H}^{(1)}_{d} \otimes \omega_{\widetilde{\pi}}). $$

	Next, we need to study the composition $\widetilde{\pi}_\ast \breve{\pi}^\ast$.\ In the spirit of the diagram \eqref{eqn:grass_blow_up_pre}, we construct diagrams as follows:
	\[
		\xymatrix{
		\Gr_{d}(\SV^2) \ar@{^{(}->}[r]^-{\iota_-} & \Gr_{d}(\SV^1)  & \ar[l]_-{v_-}  \ar[ld]|-{\tilde{v}_-} \Gr_{d}(\SV^1) \backslash \Gr_{d}(\SV^2)  \ar[d]^-{\alpha_-}  \\
		\Fl_{d-1,d}(\SV^2,\SV^2)  \ar[u]^-{\widetilde{\pi}_-} \ar@{^{(}->}[r]_-{\tilde{\iota}_-} & \Fl_{d-1,d}(\SV^2,\SV^1)  \ar[u]^-{\pi_-} \ar[r]_-{\tilde{\alpha}_-} & \Gr_{d-1}(\SV^2)
		}	\]
	and
	\[
		\xymatrix{
		\Gr_{d-1}(\SV^2) \ar@{^{(}->}[r]^-{\iota_+} & \Gr_{d-1}(\SV^1)  & \ar[l]_-{v_+}  \ar[ld]|-{\tilde{v}_+} \Gr_{d-1}(\SV^1) \backslash 	\Gr_{d-1}(\SV^2)  \ar[d]^-{\alpha_+}  \\
		\Fl_{d-2,d-1}(\SV^2,\SV^2) \ar[u]^-{\widetilde{\pi}_+} \ar@{^{(}->}[r]_-{\tilde{\iota}_+} &  \Fl_{d-2,d-1}(\SV^2,\SV^1)  \ar[u]^-{\pi_+} \ar[r]_-{\tilde{\alpha}_+} & \Gr_{d-2}(\SV^2)
		}	\]
	The following sublemma is the key for proving the case (B).
	\begin{sublemma}\label{sublem:key-sublem-B} The following diagram
		\begin{equation}
			\xymatrix@C=60pt@R=20pt{
			\GW^{[n]}(\mathbb{G}_{d-1}(\SV^1)) \ar[dd]^-{\widetilde{\pi}_\ast \breve{\pi}^\ast} &   \ar[l]_-{(\iota_{+ \ast}, \pi_{+ \ast} \tilde{\alpha}_+^\ast)} \GW^{[n-d+1]}(\mathbb{G}_{d-1}(\SV^2)) \oplus \GW^{[n]}(\mathbb{G}_{d-2}(\SV^2)) \ar[dd]^-{\everymath={\scriptstyle} \begin{pmatrix}
				0   & 0 \\
				\id & 0
			\end{pmatrix}} \\  \\
			\GW^{[n-d+1]}(\mathbb{G}_d(\SV^1)) & \ar[l]_-{(\iota_{- \ast}, \pi_{- \ast} \tilde{\alpha}_-^\ast)}  \GW^{[n-2d+1]}(\mathbb{G}_{d}(\SV^2)) \oplus \GW^{[n-d+1]}(\mathbb{G}_{d-1}(\SV^2))  }
		\end{equation}
		commutes, where the horizontal maps are stable equivalences.
	\end{sublemma}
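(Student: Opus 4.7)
The plan is to first establish the horizontal equivalences, and then to verify commutativity of the square by computing $\tilde\pi_\ast \tilde\pi'^\ast$ on each of the two source summands separately. For the horizontal equivalences, I will apply Theorem \ref{thm:grass_induction}(A) with $\SV^1$ in place of $\SV$ and the subflag $\SV^2 \lhd \SV^1$, applied to the Grassmannians $\Gr_{d-1}(\SV^1)$ and $\Gr_d(\SV^1)$. To ensure we are in case (A) rather than (B) of Theorem \ref{theo:connecting-hom}, it suffices to verify the parity congruences $\lambda(\mathcal{H}^{(1)}_{d-1}) \equiv d-2 \pmod{2}$ and $\lambda(\mathcal{H}^{(1)}_d) \equiv d-1 \pmod{2}$, which follow from the table of Subsection~9.4 together with the rules $\lambda(L)=0$ and $\lambda(L \otimes \Delta_d)=1$ recorded in \cite[Remark 5.6]{balmer2012witt}.

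For commutativity, the square amounts to the two identities $\tilde\pi_\ast \tilde\pi'^\ast (\iota_+)_\ast = (\pi_-)_\ast \tilde\alpha_-^\ast$ and $\tilde\pi_\ast \tilde\pi'^\ast (\pi_+)_\ast \tilde\alpha_+^\ast = 0$. The first follows from the Cartesian square
\[
\xymatrix{
\mathcal{F}l_{d-1,d}(\SV^2, \SV^1) \ar[r]^-{\bar\iota_+} \ar[d]_-{\bar\pi'} & \mathcal{F}l_{d-1,d}(\SV^1, \SV^1) \ar[d]^-{\tilde\pi'} \\
\Gr_{d-1}(\SV^2) \ar[r]_-{\iota_+} & \Gr_{d-1}(\SV^1),
}
\]
which is transverse since $\tilde\pi'$ is smooth. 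Theorem \ref{thm:excess_intersection} with vanishing excess bundle yields $\tilde\pi'^\ast (\iota_+)_\ast = (\bar\iota_+)_\ast \bar\pi'^\ast$, and a direct inspection of the moduli functors identifies $\bar\pi' = \tilde\alpha_-$ together with $\tilde\pi \circ \bar\iota_+ = \pi_-$, whence the identity.

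For the second identity, form the Cartesian square with $F := \mathcal{F}l_{d-2,d-1,d}(\SV^2,\SV^1,\SV^1)$,
\[
\xymatrix{
F \ar[r]^-{\bar\pi_+} \ar[d]_-{\bar\pi'} & \mathcal{F}l_{d-1,d}(\SV^1, \SV^1) \ar[d]^-{\tilde\pi'} \\
\mathcal{F}l_{d-2,d-1}(\SV^2, \SV^1) \ar[r]_-{\pi_+} & \Gr_{d-1}(\SV^1),
}
\]
and invoke a smooth-base-change extension of the results of Section~4 to deduce $\tilde\pi'^\ast (\pi_+)_\ast = (\bar\pi_+)_\ast \bar\pi'^\ast$. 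The crucial observation is that both $\tilde\pi \circ \bar\pi_+$ and $\tilde\alpha_+ \circ \bar\pi'$ factor through the forgetful map
\[
q: F \longrightarrow \mathcal{F}l_{d-2,d}(\SV^2, \SV^1), \quad (\SP_{d-2} \lhd \SP_{d-1} \lhd \SP_d) \longmapsto (\SP_{d-2}, \SP_d),
\]
which is the $\mathbb{P}^1$-bundle $\mathbb{P}(\mathcal{T}_d/\mathcal{T}_{d-2})$. Letting $r, s$ be the natural projections from $\mathcal{F}l_{d-2,d}(\SV^2,\SV^1)$ to $\Gr_{d-2}(\SV^2)$ and $\Gr_d(\SV^1)$, the projection formula (Theorem \ref{theo:projection-formula}) reduces the expression to $s_\ast\bigl(r^\ast y \cup q_\ast(1)\bigr)$. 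Tracking the twists by $\omega_{\tilde\pi'}$, $\omega_{\pi_+}$ and $\omega_q$, one finds that $q_\ast(1)$ falls into the vanishing summand of the $\mathbb{P}^1$-bundle computation of Theorem \ref{thm:projective_bundle_thm} (equivalently, $\mathrm{R}q_\ast \SO_q(-1)=0$), producing the required vanishing.

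I expect the main obstacle to be this last step: the delicate bookkeeping of twists --- involving $\Delta_\bullet$, the quotients $\SV^j/\SV^{j+1}$, and the relative dualizing sheaves of $\tilde\pi'$, $\pi_+$ and $q$ --- needed to confirm that $q_\ast(1)$ vanishes. This parity-sensitive calculation is precisely what distinguishes the $\mathbb{G}$ version of the statement from its $\widetilde{\mathbb{G}}$ counterpart. A secondary technical point is the need for a smooth-proper base change formula beyond Theorem \ref{thm:residually_stable_pullback} (where $g$ is an open immersion), which can be supplied either by a dg-enhancement argument on $\AC_{\pi_+}$ or by a local factorization of the blow-up $\pi_+$ reducing to the established open-immersion case.
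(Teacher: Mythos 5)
Your outline follows the paper's own proof quite closely: you split the square into the two identities $\tilde{\pi}_\ast\tilde{\pi}'^\ast(\iota_+)_\ast=(\pi_-)_\ast\tilde{\alpha}_-^\ast$ and $\tilde{\pi}_\ast\tilde{\pi}'^\ast(\pi_+)_\ast\tilde{\alpha}_+^\ast=0$, you identify the same Cartesian squares, you invoke the excess intersection formula with vanishing excess for the first, and you recognize the $\mathbb{P}^1$-bundle $\mathcal{F}l_{d-2,d-1,d}(\SV^2,\SV^1,\SV^1)\to\mathcal{F}l_{d-2,d}(\SV^2,\SV^1)$ as the source of the vanishing in the second. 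The appeal to Theorem \ref{thm:grass_induction}(A) for the horizontal equivalences, with the stated parities of $\lambda(\mathcal{H}^{(1)}_{d-1})$ and $\lambda(\mathcal{H}^{(1)}_d)$, is also correct.

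The genuine gap is precisely the one you flag: you need the base change identity $\tilde{\pi}'^\ast(\pi_+)_\ast=(\bar\pi_+)_\ast\bar\pi'^\ast$ for a smooth projective-bundle morphism $\tilde{\pi}'$ against a proper $\pi_+$, but the paper's Theorem \ref{thm:residually_stable_pullback} only covers base change along open immersions, and neither of your two proposed fixes (dg-enhancement of $\AC_{\pi_+}$, or local factorization of $\pi_+$) is what the paper does or is obviously executable. The paper resolves this by \emph{factoring the Cartesian square itself}, not $\pi_+$:
\[
\xymatrix{
\mathcal{F}l_{d-2,d-1,d}(\SV^{2},\SV^{1},\SV^{1}) \ar[d]_-{p_{12}}  \ar[r]^-{i'} & \mathcal{F}l_{d-2,d-1,d}(\SV^{1},\SV^{1},\SV^{1}) \ar[d]_-{q_{12}} \ar[r]^-{q_{23}} & \mathcal{F}l_{d-1,d}(\SV^{1},\SV^{1}) \ar[d]_-{\tilde{\pi}'}\\
\mathcal{F}l_{d-2,d-1}(\SV^{2},\SV^{1}) \ar[r]^-{i} & \mathcal{F}l_{d-2,d-1}(\SV^{1},\SV^{1}) \ar[r]^-{q_{2}} & \Gr_{d-1}(\SV^1),
}
\]
so that $\pi_+=q_2\circ i$ and $\bar\pi_+=q_{23}\circ i'$. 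The left square is a tor-independent square of regular immersions, handled by the excess intersection formula (Theorem \ref{thm:excess_intersection}); the right square is a pullback of the projective bundle $\tilde{\pi}'$, handled by Corollary \ref{coro:projective_bundle_base_change}. Gluing these gives exactly the smooth-proper base change you need, entirely within the formulas already proved. One further minor difference: rather than reducing to $(p_{13})_\ast(1)=0$ via the projection formula and chasing the twists, the paper invokes the exactness of the long exact sequence in Theorem \ref{thm:projective_bundle_thm}(D), which gives $(p_{13})_\ast p_{13}^\ast=0$ directly as the composite of two consecutive maps; this sidesteps the twist bookkeeping you were concerned about.
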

	\begin{proof}[Proof of Sublemma \ref{sublem:key-sublem-B}]
		Since $\lambda(\mathcal{H}_{d-1}^{(1)}) \equiv d-2 \mod 2$ and $\lambda(\mathcal{H}_{d}^{(1)}) \equiv d-1 \mod 2$, we see that horizontal maps are stable equivalences by the case (A).\

		\noindent (i) Construct the following diagram:
		$$
			\xymatrix@C=30pt@R=30pt{
			\Fl_{d-1,d}(\SV^{2},\SV^{1}) \ar[d]_-{\pi_-} \ar@{=}[r] & \Fl_{d-1,d}(\SV^{2},\SV^{1}) \ar[r]^-{\tilde{\alpha}_-} \ar[d]^-{\ddot{\iota}} \ar@{}[dr]|{\diagram \label{diag:grass_iota_car}} & \Gr_{d-1}(\SV^2) \ar[d]_-{\iota_+} \\
			\Gr_{d}(\SV^1) & \ar[l]_-{\widetilde{\pi}} 	\Fl_{d-1,d}(\SV^{1},\SV^{1}) \ar[r]^-{\breve{\pi}} & \Gr_{d-1}(\SV^1).
			}
		$$
		Since the diagram $\diag{\ref{diag:grass_iota_car}}$ is a fibre product which is tor-independent, we see that
		\begin{equation}\label{eq:grass_maps_trial_1}
			\begin{aligned}
				\widetilde{\pi}_*   \breve{\pi}^*  (\iota_+)_*
				=  \widetilde{\pi}_*  \ddot{\iota}_* \tilde{\alpha}_-^*
				=  (\pi_-)_* \tilde{\alpha}_-^*
			\end{aligned}
		\end{equation}
		where the first equality is obtained by the excess intersection formula (cf. Theorem \ref{thm:excess_intersection}).

		\noindent (ii) Construct the following diagram:
		$$
			\xymatrix@C=20pt@R=30pt{
			\Fl_{d-2,d}(\SV^{2},\SV^{1}) \ar[rr]^-{\pi_2} \ar[dd]_-{\pi_1} && \Gr_{d}(\SV^1)\\
			& \Fl_{d-2,d-1,d}(\SV^{2},\SV^{1},\SV^{1}) \ar[r]^-{\pi_{23}} \ar[d]_-{\pi_{12}} \ar[ul]^-{\pi_{13}} \ar@{}[dr]|{\diagram \label{diag:grass_v_car}}& \Fl_{d-1,d}(\SV^{1},\SV^{1}) \ar[u]^-{\widetilde{\pi}} \ar[d]_-{\breve{\pi}}\\
			\Gr_{d-2}(\SV^2) & \Fl_{d-2,d-1}(\SV^{2},\SV^{1}) \ar[r]^-{\pi_+} \ar[l]_-{\tilde{\alpha}_+}& \Gr_{d-1}(\SV^1).
			}
		$$
		From this diagram, we see that
		\begin{equation}\label{eqn:grass_conn_1}
			\begin{aligned}
				\widetilde{\pi}_\ast  \breve{\pi}^\ast  \pi_{+ \ast}  \tilde{\alpha}_+^\ast  = \widetilde{\pi}_*  (\pi_{23})_*   \pi_{12}^* \tilde{\alpha}_+^\ast  = \pi_{2 \ast} \pi_{13,\ast}  \pi_{13}^\ast  \pi_1^\ast  = 0.
			\end{aligned}
		\end{equation}
		Note that the first equality holds by the base change formula (cf. Corollary \ref{coro:projective_bundle_base_change}). For the last equality, observe that the flag variety $\Fl_{d-2,d-1,d}(\SV^{2},\SV^{1},\SV^{1})$ is isomorphic to the projective bundle
		$ \mathbb{P}_{\Fl_{d-2,d}(\SV^{2},\SV^{1})}(\mathcal{T}_{d} / \mathcal{T}_{d-2})$
		over $\Fl_{d-2,d}(\SV^{2},\SV^{1})$, and $(\pi_{13})_*\pi_{13}^* = 0$ by Theorem \ref{thm:projective_bundle_thm}(D).
	\end{proof}

	Set $\mathbf{D} = \GW^{[n-2d]}(\mathbb{G}_{d}(\SV^2))$ and $\mathbf{E} = \GW^{[n]}(\mathbb{G}_{d-2}(\SV^2))$. Let
	\[ \xymatrix{\mathbf{A} \ar[r]^-{F} & \mathbf{B} \ar[r]^-{H} & \mathbf{C} \ar[r]^-{\eta \cup} & \mathbf{A}[1]} \]
	be the algebraic Bott sequence
	$$ \GW^{[n-d]}(\mathbb{G}_{d-1}(\SV^2)) \xrightarrow{F} \K(\Gr_{d-1}(\SV^2)) \xrightarrow{H} \GW^{[n-d+1]}(\mathbb{G}_{d-1}(\SV^2)) \xrightarrow{\eta \cup}  \GW^{[n-d]}(\mathbb{G}_{d-1}(\SV^2))[1] $$
	and let
	\[
		\xymatrix@C=35pt{
		\mathbf{X}  \ar[r]^-{\iota_\ast} & \mathbf{Y} \ar[r]^-{(\alpha^\ast)^{-1}v^\ast} & \mathbf{Z} \ar[r]^-{\partial} & \mathbf{X}[1]
		}
	\]
	represent the localization sequence \eqref{eqn:grass_conn_loc}.\ Then by \cite[Theorem II.2.9 (T3)]{schwede2012symmetric} there exist maps $\mathbf{f}, \mathbf{g}$ and $\mathbf{h}$ extend the following diagram to maps of distinguished triangles in $\mathcal{SH}$.
	$$
		\xymatrix@C=50pt@R=12pt{
		\mathbf{A} \ar[dd]_-{\pi_{- \ast} \tilde{\alpha}_-^\ast} \ar[r]^-{F} \ar@{}[ddr]|-{\diagram \label{diag:grass_K_1}} & \mathbf{B} \ar[r]^-{H} \ar@{-->}[dd]_-{\mathbf{f}} \ar@{}[ddr]|-{\diagram \label{diag:grass_K_2}} & \mathbf{C} \ar[r]^-{\eta \cup}\ar[dd]_-{\iota_{+ \ast}} \ar@{}[ddr]|-{\diagram \label{diag:grass_maps_tria_1}} & \mathbf{A}[1] \ar[dd]^-{\pi_{- \ast} \tilde{\alpha}_-^\ast}\\\\
		\mathbf{X}  \ar[r]^-{\iota_\ast} & \mathbf{Y} \ar[r]^-{(\alpha^\ast)^{-1}v^\ast} & \mathbf{Z} \ar[r]^-{\partial} & \mathbf{X}[1]\\
		\mathbf{D} \ar@{}[ddr]|-{\diagram \label{diag:grass_maps_tria_4}}  \ar[dd]_-{\iota_{- \ast}} \ar@{=}[r] & \mathbf{D} \ar[r] \ar@{-->}[dd]_-{\mathbf{g}} & 0 \ar[r] \ar[dd] & \mathbf{D}[1] \ar[dd]_-{\iota_{- \ast}}\\\\
		\mathbf{X}  \ar[r]^-{\iota_\ast} & \mathbf{Y} \ar[r]^-{(\alpha^\ast)^{-1}v^\ast} & \mathbf{Z} \ar[r]^-{\partial} & \mathbf{X}[1] \\
		0 \ar[dd] \ar[r] & \mathbf{E} \ar@{=}[r] \ar@{-->}[dd]_-{\mathbf{h}} \ar@{}[ddr]|-{\diagram \label{diag:grass_maps_tria_3}}  & \mathbf{E} \ar[r] \ar[dd]|-{\pi_{+ \ast} \tilde{\alpha}_+^\ast} \ar@{}[ddr]|-{\diagram \label{diag:grass_maps_tria_2}}  & 0 \ar[dd]\\\\
		\mathbf{X}  \ar[r]^-{\iota_\ast} & \mathbf{Y} \ar[r]^-{(\alpha^\ast)^{-1}v^\ast} & \mathbf{Z} \ar[r]^-{\partial} & \mathbf{X}[1],
		}
	$$
	Note that the diagram $\diag{\ref{diag:grass_maps_tria_1}}$ commutes by \eqref{eq:grass_maps_trial_1} and the diagram $\diag{\ref{diag:grass_maps_tria_2}}$ commutes by \eqref{eqn:grass_conn_1}.
	Finally, we sum up these three maps to obtain the following map of distinguished triangles
	\begin{equation}\label{eq:diagram-grass-case-b}
		\xymatrix{
		\mathbf{D} \oplus \mathbf{A} \ar[r] \ar[d] & \mathbf{D} \oplus \mathbf{B} \oplus \mathbf{E} \ar[r]  \ar[d]^-{(\mathbf{f},\mathbf{g},\mathbf{h})}  & \mathbf{C} \oplus \mathbf{E} \ar[r] \ar[d]  &   \mathbf{D}[1] \oplus \mathbf{A}[1] \ar[d]
		\\ 
		\mathbf{X}  \ar[r] & \mathbf{Y} \ar[r] & \mathbf{Z} \ar[r] & \mathbf{X}[1].
		}
	\end{equation}
	Since the first and third vertical maps are equivalences by (A), so is the map $(\mathbf{f},\mathbf{g},\mathbf{h})$. This completes the proof of Theorem \ref{thm:grass_induction} {\rm (B)}.
\end{proof}
\begin{remark} If we do not pursue the explicit description of the additive basis, then Theorem \ref{theo:GW-Grassmannian} follows directly from Theorem \ref{thm:grass_induction} by means of induction. However, it requires more effort to construct the additive basis, which will occupy the rest of this section.
\end{remark}

\subsection{On the map $(\mathbf{f},\mathbf{g},\mathbf{h})$} The purpose of this subsection is to describe the underlying map in Theorem \ref{thm:grass_induction} (B) explicitly using pushforwards and pullbacks. Let
$$\widehat{\pi}_1: \Fl_{d-1, d}(\SV^2,\SV) \to \Gr_{d-1}(\SV^2) \quad \quad \widehat{\pi}_2: \Fl_{d-1, d}(\SV^2,\SV) \to \Gr_{d}(\SV) $$
$$\widecheck{\pi}_1: \Fl_{d-2, d}(\SV^2,\SV) \to \Gr_{d-2}(\SV^2) \quad \quad \widecheck{\pi}_2: \Fl_{d-2, d}(\SV^2,\SV) \to \Gr_{d}(\SV) $$
be the canonical morphisms. Then, we can form the following pushforwards:
\[\GW^{[n]}(\mathbb{FL}_{d-1, d}(\SV^2,\SV)) \xrightarrow{\widehat{\pi}_{2 \ast}} \GW^{[n]}(\mathbb{G}_d(\SV)) \quad \quad \GW^{[n]}(\mathbb{FL}_{d-2, d}(\SV^2,\SV)) \xrightarrow{\widecheck{\pi}_{2 \ast}} \GW^{[n]}(\mathbb{G}_d(\SV))\]
where we define
$$\mathbb{FL}_{d-1, d}(\SV^2,\SV) = (\Fl_{d-1, d}(\SV^2,\SV), \omega_{\widehat{\pi}_2} \otimes \widehat{\pi}_2^*\mathcal{H} ) \quad\quad \mathbb{FL}_{d-2, d}(\SV^2,\SV) = (\Fl_{d-2, d}(\SV^2,\SV), \omega_{\widecheck{\pi}_2} \otimes \widecheck{\pi}_2^*\mathcal{H} ). $$

\begin{lemma}\label{lem:grassman-case-B-map} Let $\widehat{\iota}: \Gr_d(\SV^2) \hookrightarrow  \Gr_d(\SV)$ be the canonical embedding.\ Then, the map
	$$(\mathbf{f}, \mathbf{g}, \mathbf{h}):        \K(\Gr_{d-1}(\SV^{2})) \oplus \GW^{[n-2d]}(\mathbb{G}_d(\SV^{2})) \oplus \GW^{[n]}(\mathbb{G}_{d-2}(\SV^{2})) \longrightarrow    \GW^{[n]}(\mathbb{G}_d(\SV))  $$
	in the diagram (\ref{eq:diagram-grass-case-b}) is an equivalence of spectra, if we let
	$$\mathbf{f}  = (\widehat{\pi}_2)_* H (\widehat{\pi}_1)^*  \quad\quad
		\mathbf{g} = \widehat{\iota}_* \quad\quad
		\mathbf{h} =  (\widecheck{\pi}_2)_*(\widecheck{\pi}_1)^* $$
	where $H$ is the hyperbolic map
 \begin{equation}\label{eqn:grass_f_hyperbolic}
     H: K(\Fl_{d-1, d}(\SV^2,\SV)) \to \GW^{[n]}(\mathbb{FL}_{d-1, d}(\SV^2,\SV)).
 \end{equation}
\end{lemma}
\begin{proof}
	The diagram (\ref{diag:grass_maps_tria_4}) commutes by the functoriallity of pushforward. Thus, it is enough to show the commutativity of the diagrams (\ref{diag:grass_K_1}), (\ref{diag:grass_K_2}) and (\ref{diag:grass_maps_tria_3}). Form the following diagram
	$$ \xymatrix{
		& \Gr_{d-1}(\SV^2) \\
		\Fl_{d-1,d}(\SV^2,\SV)\ar[d]^-{\widehat{\pi}_2} \ar[ur]^-{\widehat{\pi}_1} & \ar[l]_-{\widehat{v}} \widehat{U} \ar[d]^-{\widehat{\pi}} \ar[r]^-{\widehat{\alpha}} &  \ar[d]^-{\iota_+} \Gr_{d-1}(\SV^2) \ar@{=}[ul] \\
		\Gr_d(\SV) & \ar[l]_-{v} \Gr_d(\SV) \backslash \Gr_{d}(\SV^1) \ar[r]^-{\alpha} & \Gr_{d-1}(\SV^1) \\
		\Fl_{d-2,d}(\SV^2,\SV) \ar[dr]_-{\widecheck{\pi}_1} \ar[u]_-{\widecheck{\pi}_2} & \ar[l]_-{\widecheck{v}} \widecheck{U} \ar[u]_-{\widecheck{\pi}} \ar[r]^-{\widecheck{\alpha}} & \Fl_{d-2,d-1}(\SV^2,\SV^1) \ar[u]_-{\pi_+} \ar[dl]^-{\tilde{\alpha}_+} \\
		& \Gr_{d-2}(\SV^2)
		}$$
	where all the diagrams are commutative and the four square diagrams are cartesian (cf. \cite[Proof of Lemma 5.7]{balmer2012witt}).

	\noindent (i) The commutativity of the diagram  (\ref{diag:grass_maps_tria_3}) can be obtained by the following identites
	$$\begin{aligned}
			\alpha^* (\pi_+)_*	\tilde{\alpha}_+^* & =  \widecheck{\pi}_*  \widecheck{\alpha}^* \tilde{\alpha}_+^* &  & \textnormal{(Base change formula)}      \\
			                                      & =  \widecheck{\pi}_* \widecheck{v}^* (\widecheck{\pi}_1)^*    &  & \textnormal{(Composition of pullbacks)} \\
			                                      & =  v^* (\widecheck{\pi}_2)_* (\widecheck{\pi}_1)^*            &  & \textnormal{(Base change formula)}
		\end{aligned}$$

	\noindent (ii) The commutativity of the diagram  (\ref{diag:grass_K_2}) follows from the following identites
	$$\begin{aligned}
			\alpha^* (\iota_+)_* H & = \widehat{\pi}_*  \widehat{\alpha}^* H                &  & \textnormal{(Base change formula)}        \\
			                       & = \widehat{\pi}_*  H \widehat{\alpha}^*                &  & \textnormal{(Pullbacks commute with hyperbolic maps)} \\
			                       & = \widehat{\pi}_* H \widehat{v}^*  (\widehat{\pi}_1)^* &  & \textnormal{(Composition of pullbacks)}   \\
			                       & = \widehat{\pi}_* \widehat{v}^* H  (\widehat{\pi}_1)^* &  & \textnormal{(Pullbacks commute with hyperbolic maps)} \\
			                       & = v^* (\widehat{\pi}_2)_* H (\widehat{\pi}_1)^*        &  & \textnormal{(Base change formula)}
		\end{aligned}$$
	Note that one source of confusion is that $H(\widehat{\pi}_1)^* \neq (\widehat{\pi}_1)^*H $ since the targets have different dualities, although $\widehat{v}^*H(\widehat{\pi}_1)^* = \widehat{v}^*(\widehat{\pi}_1)^*H $.

	\noindent (iii) To show the commutativity of the diagram  (\ref{diag:grass_K_1}), we form the following diagram
	$$\xymatrix{
		\Gr_d(\SV^1) \ar[d]^-{\iota} & \ar[l]_-{\pi_-} \Fl_{d-1,d}(\SV^2,\SV^1) \ar[d]^-{\breve{\iota}} \ar[r]^-{\tilde{\alpha}_-} & \Gr_{d-1}(\SV^2) \\
		\Gr_d(\SV) & \ar[l]_-{\widehat{\pi}_2} \ar[ur]_-{\widehat{\pi}_1} \Fl_{d-1,d}(\SV^2,\SV)
		}$$
	Now, we see that
	$$\begin{aligned}
			\iota_* (\pi_-)_* (\tilde{\alpha}_-)^* & = (\widehat{\pi}_2)_*	\breve{\iota}_*  (\tilde{\alpha}_-)^* &  & \textnormal{(Composition of pullbacks)}                      \\
			                                       & = (\widehat{\pi}_2)_* H (\widehat{\pi}_1)^* F               &  & \textnormal{(The diagram }\diag{\ref{diag:projective_space_1}} )
		\end{aligned}$$
	Note that in the last equality, we have identified the map $\breve{\iota}: \Fl_{d-1,d}(\SV^2,\SV^1) \to \Fl_{d-1,d}(\SV^2,\SV)$ with the closed embedding of projective bundles $\iota: \mathbb{P}(\SV^1/\mathcal{T}_{d-1}) \to \mathbb{P}(\SV/\mathcal{T}_{d-1}) $ over $\Gr_{d-1}(\SV^2)$.
\end{proof}

\subsection{Buffalo-check Young diagrams}\label{sect:buffalo-check-Young-diagram}
Any $d$-tuple $\Lambda = (\Lambda_1, \ldots, \Lambda_d)$ of integers such that
$$ m \geq \Lambda_1 \geq \Lambda_2 \geq \ldots \geq \Lambda_d \geq 0 $$
shall be called a \textit{Young diagram} in $(d\times m)$-frame (cf.\ \cite[Definition 2.1]{balmer2012witt}).\ See Figure \ref{Fig:Framed_fig_Young_diagram} in the introduction. The non-increasing sequence
$$ m \geq \Lambda_1 \geq \Lambda_2 \geq \ldots \geq \Lambda_d \geq 0 $$
can be rewritten uniquely as follows:
$$ m \geq \Lambda_1 = \ldots = \Lambda_{d_1} > \Lambda_{d_1+1} = \ldots = \Lambda_{d_2} > \ldots > \Lambda_{d_{k-1}+1} = \ldots = \Lambda_{d_k} \geq 0, $$
where $d_k=d$.
One can form pairs of $k$-tuples of integers
\begin{equation}\label{eqn:young_diagram_associated_tuples}
	\underline{d} = (d_1 , \ldots, d_k) \quad \textnormal{ and }  \quad
	\underline{m} = (m_1 , \ldots, m_k),
\end{equation}
where $m_i : = m - \Lambda_{d_i}$ for $1 \leq i \leq k$. Set $d_0 = m_0 =0$. Define another pair of $k$-tuples
\[
	\underline{v} = (v_1, \ldots, v_k) \quad \textnormal{ and }  \quad
	\underline{h} = (h_1 , \ldots, h_k),
\]
where $v_i = d_i - d_{i-1}$ and $h_i = m_i - m_{i-1}$ for $1 \leq i \leq k$.
Construct a $2k+1$-tuple
$$\underline{s} = (s_1,s_2, \ldots,s_{2k+1}) = \begin{cases}
		(\Lambda_d,v_k,h_k, v_{k-1}, h_{k-1} \ldots, v_{2},h_{2}, v_{1},h_{1}) & \textnormal{if $\Lambda_d>0$}  \\
		(v_k,h_k, v_{k-1}, h_{k-1} \ldots, v_{2},h_{2}, v_{1},h_{1},0)         & \textnormal{if  $\Lambda_d=0$}
	\end{cases}$$
Note that the number of segements is precisely the number of non-zero terms in $\underline{s}$, and $s_i$ is the length of the $i$-th segments provided it is non-zero (cf. the introduction). One may find \cite[Figure 3, page 611]{balmer2012witt} helpful.
\begin{definition}
	A Young diagram $\Lambda$ in a $(d\times m)$-frame is called \textit{$K$-even} if one of the following conditions holds:
	\begin{enumerate}
		\item $\Lambda_{d_k}=0$;
		\item there exists an integer $r(\Lambda)\geq 2$ such that $s_2, \ldots, s_{r(\Lambda)-1}$ are even numbers, $s_{r(\Lambda)}$ is an odd number, and $s_{r(\Lambda)} = v_{l}$ for some $1 \geq l \geq k $.
	\end{enumerate}
	A pair of integers $(i,j)$ is a called a \textit{center} of $\Lambda$, if $(i,j)=(1,v_k)$ when condition (i) holds, or if $(i,j) = (\sum_{t=1}^{l-1} h_t + \Lambda_{d_k} + 1, \sum_{t= l}^{k} v_t )$ when condition (ii) holds. See Figure \ref{Fig:K-even}.
\end{definition}

Recall that we order the rows (resp. columns) in $\Xi$ from left to right (resp. bottom to top). Therefore, the pair of integers $(i,j)$ could be considered the \textit{$(i,j)$-box} in $(d \times m)$-frame.
\begin{definition}
	A \textit{buffalo-check pattern} of size $(d \times m)$ is a $d \times m$-frame such that:
	\begin{enumerate}
		\item[(1)] The $(i, j)$-box is colored \textit{black} if both $i$ and $j$ are odd,
		\item[(2)] The $(i, j)$-box is colored \textit{white} if both $i$ and $j$ are even,
		\item[(3)] All other boxes are colored \textit{grey}.
	\end{enumerate}
\end{definition}
See Figure \ref{Fig:Buffalo-check} for an example of a buffalo-check pattern of size $(6 \times 6)$.

\begin{definition}
	A \textit{buffalo-check Young diagram} in $(d\times m)$-frame is a pair $(\Lambda, c_{\Lambda})$ consisting of a $K$-even Young diagram $\Lambda$ in $(d\times m)$-frame and a center $c_\Lambda$ of $\Lambda$.
\end{definition}

\begin{definition}
	If $d \equiv l \mod 2$ (resp. $d \equiv l-1 \mod 2$) for $l \in \Z$, we let $\mathfrak{B}^{l}_{d,m}$ be the set of buffalo-check Young diagrams $(\Lambda, (i,j))$ for both $i$ and $j$ odd (resp. either $i$ or $j$ even).
\end{definition}
In particular, $\mathfrak{B}^{d}_{d,m}$ (resp. $\mathfrak{B}^{d+1}_{d,m}$) is the set of buffalo-check Young diagrams centered at the black boxes (resp. grey boxes) in the buffalo-check pattern.\ Note that there are no buffalo-check Young diagrams centered at the white boxes in the buffalo-check pattern. Moreover, if we fix an integer $l$, every buffalo-check Young diagram $(\Lambda,c_\Lambda) \in \mathfrak{B}^{l}_{d,m}$ is uniquely determined by the underlying $K$-even Young diagram $\Lambda$. For this reason, we use the $K$-even Young diagram $\Lambda$ to indicate the element of $\mathfrak{B}^{l}_{d,m}$ instead of the pair $(\Lambda,c_\Lambda)$.

\subsection{Combinatorial operations on Young diagrams}
Let $\mathfrak{C}_{d,m}$ be the set of Young diagrams in $(d\times m)$-frame.
\begin{definition}
	Let $\Lambda= (\Lambda_1, \ldots, \Lambda_d) $ be a Young diagram in $\mathfrak{C}_{d,m}$.
	\begin{enumerate}
		\item If $\Lambda_d \geq 1$, we define the Young diagram $\iota(\Lambda)$ in $(d\times (m-1))$-frame as the $d$-tuple $\iota(\Lambda) = (\Lambda_1 - 1, \ldots, \Lambda_d - 1)$.
		\item If $\Lambda_d = 0$, we define the Young diagram $v(\Lambda)$ in $((d-1)\times m)$-frame as the $(d-1)$-tuple $v(\Lambda) = (\Lambda_1 , \ldots, \Lambda_{d-1})$.
	\end{enumerate}
\end{definition}

\begin{remark}\label{rmk:young_diagram_manipulation}
	Note that $\iota(\Lambda)$ (resp. $v(\Lambda)$) deletes the first column (resp.\ row) of $\Lambda$.
\end{remark}

\begin{lemma}\label{lma:lambda_iota_v}
	Let $\Lambda = (\Lambda, c_\Lambda)  \in \mathfrak{B}^{l}_{d,m}$. Then, we have
	\begin{enumerate}
		\item $\iota(\Lambda) \in \mathfrak{B}^{l-1}_{d,m-1}$ if $\Lambda_d \geq 1$;
		\item $v(\Lambda) \in \mathfrak{B}^{l}_{d-1,m}$ if $\Lambda_d = 0$ and $c_{\Lambda} \neq (1,1)$.
	\end{enumerate}
\end{lemma}
\begin{proof}
	(i) If $\Lambda_d \geq 1$, we note that the sum of the length of all the vertical segments on the left of the center $c_\Lambda$ is odd. Thus, the buffalo-check Young diagram $\Lambda$ can only center at odd rows, where grey and black boxes appear alternately. Now, the operation $\iota(\Lambda)$ simultaneously deletes the first column of $\Lambda$ and its underlying frame, and observe that the whole buffalo-check pattern therefore moved one column to the left. It follows that the color of the center $c_\Lambda$ changes from black (resp. grey) to grey (resp. black).

	(ii) If $\Lambda_d =0$ and $c_{\Lambda} \neq (1,1)$, then the buffalo-check Young diagram $\Lambda$ can only center at odd columns by a similar reason as in (i). Therefore, the color of the center $c_\Lambda$ changes from black (resp. grey) to grey (resp. black) under the operation $v$.
\end{proof}

\begin{corollary}\label{coro:lambda_iota_v}
	Let $\Lambda \in \mathfrak{B}^{d}_{d,m}$. Then, we have
	\begin{enumerate}
		\item $\iota\iota(\Lambda) \in \mathfrak{B}^{d}_{d,m-2}$ if $\Lambda_d \geq 2$;
		\item $vv(\Lambda) \in \mathfrak{B}^{d-2}_{d-2,m}$ if $\Lambda_{d-1}= \Lambda_d = 0$.
	\end{enumerate}
\end{corollary}
\begin{proof}
	If $\Lambda_d \geq 2$ or $\Lambda_{d-1} = \Lambda_d = 0$, the buffalo-check Young diagram can not center at $(1,1)$. The result follows by applying Lemma \ref{lma:lambda_iota_v} twice in each case. \end{proof}

Suppose that $\Pi= (\Pi_1, \ldots, \Pi_d)$ is an even Young diagram. Let $\rho(\Pi)$ be the number of non-zero rows of $\Pi$ and let $t(\Pi)$ be the half-perimeter of $\Pi$. Recall that $ t(\Pi) \equiv \rho(\Pi) + \Pi_1 \mod 2$ (cf. \cite[Definition 4.3]{balmer2012witt}).\

\begin{definition}
	For any $l \in \Z$, we define $\mathfrak{A}^{l}_{d,m}$ to be the set of even Young diagrams $\Pi$ in a $(d \times m)$-frame such that $t(\Pi) \equiv l \mod 2 $.
\end{definition}
\begin{lemma}\label{lma:pi_iota_v}
	Let $\Pi$ be an element in $ \mathfrak{A}^{l}_{d,m} $. Then, we have
	\begin{enumerate}
		\item $\iota(\Pi) \in \mathfrak{A}^{l-1}_{d,m-1}$, if $\Pi_d \geq 1$;
		\item $v(\Pi) \in \mathfrak{A}^{l}_{d-1,m}$, if $\Pi_d = 0$.
	\end{enumerate}
\end{lemma}

\begin{proof}
	(i) If $\Pi_d \geq 1$, note that the Young diagram $\iota(\Pi)$ is even by \cite[Proposition 2.12 (a)]{balmer2012witt}. Moreover, we must have $\rho(\Pi)\equiv \rho(\iota(\Pi)) \mod 2$. To see this, we observe that $\rho(\Pi)= \rho(\iota(\Pi))$ if $\Pi_d \geq 2$. If $\Pi_d =1$, then $\rho(\Lambda) - \rho(\iota(\Lambda))$ is equal to the length the first vertical segment of the boundary of $\Lambda$, which is even. Finally, by definition $\iota(\Pi)_1 = \Pi_1 -1 $, we see that $t(\iota(\Pi)) \equiv l-1 \mod 2$.

	(ii) If $\Pi_d = 0$, note that the Young diagram $v(\Pi)$ is even by \cite[Proposition 2.12 (b)]{balmer2012witt}. By definition $\rho(v(\Pi)) = \rho(\Pi)$ and $v(\Pi)_1 = \Pi_1$ and the result follows.
\end{proof}

\begin{corollary}\label{coro:pi_iota_v}
	Let $\Pi$ be an element in $ \mathfrak{A}^{l}_{d,m} $. Then, we have
	\begin{enumerate}
		\item $\iota\iota(\Pi) \in \mathfrak{A}^{l}_{d,m-2}$, if $\Pi_d \geq 2$;
		\item $vv(\Pi) \in \mathfrak{A}^{l}_{d-2,m}$, if $\Pi_{d-1} = \Pi_d = 0$.
	\end{enumerate}
\end{corollary}

\begin{proposition}\label{prop:young_bijection}
	\begin{enumerate}[leftmargin=20pt]
		\item[\rm (A)] The following maps of sets
			\[
				\begin{aligned}
					 & \{\Lambda \in \mathfrak{B}^{d+1}_{d,m} \vert \Lambda_d \geq 1\} \xrightarrow[\cong]{\iota} \mathfrak{B}^{d-1}_{d-1,m} & \quad & \{\Lambda \in \mathfrak{B}^{d+1}_{d,m} \vert \Lambda_d = 0\} \xrightarrow[\cong]{v} \mathfrak{B}^{d}_{d,m-1} \\
					 & \{\Pi \in \mathfrak{A}^{d+1}_{d,m} \vert \Pi_d \geq 1\} \xrightarrow[\cong]{\iota} \mathfrak{A}^{d-1}_{d-1,m}         & \quad & \{\Pi \in \mathfrak{A}^{d+1}_{d,m} \vert \Pi_d = 0\} \xrightarrow[\cong]{v} \mathfrak{A}^{d}_{d,m-1}
				\end{aligned}
			\]
			are bijective.
		\item[\rm (B)] The following maps of sets
			\[
				\begin{aligned}
					 & \{\Lambda \in \mathfrak{B}^{d}_{d,m} \vert \Lambda_d \geq 2\} \xrightarrow[\cong]{\iota \iota} \mathfrak{B}^{d-2}_{d-2,m} & \quad & \{\Lambda \in \mathfrak{B}^{d}_{d,m} \vert \Lambda_{d-1} = \Lambda_d = 0\} \xrightarrow[\cong]{vv} \mathfrak{B}^{d}_{d,m-2} \\
					 & \{\Pi \in \mathfrak{A}^{d}_{d,m} \vert \Pi_d \geq 2\} \xrightarrow[\cong]{\iota \iota} \mathfrak{A}^{d-2}_{d-2,m}         & \quad & \{\Pi \in \mathfrak{A}^{d}_{d,m} \vert \Pi_{d-1} = \Pi_d = 0\} \xrightarrow[\cong]{vv} \mathfrak{A}^{d}_{d,m-2}
				\end{aligned}
			\]
			\[
				\{\Lambda \in \mathfrak{B}^{d}_{d,m} \vert \Lambda_d = 0, \Lambda_{d-1} \geq 1\}\xrightarrow[\cong]{\iota v}  \mathfrak{C}_{d-1,m-1}
			\]
			are bijective.
	\end{enumerate}
\end{proposition}
\begin{proof}
	All the maps of sets are well-defined by Lemma \ref{lma:lambda_iota_v}, \ref{lma:pi_iota_v} and Corollary \ref{coro:lambda_iota_v}, \ref{coro:pi_iota_v}. The proof of the case (A) follows mostly from \cite[Proposition 2.12]{balmer2012witt}. The maps $\iota$ (resp. $v$) are bijective, since we can construct the inverse maps by adding the a full column (resp. empty row) to the left (resp. bottom) of a Young diagram $\Lambda = (\Lambda_1 , \ldots, \Lambda_d)$ (resp. $\Lambda = (\Lambda_1 , \ldots, \Lambda_{d-1})$) to obtain a Young diagram $(\Lambda_1 + 1, \ldots, \Lambda_d + 1)$ (resp. $(\Lambda_1 , \ldots, \Lambda_{d-1}, 0)$) in $(d\times m)$-frame. For the case (B), the bijectivity of all the maps can be obtained from the case (A). For the last bijection, the buffalo-check Young diagram $\Lambda \in \mathfrak{B}^{d}_{d,m}$ that satisfies the conditions $\Lambda_d = 0$ and $\Lambda_{d-1} \geq 1$ if and only if it has center $(1,1)$. Furthermore, adding a full column to the left and then an empty row to the bottom of a Young diagram in $((d-1) \times (m-1))$-frame yields a buffalo-check Young diagram $\Lambda$ in $(d\times m)$-frame centered at $(1,1)$.
\end{proof}

\subsection{Flag variety of a Young diagram}
Let $\Lambda$ be a Young diagram in the $(d \times m)$-frame $\Xi$. By the previous section, this amounts to $k$-tuples $\underline{d}$ and $\underline{m}$. By Section \ref{sect:buffalo-check-Young-diagram} and  Definition \ref{def:flag-variety}, we have an associated flag variety $\Fl(\Lambda) := \Fl_{\underline{d}}(\SV_{\bullet }) = \Fl_{d_1, \ldots, d_k}(\SV_{r_1}, \ldots, \SV_{r_k})$ where $\SV_{\bullet } = \{ \SV_{r_1}\lhd \ldots \lhd \SV_{r_k} \}$ with $r_i = d_i+m_i$ for $ 1 \leq i \leq k$.\ Let
$$p_\Lambda: \Fl(\Lambda) \to S$$
be the canonical projection. Recall that $r = d+m$, and we define the following projective morphism
$$ f_{\Lambda} : \Fl(\Lambda) \xhookrightarrow{\iota} \Fl_{\underline{d}}(\SV_{r}, \ldots, \SV_{r}) \xrightarrow{\pi} \Gr_d(\SV_{r}) = \Gr_d(\SV)  $$
which agrees with the map constructed in  \cite[(15)]{balmer2012witt}.\ This yields a commutative diagram
$$
	\xymatrix{
	\Fl(\Lambda) \ar[r]^-{f_\Lambda} \ar[dr]_-{p_{\Lambda}} & \Gr_{d}(\SV_{r}) \ar[d]_-{p} \\
	&	S
	}. $$
\begin{remark}
	Recall that, by deleting the first row and (resp. column) of $\Lambda$, we obtain the Young diagram $\iota(\Lambda)$ (resp. $v(\Lambda)$) in $(d \times (m-1))$-frame (resp. $((d-1) \times m)$-frame). Under our notation, we have canonical projective morphisms $$
		f_{\iota(\Lambda)} : \Fl(\iota(\Lambda)) \to \Gr_d(\SV_{r-1}) = \Gr_d(\SV^1) \quad \quad f_{v(\Lambda)}: \Fl(v(\Lambda)) \to \Gr_{d-1}(\SV_{r-1}) = \Gr_{d-1}(\SV^1) $$
	There is a source of confusion. Note that the targets of the map $f_{\iota(\Lambda)}$ (resp. $f_{v(\Lambda)}$) is $\Gr_d(\SV^1)$ (resp. $\Gr_{d-1}(\SV^1)$) rather than $\Gr_d(\SV)$.
\end{remark}

\subsection{Construction of the maps}
Let $\omega_{\Lambda}$ be the relative canonical bundle of $f_\Lambda$.

\begin{definition}\label{def:grass_mu}
	Let $l \in \mathbb{Z}$, and let
	$$H_{\Lambda}^{l,n}(L): \K(\Fl(\Lambda)) \rightarrow \GW^{[n - |\Lambda|]}(\Fl(\Lambda), \omega_{\Lambda}\otimes f_\Lambda^*(L \otimes \Delta_d^{\otimes l})) $$
	be the hyerbolic map. Define the map
	\[
		\Theta^{l}: \bigoplus_{\Lambda \in \mathfrak{B}^{l}_{d,m}} \K(S) \xrightarrow{\sum \mu_{\Lambda}^{l,n}(L)} \GW^{[n]}(\Gr_{d}(\SV), L\otimes \Delta_d^{\otimes l}).
	\]
	where $\mu_{\Lambda}^{l,n}(L)$ is defined to be the composition
	\[
		\K(S) \xrightarrow{p_{\Lambda}^\ast} \K(\Fl(\Lambda)) \xrightarrow{H_{\Lambda}^{l,n}(L)} \GW^{[n - |\Lambda|]}(\Fl(\Lambda), \omega_{\Lambda}\otimes f_\Lambda^*(L \otimes \Delta_d^{\otimes l})) \xrightarrow{{f_\Lambda}_\ast } \GW^{[n]}(\Gr_{d}(\SV), L\otimes \Delta_d^{\otimes l})
	\]
	of maps of spectra.
\end{definition}

\begin{lemma}
	Suppose that $\Pi$ is an even Young diagram. Then, the class of the relative canonical bundle $\omega_{\Pi}$ in $\Pic(\Fl(\Pi))/2$ is given by
	$$\omega_{\Pi} = \det(\SV)^{\otimes \rho(\SV)} \otimes \Delta_d^{\otimes t(\Pi)}. $$
	Moreover, $p_{\Pi}^\ast (\det (\SV)^{\otimes \rho(\Pi)}) = \omega_{\Pi}\otimes f_\Pi^*(\Delta_d^{\otimes t(\Pi)}) \in \Pic(\Fl(\Pi))/2$.
\end{lemma}

\begin{proof}
	See \cite[Proposition 4.8]{balmer2012witt}.
\end{proof}
\begin{definition}
	Define the map
	\[
		\Omega^{l}:	\bigoplus_{\Pi \in \mathfrak{A}^{l}_{d,m}} \GW^{[n-|\Pi|]}(S, L \otimes \det(\SV)^{\otimes \rho(\Pi)}) \xrightarrow{\sum {\xi_\Pi}} \GW^{[n]}(\Gr_{d}(\SV), L \otimes \Delta_d^{\otimes l}).
	\]
	where the map $\xi_\Pi$ is the composition
	$$\resizebox{\hsize}{!}{$\GW^{[n-|\Pi|]}(S, L \otimes \det (\SV)^{\otimes \rho(\Pi)}) \xrightarrow{p_{\Pi}^\ast} \GW^{[n-|\Pi|]}(\Fl(\Pi), \omega_{\Pi}\otimes f_\Pi^*(L \otimes \Delta_d^{\otimes l})) \xrightarrow{{f_\Pi}_\ast} \GW^{[n]}(\Gr_{d}(\SV), L \otimes \Delta_d^{\otimes l})$}$$
	of maps of spectra.
\end{definition}
\begin{remark}
	By the projection formula, we see that
	\[
		\xi_\Pi(\alpha) := {f_{\Pi}}_\ast p_{\Pi}^\ast (\alpha) = {f_{\Pi}}_\ast f_{\Pi}^\ast p^\ast (\alpha) = {f_{\Pi}}_\ast (1) \cup p^\ast (\alpha)
	\]
	for any $\Pi \in \mathfrak{A}_{d,m}$ and $\alpha \in \GW^{[n-|\Pi|]}_i(S, L \otimes \det(\SV)^{\otimes \rho(\Pi)})$. Moreover, by the projection formula on $K$-theory (cf. \cite[Proposition 3.17]{thomason1990higher}),
	$$
		\mu^{l,n}_{\Lambda}(L)(\beta) := {f_{\Lambda}}_\ast H^{l, n}_{\Lambda}(L) p_{\Lambda}^\ast (\beta) = H^{l,n} {f_{\Lambda}}_\ast p_{\Lambda}^\ast (\beta) = H^{l,n} ({f_{\Lambda}}_\ast (1) \cup p^\ast (\beta))
	$$
	for any $\Lambda \in \mathfrak{B}^{l}_{d,m}$ and $\beta \in \K_i(S)$, where $H^{l,n}$ is the hyperbolic map
 \begin{equation}\label{eqn:grass_hyp}
		H^{l,n}: \K(\Gr_{d}(\SV))  \to \GW^{[n]}(\Gr_{d}(\SV), L \otimes \Delta_d^{\otimes l}).
 \end{equation}
\end{remark}

\subsection{On the main theorem}
Now, we are ready to state our main theorem:
\begin{theorem}\label{thm:grass_flag_result}
	Let $L$ be a line bundle on $S$. Then, the  map
	\[
		(\Theta^{l}, \Omega^{l}): \bigoplus_{\Lambda \in \mathfrak{B}^{l}_{d,m}} \K(S) \oplus \bigoplus_{\Pi \in \mathfrak{A}^{l}_{d,m}} \GW^{[n-|\Pi|]}(S, L \otimes \det(\SV)^{\otimes \rho(\Pi)}) \xrightarrow{\simeq} \GW^{[n]}(\Gr_{d}(\SV), L \otimes \Delta_d^{\otimes l})
	\]
	is an equivalence for any $l \in \mathbb{Z}$.
\end{theorem}

To prove Theorem \ref{thm:grass_flag_result}, we need the following propositions in view of notations introduced in Table \ref{tab:mu}.

\begin{table}[H]
	\begin{tabular}{|ll|l|l|l|c|}
		\hline
		\multicolumn{2}{|c|}{Notation}                        & \multicolumn{1}{c|}{Map}      & \multicolumn{1}{c|}{Source}                                      & \multicolumn{1}{c|}{Target} & Condition\\ \hline \hline
		\multicolumn{1}{|l|}{$\mu^{1}_{\Lambda}$}             & $H^{1}_{\Lambda}$             & $\mu^{d+1,n}_\Lambda (\widetilde{\mathcal{H}})$                  & \multirow{8}{*}{$K(S)$}     & $\GW^{[n]}(\widetilde{\mathbb{G}}_d(\SV))$         &                                 \\ \cline{1-3} \cline{5-6}
		\multicolumn{1}{|l|}{$\mu^{1}_{\iota(\Lambda)}$}      & $H^{1}_{\iota(\Lambda)}$      & $\mu^{d,n-d}_{\iota(\Lambda)} (\widetilde{\mathcal{H}}^{(1)}_d)$ &                             & $\GW^{[n-d]}(\widetilde{\mathbb{G}}_d(\SV^{1}))$   & $\Lambda_d \geq 1$              \\ \cline{1-3} \cline{5-6}
		\multicolumn{1}{|l|}{$\mu^{1}_{v(\Lambda)}$}          & $H^{1}_{v(\Lambda)}$          & $\mu^{d-1,n}_{v(\Lambda)} (\widetilde{\mathcal{H}}^{(1)}_{d-1})$ &                             & $\GW^{[n]}(\widetilde{\mathbb{G}}_{d-1}(\SV^{1}))$ & $\Lambda_d = 0$                 \\ \cline{1-3} \cline{5-6}
		\multicolumn{1}{|l|}{$\mu^{0}_{\Lambda}$}             & $H^{0}_{\Lambda}$             & $\mu^{d,n}_\Lambda (\mathcal{H})$                                &                             & $\GW^{[n]}(\mathbb{G}_d(\SV))$                     &                                 \\ \cline{1-3} \cline{5-6}
		\multicolumn{1}{|l|}{$\mu^{0}_{\iota(\Lambda)}$}      & $H^{0}_{\iota(\Lambda)}$      & $\mu^{d,n-d}_{\iota(\Lambda)} (\mathcal{H}^{(1)}_{d})$           &                             & $\GW^{[n-d]}(\mathbb{G}_d(\SV^{1}))$              & $\Lambda_d \geq 1$              \\ \cline{1-3} \cline{5-6}
		\multicolumn{1}{|l|}{$\mu^{0}_{v(\Lambda)}$}          & $H^{0}_{v(\Lambda)}$          & $\mu^{d-1,n}_{v(\Lambda)} (\mathcal{H}^{(1)}_{d-1})$             &                             & $\GW^{[n]}(\mathbb{G}_{d-1}(\SV^{1}))$             & $\Lambda_d = 0$                 \\ \cline{1-3} \cline{5-6}
		\multicolumn{1}{|l|}{$\mu^{0}_{\iota\iota(\Lambda)}$} & $H^{0}_{\iota\iota(\Lambda)}$ & $\mu^{d,n-2d}_{\iota\iota(\Lambda)} (\mathcal{H}^{(2)}_{d})$     &                             & $\GW^{[n-2d]}(\mathbb{G}_d(\SV^{2}))$              & $\Lambda_d \geq 2$              \\ \cline{1-3} \cline{5-6}
		\multicolumn{1}{|l|}{$\mu^{0}_{vv(\Lambda)}$}         & $H^{0}_{vv(\Lambda)}$         & $\mu^{d-2,n}_{vv(\Lambda)} (\mathcal{H}^{(2)}_{d-2})$            &                             & $\GW^{[n]}(\mathbb{G}_{d-2}(\SV^{2}))$             & $\Lambda_{d-1} = \Lambda_d = 0$ \\ \hline
	\end{tabular}
	\caption{Simplified notations of maps defined in Definition \ref{def:grass_mu} in which the line bundles are defined in Subsection \ref{subsec:notation_line_bundle}. Notations in the second column define the hyperbolic maps appeared in the construction of the maps in the third column.} 
 \label{tab:mu}
\end{table}

\begin{proposition}\label{prop:compatability-mu}
    Suppose that $\Lambda \in \mathfrak{C}_{d,m}$. Then, for any $\ell = 0$ or $1$, we have
	\begin{enumerate}
      \item [\rm (1)] $\mu^{\ell}_{\Lambda}= \iota_\ast \mu^\ell_{\iota(\Lambda)}$, if $\Lambda_d \geq 1$.
      \item [\rm (2)] $\mu^\ell_\Lambda = \pi_\ast \alpha^\ast \mu^\ell_{v(\Lambda)}$, if $\Lambda_d = 0$.
      \item [\rm (3)] $\mu^{0}_{\Lambda} = \widehat{\iota}_* \mu^{0}_{\iota\iota(\Lambda)}$, if $\Lambda_d \geq 2$.
      \item [\rm (4)] $\mu^{0}_\Lambda = (\widecheck{\pi}_2)_*(\widecheck{\pi}_1)^* \mu^{0}_{vv(\Lambda)}$, if $\Lambda_{d-1} = \Lambda_d = 0$.
	\end{enumerate}
\end{proposition}

\begin{proof}
To prove (1), we note that $\Fl(\iota(\Lambda))= \Fl(\Lambda)$ by definition, and we can form the square
	\[
		\xymatrix{\Fl(\iota(\Lambda)) \ar[d]_-{f_{\iota(\Lambda)}} \ar@{=}[r] & \Fl(\Lambda) \ar[d]_-{f_\Lambda} \\
		\Gr_{d}(\SV^1) \ar[r]^-{\iota} & \Gr_{d}(\SV)
		}
	\]
 which is commutative. Observe that 
	\[
 \begin{aligned}
    \mu^\ell_{\Lambda}:=& {f_\Lambda}_\ast H^\ell_{\Lambda} p_{\Lambda}^\ast && \text{(Definition)}\\
    =& {f_\Lambda}_\ast H^\ell_{\iota(\Lambda)} p_{\iota(\Lambda)}^\ast  && \text{(Compare the source of $f_{\Lambda *}$ and the target of $p_\Lambda^*$)}\\
    =& \iota_\ast {f_{\iota(\Lambda)}}_\ast H^\ell_{\iota(\Lambda)} p_{\iota(\Lambda)}^\ast  && \text{(Composition of pushforwards)}\\
    =& \iota_\ast \mu^\ell_{\iota(\Lambda)}  && \text{(Definition)}.
 \end{aligned}
	\]

For the proof of (2), we rewrite $\Fl(v(\Lambda))$ into the following two cases:
	$$\Fl(v(\Lambda)) =
		\begin{cases}
			\Fl_{d_1, \ldots, d_{k-2}, d_{k-1}}(\SV_{r_1}, \dots, \SV_{r_{k-2}},\SV_{r_{k-1}}) & \textnormal{ if } d_{k-1} = d-1\\
			\Fl_{d_1, \ldots, d_{k-1}, d-1}(\SV_{r_1}, \dots, \SV_{r_{k-1}},\SV^1)             & \textnormal{ if } d_{k-1} < d-1
		\end{cases}$$
  Define the following flag variety 
  $$ \Fl_d(v(\Lambda); \SV) := \begin{cases}
			\Fl_{d_1, \ldots, d_k}(\SV_{r_1}, \dots, \SV_{r_{k-1}})  & \textnormal{ if } d_{k-1} = d-1\\
			\Fl_{d_1, \ldots, d_{k-1}, d-1, d}(\SV_{r_1}, \dots, \SV_{r_{k-1}},\SV^1, \SV)             & \textnormal{ if } d_{k-1} < d-1
		\end{cases} $$
      Construct the following commutative diagrams
      \begin{equation}\label{eqn:diagram_mu_v}
		\xymatrix{\Fl(v(\Lambda)) \ar[d]_-{f_{v(\Lambda)}} & \Fl_d(v(\Lambda); \SV) \ar[d]_-{\mathring{\pi}_{v(\Lambda)}} \ar[l]_-{\acute{\pi}_{v(\Lambda)}} \ar[r]^-{\grave{\pi}_{v(\Lambda)}} & \Fl(\Lambda) \ar[d]_-{f_{\Lambda}}\\
		\Gr_{d-1}(\SV^1) & \ar[l]_-{\alpha} \Fl_{d-1,d}(\SV^1,\SV) \ar[r]^-{\pi} & \Gr_d(\SV)
		}
      \end{equation}
  where the left square is a fibre product, and all the maps are canonical. Note that the morphism $\grave{\pi}_{v(\Lambda)}$ is projective and birational, and it is even an identity if $d_{k-1} = d-1$. Then by \cite[Lemme VII.3.5, p. 441]{SGA6}, we have that $\mathrm{R}\grave{\pi}_{v(\Lambda) \ast} (\SO_{B_\Lambda}) = \SO_{\Fl(\Lambda)}$, and therefore $\grave{\pi}_{v(\Lambda) \ast} (1_{B_\Lambda}) = 1_{\Fl(\Lambda)}$ under the pushforward $\grave{\pi}_{v(\Lambda) \ast}: K_0(\Fl_d(v(\Lambda); \SV)) \to K_0(\Fl(\Lambda))$. By the projection formula on $K$-theory (cf. \cite[Proposition 3.17]{thomason1990higher}), we have $\grave{\pi}_{v(\Lambda) \ast} \grave{\pi}_{v(\Lambda)}^\ast = 1$. Now we can conclude that
	\[
		\begin{aligned}
			\mu^\ell_\Lambda := & {f_\Lambda}_\ast H^\ell_{\Lambda} p_{\Lambda}^\ast                                                               &  & \text{(Definition)}                                           \\
			=                          & {f_\Lambda}_\ast H^\ell_{\Lambda} \grave{\pi}_{v(\Lambda) \ast} \grave{\pi}_{v(\Lambda)}^\ast p_{\Lambda}^\ast       &  & \text{(Use $\grave{\pi}_{v(\Lambda) \ast} \grave{\pi}_{v(\Lambda)}^\ast = 1$)} \\
			=                          & {f_\Lambda}_\ast \grave{\pi}_{v(\Lambda) \ast} \widehat{H}^\ell \acute{\pi}_{v(\Lambda)}^\ast p_{v(\Lambda)}^\ast &  & \text{(Pushforwards commute with hyperbolic maps)}\\
			=                          & \pi_\ast \mathring{\pi}_{v(\Lambda) \ast} \widehat{H}^\ell  \acute{\pi}_{v(\Lambda)}^\ast p_{v(\Lambda)}^\ast &  & \text{(Composition of pushforwards)}                              \\
			=                          & \pi_\ast \mathring{\pi}_{v(\Lambda) \ast} \acute{\pi}_{v(\Lambda)}^\ast H^\ell_{v(\Lambda)} p_{v(\Lambda)}^\ast      &  & \text{(Pullbacks commute with hyperbolic maps)}                          \\
			=                          & \pi_\ast \alpha^\ast {f_{v(\Lambda)}}_\ast H^\ell_{v(\Lambda)} p_{v(\Lambda)}^\ast                               &  & \text{(Base change formula)}                                          \\
			=                          & \pi_\ast \alpha^\ast \mu^\ell_{v(\Lambda)}.                                                                      &  & \text{(Definition)}
		\end{aligned}
	\]
where $\widehat{H}^\ell$ is the hyperbolic map $K(S) \to \GW^{[n-|\Lambda|]}\big(\Fl_d(\iota(\Lambda); \SV), \omega_{f_{\Lambda} \grave{\pi}_{v(\Lambda)}} \otimes \grave{\pi}_{v(\Lambda)}^\ast f_{\Lambda}^\ast (\mathcal{H}^{\ell})\big)$. Here, the line bundle $\mathcal{H}^{\ell}$ is defined to be $\mathcal{H}$ if $\ell =0$ and $\widetilde{\mathcal{H}}$ if $\ell =1$. The result of (2) follows. The proof of (3) is similar to the proof of (1). For the proof of (4), we define the following flag variety 
  $$ \Fl_d(vv(\Lambda); \SV) := \begin{cases}
			\Fl_{d_1, \ldots, d_k}(\SV_{r_1}, \dots, \SV_{r_{k-1}})  & \textnormal{ if } d_{k-1} = d-2\\
			\Fl_{d_1, \ldots, d_{k-1}, d-2, d}(\SV_{r_1}, \dots, \SV_{r_{k-1}},\SV^2, \SV)             & \textnormal{ if } d_{k-1} < d-2
		\end{cases} $$
      Construct the following commutative diagrams
      \begin{equation}\label{eqn:diagram_mu_vv}
		\xymatrix{\Fl(vv(\Lambda)) \ar[d]_-{f_{vv(\Lambda)}} & \Fl_d(vv(\Lambda); \SV) \ar[d]_-{\mathring{\pi}_{vv(\Lambda)}} \ar[l]_-{\acute{\pi}_{vv(\Lambda)}} \ar[r]^-{\grave{\pi}_{vv(\Lambda)}} & \Fl(\Lambda) \ar[d]_-{f_{\Lambda}}\\
		\Gr_{d-2}(\SV^2) & \ar[l]_-{\widecheck{\pi}_1} \Fl_{d-2,d}(\SV^2,\SV) \ar[r]^-{\widecheck{\pi}_2} & \Gr_d(\SV)
		}
      \end{equation}
 where the left square is a fibre product, and all the maps are canonical. Note that the morphism $\grave{\pi}_{vv(\Lambda)}$ is projective and birational. The rest of the proof of (4) is then similar to that of (2).
\end{proof}
\begin{remark}
    We do not define   $\mu^1_{\iota\iota(\Lambda)}$ and $\mu^1_{vv(\Lambda)}$, as they are not used in this paper. However, Proposition \ref{prop:compatability-mu} (3) and (4) are also true if we define them in the spirit of Table \ref{tab:mu}.\   
\end{remark}
\begin{proposition}\label{prop:compatability-xi}
Suppose that $\Pi \in \mathfrak{A}_{d,m}$. Then, we have
	\begin{enumerate}
		\item[\rm (1)] $\xi_\Pi = \iota_\ast \xi_{\iota(\Pi)}$, if $\Pi_d \geq 1$.
		\item[\rm (2)] $\xi_\Pi = \pi_\ast \alpha^\ast \xi_{v(\Pi)}$, if $\Pi_d = 0$ and $ t(\Pi) \equiv d+1 \mod 2$.
		\item[\rm (3)] $\xi_\Pi = \widehat{\iota}_\ast \xi_{\ \iota(\Pi)}$, if $\Pi_d \geq 2$.
		\item[\rm (4)] $\xi_\Pi = (\widecheck{\pi}_2)_*(\widecheck{\pi}_1)^* \xi_{vv(\Pi)}$, if $\Pi_{d-1} = \Pi_d = 0$ and $t(\Pi) \equiv d \mod 2$.
	\end{enumerate}
\end{proposition}

\begin{proof}
The proof of (1) and (3) are similar to that of Proposition \ref{prop:compatability-mu} (1) and (3), as long as we delete the mention of the hyperbolic maps showed up in the proof and replace $\mu$ by $\xi$. For the proof of (2), the proof aligned with the proof of Proposition \ref{prop:compatability-mu} (2) in view of the diagram \eqref{eqn:diagram_mu_v}, except that we only have the identity $\grave{\pi}_{v(\Lambda) \ast} \grave{\pi}_{v(\Lambda)}^\ast = 1$ in $\GW$-theory when $ \rho(\Pi) \equiv d+1 \mod 2$. To explain, we observe that 
	\[
		\omega_{\grave{\pi}_{v(\Lambda)}} = (\SV/\SV^{1})^{\otimes d-1-d_{k-1}} \otimes \Delta_{d-1}^{\otimes d-1-d_{k-1}} \otimes \Delta_{d}^{\otimes 1-d+d_{k-1}} 
	\]
    in $\Pic(\Fl_d(v(\Lambda); \SV))$, and $\omega_{\grave{\pi}_{v(\Lambda)}} = 1 \in \Pic(\Fl_d(v(\Lambda); \SV))/2$ when $ t(\Pi) = d+1 \mod 2$. Here, we use $ t(\Pi) \equiv d_{k-1} \mod 2$, cf. \cite[Proposition 4.5]{balmer2012witt}. The proof of (4) is similar to the proof of (2) in view of the diagram \eqref{eqn:diagram_mu_vv}. Note that the canonical bundle
	\[
		\omega_{\grave{\pi}_{vv(\Lambda)}} = \det(\SV/\SV^{2})^{\otimes d-2-d_{k-1}} \otimes \Delta_{d-2}^{\otimes d-2-d_{k-1}} \otimes \Delta_{d}^{\otimes 2-d+d_{k-1}} 
	\]
    in $\Pic(\Fl_d(vv(\Lambda); \SV))$, and $\omega_{\grave{\pi}_{vv(\Lambda)}} = 1 \in \Pic(\Fl_d(vv(\Lambda); \SV))/2$ when $ t(\Pi) = d\mod 2$. 
\end{proof}

\begin{proposition}\label{prop:grass_additive_basis_pi}
	Let $\Lambda$ be a Young diagram in $\mathfrak{C}_{d,m}$ such that $\Lambda_d = 0$ and $\Lambda_{d-1} \geq 1$. Then
	\[
		\mu_\Lambda^0 = \widehat{\pi}_{2 \ast} H (\widehat{\pi}_1)^\ast {f_{\iota v(\Lambda)}}_\ast p_{\iota v(\Lambda)}^\ast.
	\]
\end{proposition}
\begin{proof}
Note that if $\Lambda_d = 0$ and $\Lambda_{d-1} \geq 1$, then $\Lambda$ is a $K$-even Young diagram and the black box $(1,1)$ is a center of $\Lambda$. It follows that $(\Lambda, (1,1)) \in \mathfrak{B}^{d}_{d,m}$. Recall that $\iota v(\Lambda)$ is a Young diagram in $((d-1)\times (m-1))$-frame, cf. Proposition \ref{prop:young_bijection} (B). To proceed, we rewrite the flag variety
$$ 
    \Fl(\iota v(\Lambda)) = \Fl_{d_1,\ldots, d_{k-1}}(\SV_{r_1}, \ldots , \SV_{r_{k-1}})     
$$
If $\Lambda_d = 0$ and $\Lambda_{d-1} \geq 1$, then we must have
	\[
		d_k=d, d_{k-1}=d-1, r_k = d+m \textnormal{ and } r_{k-1} \leq d+m-2.
	\]
	Next, we form the following commutative diagram
	$$
		\xymatrix{
		\Fl(\iota v(\Lambda)) \ar[d]_-{f_{\iota v(\Lambda)}} \ar@{}[dr]|-{\diagram{\label{diag:flag_fiber_product}}} & \Fl(\Lambda) \ar[l]_-{\bar{\pi}_{\Lambda}} \ar[d]_-{\bar{f}_{\Lambda}} \ar[dr]^-{f_{\Lambda}} \\
		\Gr_{d-1}(\SV^2) & \ar[l]_-{\widehat{\pi}_1} \Fl_{d-1, d}(\SV^2,\SV) \ar[r]^-{\widehat{\pi}_2} & \Gr_{d}(\SV)
		}
	$$
	where the left diagram is a fiber square and the map $\bar{f}_\Lambda$ is defined to be the following composition
 $$\Fl(\Lambda) = \Fl_{d_1,\ldots, d_k}(\SV_{r_1}, \ldots , \SV_{r_k}) \stackrel{\iota}\hookrightarrow \Fl_{d_1, \ldots, d_{k-2},d-1,d}(\SV^2, \ldots ,\SV^2, \SV) \xrightarrow{\pi} \Fl_{d-1, d}(\SV^2,\SV).$$ 
	Then by the base change formula on $K$-theory (cf. \cite[Proposition 3.18]{thomason1990higher}) and Lemma \ref{lma:push_FH}, we conclude that
	\[
		\begin{aligned}
			\mu^{0}_{\Lambda} := & {f_{\Lambda}}_\ast H^{0}_{\Lambda} p_{\Lambda}^\ast                                                   &  & \textnormal{(Definition)}                                 \\
			=                    & H^{d,n} {f_{\Lambda}}_\ast p_{\Lambda}^\ast                                                                 &  & \textnormal{(Pushforwards commute with hyperbolic maps)}             \\
			=                    & H^{d,n} \widehat{\pi}_{2 \ast} \bar{f}_{\Lambda \ast} \bar{\pi}_{\Lambda}^\ast p_{\iota v(\Lambda)}^\ast  &  & \textnormal{(Compositions of pushforwards and pullbacks)} \\
			=                    & \widehat{\pi}_{2 \ast} H \bar{f}_{\Lambda \ast} \bar{\pi}_{\Lambda}^\ast p_{\iota v(\Lambda)}^\ast  &  & \textnormal{(Pushforwards commute with hyperbolic maps)}             \\
			=                    & \widehat{\pi}_{2 \ast} H (\widehat{\pi}_1)^\ast {f_{\iota v(\Lambda)}}_\ast p_{\iota v(\Lambda)}^\ast &  & \textnormal{(Base change formula)}
		\end{aligned}
	\]
 where $H^{d,n}$ is the hyperbolic map defined in \eqref{eqn:grass_hyp} and $H$ is the hyperbolic map defined in \eqref{eqn:grass_f_hyperbolic}. 
\end{proof}

\subsection{Proof of Theorem \ref{thm:grass_flag_result}}
	If $d=1$, the statement of Theorem \ref{thm:grass_flag_result} is true by Theorem \ref{thm:proj-bundle-E-trivial}. If $d\geq 1$, the results follow by the induction setp below. Assume that the theorem holds for any $\Gr_{d'}(\SV')$ with either $d'<d$ and $\mathrm{rank}(\SV') \leq \mathrm{rank}(\SV)$ or $d'\leq d$ and $\mathrm{rank}(\SV') < \mathrm{rank}(\SV)$.

	\noindent \textbf{Case (A)} If $l \equiv d+1 \mod 2$, then we need to show that the  map
	\[
		(\Theta^{d+1}, \Omega^{d+1}): \bigoplus_{{\Lambda \in \mathfrak{B}^{d+1}_{d,m}}} \K(S) \oplus \bigoplus_{{\Pi \in \mathfrak{A}^{d+1}_{d,m}}} \GW^{[n-|\Pi|]}(S, L \otimes \det(\SV)^{\otimes \rho(\Pi)}) \xrightarrow{} \GW^{[n]}(\widetilde{\mathbb{G}}_d(\SV))
	\]
	is an equivalence in view of Table \ref{tab:tab-grass-line}. By Theorem \ref{thm:grass_induction}(A), we have an equivalence
	\[
		\GW^{[n-d]}(\widetilde{\mathbb{G}}_d(\SV^{1})) \oplus \GW^{[n]}(\widetilde{\mathbb{G}}_{d-1}(\SV^{1})) \xrightarrow[\simeq]{\big(\iota_\ast, \pi_\ast \tilde{\alpha}^\ast\big)} \GW^{[n]}(\widetilde{\mathbb{G}}_d(\SV)).
	\]
	By the induction hypothesis and  Proposition \ref{prop:young_bijection}(A), the maps
	\[
		\bigoplus_{\Lambda \in \mathfrak{B}^{d+1}_{d,m}}^{\Lambda_d \geq 1} K(S) \oplus \bigoplus_{\Pi \in \mathfrak{A}^{d+1}_{d,m}}^{\Pi_d \geq 1} \GW^{[n-|\Pi|]}(S, L \otimes \det(\SV)^{\otimes \rho(\Pi)}) \xrightarrow{\big(\sum \mu^{1}_{\iota(\Lambda)},\sum \xi_{\iota(\Pi)}\big)} \GW^{[n-d]}(\widetilde{\mathbb{G}}_d(\SV^{1}))
	\]
	\[
		\bigoplus_{\Lambda \in \mathfrak{B}^{d+1}_{d,m}}^{\Lambda_d = 0} K(S) \oplus \bigoplus_{\Pi \in \mathfrak{A}^{d+1}_{d,m}}^{\Pi_d = 0} \GW^{[n-|\Pi|]}(S, L \otimes \det(\SV)^{\otimes \rho(\Pi)}) \xrightarrow{\big(\sum \mu^{1}_{v(\Lambda)}, \sum \xi_{v(\Pi)}\big)} \GW^{[n]}(\widetilde{\mathbb{G}}_{d-1}(\SV^{1}))
	\]
	are equivalences. The result follows from Proposition \ref{prop:compatability-mu}(1) and (2), and \ref{prop:compatability-xi}(1) and (2).

	\noindent \textbf{Case (B)} Suppose that $l \equiv d \mod 2$. we need to show that the  map
	\[
		(\Theta^{d}, \Omega^{d}): \bigoplus_{{\Lambda \in \mathfrak{B}^{d}_{d,m}}} \K(S) \oplus \bigoplus_{{\Pi \in \mathfrak{A}^{d}_{d,m}}} \GW^{[n-|\Pi|]}(S, L \otimes \det(\SV)^{\otimes \rho(\Pi)}) \xrightarrow{} \GW^{[n]}(\mathbb{G}_{d}(\SV))
	\]
	is an equivalence in view of Table \ref{tab:tab-grass-line}.\ By Lemma \ref{lem:grassman-case-B-map}, the map
	\[
		\K(\Gr_{d-1}(\SV^{2})) \oplus \GW^{[n-2d]}(\mathbb{G}_d(\SV^{2})) \oplus \GW^{[n]}(\mathbb{G}_{d-2}(\SV^{2})) \xrightarrow{((\widehat{\pi}_2)_* H (\widehat{\pi}_1)^*,\widehat{\iota}_*, (\widecheck{\pi}_2)_*(\widecheck{\pi}_1)^*)} \GW^{[n]}(\mathbb{G}_{d}(\SV))
	\]
is an equivalence.
	By the induction hypothesis and Proposition \ref{prop:young_bijection}(B), the maps
	\begin{gather*}
		\bigoplus_{\Lambda \in \mathfrak{B}^{d}_{d,m}}^{\Lambda_d \geq 2} K(S) \oplus \bigoplus_{\Pi \in \mathfrak{A}^{d}_{d,m}}^{\Pi_d \geq 2} \GW^{[n-|\Pi|]}(S, L \otimes \det(\SV)^{\otimes \rho(\Pi)}) \xrightarrow{\big(\sum \mu^0_{\iota\iota(\Lambda)},\sum \xi_{\iota(\iota(\Pi))}\big)} \GW^{[n-2d]}(\mathbb{G}_d(\SV^{2}))\\
		\bigoplus_{{\Lambda \in \mathfrak{B}^{d}_{d,m}}}^{\Lambda_{d-1} = \Lambda_{d} = 0} K(S) \oplus \bigoplus_{{\Pi \in \mathfrak{A}^{d}_{d,m}}}^{\mathclap{\Pi_{d-1} = \Pi_d = 0 }} \GW^{[n-|\Pi|]}(S, L \otimes \det(\SV)^{\otimes \rho(\Pi)}) \xrightarrow{\big(\sum \mu^0_{vv(\Lambda)},\sum \xi_{v(v(\Pi))}\big)} \GW^{[n]}(\mathbb{G}_{d-2}(\SV^{2})) \\
  	 \bigoplus_{\Lambda \in \mathfrak{B}^{d}_{d,m}}^{\Lambda_{d-1} \geq 1, \Lambda_d = 0 } K(S) \xrightarrow{(\sum {f_{\iota v(\Lambda)}}_\ast p_{\iota v(\Lambda)}^\ast)} K(\Gr_{d-1}(\SV^2))
	\end{gather*}
	are equivalences.\ The result follows from Proposition \ref{prop:compatability-mu}(3)(4), \ref{prop:compatability-xi}(3)(4) and \ref{prop:grass_additive_basis_pi}.

\begin{theorem}\label{theo:GW-Grassmannian}
	The map  
 $$ (\Theta^0+\Theta^1, \Omega^0 +\Omega^1): \bigoplus_{(\Lambda, c_\Lambda) \in \mathfrak{B}_{d,m}} \K_i(S)  \oplus \bigoplus_{\Pi \in \mathfrak{A}_{d,m}} \GW^{[n-|\Pi|]}_i(S,L) \to \GW^{[n]}_i(\Gr_d(m),L)^{\mathrm{tot}}  $$
 is an equivalence of spectra.
\end{theorem}
\begin{proof}
	Note that the trivial bundle $\SV = \SO^{\oplus d+m}_S$ admits a canonical complete flag. Moreover, we have the following disjoint union of sets
	\[
		\mathfrak{A}_{d,m} = \mathfrak{A}_{d,m}^{d} \bigsqcup \mathfrak{A}_{d,m}^{d+1} \text{\quad and \quad } \mathfrak{B}_{d,m} =  \mathfrak{B}_{d,m}^{d}  \bigsqcup  \mathfrak{B}_{d,m}^{d+1}.
	\]
	The result follows immediately from Theorem \ref{thm:grass_flag_result}.
\end{proof}                                          

\begin{example}
See Figure \ref{fig:example-2-2} and \ref{fig:example-3-3} for examples of the indexing set for the additive basis of Hermitian $K$-theory of $\Gr_2(2)$ and $\Gr_3(3)$.  
	\begin{figure}[H]
		\begin{align*}
			 & \begin{tikzpicture}[scale=0.4]
				         \draw (1,0) -- (3,0) -- (3,-2) -- (1,-2) -- (1,0);
				         \draw (1,-1) -- (3,-1);
				         \draw (2,0) -- (2,-2);
				         \draw[fill, opacity = 0.2] (1,0) -- (3,0) -- (3,-2) -- (1,-2) -- (1,0);
			         \end{tikzpicture}
			\qquad
			\begin{tikzpicture}[scale=0.4]
				\draw (1,0) -- (3,0) -- (3,-2) -- (1,-2) -- (1,0);
				\draw (1,-1) -- (3,-1);
				\draw (2,0) -- (2,-2);
				\draw[fill,opacity=0.2] (1,0) -- (3,0) -- (3,-2) -- (3,-1) -- (1,-1) -- (1,0);
				\draw[very thick]  (3,-1)--(1,-1);
			\end{tikzpicture}
			\qquad
			\begin{tikzpicture}[scale=0.4]
				\draw (1,0) -- (3,0) -- (3,-2) -- (1,-2) -- (1,0);
				\draw (1,-1) -- (3,-1);
				\draw (2,0) -- (2,-2);
				\draw[fill,opacity=0.2] (1,0) -- (2,0) -- (2,-2) -- (1,-2) -- (1,0);
				\draw[very thick]  (2,0)--(2,-2);
			\end{tikzpicture}
			\qquad
			\begin{tikzpicture}[scale=0.4]
				\draw (1,0) -- (3,0) -- (3,-2) -- (1,-2) -- (1,0);
				\draw (1,-1) -- (3,-1);
				\draw (2,0) -- (2,-2);
			\end{tikzpicture}                                       \\
			 & \begin{tikzpicture}[scale=0.4]
				         \draw (1,0) -- (3,0) -- (3,-2) -- (1,-2) -- (1,0);
				         \draw (1,-1) -- (3,-1);
				         \draw (2,0) -- (2,-2);
				         \draw[fill,opacity=0.2] (1,0) -- (3,0) -- (3,-2) -- (3,-1) -- (1,-1) -- (1,0);
				         \draw[pattern=north west lines] (1,-1) -- (2,-1) -- (2,-2) -- (1,-2) -- (1,-1);
			         \end{tikzpicture}
			\qquad
			\begin{tikzpicture}[scale=0.4]
				\draw (1,0) -- (3,0) -- (3,-2) -- (1,-2) -- (1,0);
				\draw (1,-1) -- (3,-1);
				\draw (2,0) -- (2,-2);
				\draw[fill,opacity=0.2] (1,0) -- (2,0) -- (2,-1) -- (1,-1) -- (1,0);
				\draw[pattern=north west lines] (1,-1) -- (2,-1) -- (2,-2) -- (1,-2) -- (1,-1);
			\end{tikzpicture}          \\
			 & \begin{tikzpicture}[scale=0.4]
				         \draw (1,0) -- (3,0) -- (3,-2) -- (1,-2) -- (1,0);
				         \draw (1,-1) -- (3,-1);
				         \draw (2,0) -- (2,-2);
				         \draw[pattern=north west lines] (1,0) -- (2,0) -- (2,-1) -- (1,-1) -- (1,0);
			         \end{tikzpicture}
			\qquad
			\begin{tikzpicture}[scale=0.4]
				\draw (1,0) -- (3,0) -- (3,-2) -- (1,-2) -- (1,0);
				\draw (1,-1) -- (3,-1);
				\draw (2,0) -- (2,-2);
				\draw[fill,opacity=0.2] (1,0) -- (3,0) -- (3,-1) -- (2,-1) -- (2,-2) -- (1,-2) -- (1,0);
				\draw[pattern=north west lines] (2,-1) -- (3,-1) -- (3,-2) -- (2,-2) -- (2,-1);
			\end{tikzpicture}
		\end{align*}
		\caption{ The indexing set for the additive basis of $\GW^{[n]}(\Gr_2(2))^{\mathrm{tot}}$. The first line consists of even Young diagrams in $(2\times 2)$-frame with thickened inner-frame boundary segments; The second line (resp. third line) includes buffalo-check Young diagrams in $\mathfrak{B}^{2}_{2,2}$ (resp. $\mathfrak{B}^{3}_{2,2}$) with center hatched.
		}
  \label{fig:example-2-2}
	\end{figure}
	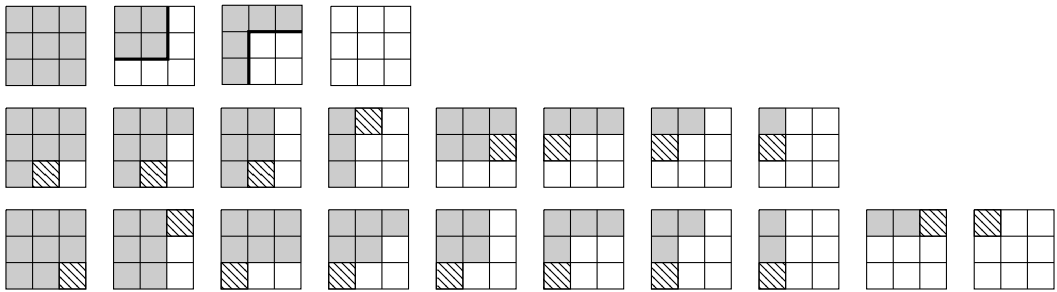
\begin{figure}[H]
		\begin{align*}
			 & \begin{tikzpicture}[scale=0.35]
				         \draw (1,0) -- (4,0) -- (4,-3) -- (1,-3) -- (1,0);
				         \draw (1,-1) -- (4,-1);
				         \draw (1,-2) -- (4,-2);
				         \draw (2,0) -- (2,-3);
				         \draw (3,0) -- (3,-3);
				         \draw[fill, opacity = 0.2] (1,0) -- (4,0) -- (4,-3) -- (1,-3) -- (1,0);
			         \end{tikzpicture}
			\quad
			\begin{tikzpicture}[scale=0.35]
				\draw (1,0) -- (4,0) -- (4,-3) -- (1,-3) -- (1,0);
				\draw (1,-1) -- (4,-1);
				\draw (1,-2) -- (4,-2);
				\draw (2,0) -- (2,-3);
				\draw (3,0) -- (3,-3);
				\draw[fill, opacity = 0.2] (1,0) -- (3,0) -- (3,-2) -- (1,-2) -- (1,0);
				\draw[very thick]  (3,0)--(3,-2)--(1,-2);
			\end{tikzpicture}
			\quad
			\begin{tikzpicture}[scale=0.35]
				\draw (1,0) -- (4,0) -- (4,-3) -- (1,-3) -- (1,0);
				\draw (1,-1) -- (4,-1);
				\draw (1,-2) -- (4,-2);
				\draw (2,0) -- (2,-3);
				\draw (3,0) -- (3,-3);
				\draw[fill, opacity = 0.2] (1,0) -- (4,0) -- (4,-1) -- (2,-1) -- (2,-3) -- (1,-3) -- (1,0);
				\draw[very thick]  (4,-1)--(2,-1)--(2,-3);
			\end{tikzpicture}
			\quad
			\begin{tikzpicture}[scale=0.35]
				\draw (1,0) -- (4,0) -- (4,-3) -- (1,-3) -- (1,0);
				\draw (1,-1) -- (4,-1);
				\draw (1,-2) -- (4,-2);
				\draw (2,0) -- (2,-3);
				\draw (3,0) -- (3,-3);
			\end{tikzpicture}                                                   \\
			 & \begin{tikzpicture}[scale=0.35]
				         \draw (1,0) -- (4,0) -- (4,-3) -- (1,-3) -- (1,0);
				         \draw (1,-1) -- (4,-1);
				         \draw (1,-2) -- (4,-2);
				         \draw (2,0) -- (2,-3);
				         \draw (3,0) -- (3,-3);
				         \draw[fill, opacity = 0.2] (1,0) -- (4,0) -- (4,-2) -- (2,-2) -- (2,-3) -- (1,-3) -- (1,0);
				         \draw[pattern=north west lines] (2,-2) -- (3,-2) -- (3,-3) -- (2,-3) -- (2,-2);
			         \end{tikzpicture}
			\quad
			\begin{tikzpicture}[scale=0.35]
				\draw (1,0) -- (4,0) -- (4,-3) -- (1,-3) -- (1,0);
				\draw (1,-1) -- (4,-1);
				\draw (1,-2) -- (4,-2);
				\draw (2,0) -- (2,-3);
				\draw (3,0) -- (3,-3);
				\draw[fill, opacity = 0.2] (1,0) -- (4,0) -- (4,-1) -- (3,-1) -- (3,-2) -- (2,-2) -- (2,-3) -- (1,-3) -- (1,0);
				\draw[pattern=north west lines] (2,-2) -- (3,-2) -- (3,-3) -- (2,-3) -- (2,-2);
			\end{tikzpicture}
			\quad
			\begin{tikzpicture}[scale=0.35]
				\draw (1,0) -- (4,0) -- (4,-3) -- (1,-3) -- (1,0);
				\draw (1,-1) -- (4,-1);
				\draw (1,-2) -- (4,-2);
				\draw (2,0) -- (2,-3);
				\draw (3,0) -- (3,-3);
				\draw[fill, opacity = 0.2] (1,0) -- (3,0) -- (3,-2) -- (2,-2) -- (2,-3) -- (1,-3) -- (1,0);
				\draw[pattern=north west lines] (2,-2) -- (3,-2) -- (3,-3) -- (2,-3) -- (2,-2);
			\end{tikzpicture}
			\quad
			\begin{tikzpicture}[scale=0.35]
				\draw (1,0) -- (4,0) -- (4,-3) -- (1,-3) -- (1,0);
				\draw (1,-1) -- (4,-1);
				\draw (1,-2) -- (4,-2);
				\draw (2,0) -- (2,-3);
				\draw (3,0) -- (3,-3);
				\draw[fill, opacity = 0.2] (1,0) -- (2,0) -- (2,-3) -- (1,-3) -- (1,0);
				\draw[pattern=north west lines] (2,0) -- (3,0) -- (3,-1) -- (2,-1) -- (2,0);
			\end{tikzpicture}
			\quad
			\begin{tikzpicture}[scale=0.35]
				\draw (1,0) -- (4,0) -- (4,-3) -- (1,-3) -- (1,0);
				\draw (1,-1) -- (4,-1);
				\draw (1,-2) -- (4,-2);
				\draw (2,0) -- (2,-3);
				\draw (3,0) -- (3,-3);
				\draw[fill, opacity = 0.2] (1,0) -- (4,0) -- (4,-1) -- (3,-1) -- (3,-2)-- (1,-2) -- (1,0);
				\draw[pattern=north west lines] (3,-1) -- (4,-1) -- (4,-2) -- (3,-2) -- (3,-1);
			\end{tikzpicture}
			\quad
			\begin{tikzpicture}[scale=0.35]
				\draw (1,0) -- (4,0) -- (4,-3) -- (1,-3) -- (1,0);
				\draw (1,-1) -- (4,-1);
				\draw (1,-2) -- (4,-2);
				\draw (2,0) -- (2,-3);
				\draw (3,0) -- (3,-3);
				\draw[fill, opacity = 0.2] (1,0) -- (4,0) -- (4,-1) -- (1,-1) -- (1,0);
				\draw[pattern=north west lines] (1,-1) -- (2,-1) -- (2,-2) -- (1,-2) -- (1,-1);
			\end{tikzpicture}
			\quad
			\begin{tikzpicture}[scale=0.35]
				\draw (1,0) -- (4,0) -- (4,-3) -- (1,-3) -- (1,0);
				\draw (1,-1) -- (4,-1);
				\draw (1,-2) -- (4,-2);
				\draw (2,0) -- (2,-3);
				\draw (3,0) -- (3,-3);
				\draw[fill, opacity = 0.2] (1,0) -- (3,0) -- (3,-1) -- (1,-1) -- (1,0);
				\draw[pattern=north west lines] (1,-1) -- (2,-1) -- (2,-2) -- (1,-2) -- (1,-1);
			\end{tikzpicture}
			\quad
			\begin{tikzpicture}[scale=0.35]
				\draw (1,0) -- (4,0) -- (4,-3) -- (1,-3) -- (1,0);
				\draw (1,-1) -- (4,-1);
				\draw (1,-2) -- (4,-2);
				\draw (2,0) -- (2,-3);
				\draw (3,0) -- (3,-3);
				\draw[fill, opacity = 0.2] (1,0) -- (2,0) -- (2,-1) -- (1,-1) -- (1,0);
				\draw[pattern=north west lines] (1,-1) -- (2,-1) -- (2,-2) -- (1,-2) -- (1,-1);
			\end{tikzpicture}                      \\
			 & \begin{tikzpicture}[scale=0.35]
				         \draw (1,0) -- (4,0) -- (4,-3) -- (1,-3) -- (1,0);
				         \draw (1,-1) -- (4,-1);
				         \draw (1,-2) -- (4,-2);
				         \draw (2,0) -- (2,-3);
				         \draw (3,0) -- (3,-3);
				         \draw[fill, opacity = 0.2] (1,0) -- (4,0) -- (4,-2) -- (3,-2) -- (3,-3) -- (1,-3) -- (1,0);
				         \draw[pattern=north west lines] (3,-2) -- (4,-2) -- (4,-3) -- (3,-3) -- (3,-2);
			         \end{tikzpicture}
			\quad
			\begin{tikzpicture}[scale=0.35]
				\draw (1,0) -- (4,0) -- (4,-3) -- (1,-3) -- (1,0);
				\draw (1,-1) -- (4,-1);
				\draw (1,-2) -- (4,-2);
				\draw (2,0) -- (2,-3);
				\draw (3,0) -- (3,-3);
				\draw[fill, opacity = 0.2] (1,0) -- (3,0) -- (3,-3) -- (1,-3) -- (1,0);
				\draw[pattern=north west lines] (3,0) -- (4,0) -- (4,-1) -- (3,-1) -- (3,0);
			\end{tikzpicture}
			\quad
			\begin{tikzpicture}[scale=0.35]
				\draw (1,0) -- (4,0) -- (4,-3) -- (1,-3) -- (1,0);
				\draw (1,-1) -- (4,-1);
				\draw (1,-2) -- (4,-2);
				\draw (2,0) -- (2,-3);
				\draw (3,0) -- (3,-3);
				\draw[fill, opacity = 0.2] (1,0) -- (4,0) -- (4,-2) -- (1,-2) -- (1,0);
				\draw[pattern=north west lines] (1,-2) -- (2,-2) -- (2,-3) -- (1,-3) -- (1,-2);
			\end{tikzpicture}
			\quad
			\begin{tikzpicture}[scale=0.35]
				\draw (1,0) -- (4,0) -- (4,-3) -- (1,-3) -- (1,0);
				\draw (1,-1) -- (4,-1);
				\draw (1,-2) -- (4,-2);
				\draw (2,0) -- (2,-3);
				\draw (3,0) -- (3,-3);
				\draw[fill, opacity = 0.2] (1,0) -- (4,0) -- (4,-1) -- (3,-1) -- (3,-2) -- (1,-2) -- (1,0);
				\draw[pattern=north west lines] (1,-2) -- (2,-2) -- (2,-3) -- (1,-3) -- (1,-2);
			\end{tikzpicture}
			\quad
			\begin{tikzpicture}[scale=0.35]
				\draw (1,0) -- (4,0) -- (4,-3) -- (1,-3) -- (1,0);
				\draw (1,-1) -- (4,-1);
				\draw (1,-2) -- (4,-2);
				\draw (2,0) -- (2,-3);
				\draw (3,0) -- (3,-3);
				\draw[fill, opacity = 0.2] (1,0) -- (3,0) -- (3,-2) -- (1,-2) -- (1,0);
				\draw[pattern=north west lines] (1,-2) -- (2,-2) -- (2,-3) -- (1,-3) -- (1,-2);
			\end{tikzpicture}
			\quad
			\begin{tikzpicture}[scale=0.35]
				\draw (1,0) -- (4,0) -- (4,-3) -- (1,-3) -- (1,0);
				\draw (1,-1) -- (4,-1);
				\draw (1,-2) -- (4,-2);
				\draw (2,0) -- (2,-3);
				\draw (3,0) -- (3,-3);
				\draw[fill, opacity = 0.2] (1,0) -- (4,0) -- (4,-1) -- (2,-1) -- (2,-2) -- (1,-2) -- (1,0);
				\draw[pattern=north west lines] (1,-2) -- (2,-2) -- (2,-3) -- (1,-3) -- (1,-2);
			\end{tikzpicture}
			\quad
			\begin{tikzpicture}[scale=0.35]
				\draw (1,0) -- (4,0) -- (4,-3) -- (1,-3) -- (1,0);
				\draw (1,-1) -- (4,-1);
				\draw (1,-2) -- (4,-2);
				\draw (2,0) -- (2,-3);
				\draw (3,0) -- (3,-3);
				\draw[fill, opacity = 0.2] (1,0) -- (3,0) -- (3,-1) -- (2,-1) -- (2,-2) -- (1,-2) -- (1,0);
				\draw[pattern=north west lines] (1,-2) -- (2,-2) -- (2,-3) -- (1,-3) -- (1,-2);
			\end{tikzpicture}
			\quad
			\begin{tikzpicture}[scale=0.35]
				\draw (1,0) -- (4,0) -- (4,-3) -- (1,-3) -- (1,0);
				\draw (1,-1) -- (4,-1);
				\draw (1,-2) -- (4,-2);
				\draw (2,0) -- (2,-3);
				\draw (3,0) -- (3,-3);
				\draw[fill, opacity = 0.2] (1,0) -- (2,0) -- (2,-2) -- (1,-2) -- (1,0);
				\draw[pattern=north west lines] (1,-2) -- (2,-2) -- (2,-3) -- (1,-3) -- (1,-2);
			\end{tikzpicture}
			\quad
			\begin{tikzpicture}[scale=0.35]
				\draw (1,0) -- (4,0) -- (4,-3) -- (1,-3) -- (1,0);
				\draw (1,-1) -- (4,-1);
				\draw (1,-2) -- (4,-2);
				\draw (2,0) -- (2,-3);
				\draw (3,0) -- (3,-3);
				\draw[fill, opacity = 0.2] (1,0) -- (3,0) -- (3,-1) -- (1,-1) -- (1,0);
				\draw[pattern=north west lines] (3,0) -- (4,0) -- (4,-1) -- (3,-1) -- (3,0);
			\end{tikzpicture}
			\quad
			\begin{tikzpicture}[scale=0.35]
				\draw (1,0) -- (4,0) -- (4,-3) -- (1,-3) -- (1,0);
				\draw (1,-1) -- (4,-1);
				\draw (1,-2) -- (4,-2);
				\draw (2,0) -- (2,-3);
				\draw (3,0) -- (3,-3);
				\draw[pattern=north west lines] (1,0) -- (2,0) -- (2,-1) -- (1,-1) -- (1,0);
			\end{tikzpicture}
		\end{align*}
		\caption{The indexing set for the additive basis of $\GW^{[n]}(\Gr_3(3))^{\mathrm{tot}}$. The first line consists of even Young diagrams in $(3\times 3)$-frame with thickened inner-frame boundary segments; The second line (resp. third line) includes buffalo-check Young diagrams in $\mathfrak{B}^{3}_{3,3}$ (resp.\ $\mathfrak{B}^{4}_{3,3}$) with center hatched.
		}
      \label{fig:example-3-3}
	\end{figure}
\end{example}

\hfill \break
\noindent \textbf{Acknowledgment}. We would like to thank Marco Schlichting for providing the key idea in Appendix A. Both authors are partially supported NSFC Grant 12271529, NSFC Grant 12271500, and the Fundamental Research Funds from the Central Universities Sun Yat-sen University 34000-31610294. HX was partially supported by EPSRC Grant EP/M001113/1, the DFG priority programme SPP1786, and the DFG-funded research training group GRK 2240: Algebro-Geometric Methods in Algebra, Arithmetic and Topology.

\appendix

\section{The connecting homomorphism and the cone}\label{sect:connecting-and-cone}
The purpose of this appendix is to interpret the connecting homomorphism for Hermitian $K$-theory via a cone construction due to Balmer in Witt theory. This interpretation plays a crucial role in understanding the geometric description of the connecting homomorphism in Hermitian $K$-theory.

Let $\AC= (\AC, u, \sharp, \mathrm{can})$ be a dg category with weak equivalences and duality. Suppose that we have a larger set $w \subset Z^0 \AC$ of weak equivalences such that $(\AC, w, \sharp, \mathrm{can})$ is also a dg category with weak equivalences and duality. Consider the full dg subcategory $\mathrm{Mor}_{w} (\AC) := \mathrm{Fun}_{w}([1], \AC)$ of the dg category $\mathrm{Mor} (\AC) := \mathrm{Fun}([1], \AC)$. It consists of functors $[1]\to \AC$ whose images are in $w$ (see \cite[p.367]{schlichting2010mayer}). Note that $\mathrm{Mor}_{w} (\AC)$ (resp. $\mathrm{Mor} (\AC)$) is a category with weak equivalences equipped with a duality coming from $\AC$, cf. \cite[Section 1.8 and 1.10]{schlichting2017hermitian}.

By \cite[Proposition 4.7]{schlichting2017hermitian}, the sequence
\[
	\GW^{[n]}(\AC^w) \xrightarrow{} \GW^{[n]}(\AC) \xrightarrow{} \GW^{[n]}(\AC, w) \xrightarrow{\partial} \GW^{[n]}(\AC^w)[1]
\]
is a distinguished triangle in $\mathcal{SH}$. By \cite[Proposition 4.9]{schlichting2017hermitian}, the sequence
\[
	\GW^{[n]}(\AC^w) \xrightarrow{I} \GW^{[n]}(\mathrm{Mor}(\AC^w)) \xrightarrow{\mathrm{cone}} \GW^{[n+1]}(\AC^w) \xrightarrow{-\eta \cup} \GW^{[n]}(\AC^w) [1]
\]
is also a distinguished triangle in $\mathcal{SH}$.

Define the form functor $(P,\phi): \mathrm{Mor}_{w}(\AC) \to \AC$, where $P$ sends $f:A_0 \to A_1$ to $A_0$ and $\phi(f) = f^{\sharp}: A_1^{\sharp} \to A_0^{\sharp}$. Then the form functor of dg categories with weak equivalences and duality
\[
	P: (\mathrm{Mor}_{w}(\AC), w) \to (\AC,w)
\]
induces a stable equivalence of Grothendieck-Witt spectra.

\begin{lemma}\label{lem:balmer-schlichting-coin}
	The following diagram
	\[
		\xymatrix{
		\GW^{[n]}(\mathrm{Mor}_{w}(\AC)) \ar[r]^-{\mathrm{cone}} \ar[d]_-{P} & \GW^{[n+1]}(\AC^w) \ar[d]_-{\eta \cup}\\
		\GW^{[n]}(\AC, w) \ar[r]^-{\partial}& \GW^{[n]}(\AC^w)[1]
		}
	\]
	commutes up to homotopy.
\end{lemma}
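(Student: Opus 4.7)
The plan is to identify both compositions as connecting homomorphisms of compatible cofiber sequences of Grothendieck-Witt spectra, and then appeal to naturality together with the Bott sequence.

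First, I would observe that the dg form functor $P \colon (\mathrm{Mor}_w(\AC), \quis, \sharp) \to (\AC, w, \sharp)$ induces a stable equivalence of Grothendieck-Witt spectra. On the level of associated triangulated categories, $\quis$-equivalences in $\mathrm{Mor}_w(\AC)$ are by definition quasi-isomorphisms on source and target, and any object $(f \colon A_0 \to A_1) \in \mathrm{Mor}_w(\AC)$ is isomorphic to $(\mathrm{id} \colon A_0 \to A_0)$ in $\mathcal{T}(\mathrm{Mor}_w(\AC), \quis)$'s image under the natural functor into $\mathcal{T}(\AC, w)$ (since $f$ becomes an isomorphism after inverting $w$). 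Hence $P$ induces an equivalence $\mathcal{T}(\mathrm{Mor}_w(\AC), \quis) \xrightarrow{\simeq} \mathcal{T}(\AC, w)$, and by \cite[Theorem 6.5]{schlichting2017hermitian} this lifts to a stable equivalence on Grothendieck-Witt spectra. Consequently, the composition $\partial \circ P$ is determined entirely by the connecting map $\partial$ of the localization sequence.

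Next, I would use the standard description of $\partial$ via cones: given a symmetric form $(B, \psi)$ in $(\AC, w)$, choose a lift of $\psi$ to an honest morphism $\tilde{\psi} \colon B \to B^\sharp$ in $\AC$ (which is automatically a $w$-equivalence). Then $\mathrm{cone}(\tilde{\psi}) \in \AC^w$ acquires a canonical $[n+1]$-symmetric form via the octahedral axiom, and $\partial(B, \psi)$ is represented by this cone in $\GW^{[n+1]}(\AC^w) = \GW^{[n]}(\AC^w)[1]$. For a symmetric $(f \colon A_0 \to A_1, \phi)$ in $(\mathrm{Mor}_w(\AC), \quis)$ with $\phi = (\phi_0, \phi_1)$ satisfying $\phi_1 f = f^\sharp \phi_0$ and $\phi_1 = \phi_0^\sharp$, the image $P(f, \phi) = (A_0, \phi_0)$ admits the natural lift $\tilde{\phi_0} := f^\sharp \phi_1 f \colon A_0 \to A_0^\sharp$, which is a $w$-equivalence. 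Applying the octahedral axiom to the factorisation $\tilde{\phi_0} = (f^\sharp \phi_1) \circ f$ produces a distinguished triangle relating $\mathrm{cone}(\tilde{\phi_0})$, $\mathrm{cone}(f)$ and $\mathrm{cone}(f^\sharp \phi_1) \simeq \mathrm{cone}(f^\sharp)$, where the last quasi-isomorphism uses that $\phi_1$ is a quasi-isomorphism.

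Finally, I would identify the cone-term obtained in this way with $\eta \cup \mathrm{cone}(f, \phi)$ using the Bott sequence for $\AC^w$. The Bott sequence (Theorem \ref{theo:bott_seq_L} applied to $\AC^w$) and the presentation
\[
\GW^{[n]}(\AC^w) \xrightarrow{I} \GW^{[n]}(\mathrm{Mor}(\AC^w)) \xrightarrow{\mathrm{cone}} \GW^{[n+1]}(\AC^w) \xrightarrow{-\eta\cup} \GW^{[n]}(\AC^w)[1]
\]
of \cite[Proposition 4.9]{schlichting2017hermitian} imply that $\eta \cup \alpha$ can be computed, for $\alpha \in \GW^{[n+1]}(\AC^w)$, as the connecting map of the fibration whose middle term is $\GW^{[n]}(\mathrm{Mor}(\AC^w))$ and whose rightmost map is cone. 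The canonical inclusion $\mathrm{Mor}(\AC^w) \hookrightarrow \mathrm{Mor}_w(\AC)$ then furnishes a map from the Bott fibration to the cofiber sequence computing $\partial$, and the desired homotopy $\partial \circ P \simeq \eta \cup \mathrm{cone}$ is the naturality of connecting maps along this comparison map. The main obstacle is the careful bookkeeping of the Hermitian structures on the cones produced by the octahedral axiom: one must verify that the symmetric form on $\mathrm{cone}(\tilde{\phi_0})$ arising from the canonical lift above agrees, under the identification coming from the Bott sequence, with the symmetric form induced on $\mathrm{cone}(f)$ by $\phi$, so that the signs and twists in $\eta \cup \mathrm{cone}$ indeed cancel against those in $\partial \circ P$.
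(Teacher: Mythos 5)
Your high-level instinct — identify both routes around the square as boundary maps of compatible fibration sequences and use naturality — is the right one and is essentially what the paper does, but as written your argument has two genuine problems.

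First, the middle of your proposal (lifting the symmetric form to $\tilde\psi$, applying the octahedral axiom to the factorisation of $\tilde{\phi_0}=f^\sharp\phi_1 f$, etc.) only describes how $\partial$ acts on a single class in a Grothendieck–Witt group. The lemma asserts commutativity of a square of maps of \emph{spectra} up to homotopy, and an element-by-element cone computation does not produce such a homotopy. You treat this as ``careful bookkeeping of signs and twists,'' but it is not bookkeeping that is missing — it is the mechanism for getting a spectrum-level statement.

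Second, the ``map from the Bott fibration to the cofiber sequence computing $\partial$'' that you invoke in the final paragraph does not exist in the form you describe. The inclusion $\mathrm{Mor}(\AC^w)\hookrightarrow\mathrm{Mor}_w(\AC)$ followed by $P$ has image landing in $\AC^w$ (since $P(f\colon A_0\to A_1)=A_0$ and $A_0\in\AC^w$), not in $(\AC,w)$, so it does not produce a morphism from the Bott fibration to the localization sequence $\GW^{[n]}(\AC^w)\to\GW^{[n]}(\AC)\to\GW^{[n]}(\AC,w)$. What is actually available is a $3\times3$ commutative diagram of dg categories with weak equivalences and duality,
\[
\xymatrix@C=1.5em@R=1.2em{
\AC^w \ar[r] \ar[d] & \mathrm{Mor}(\AC^w) \ar[r]^-{\mathrm{cone}} \ar[d] & (\AC^w)^{[1]} \ar@{=}[d]\\
\AC \ar[r] \ar[d] & \mathrm{Mor}_w(\AC) \ar[r]^-{\mathrm{cone}} \ar[d]_-{P} & (\AC^w)^{[1]}\\
(\AC,w) \ar@{=}[r] & (\AC,w), &
}
\]
all of whose rows and columns yield homotopy fibration sequences of Grothendieck–Witt spectra. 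The commutativity of the desired square is then a consequence of the $3\times3$-lemma in the triangulated category $\mathcal{SH}$: one first uses the rotation axiom to compare the rotated Bott fibration with the rotated second row, and then the octahedral axiom (not merely naturality of connecting maps) to fill in the remaining square. This is also where the sign is resolved: the connecting map of the Bott sequence is $-\eta\cup$, and the anticommutativity in the $3\times3$-lemma converts it to the unsigned $\eta\cup$ appearing in the lemma. Naturality of connecting maps alone will not give you the square involving two boundary maps; you need to invoke the octahedral axiom for the comparison of fibration sequences, and you need to exhibit the $3\times3$ diagram of dg categories that makes the comparison precise.
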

\begin{proof}
	Consider the following commutative diagram of dg categories with weak equivalences and duality
	\[
		\xymatrix{
		\AC^w \ar[r]^-{I} \ar[d]_-{}& \mathrm{Mor}(\AC^w) \ar[d]_-{} \ar[r]^-{\mathrm{cone}} & \left(\AC^w\right)^{[1]} \ar@{=}[d]\\
		\AC \ar[r]^-{I} \ar[d]_-{} & \mathrm{Mor}_{w}(\AC) \ar[r]^-{\mathrm{cone}} \ar[d]_-{P} & \left(\AC^w\right)^{[1]}  \\
		(\AC, w) \ar@{=}[r]& (\AC, w),
		}
	\]
	where all rows and columns induce homotopy fibration sequences of Grothendieck-Witt spectra. By the naturality of the connecting morphisms and \cite[Theorem II.2.9 (T2)]{schwede2012symmetric}, there is a map of (shifted) homotopy fibration sequences
	$$
		\xymatrix{
		\GW^{[n+1]}(\AC^w)[-1] \ar[r]^-{\eta \cup} \ar@{=}[d]& \GW^{[n]}(\AC^w) \ar[r]^-{I} \ar[d]& \GW^{[n]}(\mathrm{Mor}(\AC^w)) \ar[d]_-{} \ar[r]^-{\mathrm{cone}} & \GW^{[n+1]}(\AC^w) \ar@{=}[d]\\
		\GW^{[n+1]}(\AC^w)[-1] \ar[r] & \GW^{[n]}(\AC) \ar[r]^-{I} & \GW^{[n]}(\mathrm{Mor}_{w}(\AC)) \ar[r]^-{\mathrm{cone}}[-1] & \GW^{[n+1]}(\AC^w)
		}
	$$

	Then by \cite[Theorem II.2.9 (T4)]{schwede2012symmetric}, there is a commutative diagram
	$$
		\xymatrix{
		\GW^{[n+1]}(\AC^w)[-1] \ar[r]^-{\eta \cup} \ar@{=}[d]& \GW^{[n]}(\AC^w) \ar[r]^-{I} \ar[d]_-{} & \GW^{[n]}(\mathrm{Mor}(\AC^w)) \ar[d]_-{} \ar[r]^-{\mathrm{cone}} & \GW^{[n+1]}(\AC^w) \ar@{=}[d]\\
		\GW^{[n+1]}(\AC^w)[-1] \ar[r] & \GW^{[n]}(\AC) \ar[r]^-{I} \ar[d]_-{} & \GW^{[n]}(\mathrm{Mor}_{w}(\AC)) \ar[r]^-{\mathrm{cone}} \ar[d]_-{P} \ar@{}[dr]|-{\diagram \label{diag:appendix}}& \GW^{[n+1]}(\AC^w) \ar[d]_-{\eta \cup}\\
		&\GW^{[n]}(\AC, w) \ar[d]_-{\partial} \ar@{=}[r]& \GW^{[n]}(\AC, w) \ar[d]_-{} \ar[r]^-{\partial} & \GW^{[n]}(\AC^w)[1] \\
		&\GW^{[n]}(\AC^w)[1] \ar[r]^-{I} & \GW^{[n]}(\mathrm{Mor}(\AC^w))[1],
		}
	$$
	and the diagram $\diag{\ref{diag:appendix}}$ is all we need.
\end{proof}

\bibliographystyle{halpha}
\bibliography{paper}

\newcommand{\etalchar}[1]{$^{#1}$}
\begin{thebibliography}{CDH{\etalchar{+}}23}

\bibitem[AF14]{asok2014a}
Aravind Asok and Jean Fasel.
\newblock A cohomological classification of vector bundles on smooth affine threefolds.
\newblock {\em Duke Math. J.}, 163(14):2561--2601, 2014.

\bibitem[AF15]{asok2015splitting}
Aravind Asok and Jean Fasel.
\newblock Splitting vector bundles outside the stable range and {$\mathbb{A}^1$}-homotopy sheaves of punctured affine spaces.
\newblock {\em J. Amer. Math. Soc.}, 28(4):1031--1062, 2015.

\bibitem[Bal05]{balmer2005products}
Paul Balmer.
\newblock Products of degenerate quadratic forms.
\newblock {\em Compos. Math.}, 141(6):1374--1404, 2005.

\bibitem[BC09]{balmer2009geometric}
Paul Balmer and Baptiste Calm\`es.
\newblock Geometric description of the connecting homomorphism for {W}itt groups.
\newblock {\em Doc. Math.}, 14:525--550, 2009.

\bibitem[BC12]{balmer2012witt}
Paul Balmer and Baptiste Calm\`es.
\newblock Witt groups of {G}rassmann varieties.
\newblock {\em J. Algebraic Geom.}, 21(4):601--642, 2012.

\bibitem[Bou72]{bourbaki1972elements}
Nicolas Bourbaki.
\newblock {\em Elements of mathematics. {C}ommutative algebra.}
\newblock Hermann, Paris; Addison-Wesley Publishing Co., Reading, Mass., 1972.
\newblock Translated from the French.

\bibitem[BW23]{bachmann2023euler}
Tom Bachmann and Kirsten Wickelgren.
\newblock Euler classes: six-functors formalism, dualities, integrality and linear subspaces of complete intersections.
\newblock {\em J. Inst. Math. Jussieu}, 22(2):681--746, 2023.

\bibitem[CDH{\etalchar{+}}23]{calmes2023hermitian}
Baptiste Calm\`es, Emanuele Dotto, Yonatan Harpaz, Fabian Hebestreit, Markus Land, Kristian Moi, Denis Nardin, Thomas Nikolaus, and Wolfgang Steimle.
\newblock Hermitian {K}-theory for stable {$\infty$}-categories {I}: {F}oundations.
\newblock {\em Selecta Math. (N.S.)}, 29(1):Paper No. 10, 269, 2023.

\bibitem[CH09]{calmes2009tensor}
Baptiste Calm\`es and Jens Hornbostel.
\newblock Tensor-triangulated categories and dualities.
\newblock {\em Theory Appl. Categ.}, 22:No. 6, 136--200, 2009.

\bibitem[CH11]{calmes2011push}
Baptiste Calm\`es and Jens Hornbostel.
\newblock Push-forwards for {W}itt groups of schemes.
\newblock {\em Comment. Math. Helv.}, 86(2):437--468, 2011.

\bibitem[CHN24]{calmes2024a}
Baptiste Calmès, Yonatan Harpaz, and Denis Nardin.
\newblock A motivic spectrum representing hermitian k-theory.
\newblock \url{https://arxiv.org/abs/2402.15136}, 2024.

\bibitem[Con00]{conrad2000grothendieck}
Brian Conrad.
\newblock {\em Grothendieck duality and base change}, volume 1750 of {\em Lecture Notes in Mathematics}.
\newblock Springer-Verlag, Berlin, 2000.

\bibitem[EGA3]{ega31}
A.~Grothendieck.
\newblock \'{E}l\'{e}ments de g\'{e}om\'{e}trie alg\'{e}brique. {III}. \'{E}tude cohomologique des faisceaux coh\'{e}rents. {I}.
\newblock {\em Inst. Hautes \'{E}tudes Sci. Publ. Math.}, (11):167, 1961.

\bibitem[EGA71]{ega1}
A.~Grothendieck and J.~A. Dieudonn\'{e}.
\newblock {\em \'{E}l\'{e}ments de g\'{e}om\'{e}trie alg\'{e}brique. {I}}, volume 166 of {\em Grundlehren der mathematischen Wissenschaften [Fundamental Principles of Mathematical Sciences]}.
\newblock Springer-Verlag, Berlin, 1971.

\bibitem[Eis95]{eisenbud1995commutative}
David Eisenbud.
\newblock {\em Commutative algebra}, volume 150 of {\em Graduate Texts in Mathematics}.
\newblock Springer-Verlag, New York, 1995.
\newblock With a view toward algebraic geometry.

\bibitem[Fas09]{fasel2009the}
Jean Fasel.
\newblock The excess intersection formula for {G}rothendieck-{W}itt groups.
\newblock {\em Manuscripta Math.}, 130(4):411--423, 2009.

\bibitem[Fas13]{fasel2013the}
Jean Fasel.
\newblock The projective bundle theorem for {${\bf I}^j$}-cohomology.
\newblock {\em J. K-Theory}, 11(2):413--464, 2013.

\bibitem[FS09]{fasel2009chow}
J.~Fasel and V.~Srinivas.
\newblock Chow-{W}itt groups and {G}rothendieck-{W}itt groups of regular schemes.
\newblock {\em Adv. Math.}, 221(1):302--329, 2009.

\bibitem[Gil07]{gille2007a}
Stefan Gille.
\newblock A graded {G}ersten-{W}itt complex for schemes with a dualizing complex and the {C}how group.
\newblock {\em J. Pure Appl. Algebra}, 208(2):391--419, 2007.

\bibitem[GW20]{gortz2020algebraic}
Ulrich G\"{o}rtz and Torsten Wedhorn.
\newblock {\em Algebraic geometry {I}. {S}chemes---with examples and exercises}.
\newblock Springer Studium Mathematik---Master. Springer Spektrum, Wiesbaden, second edition, [2020] \copyright 2020.

\bibitem[Har66]{hartshorne1966residues}
Robin Hartshorne.
\newblock {\em Residues and duality}.
\newblock Lecture Notes in Mathematics, No. 20. Springer-Verlag, Berlin-New York, 1966.
\newblock Lecture notes of a seminar on the work of A. Grothendieck, given at Harvard 1963/64, With an appendix by P. Deligne.

\bibitem[Har67]{hartshorne1967local}
Robin Hartshorne.
\newblock {\em Local cohomology.}
\newblock Springer-Verlag, Berlin-New York,, 1967.
\newblock A seminar given by A. Grothendieck, Harvard University, Fall, 1961.

\bibitem[HMX22]{hudson2022witt}
Thomas Hudson, Arthur Martirosian, and Heng Xie.
\newblock Witt groups of spinor varieties.
\newblock {\em Proc. Lond. Math. Soc. (3)}, 125(5):1152--1178, 2022.

\bibitem[KSW21]{karoubi2021grothendieck}
Max Karoubi, Marco Schlichting, and Charles Weibel.
\newblock Grothendieck-{W}itt groups of some singular schemes.
\newblock {\em Proc. Lond. Math. Soc. (3)}, 122(4):521--536, 2021.

\bibitem[Lip09]{lipman2009notes}
Joseph Lipman.
\newblock Notes on derived functors and {G}rothendieck duality.
\newblock In {\em Foundations of {G}rothendieck duality for diagrams of schemes}, volume 1960 of {\em Lecture Notes in Math.}, pages 1--259. Springer, Berlin, 2009.

\bibitem[Nee01]{neeman2001triangulated}
Amnon Neeman.
\newblock {\em Triangulated categories}, volume 148 of {\em Annals of Mathematics Studies}.
\newblock Princeton University Press, Princeton, NJ, 2001.

\bibitem[Nen07]{nenashev2007gysin}
Alexander Nenashev.
\newblock Gysin maps in {B}almer-{W}itt theory.
\newblock {\em J. Pure Appl. Algebra}, 211(1):203--221, 2007.

\bibitem[Pan03]{panin2003oriented}
I.~Panin.
\newblock Oriented cohomology theories of algebraic varieties.
\newblock {\em $K$-Theory}, 30(3):265--314, 2003.
\newblock Special issue in honor of Hyman Bass on his seventieth birthday. Part III.

\bibitem[Roh21]{rohrbach2021on}
Herman Rohrbach.
\newblock {\em On {A}tiyah-{S}egal completion for {H}ermitian {$K$}-theory}.
\newblock Phd thesis, Bergischen Universit\"{a}t Wuppertal, 2021.
\newblock Available at \url{http://www2.math.uni-wuppertal.de/~hornbost/DissertationRohrbach.pdf}.

\bibitem[RS{\O}19]{rondigs2019the}
Oliver R\"{o}ndigs, Markus Spitzweck, and Paul~Arne {\O}stv{\ae}r.
\newblock The first stable homotopy groups of motivic spheres.
\newblock {\em Ann. of Math. (2)}, 189(1):1--74, 2019.

\bibitem[Sch10]{schlichting2010mayer}
Marco Schlichting.
\newblock The {M}ayer-{V}ietoris principle for {G}rothendieck-{W}itt groups of schemes.
\newblock {\em Invent. Math.}, 179(2):349--433, 2010.

\bibitem[Sch12]{schwede2012symmetric}
Stefan Schwede.
\newblock Symmetric spectra.
\newblock \url{http://www.math.uni-bonn.de/people/schwede/SymSpec-v3.pdf}, 2012.

\bibitem[Sch17]{schlichting2017hermitian}
Marco Schlichting.
\newblock Hermitian {$K$}-theory, derived equivalences and {K}aroubi's fundamental theorem.
\newblock {\em J. Pure Appl. Algebra}, 221(7):1729--1844, 2017.

\bibitem[Sch21]{schlichting2021higher}
Marco Schlichting.
\newblock Higher {$K$}-theory of forms {I}. {F}rom rings to exact categories.
\newblock {\em J. Inst. Math. Jussieu}, 20(4):1205--1273, 2021.

\bibitem[SGA6]{SGA6}
{\em Th\'{e}orie des intersections et th\'{e}or\`eme de {R}iemann-{R}och}.
\newblock Lecture Notes in Mathematics, Vol. 225. Springer-Verlag, Berlin-New York, 1971.
\newblock S\'{e}minaire de G\'{e}om\'{e}trie Alg\'{e}brique du Bois-Marie 1966--1967 (SGA 6), Dirig\'{e} par P. Berthelot, A. Grothendieck et L. Illusie. Avec la collaboration de D. Ferrand, J. P. Jouanolou, O. Jussila, S. Kleiman, M. Raynaud et J. P. Serre.

\bibitem[Spa88]{spaltenstein1988resolutions}
N.~Spaltenstein.
\newblock Resolutions of unbounded complexes.
\newblock {\em Compositio Math.}, 65(2):121--154, 1988.

\bibitem[{Sta}23]{stacks-project}
The {Stacks project authors}.
\newblock The stacks project.
\newblock \url{https://stacks.math.columbia.edu}, 2023.

\bibitem[Tho86]{thomason1986lefschetz}
R.~W. Thomason.
\newblock Lefschetz-{R}iemann-{R}och theorem and coherent trace formula.
\newblock {\em Invent. Math.}, 85(3):515--543, 1986.

\bibitem[TT90]{thomason1990higher}
R.~W. Thomason and Thomas Trobaugh.
\newblock Higher algebraic {$K$}-theory of schemes and of derived categories.
\newblock In {\em The {G}rothendieck {F}estschrift, {V}ol. {III}}, volume~88 of {\em Progr. Math.}, pages 247--435. Birkh\"{a}user Boston, Boston, MA, 1990.

\bibitem[Wei94]{weibel1994an}
Charles~A. Weibel.
\newblock {\em An introduction to homological algebra}, volume~38 of {\em Cambridge Studies in Advanced Mathematics}.
\newblock Cambridge University Press, Cambridge, 1994.

\bibitem[Xie14]{xie2014an}
Heng Xie.
\newblock An application of {H}ermitian {$K$}-theory: sums-of-squares formulas.
\newblock {\em Doc. Math.}, 19:195--208, 2014.

\bibitem[Xie20]{xie20a}
Heng Xie.
\newblock A transfer morphism for {H}ermitian {$K$}-theory of schemes with involution.
\newblock {\em J. Pure Appl. Algebra}, 224(4):106215, 26, 2020.

\bibitem[Zib11]{zibrowius2011witt}
Marcus Zibrowius.
\newblock Witt groups of complex cellular varieties.
\newblock {\em Doc. Math.}, 16:465--511, 2011.

\end{thebibliography}

\end{document}